\definecolor{questype}{gray}{0.8}
\theoremstyle{plain}
\newtheorem{theorem}[paragraph]{Theorem}
\newtheorem{corollary}[paragraph]{Corollary}
\theoremstyle{definition}
\newtheorem{definition}[paragraph]{Definition}
\newtheorem{remark}[paragraph]{Remark}
\newcommand\circledmark[2][white]{%
  \ooalign{%
    \hidewidth
    \kern0.7ex\raisebox{-0.7ex}{\scalebox{2.5}{\textcolor{#1}{\textbullet}}}
    \hidewidth\cr
    #2\cr
  }%
}
\def\formtmp#1#2{{\vskip12pt\noindent\fboxsep=0pt\colorbox{#1}{\vbox{\vskip3pt\hbox to \textwidth{\hskip3pt\vbox{\raggedright\noindent\textbf{#2\vphantom{Qy}}}\hfill}\vspace*{3pt}}}\par\vskip2pt%
\noindent\kern0pt}}
\newenvironment{questype}[1]{\ignorespaces\def\stmtopen##1{##1}%
\formtmp{questype}{\ \circledmark[white]{\textcolor{black}{\bfseries?}}{\kern6pt}#1}}{\par\noindent\textcolor{questype}{\rule{\columnwidth}{1pt}}\vskip2pt\par\addvspace{\baselineskip}}%
\newcommand{\R}{\mathbb{R}}
\newcommand{\C}{\mathbb{C}}
\newcommand{\N}{\mathbb{N}}
\newcommand{\Z}{\mathbb{Z}}
\newcommand{\Ham}{\mathbb{H}}
\newcommand{\Oct}{\mathbb{O}}
\newcommand{\Id}{\mathrm{Id}}
\newcommand{\Ric}{\mathrm{Ric}}
\newcommand{\scal}{\mathrm{scal}}
\newcommand{\vol}{\mathrm{vol}}
\newcommand{\Vol}{\mathrm{Vol}}
\newcommand{\Sym}{\operatorname{Sym}}
\newcommand{\tr}{\operatorname{tr}}
\newcommand{\im}{\operatorname{im}}
\newcommand{\diag}{\operatorname{diag}}
\newcommand{\End}{\operatorname{End}}
\newcommand{\Aut}{\operatorname{Aut}}
\newcommand{\Hom}{\operatorname{Hom}}
\newcommand{\Ad}{\operatorname{Ad}}
\newcommand{\Cas}{\mathrm{Cas}}
\newcommand{\Met}{\mathscr{M}}
\newcommand{\Diff}{\mathscr{D}}
\newcommand{\EMod}{\mathscr{E}}
\newcommand{\Sy}{\mathscr{S}}
\newcommand{\X}{\mathfrak{X}}
\newcommand{\U}{\operatorname{U}}
\newcommand{\SU}{\operatorname{SU}}
\newcommand{\su}{\mathfrak{su}}
\newcommand{\SO}{\operatorname{SO}}
\newcommand{\Orth}{\operatorname{O}}
\newcommand{\PSO}{\operatorname{PSO}}
\newcommand{\so}{\mathfrak{so}}
\newcommand{\Sp}{\operatorname{Sp}}
\newcommand{\Spin}{\operatorname{Spin}}
\newcommand{\GL}{\operatorname{GL}}
\newcommand{\gl}{\mathfrak{gl}}
\newcommand{\SL}{\operatorname{SL}}
\newcommand{\rmS}{\mathrm{S}}
\newcommand{\pr}{\operatorname{pr}}
\newcommand{\mf}{\mathfrak{m}}
\newcommand{\hf}{\mathfrak{h}}
\newcommand{\gf}{\mathfrak{g}}
\newcommand{\kf}{\mathfrak{k}}
\DeclareMathOperator*{\closedsum}{\overline{\bigoplus}}
\newcommand{\TT}{\Sy^2_{\mathrm{tt}}}
\newcommand{\ef}{\mathfrak{e}}
\newcommand{\tf}{\mathfrak{t}}
\newcommand{\rk}{\operatorname{rank}}
\newcommand{\Isom}{\operatorname{Isom}}
\newcommand{\Ltwoinprod}[2]{\big(#1,#2\big)_{L^2}}
\newcommand{\intprod}{\mathbin{\lrcorner}}
\newcommand{\FN}[2]{[#1,#2]^{\mathrm{FN}}}
\newcommand{\Proj}{\mathbb{P}}
\newcommand{\rmE}{\mathrm{E}}
\newcommand{\rmF}{\mathrm{F}}
\newcommand{\rmG}{\mathrm{G}}
\newcommand{\LL}{\Delta_{\mathrm{L}}}
\newcommand{\EH}{\mathcal{S}}
\newcommand{\Csc}{\mathscr{C}}
\newcommand{\K}{\mathcal{K}}
\newcommand{\Univ}{\mathfrak{U}}
\newcommand{\Lie}{\mathcal{L}}
\newcommand{\Gr}{\mathrm{Gr}}
\newcommand{\Y}{\mathcal{Y}}
\newcommand{\A}{\mathcal{A}}
\newcommand{\Eop}{\mathbf{E}}
\newcommand{\Slice}{\mathscr{S}}
\newcommand{\Hol}{\mathrm{Hol}}
\newcommand{\Spinor}{\Sigma}
\renewcommand{\mod}{\mathbin{\mathrm{mod}}}
\newcommand{\rmc}{\mathrm{c}}
\newcommand{\coind}{\mathrm{coind}}
\newcommand{\qH}{\mathbb{H}}
\newcommand{\qE}{\mathbb{E}}
\newcommand{\D}{\mathrm{d}}
\title{\rmfamily Einstein metrics, their moduli spaces and stability}
\author{Paul Schwahn\footnote{Universidade Estadual de Campinas, IMECC, Rua Sérgio Buarque de Holanda 651, 13083-859 Campinas-SP, Brazil. Email: \texttt{schwahn@ime.unicamp.br}}, Uwe Semmelmann\footnote{Institut f\"ur Geometrie und Topologie, Fachbereich Mathematik, Universit\"at Stuttgart, Pfaffenwaldring 57, 70569 Stuttgart, Germany. Email: \texttt{uwe.semmelmann@mathematik.uni-stuttgart.de}}}
\date{\today}
\begin{document}

\maketitle

\begin{abstract}
\noindent
This survey deals with two closely connected topics: first, the stability of Einstein metrics under the Einstein--Hilbert functional, and second, their deformation theory and the study of the moduli space of Einstein metrics on a compact manifold. To first order, both problems reduce to studying the spectrum and eigentensors of the Lichnerowicz Laplacian. We give an introduction to the classical theory and survey recent results and advances.

\medskip

\noindent{\textbf{MSC} (2020): 53C25, 53C21, 53E20, 58D27, 53C24}

\medskip

\noindent{\textbf{Keywords}: Einstein metrics, stability, moduli space, rigidity, special holonomy, Killing spinors, homogeneous spaces}
\end{abstract}

\tableofcontents

\section{Introduction}

In Riemannian and pseudo-Riemannian geometry, a metric $g$ is called \emph{Einstein} if its Ricci tensor is proportional to $g$ itself, that is $\Ric_g=Eg$ for some constant $E\in\R$. Originally introduced in the context of General Relativity, these metrics have attracted considerable interest and are widely studied today also in the Riemannian setting. The Einstein equation $\Ric_g=Eg$ is a nonlinear partial differential equation and notoriously hard to solve. Its solution space on a fixed manifold, modulo gauge symmetries, is called the \emph{Einstein moduli space}. The global structure of this moduli space remains largely mysterious in the general case.

Einstein metrics may also be characterized variationally via the \emph{Einstein--Hilbert action}, a functional on the space of all Riemannian metrics. The \emph{stability} of an Einstein metric $g$ refers to the local behavior of this functional around $g$, and it is closely connected to the local structure of the moduli space around $g$.

This survey aims to give a comprehensive introduction to the study of stability as well as the deformation theory and moduli spaces of Einstein metrics. We will explain the relation between these topics and their connection to adjacent domains of study like the Ricci flow or the Yamabe functional. Further, we give a broad overview over the current state of knowledge in particular geometric situations, such as on manifolds with special holonomy or homogeneous spaces. This includes collecting and stating some open questions in the field that have attracted interest and that we hope will continue to do so.

Throughout this article, we are going to restrict our considerations to \emph{compact} manifolds, mentioning the noncompact world only in passing. The article \cite{Kr25} by Kröncke, appearing in this same volume, discusses stability and related concepts also on noncompact manifolds, and is in some sense orthogonal to this one.

\subsection{History and motivation}

The field equations of gravitation in General Relativity, first formulated by and named after Albert Einstein in 1915, can be written as
\[\Ric_g-\frac{\scal_g}{2}g=T,\]
where $g$ is a Lorentzian metric on a spacetime $M^n$, and $T$ is the energy-momentum tensor. In a vacuum ($T=0$) they are equivalent to the metric being Ricci-flat, $\Ric_g=0$. Two years after formulating the field equations, Einstein  realized that the principles from which he derived his equations allowed adding another ``cosmological'' term, leading to the modified field equations
\[\Ric_g-\frac{\scal_g}{2}g+\Lambda g=T,\]
where $\Lambda\in\R$ is the \emph{cosmological constant}. The vacuum solutions of this equation equivalently satisfy $\Ric_g=Eg$ with $E=\frac{2}{n-2}\Lambda$.

Also in 1915, Hilbert shows that the vacuum Einstein equations (without cosmological term) arise as the Euler--Lagrange equations of the action  $\EH$ given by
\[\EH(g)=\int_M\scal_g\vol_g,\]
today known as the \emph{Einstein--Hilbert action}. Here we need to assume $M$ to be compact, or at least the scalar curvature to have suitable decay for the integral to converge. By restricting $\EH$ to metrics of a given total volume, one obtains as critical points precisely the metrics with $\Ric_g=Eg$.

The interest in Einstein metrics from Riemannian geometry came, beside the potential applications in physics, from the search of a ``best'' metric on a given manifold. An Einstein manifold $(M,g)$ has ``constant Ricci curvature'' in the sense that the quadratic form $X\mapsto\Ric_g(X,X)$, $X\in TM$, is constant on the unit sphere bundle inside $TM$. The constancy of the Ricci curvature seems to be a sweet spot between the stronger condition of constant sectional curvature (which forces $M$ to be a quotient of a round sphere, Euclidean space, or hyperbolic space) and the weaker condition of constant scalar curvature (since the resolution of the Yamabe problem we know that every metric is conformal to a constant scalar curvature metric). Moreover, the Einstein--Hilbert action is the simplest curvature functional one can write down on a general Riemannian manifold, lending credence to the special status of Einstein metrics.

The behavior of Einstein metrics as critical points of the Einstein--Hilbert action may vary. We call an Einstein metric \emph{stable} (or $\EH$-stable) if it is a local maximum of $\EH$ restricted to the set of constant scalar curvature metrics of fixed total volume. Einstein metrics are also (up to rescaling) fixed points of the Ricci flow, and the dynamical behavior of this flow close to an Einstein metric is indeed closely related to its $\EH$-stability. This will shortly be elucidated.

The solution space of the Einstein equation is generally not well understood. We will not touch upon the question of which manifolds admit Einstein metrics, but instead mostly focus on the local structure of the Einstein moduli space around a given Einstein metric $g$. In particular, we shall discuss the question whether $g$ can be deformed to other non-homothetic Einstein metrics. If not, $g$ is called \emph{rigid}.

A first step towards understanding this local behavior is to solve the linearized Einstein equation, which in the right gauge is an eigenvalue problem for an elliptic operator called the \emph{Lichnerowicz Laplacian}, a natural generalization of the Hodge--de Rham Laplacian to tensors of any type. This connects the stability and rigidity questions to the spectral geometry of $(M,g)$.

\subsection{Plan of the article}

We begin in Section~\ref{sec:stable} with reviewing the properties of the Einstein--Hilbert action to first and second order, comparing the different stability notions for Einstein metrics and relating them to other concepts such as scalar curvature rigidity, the Yamabe functional and the Ricci flow. In Section~\ref{sec:moduli} we turn to the Einstein moduli space and the deformation theory of Einstein metrics.

The remainder of the article surveys what we know about the questions of stability and rigidity in particular geometric settings. First, in Section~\ref{sec:prodfib}, we discuss how stability and rigidity behave under taking products, and what we know about warped product metrics and total spaces of Riemannian submersions. In Section~\ref{sec:parallelspinor} we specialize to metrics admitting a parallel spinor, which implies that they are Ricci-flat. The other cases in Berger's holonomy list, that is Kähler and quaternion-Kähler metrics, are discussed in Sections~\ref{sec:ke} and \ref{sec:qk}, respectively. Next, in Section~\ref{sec:killingspinor}, we turn to a particular kind of non-integrable geometries: manifolds admitting Killing spinors. Finally, in Section~\ref{sec:homogeneous}, we discuss the rich world of homogeneous Einstein manifolds, on which the Einstein equation and its linearization can be treated by algebraic and representation-theoretic methods.

\section{The Einstein--Hilbert action and stability}
\label{sec:stable}

Let $M^n$ be a compact manifold of dimension $n\geq3$, and denote with $\Met$ the set of all Riemannian metrics on $M$. This is a convex open cone in $\Sy^2(M)=\Gamma(\Sym^2T^\ast M)$, the space of smooth symmetric $2$-tensor fields.

\begin{definition}
The \emph{Einstein--Hilbert action} is the functional $\EH: \Met\to\R$ defined by
\[\EH(g)=\int_M\scal_g|\vol_g|,\qquad g\in\Met.\]
\end{definition}
Here $|\vol_g|$ denotes the volume density associated to $g$; if $M$ is orientable, one may replace it by the volume form $\vol_g\in\Omega^n(M)$.

\subsection{First and second order properties}

The vector space $\Sy^2(M)$ is ILH (an inverse limit of Hilbert spaces), and in particular a Fréchet space, hence there is a well-behaved notion of differentiation. The \emph{first variation} of $\EH$ at some $g\in\Met$ is given by
\begin{equation}
\EH'_g(h)=\Ltwoinprod{-\Ric_g+\tfrac{\scal_g}{2}g}{h}.
\end{equation}

We may always normalize a metric $g$ to have total volume one without posing a restriction for the following discussion. Let
\[\Met_1=\left\{g\in\Met\,\middle|\,\Vol(g)=1\right\}\]
denote the space of metrics of total volume one. This is an ILH-submanifold of $\Met$ with (formal) tangent space
\[T_g\Met_1=\left\{h\in\Sy^2(M)\,\middle|\,\int_M(\tr_g h)|\vol_g|=0\right\}.\]
The following theorem \cite[Thm.~4.21]{besse} essentially goes back to Hilbert.
\begin{theorem}
\label{thm:hilbertcrit}
For $g\in\Met_1$, the following conditions are equivalent:
\begin{enumerate}[\upshape(i)]
    \item $g$ is Einstein,
    \item $g$ is a critical point of $\EH$ restricted to $\Met_1$,
    \item $g$ is a critical point of the \emph{volume-normalized Einstein--Hilbert action}
    \[\widetilde\EH: \Met\to\R,\qquad\widetilde\EH(g)=\Vol(g)^{\frac{2-n}{n}}\EH(g).\]
\end{enumerate}
\end{theorem}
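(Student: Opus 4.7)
The plan is to deduce all three equivalences from the first variation formula~(2.1) together with a Lagrange multiplier argument, and then handle the normalized functional via scale invariance.

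For (ii) $\Leftrightarrow$ (i), I would first observe that $T_g\Met_1$ is exactly the $L^2$-orthogonal complement of $\R\cdot g$ inside $\Sy^2(M)$, since $(g,h)_{L^2}=\int_M\tr_g(h)|\vol_g|$. Hence by~(2.1), $g$ is a critical point of $\EH|_{\Met_1}$ if and only if the gradient $-\Ric_g+\tfrac{\scal_g}{2}g$ lies in $(T_g\Met_1)^\perp=\R\cdot g$, i.e.\ there exists $\lambda\in\R$ with
\[-\Ric_g+\tfrac{\scal_g}{2}g=\lambda g.\]
Taking the pointwise trace yields $\tfrac{(n-2)}{2}\scal_g=n\lambda$; since $n\geq 3$, this forces $\scal_g$ to be constant, and substituting back gives $\Ric_g=\tfrac{\scal_g}{n}g$, so $g$ is Einstein. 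Conversely, if $\Ric_g=Eg$ then $\scal_g=nE$ is constant and $-\Ric_g+\tfrac{\scal_g}{2}g=\tfrac{(n-2)E}{2}g$ is a constant multiple of $g$, hence $L^2$-orthogonal to $T_g\Met_1$.

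For (ii) $\Leftrightarrow$ (iii), the key is that $\widetilde\EH$ is scale invariant: from $\scal_{cg}=c^{-1}\scal_g$ and $|\vol_{cg}|=c^{n/2}|\vol_g|$ one computes $\EH(cg)=c^{(n-2)/2}\EH(g)$ and $\Vol(cg)=c^{n/2}\Vol(g)$, so $\widetilde\EH(cg)=\widetilde\EH(g)$ for all $c>0$. Differentiating along the scaling orbit shows $\widetilde\EH'_g(g)=0$. For $g\in\Met_1$ and $h\in T_g\Met_1$, the variation of the volume factor vanishes because $\frac{d}{dt}\big|_{t=0}\Vol(g+th)=\tfrac{1}{2}(g,h)_{L^2}=0$, so $\widetilde\EH'_g(h)=\EH'_g(h)$. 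Decomposing an arbitrary $h\in\Sy^2(M)$ as $h=h_0+tg$ with $h_0\in T_g\Met_1$, we see that $\widetilde\EH'_g\equiv 0$ on $\Sy^2(M)$ if and only if $\EH'_g\equiv 0$ on $T_g\Met_1$.

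The main obstacle is essentially bookkeeping: one needs to correctly identify the $L^2$-orthogonal of $T_g\Met_1$ and carry out the trace argument that upgrades the proportionality ``$\Ric_g\propto g$ with possibly varying factor'' to the genuine Einstein condition. Both steps are standard, and no analytic subtleties intervene because we only deal with first-order variations on the Fréchet/ILH-manifold $\Met$.
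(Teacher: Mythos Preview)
Your argument is correct and is essentially the standard proof (as found in Besse~\cite[Thm.~4.21]{besse}). Note that the paper itself is a survey and does not give a proof of this classical result; it merely states the theorem with a reference. Your Lagrange-multiplier computation for (i)$\Leftrightarrow$(ii) and the scale-invariance argument for (ii)$\Leftrightarrow$(iii) are exactly what one expects, and no step is missing.
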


Let $\Diff$ denote the diffeomorphism group of $M$. The Einstein--Hilbert functional is invariant under diffeomorphisms, that is
\[\EH(g)=\EH(\varphi^\ast g)\]
for all $g\in\Met$, $\varphi\in\Diff$. The tangent space of the $\Diff$-orbit of a metric $g$ is given by
\[T_g(\Diff\cdot g)=\Lie_{\X(M)}g=\{\Lie_Xg\,|\,X\in\X(M)\}.\]
This is the image of the \emph{Killing operator}, a first order, overdetermined elliptic operator
\[\delta^\ast: \Sy^k(M)\to\Sy^{k+1}(M),\qquad\alpha\mapsto\sum_ie^i\odot\nabla_{e_i}\alpha,\]
which is given by $\delta^\ast\alpha=\Lie_{\alpha^\sharp}g$ for $\alpha\in\Sy^1(M)=\Omega^1(M)$. Here $\nabla$ is the Levi-Civita connection associated to $g$. 

\begin{remark}
We warn the reader that there are different conventions for the symmetric and inner product on symmetric tensors, and hence for the Killing operator. Our convention is analogous to the usual wedge and inner product on alternating tensors. The symmetric product is given by
\[\alpha\odot\beta=\frac{(k+l)!}{k!l!}\mathrm{sym}(\alpha\otimes\beta)\]
for any $\alpha\in\Sym^kT$, $\beta\in\Sym^lT$, where $\mathrm{sym}$ denotes the $\GL(T)$-invariant projection $T^{\otimes k}\to\Sym^kT$. For example, $X\odot Y=X\otimes Y+Y\otimes X$ for $X,Y\in T$. The inner product in turn is determined by stipulating that $X\intprod: \Sym^{k+1}T\to\Sym^kT$ is adjoint to $X\odot: \Sym^kT\to\Sym^{k+1}T$ for any $X\in T$.
\end{remark}

The Killing operator is formally adjoint to the \emph{divergence operator}
\[\delta: \Sy^{k+1}(M)\to\Sy^k(M),\qquad \alpha\mapsto-\sum_ie_i\intprod\nabla_{e_i}\alpha.\]
Hence, by elliptic theory, there is an $L^2$-orthogonal decomposition
\begin{equation}
\Sy^2(M)=\Lie_{\X(M)}g\oplus\ker\delta.
\label{diffdecomp}
\end{equation}
The condition $\delta h=0$ is tantamount to fixing a gauge with respect to the symmetry group $\Diff$. It is called the \emph{transverse gauge}.

Since any Einstein metric is a critical point of $\EH|_{\Met_1}$ by Theorem~\ref{thm:hilbertcrit}, we may want to analyze its behavior by looking at the \emph{second variation} $\EH_g''$, a bilinear form on $T_g\Met_1$. 
Recall that the index (coindex) of a bilinear form is the dimension of a maximal subspace on which it is negative (positive) definite, and the nullity is the dimension of its kernel.
The following result is due to Berger and Koiso.

\begin{theorem}
\label{thm:secondvar}
If $(M,g)$ is Einstein and not isometric to the standard sphere, there is a $\EH_g''$-orthogonal decomposition
\begin{equation}
T_g\Met_1=C^\infty_\perp(M)g\oplus\Lie_{\X(M)}g\oplus\TT(M),\label{EHdecomp}
\end{equation}
where
\begin{enumerate}[\upshape(i)]
    \item $C^\infty_\perp(M)$ denotes the space of smooth functions with average zero, and $\EH_g''$ is positive definite on $C^\infty_\perp(M)g$,
    \item $\Lie_{\X(M)}g$ is contained in the null space of $\EH_g''$,
    \item $\TT(M)=\ker\tr_g\cap\ker\delta$ denotes the space of transverse traceless (\emph{tt}) tensors, and $\EH_g''$ has infinite index and finite coindex and nullity on $\TT(M)$.
\end{enumerate}
\end{theorem}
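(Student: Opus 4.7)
My plan is to proceed in three steps: (a) dispatch part (ii) via diffeomorphism invariance, (b) construct the claimed direct-sum decomposition, and (c) analyze $\EH_g''$ on each of the two remaining summands using an explicit formula involving the Lichnerowicz Laplacian.

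For step (a), part (ii) is immediate from the diffeomorphism invariance of $\EH$. Since $\EH(\varphi_t^\ast g)=\EH(g)$ for the flow $\varphi_t$ of any $X\in\X(M)$, differentiating twice at $t=0$ and using that $\EH_g'$ vanishes on $T_g\Met_1$ by Theorem~\ref{thm:hilbertcrit} gives $\EH_g''(\Lie_X g,\Lie_X g)=0$; polarizing yields $\EH_g''(\Lie_X g,k)=0$ for every $k\in T_g\Met_1$, so this summand is automatically $\EH_g''$-orthogonal to the other two. For step (b), given $h\in T_g\Met_1$, I would apply decomposition~(\ref{diffdecomp}) to write $h=\Lie_{X_0}g+k$ with $\delta k=0$. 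Setting $f:=\tfrac{1}{n}\tr_g k$ one has $f\in C^\infty_\perp(M)$ by the volume constraint; the residue $k-fg$ is traceless but carries divergence $df$, so, using that $\delta\delta^\ast$ is elliptic self-adjoint on $\X(M)$ with kernel the Killing fields (and $df$ is $L^2$-orthogonal to that kernel, since the dual of a Killing field is divergence-free), one solves $\delta\delta^\ast Y=df$. Then $\tilde h:=k-fg+\Lie_Y g$ is both traceless and transverse, yielding $h=fg+\Lie_{X_0-Y}g+\tilde h\in C^\infty_\perp(M)g+\Lie_{\X(M)}g+\TT(M)$. Pairwise trivial intersection of the three summands (away from the sphere) follows from the Obata rigidity result invoked below and from the observation that any $X$ with $\Lie_X g\in\TT(M)$ satisfies $\delta\delta^\ast X=0$, hence is Killing and produces $\Lie_X g=0$.

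For step (c), the classical second variation computation at an Einstein metric with $\Ric_g=Eg$ (see Besse~\S4.60) reduces on the above summands to
\[\EH_g''(\tilde h,\tilde h)=-\tfrac12\Ltwoinprod{(\LL-2E)\tilde h}{\tilde h},\qquad \EH_g''(fg,fg)=c_n\Ltwoinprod{\bigl(\Delta-\tfrac{nE}{n-1}\bigr)f}{f},\]
for $\tilde h\in\TT(M)$ and $f\in C^\infty_\perp(M)$, with a positive constant $c_n$; the cross-term $\EH_g''(fg,\tilde h)$ vanishes identically because each remaining integrand factors through $\tr_g\tilde h=0$ or $\delta\tilde h=0$. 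Since $\LL$ preserves $\TT(M)$ and is elliptic self-adjoint with discrete spectrum accumulating only at $+\infty$, only finitely many eigenvalues of $\LL-2E$ on $\TT(M)$ are $\leq 0$, giving finite coindex and nullity in (iii), while the remaining infinitely many yield infinite index. For (i), the Lichnerowicz--Obata theorem asserts that $\lambda_1(\Delta)\geq\tfrac{nE}{n-1}$ on a compact Einstein manifold with $E>0$, with equality if and only if $(M,g)$ is isometric to the round sphere; the hypothesis then yields strict inequality and hence positive definiteness (the cases $E\leq 0$ being automatic since $\lambda_1(\Delta)>0\geq\tfrac{nE}{n-1}$ on $C^\infty_\perp(M)$). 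The main subtlety I expect is precisely this sphere exclusion: in the Obata equality case the first-eigenfunction conformal modes $fg$ coincide with Lie derivatives of gradient Killing potentials, collapsing the proposed direct sum, and the rigidity direction of Obata's theorem is the one nontrivial ingredient beyond standard elliptic and spectral theory.
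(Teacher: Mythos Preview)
The paper is a survey and does not prove this theorem; it attributes the result to Berger and Koiso and moves on. So there is no ``paper's own proof'' to compare against, and the question is simply whether your argument stands on its own. Steps (a) and (c) are fine and standard. Step (b), however, contains a genuine gap.

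Your spanning argument fails at the point where you assert that $\tilde h:=k-fg+\Lie_Y g$ is traceless. You chose $f=\tfrac{1}{n}\tr_g k$ so that $k-fg$ is traceless, but adding $\Lie_Y g$ reintroduces trace: $\tr_g(\Lie_Y g)=2\,\mathrm{div}\,Y$, and nothing in the equation $\delta\delta^\ast Y=\pm df$ forces $Y$ to be divergence-free. So in general $\tr_g\tilde h=2\,\mathrm{div}\,Y\neq0$ and $\tilde h\notin\TT(M)$. The fix is to reverse the order: first split off the trace, $h=\tfrac{1}{n}(\tr_g h)g+h_0$ with $h_0\in\Sy^2_0(M)$, and then use the $L^2$-orthogonal decomposition $\Sy^2_0(M)=\im\theta\oplus\TT(M)$, where $\theta$ is the conformal Killing operator. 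Since $\theta\alpha=\Lie_{\alpha^\sharp}g+\tfrac{2}{n}(d^\ast\alpha)g$, the $\im\theta$ piece is absorbed into $C^\infty g+\Lie_{\X(M)}g$, and the volume constraint pushes the function part into $C^\infty_\perp$.

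There is a second, smaller gap: you argue directness via ``pairwise trivial intersection'', but for three summands this is not sufficient (three distinct lines through the origin in $\R^2$ have pairwise trivial intersection). You can recover directness either from the $L^2$-orthogonal decomposition above, or a posteriori from your own step (c): if $fg+\Lie_Xg+\tilde h=0$, pairing against $fg$ in $\EH_g''$ and using the orthogonality and positive-definiteness you establish gives $f=0$, after which your $\Lie_\X g\cap\TT(M)=0$ argument finishes it.
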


\begin{remark}
The standard sphere $(S^n,g_0)$ is excluded in the preceding theorem because $C^\infty_\perp(S^n)g_0$ and $\Lie_{\X(S^n)}g_0$ intersect nontrivially; the result becomes true if one replaces $C^\infty_\perp(S^n)$ by the orthogonal complement to the first order spherical harmonics, i.e.~the Laplace--Beltrami eigenfunctions to the eigenvalue $\scal_{g_0}/(n-1)$.
\end{remark}

The ``interesting'' part of the decomposition in Theorem~\ref{thm:secondvar} is clearly the (infinite-dimensional) space of tt-tensors, where $\EH_g''$ is ``almost'' negative definite, up to a finite-dimensional space of positive or null directions. The tt-tensors are tangent to the ILH-submanifold
\[\Csc=\{g\in\Met_1\,|\,\scal_g=\text{const.}\}\]
of unit volume metrics of constant scalar curvature; indeed, its tangent space is
\[T_g\Csc=\Lie_{\X(M)}g\oplus\TT(M).\]
This motivates the following definition.

\begin{definition}[Stability]
\label{def:stable}
An Einstein metric $g$ is called
\begin{enumerate}[\upshape(i)]
    \item \emph{strictly (linearly) stable} if $\EH_g''$ is negative-definite on $\TT(M)$,
    \item \emph{$\EH$-stable} if $g$ is a local maximum of $\EH$ restricted to $\Csc$,
    \item \emph{(linearly) semistable} if $\EH_g''$ is negative-semidefinite on $\TT(M)$,
    \item \emph{linearly unstable} if there exists $h\in\TT(M)$ such that $\EH_g''(h,h)>0$,
    \item \emph{$\EH$-unstable} if it is not $\EH$-stable.
\end{enumerate}
\end{definition}
Clearly, we have the implications (i)$\Rightarrow$(ii)$\Rightarrow$(iii) and (iv)$\Rightarrow$(v). We call the coindex of $\EH_g''$ on $\TT(M)$ the \emph{Hilbert coindex} of the Einstein metric $g$ and denote it by $\coind(g)$. 
Tensors as in (iv) are called \emph{destabilizing directions}.

\begin{definition}[Infinitesimal Einstein deformations]
Let $g$ be an Einstein metric. An \emph{infinitesimal Einstein deformation} (IED) of $g$ is a tt-tensor that is a null direction for $\EH_g''$. We denote the space of IED by
\[\varepsilon(g)=\ker\EH_g''|_{\TT(M)}.\]
\end{definition}

One may restrict the Einstein--Hilbert action further and ask about the meaning of critical points. For example, a metric $g\in\Met_1$ has constant scalar curvature precisely if it is a critical point of $\EH$ restricted to $[g]\cap\Met_1$, where $[g]$ denotes the class of metrics conformal to $g$. On the other hand, it is unknown what the critical points of $\EH$ restricted to $\Csc$ are. This is the subject of the famous Besse conjecture \cite[4.48]{besse}.
\begin{questype}{Besse conjecture}
Let $g$ be a critical point of $\EH$ restricted to $\Csc$. Then $g$ is an Einstein metric.
\end{questype}

\subsection{The Yamabe functional}
\label{sec:yamabe}

A given manifold may admit many different Einstein metrics and thus many candidates for the ``best'' metric. Therefore one may look for some sort of global extremality condition that could single out a ``best'' Einstein metric. Due to Einstein metrics always being saddle points of $\EH|_{\Met_1}$, there is no sensible notion of local or even global extrema for $\EH$. In 1960, this led Yamabe to the following min-max approach.

\begin{definition}[Yamabe functional and invariant]
\begin{enumerate}[\upshape(i)]
    \item If $c\subset\Met$ is a conformal class on $M$, then
    \[\Y(M,c)=\inf_{g\in c}\widetilde{\EH}(g)=\inf_{g\in c\cap\Met_1}\EH(g)\]
    is called the \emph{Yamabe constant} of $c$. A metric $g\in c$ achieving the infimum is called a \emph{Yamabe metric}.
    \item The supremum over all conformal classes,
    \[\Y(M)=\sup_{c}\Y(M,c),\]
    is called the \emph{Yamabe invariant} of $M$.
\end{enumerate}
\end{definition}

Trudinger--Aubin--Schoen showed that every conformal class contains a Yamabe metric, that Yamabe metrics always have constant scalar curvature, and that
\[\Y(M)\leq\widetilde{\EH}(S^n,g_0)\]
for any compact manifold $M$. Moreover, Einstein metrics, or metrics of constant nonpositive scalar curvature, are always Yamabe metrics. A good survey on this is \cite{LeB99}. We note one additional fact that we will refer to later:

\begin{theorem}
\label{thm:yamabepos}
$\Y(M)>0$ if and only if $M$ admits a metric of positive scalar curvature.
\end{theorem}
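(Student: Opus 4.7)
The plan is to prove the two directions separately. The key ingredient is the existence of a Yamabe metric in every conformal class (Trudinger--Aubin--Schoen, cited in the paragraph preceding the statement), combined with the standard formula expressing the volume-normalized Einstein--Hilbert action on a conformal class as a Rayleigh quotient for the conformal Laplacian.

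For the forward implication, assume $\Y(M)>0$. By definition of the supremum there exists a conformal class $c$ with $\Y(M,c)>0$. By the Yamabe theorem, $c$ contains a Yamabe metric $g$, which we may normalize to $\Vol(g)=1$. Since Yamabe metrics have constant scalar curvature, one directly reads off
\[\Y(M,c)=\widetilde\EH(g)=\EH(g)=\int_M\scal_g\,|\vol_g|=\scal_g>0,\]
so $g$ itself is a metric of (constant) positive scalar curvature, and in particular $M$ admits such a metric.

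For the converse, suppose $g_0\in\Met$ satisfies $\scal_{g_0}>0$ everywhere, and set $c=[g_0]$. Writing an arbitrary $g\in c$ as $g=u^{4/(n-2)}g_0$ with $u>0$, the classical conformal change formula for scalar curvature gives
\[\widetilde\EH(g)=\frac{\int_M\bigl(\tfrac{4(n-1)}{n-2}|du|^2_{g_0}+\scal_{g_0}u^2\bigr)|\vol_{g_0}|}{\bigl(\int_M u^{2n/(n-2)}|\vol_{g_0}|\bigr)^{(n-2)/n}}.\]
With $c_0:=\min_M\scal_{g_0}>0$, the numerator is bounded below by $\min\bigl\{\tfrac{4(n-1)}{n-2},c_0\bigr\}\|u\|_{H^1}^2$, while the Sobolev embedding $H^1(M)\hookrightarrow L^{2n/(n-2)}(M)$ bounds the denominator above by a constant times $\|u\|_{H^1}^2$. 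Taking the infimum over positive $u$ yields a uniform positive lower bound, so $\Y(M,c)>0$ and hence $\Y(M)\geq\Y(M,c)>0$.

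The substantive step is the forward direction, which relies on the full strength of the Yamabe theorem; modulo that, the argument is essentially one line. The converse is spectral-analytic in flavor, equivalent to the positivity of the first eigenvalue of the conformal Laplacian $L=\tfrac{4(n-1)}{n-2}\Delta_{g_0}+\scal_{g_0}$, and the only subtlety there is the use of the critical Sobolev embedding to pass from an $L^2$-coercivity estimate for the numerator to a positive lower bound on the Rayleigh quotient whose denominator sits at the critical exponent $2n/(n-2)$.
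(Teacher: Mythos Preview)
Your argument is correct. Note, however, that the paper does not actually prove this theorem: it is stated as a known fact (``We note one additional fact that we will refer to later'') with a pointer to LeBrun's survey \cite{LeB99}, so there is no in-paper proof to compare against. Your write-up supplies exactly the standard justification one would expect --- the forward direction via the existence of a Yamabe minimizer with constant scalar curvature, and the converse via the Rayleigh-quotient form of $\widetilde\EH$ on a conformal class together with the critical Sobolev embedding --- and both directions are cleanly handled.
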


An Einstein metric $g$ which is a global maximum of the Yamabe functional in the sense that $\widetilde{\EH}(g)=\Y(M)$ is called \emph{supreme}. For example, the standard metrics on $S^n$ and $\C\Proj^2$ are supreme, but there are also manifolds admitting Einstein metrics but no supreme Einstein metric, like $S^2\times S^2$ \cite{LeB99}.

The Yamabe functional is linked to the stability of Einstein metrics through a theorem of Böhm--Wang--Ziller \cite[Thm.~C]{BWZ04}.
\begin{theorem}
\label{thm:localY}
An Einstein metric $g$ is $\EH$-stable if and only if its conformal class $[g]$ is a local maximum of $\Y$.
\end{theorem}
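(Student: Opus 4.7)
The plan is to reduce both conditions to a common local model near $g$. By diffeomorphism invariance of $\EH$ and $\Y$ we may work modulo the action of $\Diff$. Combining the slice theorem with the decomposition \eqref{diffdecomp} and Theorem~\ref{thm:secondvar}, a neighborhood of $g$ in $\Met_1/\Diff$ is modeled on $C^\infty_\perp(M)g \oplus \TT(M)$: the first summand parametrizes conformal deformations of $g$, while varying along the second summand changes the conformal class.

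First I would parametrize nearby conformal classes by small tt-tensors and identify their Yamabe representatives. By Theorem~\ref{thm:secondvar}(i), the second variation $\EH''_g$ is strictly positive on $C^\infty_\perp(M)g$; in particular $g$ is a nondegenerate local minimum of $\widetilde{\EH}$ along the conformal orbit $[g]\cap\Met_1$. An implicit function theorem argument in a suitable Sobolev or H\"older completion then produces, for every sufficiently small $h\in\TT(M)$, a unique function $u(h)\in C^\infty_\perp(M)$ near zero such that the unit-volume metric $g_h\in[g+h]$ with conformal factor $e^{u(h)}$ has constant scalar curvature and lies $C^\infty$-close to $g$. Local uniqueness of constant-scalar-curvature metrics in a conformal class near $g$, together with continuity of the Yamabe constant on conformal classes, forces $g_h$ to be the Yamabe minimizer of $[g+h]$, so that $\Y([g+h])=\widetilde{\EH}(g_h)$.

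Conversely, any constant scalar curvature metric in $\Met_1$ close to $g$ is, modulo diffeomorphisms, of the form $g_h$ for a unique small $h$: the $L^2$-projection onto $\TT(M)$ of a representative in the slice determines $h$, and the constant scalar curvature equation then pins down the conformal factor by the same implicit function argument. Hence $h\mapsto g_h$ identifies a neighborhood of $g$ in $(\Csc\cap\Met_1)/\Diff$ with a neighborhood of $0\in\TT(M)$, and under this identification $\widetilde{\EH}|_{\Csc}$ coincides with the pullback of $\Y$; the theorem follows. The main technical obstacle will be verifying that $g_h$ genuinely achieves the infimum of $\widetilde{\EH}$ within $[g+h]$ rather than merely being a critical point, which is handled by combining the strict positivity of $\EH''_g$ on $C^\infty_\perp(M)g$ with the observation that any Yamabe minimizer in a conformal class close to $[g]$ must be $C^\infty$-close to $g$, after which local uniqueness identifies it with $g_h$.
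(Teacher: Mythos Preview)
The paper is a survey and does not prove this theorem; it only cites \cite[Thm.~C]{BWZ04}. Your outline is essentially the argument of B\"ohm--Wang--Ziller and is structurally sound: parametrize nearby conformal classes (modulo $\Diff$) by small $h\in\TT(M)$, use the implicit function theorem---enabled by the strict positivity of $\EH''_g$ on $C^\infty_\perp(M)g$, equivalently the invertibility of the linearized conformal scalar-curvature operator away from the round sphere---to produce the unique nearby constant-scalar-curvature representative $g_h\in[g+h]$, and then identify $\widetilde{\EH}|_{\Csc}$ near $g$ with $\Y$ near $[g]$.

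The step you flag as the ``main technical obstacle'' is indeed the crux, and it is not merely an observation. Continuity of $\Y$ together with local uniqueness of constant-scalar-curvature metrics does \emph{not} by itself force $g_h$ to be the Yamabe minimizer: one must rule out minimizers in $[g+h]$ that lie far from $g$. What is actually needed is a compactness statement---Yamabe minimizers in conformal classes $c\to[g]$ subconverge to a Yamabe minimizer in $[g]$---combined with Obata's theorem, which says that for an Einstein metric not isometric to the round sphere the unit-volume constant-scalar-curvature representative of $[g]$ is unique and equal to $g$. Together these force every Yamabe minimizer in a nearby class to lie near $g$, hence to coincide with $g_h$ by your local uniqueness. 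You should either supply this compactness argument (it uses $\Y([g])<\Y(S^n,g_0)$ and a priori estimates for the subcritical/critical Yamabe equation) or cite it explicitly; in \cite{BWZ04} it is part of the proof. The round sphere must be handled separately, since the implicit-function step fails there, but the conclusion is trivial in that case.
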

In particular, supreme implies $\EH$-stable, and an Einstein metric which is $\EH$-unstable cannot be supreme. In light of Theorem~\ref{thm:localY}, even if one cannot always hope to find a supreme Einstein metric, stability seems desirable in the search for a ``best'' Einstein metric.

\subsection{Scalar curvature rigidity}

The definition of the Einstein--Hilbert action does not generalize well to non-compact manifolds, as the integral needs not converge. One may still define \emph{linear} stability by restricting to compactly supported variations, but a nonlinear version is not straightforward.

In \cite{DK24}, Dahl--Kröncke study the related concept of \emph{scalar curvature rigidity}. We keep the exposition short here, as scalar curvature rigidity of compact and noncompact manifolds and its relation to dynamical stability under the Ricci flow is the subject of Kröncke's survey \cite{Kr25}.
\begin{definition}[Scalar curvature rigidity]
Let $M$ be a (not necessarily compact) manifold. A Riemannian metric $g\in\Met$ is called \emph{scalar curvature rigid} if there exists a neighborhood $\mathscr{U}\subset\Met$ of $g$ such that there is no metric $\tilde g\in\mathscr{U}$ satisfying
\[(\tilde g-g)|_{M\setminus K}=0,\qquad\Vol(K,\tilde g)=\Vol(K,g)\]
for some compact subset $K\subseteq M$, and such that
\[\scal_{\tilde g}\geq\scal_g\text{ on $M$},\qquad\scal_{\tilde g}(p)>\scal_g(p)\quad\text{for some $p\in M$}.\]
\end{definition}
As it turns out, scalar curvature rigidity is a good replacement for $\EH$-stability in the noncompact case \cite{DK24}, as these notions coincide on compact manifolds.
\begin{theorem}
Let $(M,g)$ be a compact Einstein manifold. Then $g$ is $\EH$-stable if and only if it is scalar curvature rigid.
\end{theorem}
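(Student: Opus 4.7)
Normalize $\Vol(g) = 1$ and set $S = \scal_g$. I proceed by proving each direction separately.

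For the reverse direction: if $g$ is not $\EH$-stable, there exists $\tilde g \in \Csc$ arbitrarily close to $g$ with $\Vol(\tilde g) = 1$ and $\EH(\tilde g) > \EH(g)$. Since both metrics have constant scalar curvature and unit volume, the inequality rewrites as $\scal_{\tilde g} > \scal_g$ as constants, so $\scal_{\tilde g} \geq \scal_g$ pointwise with strict inequality everywhere, which violates scalar curvature rigidity (take $K = \emptyset$).

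For the forward direction, assume $g$ is $\EH$-stable and, for contradiction, let $\tilde g$ be arbitrarily close to $g$ with $\Vol(\tilde g) = 1$, $\scal_{\tilde g} \geq S$ pointwise, and $\scal_{\tilde g}(p) > S$ at some $p$. The key auxiliary object is Perelman's $\lambda$-functional
\[
\lambda(h) = \inf\Bigl\{\int_M (4|\nabla u|_h^2 + \scal_h\,u^2)|\vol_h| : u\in C^\infty(M),\ \|u\|_{L^2(h)}=1\Bigr\},
\]
equal to the lowest eigenvalue of the Schr\"odinger operator $-4\Delta_h + \scal_h$. For our Einstein $g$ the constant eigenfunction realizes the infimum, giving $\lambda(g) = S$. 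For $\tilde g$, the normalized positive first eigenfunction $u > 0$ yields
\[
\lambda(\tilde g) \geq \int_M \scal_{\tilde g}\,u^2\,|\vol_{\tilde g}| > S\int_M u^2\,|\vol_{\tilde g}| = S = \lambda(g),
\]
where the strict inequality uses $u > 0$ throughout $M$ together with $\scal_{\tilde g} > S$ on an open subset. Hence $g$ fails to be a local maximum of $\lambda$ restricted to $\Met_1$.

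The contradiction is then closed by invoking the equivalence -- due to Kröncke and refined in \cite{DK24} -- between $\EH$-stability of a compact Einstein metric and its being a local maximum of $\lambda$ on $\Met_1$. The infinitesimal version of this equivalence is transparent: on $\TT(M)$ both second variations reduce to the same Lichnerowicz-type expression $\langle(\LL - 2E)h, h\rangle_{L^2}$, while on the conformal complement $C^\infty_\perp(M)\,g$ the Hessian of $\lambda$ is non-positive by Lichnerowicz's bound $\mu_1(\Delta_g)\geq S/(n-1)$ (strict off the round sphere). The nonlinear upgrade of this equivalence is the main technical obstacle, and is carried out via a {\L}ojasiewicz--Simon inequality to treat the null directions $\varepsilon(g)$ of the Hessian; the standard-sphere case is handled separately via its classical scalar curvature rigidity.
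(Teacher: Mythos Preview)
The paper itself does not prove this theorem; it is a survey that states the result and attributes it to Dahl--Kröncke \cite{DK24}. So there is no in-paper argument to compare against, and your sketch should be judged on its own merits and against what one knows of \cite{DK24}.

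Your reverse direction is correct and is indeed the easy half: failure of $\EH$-stability produces a nearby $\tilde g\in\Csc\cap\Met_1$ with strictly larger constant scalar curvature, which directly violates scalar curvature rigidity. (The choice $K=\emptyset$ is a slightly glib reading of the definition; on a compact manifold the intended constraint is volume preservation, which you have anyway since $\tilde g\in\Met_1$.)

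Your forward direction is also along the lines of \cite{DK24}. The computation $\lambda(\tilde g)>\lambda(g)$ from $\scal_{\tilde g}\geq S$ with strict inequality somewhere is clean and correct, using positivity of the ground-state eigenfunction. The substantive step you then invoke---that for a compact Einstein metric, $\EH$-stability is equivalent to being a local maximizer of $\lambda$ on $\Met_1$---is precisely the nontrivial core of the Dahl--Kröncke argument. You correctly flag that the infinitesimal version is straightforward (matching Hessians on $\TT(M)$, Lichnerowicz--Obata on the conformal piece) while the nonlinear passage through the null space $\varepsilon(g)$ requires a \L ojasiewicz--Simon inequality. So your sketch is accurate as an outline, but one should be clear that it does not reduce the theorem to something easier: the cited equivalence \emph{is} the theorem, up to the short $\lambda$-comparison you supply. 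In other words, your forward argument is essentially ``apply the main technical result of \cite{DK24},'' which is fine for a survey-level sketch but is not an independent proof.
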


\subsection{Dynamical stability}
\label{sec:dynamic}

Any Einstein metric is in particular a \emph{soliton}, i.e.~a self-similar solution to the \emph{Ricci flow}
\[\frac{\partial}{\partial t}g_t=-2\Ric_{g_t},\]
an important device in geometric analysis. On a compact manifold, one may define the \emph{volume-normalized Ricci flow}
\begin{equation}
\frac{\partial}{\partial t}g_t=-2\Ric_{g_t}+\frac{2\EH(g_t)}{n\Vol(g_t)}g_t,\label{ricciflow}
\end{equation}
for which Einstein metrics are in fact fixed points. One may want to analyze the dynamical behaviour of this flow near the fixed points.

\begin{definition}[Dynamical stability]
An Einstein metric $g$ is called \emph{dynamically stable} if the volume-normalized Ricci flow \eqref{ricciflow} starting sufficiently near $g$ stays near $g$ modulo diffeomorphisms. It is called \emph{dynamically unstable} if there exists an ancient solution $(g_t)_{t\in\left]-\infty,T\right[}$ of \eqref{ricciflow}
such that $g_t$ converges to $g$ modulo diffeomorphisms as $t\to-\infty$.
\end{definition}

The dynamical stability of Einstein metrics was investigated in \cite{Kr20}. 
At least in the
compact case, any Einstein metric is either dynamically stable or unstable. The relation between dynamical stability and scalar curvature rigidity, in particular on noncompact manifolds, is the subject of the survey \cite{Kr25}. On compact manifolds, the situation is as follows.

\begin{theorem}
Let $g$ be an Einstein metric of nonpositive scalar curvature. Then $g$ is dynamically stable if and only if it is $\EH$-stable.
\end{theorem}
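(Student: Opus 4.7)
The plan is to introduce an analytic Lyapunov functional for the volume-normalized Ricci flow~\eqref{ricciflow} whose second variation at $g$ agrees, up to a positive constant, with $-\EH''_g$ on the tt-slice. Concretely, in the case $E=0$ take $\mathcal{F}=\bar\lambda$ to be Perelman's (scale-invariant) $\lambda$-functional; in the case $E<0$ take $\mathcal{F}$ to be its natural analogue adapted to expanding solitons, as used in~\cite{Kr20}. In either case $\mathcal{F}$ is real-analytic on $\Met$, $\Diff$-invariant, and monotone non-decreasing along~\eqref{ricciflow}, with Einstein metrics of Einstein constant $E$ as its critical points. The hypothesis $E\leq0$ enters precisely here: for $E>0$ the appropriate functional is Perelman's shrinker entropy $\nu$, and the dynamical picture becomes more subtle.

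The first technical step is to identify the second variations. Using the implicit function theorem to differentiate the auxiliary potential defining $\mathcal{F}$, a standard computation shows that for $h\in\TT(M)$
\[\mathcal{F}''_g(h,h)=-c\,\EH''_g(h,h)\]
with a positive constant $c$, while $\mathcal{F}$ is constant to second order along $\Lie_{\X(M)}g$ and along $C^\infty_\perp(M)g$ after volume normalization. In view of~\eqref{EHdecomp}, this identifies $\EH$-stability of $g$ with $g$ being a local maximum of $\mathcal{F}$ on a neighborhood in $\Met$ transverse to the $\Diff$-orbit.

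For $\EH$-stable $\Rightarrow$ dynamically stable, I would invoke the Lojasiewicz--Simon gradient inequality for $\mathcal{F}$, available because $\mathcal{F}$ is real-analytic. Combined with the monotonicity of $\mathcal{F}$ under~\eqref{ricciflow} and the local-maximum property at $g$, this yields that any solution $(g_t)$ of~\eqref{ricciflow} starting sufficiently near $g$ remains in a small neighborhood of $g$ for all $t\geq 0$, and in fact converges modulo diffeomorphism to a nearby Einstein metric. The principal difficulty is the potential drift along infinitesimal Einstein deformations $\varepsilon(g)$: these are null directions of $\EH''_g$ invisible to linearized arguments, and the Lojasiewicz--Simon estimate is precisely what controls them.

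For the converse, given a destabilizing direction $h\in\TT(M)$ with $\EH''_g(h,h)>0$, the identification above shows that $h$ is a strictly expanding direction for the gradient flow of $-\mathcal{F}$, that is, for~\eqref{ricciflow} modulo gauge. The unstable manifold theorem for this flow, regarded as an analytic semiflow, then produces an ancient solution $(g_t)_{t\in\left]-\infty,T\right[}$ of~\eqref{ricciflow} satisfying $g_t\to g$ as $t\to-\infty$ with leading tangent direction $h$; this witnesses dynamical instability.
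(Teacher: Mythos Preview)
Your strategy matches the paper's: use Perelman's $\lambda$-functional (for $E=0$) or the expander entropy of \cite{FIN05} (for $E<0$) as a monotone Lyapunov functional, and link dynamical stability to the local-maximum property of that functional via a {\L}ojasiewicz--Simon argument. Note, however, a sign slip: on tt-tensors one has $\mathcal{F}''_g=c\,\EH''_g$ with $c>0$, not $-c$, since both equal a negative multiple of $\Ltwoinprod{(\LL-2E)h}{h}$. With your sign, $\EH$-stability would make $g$ a local \emph{minimum} of $\mathcal{F}$, contradicting your own next paragraph.

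The genuine gap is the step ``$\EH$-stable $\Rightarrow$ $g$ is a local maximum of $\mathcal{F}$''. $\EH$-stability is a \emph{nonlinear} condition (Definition~\ref{def:stable}(ii)), and matching Hessians on $\TT(M)$ only yields $\mathcal{F}''_g\leq0$ there---necessary but not sufficient for a local maximum when $\varepsilon(g)\neq0$. The {\L}ojasiewicz--Simon inequality controls the dynamics \emph{given} the local-maximum property; it does not supply it. The argument in \cite{Kr20,DK24} instead passes through the Yamabe functional: one proves a nonlinear inequality comparing $\mathcal{F}(\tilde g)$ to $\Y([\tilde g])$ near $g$, with equality at Einstein metrics, and then invokes Theorem~\ref{thm:localY} (B\"ohm--Wang--Ziller). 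Your converse has the mirror-image gap: the unstable-manifold construction only applies when there exists $h\in\TT(M)$ with $\EH''_g(h,h)>0$, i.e.\ in the \emph{linearly} unstable case, and says nothing when $g$ is linearly semistable yet $\EH$-unstable.
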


The proof of this uses suitable entropy functionals on $\Met_1$ which are nondecreasing along the Ricci flow (the \emph{expander entropy} \cite{FIN05} in the negative case, and Perelman's \emph{$\lambda$-functional} \cite{Per02} in the Ricci-flat case). For any sign of the scalar curvature, dynamical stability of $g$ is equivalent to $g$ being a local maximizer of the appropriate entropy functional, which in turn is closely related to the Einstein--Hilbert and Yamabe functional. For Einstein metrics of positive scalar curvature, an analogous functional is the \emph{shrinker entropy} $\nu$ introduced by Perelman \cite{Per02}, but the criterion to be a local maximizer of $\nu: \Met\to\R$ is more complicated. One may study a linearized version of stability by looking at its second variation \cite{CHI04}.

\begin{theorem}
\label{thm:nu2nd}
Let $g$ be an Einstein metric with Einstein constant $E>0$, not isometric to the standard sphere. The decomposition \eqref{EHdecomp} is still orthogonal for $\nu_g''$. Moreover:
\begin{enumerate}[\upshape(i)]
\item For unit volume constant scalar curvature metrics $\tilde g\in\Csc$, we have
\[\nu(\tilde g)=\frac{n}{2}\log\EH(\tilde g)+\frac{n}{2}(1-\log(2\pi n)).\]
\item For any $f\in C^\infty_\perp(M)$, the second variation $\nu_g''(fg,fg)$ has the same sign as $-\Ltwoinprod{\Delta f-2Ef}{f}$, where $\Delta$ is the Laplace--Beltrami operator.
\end{enumerate}
\end{theorem}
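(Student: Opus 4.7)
The plan is to work directly with Perelman's definition of the shrinker entropy as an infimum of the $\mathcal{W}$-functional
\[\mathcal{W}(g,f,\tau)=\int_M\bigl[\tau(\scal_g+|\nabla f|^2)+f-n\bigr](4\pi\tau)^{-n/2}e^{-f}|\vol_g|\]
over $(f,\tau)\in C^\infty(M)\times\R_{>0}$ subject to the normalization $\int_M(4\pi\tau)^{-n/2}e^{-f}|\vol_g|=1$, so that $\nu$ is the lower envelope of the globally defined functional $\mathcal{W}$. This presentation reduces every statement to a computation involving $\mathcal{W}$ and its partial derivatives at the optimal $(f^*,\tau^*)$.

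For part (i), restrict to a unit-volume CSC metric $\tilde g$. The substitution $w=e^{-f/2}$ turns the inner minimization in $f$ at fixed $\tau$ into a Rayleigh problem for the Schrödinger operator $-4\tau\Delta+\tau\scal_{\tilde g}$ acting on $L^2(M,|\vol_{\tilde g}|)$. Since $\scal_{\tilde g}$ is constant, a constant $w$ is a ground state, and the inner infimum is attained at the constant $f_0=-\tfrac{n}{2}\log(4\pi\tau)$ fixed by the normalization. The remaining minimization in $\tau$ of the one-variable function $\tau\scal_{\tilde g}-\tfrac{n}{2}\log(4\pi\tau)-n$ is elementary, with optimizer $\tau_0=n/(2\scal_{\tilde g})$, and yields the stated closed form after substituting $\scal_{\tilde g}=\EH(\tilde g)$.

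For the orthogonality of \eqref{EHdecomp} under $\nu_g''$, diffeomorphism invariance of $\nu$ immediately places $\Lie_{\X(M)}g\subseteq\ker\nu_g''$, decoupling this summand. For orthogonality between $C^\infty_\perp(M)g$ and $\TT(M)$, apply the envelope theorem to $\nu(g_t)=\mathcal{W}(g_t,f^*(g_t),\tau^*(g_t))$: since $\partial_f\mathcal{W}=\partial_\tau\mathcal{W}=0$ at the optimum, the bilinear form $\nu_g''(h_1,h_2)$ reduces to $\partial_{gg}^2\mathcal{W}(h_1,h_2)$ plus cross terms $\partial_{gf}^2\mathcal{W}(h_1,\dot f^*(h_2))$ and $\partial_{g\tau}^2\mathcal{W}(h_1,\dot\tau^*(h_2))$. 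For $h_2$ tt, the Einstein condition together with $\tr h_2=0$ and $\delta h_2=0$ yields $\scal'_g(h_2)=0$ and $\mathrm{Vol}'(h_2)=0$, so $\dot f^*(h_2)=\dot\tau^*(h_2)=0$ and the cross terms vanish; the remaining pure-$g$ contribution $\partial_{gg}^2\mathcal{W}(fg,h_2)$ is a constant multiple of $\EH_g''(fg,h_2)$ plus corrections proportional to $\int\tr h_2\,|\vol_g|=0$, so by Theorem~\ref{thm:secondvar} this also vanishes.

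For part (ii), compute $\nu_g''(fg,fg)$ via the envelope formula
\[\nu_g''(fg,fg)=\partial_{gg}^2\mathcal{W}(fg,fg)+2\partial_{gf}^2\mathcal{W}(fg,\dot f^*)+2\partial_{g\tau}^2\mathcal{W}(fg,\dot\tau^*)\]
at the baseline $(f_0,\tau_0)$ from (i), using the first variation formula $\scal'_g(fg)=(n-1)\Delta f-nEf$ together with its second-order analogue, and solving the linearized optimality equations for $\dot f^*,\dot\tau^*$ in closed form (a Poisson-type system involving $\Delta-2E$). After extensive cancellations enforced by $\int f\,|\vol_g|=0$ and the identity $\tau_0\scal_g=n/2$, the expression collapses to a positive multiple of $-\Ltwoinprod{\Delta f-2Ef}{f}$, yielding the claimed sign. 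The main obstacle is precisely this bookkeeping: second derivatives of both $(4\pi\tau)^{-n/2}e^{-f}|\vol_g|$ and of $\scal_{g_t}$ produce many terms, and it is a priori unclear how they combine to produce exactly the operator $\Delta-2E$ rather than, say, the first-order generator $(n-1)\Delta-nE$ of scalar-curvature variations. The specific coefficient $2E$ emerges only after incorporating the linearized equations for $\dot f^*,\dot\tau^*$, which effectively project out the conformal ``scale'' mode and leave the Obata operator $\Delta-2E$---consistent with destabilizing conformal directions of $\nu_g''$ corresponding to non-constant eigenfunctions of $\Delta$ with eigenvalue strictly below $2E$.
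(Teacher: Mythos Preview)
The paper is a survey and does not prove this theorem; it simply records the result with a reference to Cao--Hamilton--Ilmanen \cite{CHI04}. So there is no proof in the paper to compare against, and your proposal has to be assessed on its own terms.

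Your argument for part (i) has a genuine gap. The substitution $w=e^{-f/2}$ does \emph{not} reduce the inner minimization to a Rayleigh quotient for the Schr\"odinger operator $-4\tau\Delta+\tau\scal_{\tilde g}$: the term $(f-n)e^{-f}=(-2\log w-n)w^2$ contributes an entropy term $-\int w^2\log w^2\,d\mu$ to the functional, so the problem is of log-Sobolev type rather than a linear eigenvalue problem. (You may be thinking of Perelman's $\lambda$-functional, where $f$ does not appear in the integrand and the Rayleigh reduction is valid.) Worse, the two pieces pull in opposite directions: the gradient term $4\tau\int|\nabla w|^2$ is minimized at constant $w$, but by Jensen the entropy term is \emph{maximized} there, so one cannot conclude minimality of constant $f$ from the decomposition alone. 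The correct argument goes through the Euler--Lagrange equation $\tau(2\Delta f-|\nabla f|^2+\scal)+f-n=\mu$ together with a uniqueness/second-variation analysis; this is also where the hypothesis ``not isometric to the standard sphere'' enters, which your argument never uses. Once constant $f$ is justified, your optimization over $\tau$ is fine.

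For part (ii) your outline is a plan rather than a proof. Two concrete issues: first, the envelope formula you write has the wrong coefficients---after substituting the linearized optimality conditions for $(\dot f^*,\dot\tau^*)$, the cross terms carry coefficient $1$, not $2$, because the pure $(f,\tau)$-Hessian block contributes $-\mathcal W_{gf}\dot f^*-\mathcal W_{g\tau}\dot\tau^*$. Second, and more importantly, you yourself flag the ``extensive cancellations'' producing $\Delta-2E$ as the main obstacle and do not carry them out. For a conformal variation $fg$ one has $\dot f^*\neq 0$, and it is precisely the contribution of $\dot f^*$ (obtained by inverting a Schr\"odinger-type operator on functions) that converts the naive linearization $(n-1)\Delta-\scal$ of the scalar curvature into the operator $\Delta-2E$. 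Until that step is actually performed, the claimed sign identity is unproven.
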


\begin{corollary}
Let $g$ be an Einstein metric with Einstein constant $E>0$, and denote with $\lambda_1(\Delta)$ the first positive eigenvalue of the Laplace--Beltrami operator on $C^\infty(M)$.
\begin{enumerate}[\upshape(i)]
\item If $g$ is $\EH$-stable and $\lambda_1(\Delta)>2E$, then $g$ is dynamically stable.
\item If $g$ is $\EH$-unstable or $\lambda_1(\Delta)<2E$, then $g$ is dynamically unstable.
\end{enumerate}
\end{corollary}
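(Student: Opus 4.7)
My approach is to use the preceding theorem to decompose the sign analysis of $\nu_g''$ over the three orthogonal summands $C^\infty_\perp(M)g$, $\Lie_{\X(M)}g$, and $\TT(M)$ of $T_g\Met_1$, and then translate the outcome into dynamical statements via the variational characterization of dynamical stability as local maximality of the Perelman entropy $\nu$.

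On $\Lie_{\X(M)}g$ the diffeomorphism invariance of $\nu$ forces $\nu_g''$ to vanish. On conformal variations $fg$ with $f \in C^\infty_\perp(M)$, I would expand $f$ in an $L^2$-orthonormal basis of $\Delta$-eigenfunctions with eigenvalues $\lambda_k \geq \lambda_1(\Delta)$; part (ii) of the preceding theorem then gives that the sign of $\nu_g''(fg,fg)$ is that of
\[
-\Ltwoinprod{\Delta f - 2Ef}{f} = -\sum_k (\lambda_k - 2E)|\hat f_k|^2,
\]
which is strictly negative for any $f\neq 0$ when $\lambda_1(\Delta) > 2E$, and strictly positive when $f$ is a first eigenfunction and $\lambda_1(\Delta) < 2E$. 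On $\TT(M)$, the tt-tensors lie in $T_g\Csc$ and $g$ is a critical point of $\EH|_{\Met_1}$; applying the chain rule for $\log$ to part (i) yields $\nu_g''(h,h) = \frac{n}{2\,\EH(g)}\EH_g''(h,h)$ for $h\in\TT(M)$, so the signatures of $\nu_g''|_\TT$ and $\EH_g''|_\TT$ coincide.

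For (i), $\EH$-stability gives $\EH_g''\leq 0$ on $\TT(M)$, hence $\nu_g''|_\TT\leq 0$; combined with the strict negativity on $C^\infty_\perp(M)g$ from $\lambda_1(\Delta) > 2E$, one obtains $\nu_g''\leq 0$ on all of $T_g\Met_1$, with strict inequality transverse to the gauge directions. To upgrade this Hessian bound to $g$ being an actual local maximum of $\nu$ I would invoke the Łojasiewicz--Simon gradient inequality for $\nu$ near the critical point $g$, the analytical backbone of \cite{Kr20}, whence dynamical stability follows by the variational characterization. For (ii), either hypothesis yields a direction in $T_g\Met_1$ along which $\nu$ strictly increases near $g$: if $\EH$-unstable, some nearby $\tilde g\in\Csc$ has $\EH(\tilde g)>\EH(g)$ and therefore $\nu(\tilde g)>\nu(g)$ by the identity on $\Csc$; if $\lambda_1(\Delta)<2E$, a first-eigenfunction conformal direction $fg$ gives $\nu_g''(fg,fg)>0$. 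In either case $g$ is not a local maximum of $\nu$, and reverse-flowing along the direction of increase using the monotonicity of $\nu$ under Ricci flow produces the ancient solution required for dynamical instability.

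The principal obstacle is the nonlinear step in (i): one cannot conclude from a non-positive Hessian alone that $\nu$ is locally maximized at $g$, and the Łojasiewicz--Simon framework is essential. Everything else is bookkeeping given the preceding theorem and Kröncke's dynamical characterization.
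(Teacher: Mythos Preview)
Your argument is essentially the one the paper has in mind; the corollary is stated without proof as a direct consequence of the preceding theorem together with the results of \cite{Kr20}, and your sign bookkeeping on the three summands, as well as your treatment of part (ii), are correct.

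One refinement in part (i) is worth flagging. You downgrade the hypothesis of $\EH$-stability (a genuine local-maximum statement on $\Csc$) to its second-order consequence $\EH_g''|_{\TT}\leq 0$, and then invoke the \L ojasiewicz--Simon inequality to recover a local maximum for $\nu$. But \L ojasiewicz--Simon governs gradient-flow convergence near a critical point; it does not by itself promote a negative-semidefinite Hessian to a local maximum when null directions (IED) are present. The cleaner route keeps the full nonlinear hypothesis: since $\EH(g)=nE>0$ and $\log$ is strictly increasing, part (i) of the preceding theorem transfers the local maximum of $\EH|_{\Csc}$ at $g$ directly to a local maximum of $\nu|_{\Csc}$ --- a nonlinear statement, not merely a Hessian bound. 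Combined with the strict negativity of $\nu_g''$ on the transversal conformal directions $C^\infty_\perp(M)g$ (your $\lambda_1>2E$ computation), this yields that $g$ locally maximizes $\nu$ on $\Met_1$, and Kr\"oncke's characterization then gives dynamical stability. In \cite{Kr20} the \L ojasiewicz--Simon machinery enters in establishing the equivalence between local maximality of $\nu$ and dynamical stability, not in the maximality step itself.
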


In the presence of eigenfunctions to the eigenvalue $2E$, which together with $\varepsilon(g)$ constitute the \emph{infinitesimal solitonic deformations} of $g$, the situation is not clear. Kröncke \cite{Kr20} found a criterion for instability:

\begin{theorem}
\label{thm:cubicobst}
Let $g$ be an Einstein metric with Einstein constant $E>0$. If there exists $f\in C^\infty(M)$ such that $\Delta f=2Ef$ and $\int_Mf^3|\vol_g|\neq0$, then $g$ is dynamically unstable.
\end{theorem}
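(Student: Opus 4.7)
The strategy is to show that $g$ fails to be a local maximizer of the shrinker entropy $\nu$ on $\Met_1$, and then exploit the monotonicity of $\nu$ along the volume-normalized Ricci flow to produce an ancient solution emerging from $g$. By the preceding theorem (ii), the hypothesis $\Delta f=2Ef$ says that $h:=fg$ is a null direction for $\nu_g''$, so all of the interesting information about the behaviour of $\nu$ along $h$ sits in the cubic (and higher) terms of its Taylor expansion at $g$.

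The central computation is the third variation of $\nu$ at $g$ in the direction $h=fg$. Recall that $\nu(\tilde g)$ is defined as the infimum of Perelman's $\mathcal{W}$-functional over auxiliary functions, and at an Einstein metric with $E>0$ the minimizer is constant and $\tau=1/(2E)$. One expands $\nu(g+sh)$ to third order in $s$: the constant term is $\nu(g)$, the linear term vanishes because $g$ is critical, and the quadratic term vanishes on $h$ by the eigenvalue assumption. A direct computation — implicitly differentiating the minimizer, using the constancy of the minimizer at $g$ and the normalization $\int_M f\,|\vol_g|=0$ (which one may assume after subtracting a constant, since the constant direction is the volume constraint) — yields
\begin{equation*}
\nu_g'''(h,h,h)=C\int_M f^3\,|\vol_g|
\end{equation*}
for a nonzero universal constant $C$. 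This is the main technical step and also the principal obstacle: all terms involving $\Delta f-2Ef$ must be collected and shown to cancel, so that only the purely algebraic cubic pairing survives.

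Granted this identity, $\nu_g'''(h,h,h)\neq 0$. Replacing $f$ by $-f$ if necessary, we may assume $\nu_g'''(h,h,h)>0$. Then there exists a smooth curve $g_s\in\Met_1$ with $g_0=g$ and $\dot g_0=h$ (with higher-order corrections chosen to keep $g_s$ in $\Met_1$ and in a transverse gauge, using the decomposition \eqref{diffdecomp}) such that $\nu(g_s)>\nu(g)$ for all sufficiently small $s>0$. In particular, $g$ is not a local maximum of $\nu$ on $\Met_1$.

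To build the ancient solution, pick $s_k\searrow 0$ and run the volume-normalized Ricci flow $g_{s_k}(t)$, $t\geq 0$, starting from $g_{s_k}$. Since $\nu$ is nondecreasing along the flow and $\nu(g_{s_k}(t))\geq\nu(g_{s_k})>\nu(g)$, the flow cannot return to $g$. Choose a small $\delta>0$ and let $T_k>0$ be the first time that $g_{s_k}(T_k)$ lies at distance $\delta$ from $g$; such $T_k$ is finite and $T_k\to\infty$ as $s_k\to 0$ by continuous dependence. The time-shifted flows $\tilde g_k(t):=g_{s_k}(t+T_k)$ are defined on $[-T_k,0]$ with $\tilde g_k(0)$ at distance $\delta$ from $g$ and $\nu(\tilde g_k(t))\in(\nu(g),\nu(g_{s_k}(T_k)))$. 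Standard compactness for the Ricci flow in a neighbourhood of a fixed point produces a subsequential smooth limit $\tilde g(t)$ defined on all of $(-\infty,0]$. Monotonicity forces $\nu(\tilde g(t))\nearrow\nu(g)$ as $t\to-\infty$, and by Ebin's slice theorem $g$ is (up to diffeomorphism) isolated among critical points of $\nu$ at this level in a small neighbourhood; hence $\tilde g(t)\to g$ as $t\to-\infty$, witnessing dynamical instability.
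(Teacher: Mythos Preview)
The paper is a survey and does not prove this theorem; it merely states it and cites Kr\"oncke \cite{Kr20} (and the closely related \cite{Kr15c}). Your outline is in fact the same as Kr\"oncke's: show that $g$ is not a local maximizer of the shrinker entropy $\nu$ by exhibiting a null direction $h=fg$ for $\nu_g''$ along which the third variation is nonzero, and then upgrade ``not a local maximizer of $\nu$'' to ``dynamically unstable'' via monotonicity and a compactness argument for the Ricci flow.

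That said, what you have written is an outline, not a proof. The substantive content of the theorem is precisely the identity
\[
\nu_g'''(fg,fg,fg)=C\int_M f^3\,|\vol_g|,\qquad C\neq 0,
\]
and you explicitly flag this as ``the main technical step and also the principal obstacle'' without carrying it out. Computing this third variation is nontrivial: one has to differentiate the implicitly defined minimizer $(w_s,\tau_s)$ of the $\mathcal{W}$-functional along the curve $g+sfg$ to second order, track the coupling between the conformal direction and the auxiliary data, and verify that all terms containing $\Delta f-2Ef$ cancel. In Kr\"oncke's papers this occupies several pages and uses the precise form of the second variation operator; simply asserting the result leaves the core of the argument unproved.

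There is also a smaller gap in your endgame. You claim $T_k<\infty$ and $T_k\to\infty$, and that the limiting ancient flow converges to $g$ as $t\to-\infty$. The second point needs more than ``Ebin's slice theorem shows $g$ is isolated among critical points at this level'': one must argue that an ancient flow staying in a small neighbourhood of $g$ with $\nu$ bounded between $\nu(g)$ and $\nu(g)+\epsilon$ actually subconverges to a shrinking soliton as $t\to-\infty$, and that this soliton must be $g$ (e.g.\ via a {\L}ojasiewicz--Simon inequality for $\nu$ near $g$, as in \cite{Kr15c,Kr20}). Likewise, $T_k<\infty$ requires knowing the flow cannot linger indefinitely in the $\delta$-neighbourhood without converging, which again uses {\L}ojasiewicz--Simon. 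These are standard in the literature but not consequences of the tools you invoke.
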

We will return to the higher order behavior of $\EH$ around a critical point in Section~\ref{sec:moduli}.

\subsection{The Lichnerowicz Laplacian}

The \emph{linear} stability of an Einstein metric $g$ turns out to be governed by the spectrum of a very natural Laplace-type operator, which we shall now introduce.

For now, let $(M,g)$ be any Riemannian manifold. Let $R\in\Gamma(\Lambda^2T^\ast M\otimes\so(TM))$ be the Riemannian curvature tensor of $g$, in our convention given by
\[R_{X,Y}=[\nabla_X,\nabla_Y]-\nabla_{[X,Y]}.\]
Due to pair-symmetry, this may also be seen as a section of the bundle $\Sym^2\so(TM)$. For any Euclidean vector space $T$, let $q$ be the quantization map
\[q: \Sym^{\leq\bullet}\so(T)\to\Univ^{\leq\bullet}\so(T),\qquad X^{\odot k}\mapsto X^k\]
from the symmetric algebra to the universal enveloping algebra over $\so(T)$. 
This is the isomorphism of filtered vector spaces appearing in the Poincar\'e--Birkhoff--Witt theorem.
It gives rise to a bundle map when we replace $T$ with $TM$. Let $VM$ be any (tensor or spinor) bundle associated to the orthonormal frame bundle (or a spin structure); in particular, it comes with a metric connection, also denoted by $\nabla$, and an action $\rho$ of $\so(TM)$. This naturally extends to an action of $\Univ\so(TM)$, which we also denote by $\rho$. For the purpose of this exposition, let us call such a bundle a \emph{geometric vector bundle} (see \cite{SW19} for a more general definition which takes holonomy reductions into account).

\begin{definition}[Standard curvature endomorphism]
For a geometric vector bundle $VM$, we call the bundle map
\[\K(R,VM)=2\rho(q(R))\in\Gamma(\End VM)\]
the \emph{standard} or \emph{Weitzenböck curvature endomorphism}.
\end{definition}

This is always a symmetric endomorphism with respect to the bundle metric on $VM$ fixed by the $\so(TM)$-action. The factor $2$ here is chosen so that $\K(R,TM)=\Ric_g$. If the bundle is clear from context, we may also just write $\K(R)$. If $(e_i)$ is a local orthonormal basis,
\[\K(R)=\sum_{i<j}\rho(e_i\wedge e_j)\rho(R_{e_i,e_j}).\]
To avoid confusion, we note that the operator $\K(R)$ appears under several notations in the literature, such as $q(R)$ or $\Ric$.

\begin{definition}[Lichnerowicz Laplacian]
On sections of a geometric vector bundle $VM$, the \emph{Lichnerowicz Laplacian} $\LL$ is defined as
\[\LL=\nabla^\ast\nabla+\K(R).\]
\end{definition}
This particular operator shows up in a number of geometric applications; most importantly, it coincides with the Hodge--de Rham Laplacian $\Delta$ on differential forms. It is an elliptic differential operator, and hence has discrete spectrum with finite multiplicities, accumulating only at $+\infty$. Moreover it enjoys many favorable properties: it commutes with parallel bundle maps, hence preserves parallel subbundles. If $g$ is Einstein, $\LL$ also commutes with $\delta: \Sy^2(M)\to\Omega^1(M)$ and its adjoint. In particular, it preserves the space $\TT(M)$ of tt-tensors.

\begin{theorem}
\label{thm:secondvartt}
The second variation of the Einstein--Hilbert functional at an Einstein metric $g$ with Einstein constant $E$ is given by
\[\EH_g''(h,h)=-\frac12\Ltwoinprod{\LL h-2Eh}{h}\]
for all $h\in\TT(M)$.
\end{theorem}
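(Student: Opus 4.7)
The plan is to compute the Hessian of the volume-normalized Einstein--Hilbert functional $\widetilde{\EH} = \Vol^{(2-n)/n}\EH$ at $g$. By Theorem~\ref{thm:hilbertcrit} the Einstein metric $g$ is an \emph{unconstrained} critical point of $\widetilde{\EH}$, so the bilinear form $\EH''_g$ on $T_g\Met_1$ appearing in the statement agrees with $\widetilde{\EH}''_g$ restricted to $T_g\Met_1\supset\TT(M)$. Working along the straight path $g_s=g+sh$, this gives $\EH''_g(h,h)=\frac{d^2}{ds^2}\big|_0\widetilde{\EH}(g_s)$, with no acceleration correction to handle.

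First I would compute the bare ambient Hessian $\EH''_g(h,h)\big|_{\Met}=\frac{d}{ds}\big|_0\EH'_{g_s}(h)$ by differentiating the first-variation identity $\EH'_g(h)=\Ltwoinprod{T(g)}{h}$ with $T(g):=-\Ric_g+\tfrac{\scal_g}{2}g$. The product rule produces three terms, from $T(g_s)$, the pointwise inner product, and the volume density. At Einstein one has $T(g)=\tfrac{(n-2)E}{2}g$, so $\langle T(g),h\rangle_g=0$ for $h\in\TT(M)$, killing the volume-density contribution. The inner-product variation contributes $-(n-2)E\,|h|_g^2$ via the elementary identity $\tfrac{d}{ds}\big|_0\langle A,B\rangle_{g_s}=-\tr\!\big(\hat h(\hat A\hat B+\hat B\hat A)\big)$, where $\hat{\cdot}=g^{-1}\cdot$ as an endomorphism. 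For the $T'_g(h)$ term, the Lichnerowicz identity $\Ric'_g(h)=\tfrac{1}{2}\LL h-\delta^*\delta h-\tfrac{1}{2}\nabla d(\tr_g h)$ and its trace $\scal'_g(h)=\Delta(\tr_g h)+\delta(\delta h)-\langle\Ric_g,h\rangle$ collapse under the tt-conditions and $\Ric_g=Eg$ to $\Ric'_g(h)=\tfrac{1}{2}\LL h$ and $\scal'_g(h)=0$, so $T'_g(h)=-\tfrac{1}{2}\LL h+\tfrac{nE}{2}h$. After integration this yields
\[\EH''_g(h,h)\big|_{\Met}=\int_M\left[-\tfrac{1}{2}\langle\LL h,h\rangle_g+\tfrac{(4-n)E}{2}|h|_g^2\right]|\vol_g|.\]

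To pass to $\widetilde{\EH}''_g$, I expand $\widetilde{\EH}(g_s)=V(g_s)^{(2-n)/n}\EH(g_s)$ using that $V'(h)=\tfrac{1}{2}\int\tr_g h\,|\vol_g|=0$ and $V''(h,h)=-\tfrac{1}{2}\|h\|_{L^2}^2$ for tt $h$ (read off from the standard expansion $|\vol_{g_s}|=[1+\tfrac{s}{2}\tr_g h+\tfrac{s^2}{8}(\tr_g h)^2-\tfrac{s^2}{4}|h|_g^2+O(s^3)]|\vol_g|$). Hence $F'(0)=G'(0)=0$ where $F=V^{(2-n)/n}$, $G=\EH$, and only the cross term $F''(0)\,\EH(g)=\tfrac{(n-2)E}{2}\|h\|_{L^2}^2$ survives as a correction. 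Adding this to the bare Hessian turns the coefficient $\tfrac{4-n}{2}E$ into $E$ (since $\tfrac{4-n}{2}+\tfrac{n-2}{2}=1$), producing the claimed identity $-\tfrac{1}{2}\Ltwoinprod{\LL h-2Eh}{h}$.

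The main obstacle is the book-keeping of the $n$- and $E$-dependent contributions: the clean $-2Eh$ is the result of a non-trivial cancellation between the $\tfrac{nE}{2}h$ appearing in $T'_g(h)$, the $-(n-2)E\,|h|_g^2$ from the inner-product variation, and the $\tfrac{(n-2)E}{2}\|h\|_{L^2}^2$ from the volume normalization. The ingredients themselves (the Ricci and scalar linearizations, the tt-simplification, the expansion of the volume density) are standard, and sign conventions for $\delta$, $\delta^*$, and the Laplacian are the only real pitfall.
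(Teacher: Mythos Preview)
Your argument is correct. The paper is a survey and states Theorem~\ref{thm:secondvartt} without proof, so there is no in-text argument to compare against; the result is classical (see e.g.\ Besse, \S4.G). Your route via the volume-normalized functional $\widetilde{\EH}$ is a clean way to handle the volume constraint: since $g$ is an unconstrained critical point of $\widetilde{\EH}$ and $\widetilde{\EH}|_{\Met_1}=\EH|_{\Met_1}$, the Hessian of $\widetilde{\EH}$ along the linear path $g+sh$ computes $(\EH|_{\Met_1})''_g(h,h)$ without any acceleration correction, exactly as you say. The three $E$-dependent contributions you track (the $\tfrac{nE}{2}h$ from $\tfrac{\scal_g}{2}h$ in $T'_g(h)$, the $-(n-2)E|h|^2$ from differentiating the pointwise inner product, and the $\tfrac{(n-2)E}{2}\|h\|^2$ from $V''\cdot\EH(g)$) combine correctly to produce the $E|h|^2$ term. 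One small remark: the identity you quote as ``the Lichnerowicz identity'' $\Ric'_g(h)=\tfrac12\LL h-\delta^\ast\delta h-\tfrac12\nabla d(\tr_g h)$ already uses $\Ric_g=Eg$ (for general metrics there is an extra symmetrized $\Ric\circ h$ term), but since you immediately impose the Einstein condition this causes no harm.
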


In light of this theorem, linear (semi-)stability of an Einstein metric $g$ amounts to the estimate $\LL>2E$ ($\LL\geq2E$) on tt-tensors. The infinitesimal Einstein deformations of $g$ are exactly the tt-eigentensors to the eigenvalue $2E$,
\[\varepsilon(g)=\ker(\LL-2E)|_{\TT(M)}.\]
Even though the Einstein--Hilbert action is not necessarily well-defined in the case where $M$ is noncompact, one may still define linear (in-)stability using the Lichnerowicz Laplacian, e.g.~$(M,g)$ is called strictly linearly stable if
\[\Ltwoinprod{\LL h}{h}>2E\Ltwoinprod{h}{h}\]
for all compactly supported $h\in\TT(M)$, and similarly for semistable/unstable.

The restriction of $\LL-2E$ to $\TT(M)$ is also sometimes called the \emph{linearized Einstein operator}. It is often expressed as
\[\LL-2E=\nabla^\ast\nabla-2\mathring{R},\]
where $\mathring{R}: \Sym^2T^\ast M\to\Sym^2T^\ast M$ is the \emph{curvature operator of the second kind},
\[(\mathring{R}h)(X,Y)=\sum_ih(R(e_i,X)Y,e_i).\]

In particular, the standard curvature endomorphism on symmetric $2$-tensors for an Einstein metric $g$ can be written as $\K(R) = 2E - 2\mathring{R}$.

The Lichnerowicz Laplacian shows up in two beautiful Weitzenböck formulas: on $\Sy^k(M)$, we have
\begin{equation}
\delta\delta^\ast-\delta^\ast\delta=\LL-2\K(R),\label{WBF1}
\end{equation}
and if $g$ is Einstein,
\begin{equation}
(d^\nabla)^\ast d^\nabla+d^\nabla(d^\nabla)^\ast=\LL-\frac12\K(R)
\label{WBF2}
\end{equation}
on $\Sy^2(M)\subset\Omega^1(M,T^\ast M)$, where $d^\nabla: \Omega^k(M,T^\ast M)\to\Omega^{k+1}(M,T^\ast M)$ is the twisted exterior derivative.

On $\TT(M)$, the first Weitzenböck formula \eqref{WBF1} implies a lower bound for the Lichnerowicz Laplacian in terms of $\K(R)$ \cite[Prop.~6.2]{HMS15}, namely
\begin{equation}\label{estimate}
\LL \ge 2 \K(R)
\end{equation}
with equality exactly on the space of trace-free \emph{Killing tensors} (that is, tensors in the kernel of $\delta^\ast$).
Note that trace-free Killing tensors are automatically divergence-free \cite[Cor.~3.10]{HMS15}.

Calculating or estimating the spectrum of $\LL$ is in general difficult. Restricting to tt-tensors means searching for eigensections of  
the bundle of trace-free symmetric
$2$-tensors 
$VM=\Sym^2_0T^\ast M$ which are at the same time divergence-free. The latter condition is mostly taken care of by the following relation \cite{Sch22a}, which is in parts already implicit in \cite{Koiso82}. 
Denote $\Sy^2_0(M)=\Gamma(\Sym^2_0T^\ast M)$, and 
let $\theta: \Omega^1(M)\to\Sy^2_0(M)$ be the \emph{conformal Killing operator}, defined by
\[\theta\alpha=\delta^\ast\alpha+\frac{2}{n}(d^\ast\alpha)g.\]
Its kernel consists of the one-forms dual to conformal Killing vector fields on $(M,g)$.

\begin{theorem}
If $g$ is an Einstein metric, the sequence
\[0\longrightarrow\ker\theta\stackrel{\subset}{\longrightarrow}\Omega^1(M)\stackrel{\theta}{\longrightarrow}\Sy^2_0(M)\stackrel{P}{\longrightarrow}\TT(M)\longrightarrow 0,\]
where $P$ denotes the $L^2$-orthogonal projection, is exact and every arrow commutes with the Lichnerowicz Laplacian.
\end{theorem}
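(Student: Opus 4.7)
The plan is to verify exactness position by position, then check commutation with $\LL$ on each arrow separately. The whole argument rests on two recurring inputs: the facts already collected in the text that on an Einstein manifold $\LL$ commutes with $\delta$ and $\delta^\ast$, commutes with any parallel bundle map, and preserves $\TT(M)$; and that $\theta$ is overdetermined elliptic, so that $\im\theta$ is $L^2$-closed and a Hodge-type decomposition of its target is available.

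Exactness at $\ker\theta$ and $\Omega^1(M)$ is tautological, and surjectivity of $P$ is immediate since $P$ restricts to the identity on $\TT(M)\subset\Sy^2_0(M)$. The substantial point is exactness at $\Sy^2_0(M)$. I would first compute the formal $L^2$-adjoint $\theta^\ast|_{\Sy^2_0(M)}$: for $\alpha\in\Omega^1(M)$ and $h\in\Sy^2_0(M)$, the term $\tfrac{2}{n}\int_M(d^\ast\alpha)\tr_g h\,|\vol_g|$ vanishes because $h$ is trace-free, leaving $\Ltwoinprod{\theta\alpha}{h}=\Ltwoinprod{\alpha}{\delta h}$. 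Hence $\theta^\ast|_{\Sy^2_0(M)}=\delta|_{\Sy^2_0(M)}$ and $\ker\theta^\ast=\ker\delta\cap\ker\tr_g=\TT(M)$. The principal symbol of $\theta$ at a nonzero covector $\xi$ is $\alpha\mapsto\xi\odot\alpha-\tfrac{2}{n}\langle\xi,\alpha\rangle g$, which is injective (a quick check with an orthonormal basis adapted to $\xi$); so $\theta$ is overdetermined elliptic and $\im\theta$ is closed. The resulting Hodge-type decomposition
\[\Sy^2_0(M)=\im\theta\oplus\ker\theta^\ast=\im\theta\oplus\TT(M)\]
then identifies $\ker P|_{\Sy^2_0(M)}$ with $\im\theta$.

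For commutation with $\LL$, the arrow $\theta$ is handled by direct computation: since $\LL\delta^\ast=\delta^\ast\LL$ (Einstein assumption), $\LL g=0$ ($g$ being parallel), and $\LL d^\ast=d^\ast\LL$ on $\Omega^1(M)$ (restriction of the Hodge--de Rham Laplacian), we obtain
\[\LL\theta\alpha=\delta^\ast\LL\alpha+\tfrac{2}{n}(d^\ast\LL\alpha)g=\theta\LL\alpha.\]
This also shows that $\LL$ preserves $\ker\theta$, making the inclusion $\LL$-equivariant. For $P$, I would observe that $\LL$ preserves $\TT(M)$ (given) and $\Sy^2_0(M)=\ker\tr_g$ (since $\tr_g$ is a parallel bundle map); by $L^2$-self-adjointness $\LL$ then preserves $\TT(M)^\perp$ as well, hence commutes with the orthogonal projection $P$.

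The main potential obstacle is the exactness at $\Sy^2_0(M)$, which is really the heart of the theorem. Restricting the target of $\theta$ to trace-free tensors is crucial: it eliminates the $\tfrac{2}{n}(d^\ast\alpha)g$ correction in the adjoint calculation and reduces everything to a single overdetermined elliptic operator, avoiding a messier treatment involving the full Berger--Ebin decomposition of $\Sy^2(M)$.
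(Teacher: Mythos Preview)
Your argument is correct and complete. The paper itself does not prove this theorem; it is stated as a survey result with a reference to \cite{Sch22a} (and implicitly \cite{Koiso82}), so there is no in-text proof to compare against. Your approach---identifying $\theta^\ast|_{\Sy^2_0(M)}=\delta$, using overdetermined ellipticity of $\theta$ to obtain the orthogonal splitting $\Sy^2_0(M)=\im\theta\oplus\TT(M)$, and checking $\LL$-equivariance of each arrow from the commutation properties already recorded in the text---is the natural one and matches what is done in the cited reference.

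One small remark on presentation: in the commutation check for $\theta$ you implicitly use that $\LL(fg)=(\Delta f)g$ for $f\in C^\infty(M)$, which follows from $\K(R)g=0$ on an Einstein manifold (since $\mathring{R}g=\Ric=Eg$ gives $\K(R)g=(2E-2E)g=0$). It might be worth making this explicit, as ``$\LL g=0$'' alone does not immediately yield the needed identity for $fg$.
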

We note two additional facts: on an Einstein manifold $(M,g)$, Killing one-forms are always eigentensors of the Laplacian to the eigenvalue $2E$, and if $(M,g)$ is in addition not isometric to the standard sphere, then all conformal Killing vector fields are Killing. As a consequence, we can state the following.
\begin{corollary}
\label{thm:ttLL}
Let $(M,g)$ be Einstein with Einstein constant $E$, and not isometric to the standard sphere. Then there are exact sequences
\begin{enumerate}[\upshape(i)]
    \item for the space of IED:
    \[0\longrightarrow\ker\delta^\ast\longrightarrow\ker(\Delta-2E)|_{\Omega^1(M)}\longrightarrow\ker(\LL-2E)|_{\Sy^2_0(M)}\longrightarrow\varepsilon(g)\longrightarrow 0,\]
    \item for any $\lambda\neq2E$:
    \[0\longrightarrow\ker(\Delta-\lambda)|_{\Omega^1(M)}\longrightarrow\ker(\LL-\lambda)|_{\Sy^2_0(M)}\longrightarrow\ker(\LL-\lambda)|_{\TT(M)}\longrightarrow 0,\]
\end{enumerate}
and all the arrows commute with isometries.
\end{corollary}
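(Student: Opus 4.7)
The plan is to derive both exact sequences by restricting the exact sequence from the theorem immediately preceding the corollary to $\lambda$-eigenspaces of $\LL$. Crucially, every arrow in that sequence commutes with $\LL$, and $\LL$ restricts to $\Delta$ on one-forms. Since $\LL$ is elliptic and formally self-adjoint on each of the compact bundles involved, every one of $\Omega^1(M)$, $\Sy^2_0(M)$, and $\TT(M)$ decomposes, in the $L^2$ sense, as an orthogonal direct sum of finite-dimensional smooth $\LL$-eigenspaces, and $\theta$ and $P$ send the $\lambda$-eigenspace of the source into that of the target.

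To identify the leftmost term of the restricted sequence, I would use the hypothesis that $(M,g)$ is not isometric to the standard sphere: every conformal Killing vector field is then Killing, so $\ker\theta=\ker\delta^\ast$. Combined with the fact, recalled above the corollary, that Killing one-forms on an Einstein manifold are $\Delta$-eigenforms with eigenvalue $2E$, we obtain
\[\ker\theta\cap\ker(\Delta-\lambda)|_{\Omega^1(M)}=\begin{cases}\ker\delta^\ast&\text{if }\lambda=2E,\\ 0&\text{if }\lambda\neq 2E.\end{cases}\]
This exactly matches the left end of (i) when $\lambda=2E$ and of (ii) when $\lambda\neq 2E$.

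For exactness in the middle, suppose $h\in\ker(\LL-\lambda)|_{\Sy^2_0(M)}$ satisfies $Ph=0$; by the unrestricted sequence, $h=\theta\xi$ for some smooth $\xi\in\Omega^1(M)$. Let $\pi_\lambda$ denote the $L^2$-orthogonal projection of $\Omega^1(M)$ onto the $\lambda$-eigenspace of $\Delta$; since that eigenspace is finite-dimensional with smooth basis, $\pi_\lambda\xi$ is a smooth one-form. Because $\theta$ intertwines $\Delta$ and $\LL$, it also intertwines the corresponding spectral projections, so $\theta(\pi_\lambda\xi)$ is the $\lambda$-component of $h=\theta\xi$, which is $h$ itself. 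Hence $\pi_\lambda\xi$ is the desired preimage. Surjectivity at the right end is immediate: any $h$ in the target tt-eigenspace already lies in $\Sy^2_0(M)$ and satisfies $Ph=h$, since $P$ is the $L^2$-projection onto $\TT(M)$. Finally, all the operators $\theta$, $P$, $\Delta$, $\LL$, $\delta^\ast$ are built from the Levi-Civita connection and metric contractions, hence commute with pullback by isometries, so the arrows in both sequences are automatically equivariant. The only delicate point is the spectral lifting in the middle step; everything else is bookkeeping.
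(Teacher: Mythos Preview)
Your proposal is correct and follows precisely the route the paper intends: the corollary is stated without proof as a direct consequence of the preceding exact sequence together with the two facts noted just before it (Killing one-forms are $2E$-eigenforms, and conformal Killing fields are Killing off the sphere), and your argument spells out exactly this restriction to eigenspaces. The one point you flag as delicate---that $\theta$ intertwines the spectral projections---is indeed the only nontrivial step; it holds here because both Laplacians have discrete spectrum with smooth eigensections and the $C^\infty$-convergent eigendecomposition of a smooth $\xi$ is carried termwise by the differential operator $\theta$ into the eigendecomposition of $\theta\xi$.
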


The value $2E$ that comes up in the discussion of linear stability and infinitesimal deformability is also called the \emph{critical eigenvalue}. Recall that $2E$ is  the lower bound of the Laplace operator on co-closed $1$-forms, for a compact Einstein manifold with $E>0$. This follows from
estimate \eqref{estimate} on $1$-forms. In the case of compact Kähler--Einstein manifolds with positive Ricci curvature it is known that $2E$ is the lower bound for the first eigenvalue of the Laplace--Beltrami operator. Equality is attained if the manifold admits Killing vector fields.



There is another type of stability which occurs in some physical applications, such as Anti-de Sitter product spacetimes and generalized (Schwarzschild--Tangherlini) black holes \cite{GH02,GHP03}.

\begin{definition}[Physical stability]
\label{phys-stab}
An Einstein metric $g$ with Einstein constant $E$ is called \emph{physically stable} if $\LL\geq\frac{9-n}{4}E$ on tt-tensors.
\end{definition}

This is clearly much weaker than linear stability. Later we will see that this bound, which comes from the \emph{Breitenlohner--Freedman bound} in relativistic quantum field theory, exactly appears on manifolds with Killing spinors (Theorem~\ref{thm:killingphys}).

The Lichnerowicz Laplacian also features in the study of \emph{Ricci iterations}. A Ricci iteration is a sequence of metrics $(g_i)_{i=1}^\infty$ satisfying
\[\Ric_{g_{i+1}}=g_i.\]
This constitutes a discrete analogue of the Ricci flow. As Buttsworth--Hallgren \cite[Thm.~4.1]{RicIt} have shown, a sufficient condition for the existence and dynamical stability of a Ricci iteration starting at a constant multiple of $g$ is that $\ker\LL|_{\Sy^2(M)}=\R g$ and that $\LL|_{\ker\delta}$ has no other eigenvalues in the interval $[-2E,2E]$.

\subsection{Curvature conditions for stability}

We conclude this section by listing some sufficient conditions for stability in terms of the curvature. The first criterion goes back to Koiso \cite{Koiso78} and is a consequence of the two Weitzenböck formulas \eqref{WBF1} and \eqref{WBF2}.

\begin{theorem}
Let $g$ be an Einstein metric with Einstein constant $E$. If pointwise \[\K(R,\Sym^2_0T^\ast M)>\min\{E,4E\},\]
or equivalently $\mathring{R}<\max\{-E,E/2\}$ on $\Sym^2_0T^\ast M$, then $g$ is strictly stable.
\end{theorem}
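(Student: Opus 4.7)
The plan is to derive two complementary lower bounds for $\LL$ on trace-free symmetric $2$-tensors from the Weitzenböck formulas \eqref{WBF1} and \eqref{WBF2}, and then combine them by case analysis on the sign of $E$ so as to conclude $\LL>2E$ on $\TT(M)$; this is exactly strict stability by Theorem~\ref{thm:secondvartt}.

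First I would apply \eqref{WBF1} to a tensor $h\in\TT(M)$. Since $\delta h=0$, one obtains $\LL h-2\K(R)h=\delta\delta^\ast h$, and pairing with $h$ in $L^2$ yields
\[\Ltwoinprod{\LL h}{h}-2\Ltwoinprod{\K(R)h}{h}=\Ltwoinprod{\delta^\ast h}{\delta^\ast h}\geq 0,\]
which is precisely the bound \eqref{estimate}, i.e.\ $\LL\geq 2\K(R)$ on $\TT(M)$. Next, pairing \eqref{WBF2} -- which holds on all of $\Sy^2(M)$ because $g$ is Einstein -- with $h$ gives
\[\Ltwoinprod{\LL h}{h}-\tfrac12\Ltwoinprod{\K(R)h}{h}=\Ltwoinprod{d^\nabla h}{d^\nabla h}+\Ltwoinprod{(d^\nabla)^\ast h}{(d^\nabla)^\ast h}\geq 0,\]
hence $\LL\geq\tfrac12\K(R)$ on $\Sy^2(M)$, and a fortiori on $\TT(M)$.

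The conclusion then proceeds by combining the two bounds. If $E\geq 0$, then $\min\{E,4E\}=E$, and the hypothesis $\K(R)>E$ on $\Sym^2_0T^\ast M$ together with the compactness of $M$ furnishes some uniform $\varepsilon>0$ with $\K(R)\geq E+\varepsilon$ pointwise as a symmetric endomorphism; the first bound then yields $\LL\geq 2\K(R)\geq 2E+2\varepsilon>2E$ on tt-tensors. If instead $E<0$, then $\min\{E,4E\}=4E$, and the hypothesis $\K(R)>4E$ analogously yields $\K(R)\geq 4E+\varepsilon'$ uniformly, so that the second bound produces $\LL\geq\tfrac12\K(R)\geq 2E+\tfrac12\varepsilon'>2E$. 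The equivalent reformulation in terms of $\mathring{R}$ is then immediate from the identity $\K(R)=2E-2\mathring{R}$ on $\Sym^2T^\ast M$: the inequality $\K(R)>E$ becomes $\mathring{R}<E/2$, and $\K(R)>4E$ becomes $\mathring{R}<-E$.

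The only conceptual subtlety is the case split: the factor $2$ in front of $\K(R)$ in the first bound is favorable when $E>0$ but unfavorable when $E<0$, and symmetrically for the factor $\tfrac12$ in the second bound, so neither Weitzenböck estimate alone covers both sign regimes. The expression $\min\{E,4E\}$ in the hypothesis is precisely what records which of the two bounds is active in each regime; once this is recognized, the rest of the argument is routine manipulation.
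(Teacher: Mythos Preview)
Your proposal is correct and follows precisely the approach the paper indicates: it derives the two lower bounds $\LL\geq 2\K(R)$ and $\LL\geq\tfrac12\K(R)$ from the Weitzenböck formulas \eqref{WBF1} and \eqref{WBF2}, then selects the appropriate one according to the sign of $E$. The paper does not spell out the case split or the compactness argument for passing from the pointwise strict inequality to a uniform lower bound, so your write-up in fact supplies more detail than the text, but the underlying strategy is identical.
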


Fujitani \cite{Fuj79} worked out how the eigenvalues of $\mathring{R}$ are controlled by the sectional curvature. This yields the following corollary, originally due to Bourguignon.

\begin{corollary}
If the sectional curvature of an Einstein metric $g$ is negative or $\kappa$-pinched with $\kappa>\frac{n-2}{3n}$, then $g$ is strictly stable.
\end{corollary}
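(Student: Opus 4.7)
The plan is to invoke the preceding theorem of Koiso, which reduces strict linear stability to the pointwise inequality $\mathring{R}<\max\{-E,E/2\}$ on $\Sym^2_0T^\ast M$. The technical content supplied by Fujitani is a pointwise bound on the largest eigenvalue of $\mathring{R}|_{\Sym^2_0}$ in terms of the extremal sectional curvatures. To derive it, I would fix $p\in M$ and an orthonormal basis $(e_i)$ of $T_pM$ diagonalizing a given $h\in\Sym^2_0T_p^\ast M$ with eigenvalues $\lambda_i$; a direct computation from the definition of $\mathring{R}$ gives
\[\langle\mathring{R}h,h\rangle=\sum_{i\neq j}\lambda_i\lambda_jK(e_i,e_j),\]
and the trace-free condition forces $\sum_i\lambda_i=0$, equivalently $\sum_{i\neq j}\lambda_i\lambda_j=-|h|^2$. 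Assuming pointwise bounds $\delta\leq K\leq\Delta$, one bounds this quadratic form by splitting $K$ into its mean and deviation parts and controlling the latter using the linear constraint on the $\lambda_i$, arriving at an eigenvalue inequality for $\mathring{R}$ linear in $\delta$ and $\Delta$.

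The two cases of the corollary then reduce to plugging this estimate into Koiso's criterion. For the negative sectional curvature case, one may take $\Delta<0$; since $E=\scal_g/n$ is a normalized sum of sectional curvatures one has $E\leq(n-1)\Delta$, hence $-E\geq(n-1)|\Delta|$, which strictly dominates Fujitani's upper bound on $\mathring{R}$ as soon as $n\geq3$ (the constant curvature case $K\equiv c<0$, where $\mathring{R}=|c|$ and $-E=(n-1)|c|$, already illustrates this). In the positive pinched case $0<\kappa\Delta\leq K\leq\Delta$, one has $E\geq(n-1)\kappa\Delta>0$, and Fujitani's bound, specialized to pinching ratio $\kappa$, gives an inequality that collapses to $\mathring{R}<E/2$ precisely when $\kappa>\frac{n-2}{3n}$.

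The main obstacle is the last numerical identification: verifying that the critical pinching constant $\frac{n-2}{3n}$ emerges cleanly from the comparison of Fujitani's upper bound against $E/2=\frac{n-1}{2}\kappa\Delta$ (or larger). This is an elementary but careful manipulation of the constrained quadratic form $\sum_{i\neq j}\lambda_i\lambda_jK(e_i,e_j)$ subject to $\sum_i\lambda_i=0$; I would expect the extremal destabilizing configuration to be a rank-two trace-free tensor of the form $h=\lambda(e_1^\flat\otimes e_1^\flat-e_2^\flat\otimes e_2^\flat)$, which localizes the optimization to the sectional curvature of a single plane and makes the threshold explicit. Once this is verified, Koiso's theorem applies in both cases to give strict linear stability.
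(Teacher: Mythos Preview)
Your overall strategy---bound the largest eigenvalue of $\mathring{R}|_{\Sym^2_0}$ in terms of the sectional curvature, then invoke Koiso's criterion---is exactly what the paper does; it simply cites Fujitani for the curvature estimate and states the corollary. Your formula $\langle\mathring{R}h,h\rangle=\sum_{i\neq j}\lambda_i\lambda_jK(e_i,e_j)$, obtained by diagonalizing $h$, is the right starting point.

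One concrete issue: your expected extremal tensor in the pinched case is wrong. For $h=e_1^\flat\otimes e_1^\flat-e_2^\flat\otimes e_2^\flat$ your own formula gives $\langle\mathring{R}h,h\rangle/|h|^2=-K(e_1,e_2)$, which is \emph{negative} when $K>0$; this is a minimizer of the Rayleigh quotient, not a maximizer, and following it will not produce the threshold $(n-2)/(3n)$. The maximizing configurations for $\mathring{R}$ in the positively curved case have higher rank, and extracting the sharp constant genuinely requires the analysis in Fujitani's paper, as you anticipate.

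For the negative case you can bypass an unstated ``Fujitani bound'' entirely. Using the Einstein condition $\sum_{j\neq i}K(e_i,e_j)=E$ one checks the identity
\[
\langle\mathring{R}h,h\rangle+E|h|^2=\sum_{i<j}K(e_i,e_j)(\lambda_i+\lambda_j)^2,
\]
which is $\leq0$ whenever all sectional curvatures are negative. Equality forces $\lambda_i+\lambda_j=0$ for every pair, hence $h=0$ once $n\geq3$; thus $\mathring{R}<-E$ strictly on nonzero trace-free tensors, and Koiso's criterion applies directly. This is cleaner than comparing two separate estimates for $\mathring{R}$ and $-E$, which as you have written it does not obviously close (you would need to control $|\delta|/|\Delta|$, which negative curvature alone does not do).
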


We emphasize that the condition $n\geq3$ is crucial for the negative sectional curvature case, as hyperbolic surfaces admit IED.

In any of the statements above, replacing the strict inequalities with $\geq$ still yields semistability. In \cite{Kr15b}, Kröncke further characterizes these cases under the assumption $\varepsilon(g)\neq0$. Another result by Koiso \cite{Koiso78}, which uses the estimate $\mathring{R}<-E$ directly, is as follows.

\begin{corollary}
Let $(M,g)$ be an (Einstein) locally symmetric space of noncompact type. If $M$ has no local two-dimensional factor, then $g$ is strictly stable.
\end{corollary}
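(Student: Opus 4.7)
The plan is to verify the pointwise inequality $\mathring R<-E$ on $\Sym^2_0 T^*M$ that is required by the preceding theorem. Since $(M,g)$ is locally symmetric of noncompact type, the Einstein constant is negative, $E<0$, so that $\max\{-E,E/2\}=-E>0$.

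Because $\mathring R$ is pointwise determined by the curvature tensor, I would pass to the Riemannian universal cover $\tilde M$ and invoke the de Rham decomposition to write $\tilde M=M_1\times\cdots\times M_k$ as a product of irreducible Riemannian symmetric spaces of noncompact type. By hypothesis, every factor satisfies $\dim M_i\ge 3$, and the Einstein condition forces a common Einstein constant $E$ on all of them. The curvature of $\tilde M$ vanishes on any $2$-plane that spans distinct factors, so $\mathring R$ preserves the orthogonal splitting
\[
\Sym^2 T^*\tilde M\;\cong\;\bigoplus_i \Sym^2 T^*M_i\;\oplus\;\bigoplus_{i<j}\bigl(T^*M_i\otimes T^*M_j\bigr),
\]
acting as $0$ on every mixed summand (where $-E>0$ makes the estimate automatic) and by $\mathring R_{M_i}$ on each $\Sym^2 T^*M_i$. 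A short computation using the Einstein equation gives $\mathring R_{M_i}(g_i)=E\,g_i$, so the trace direction lies below $-E$ as well. The problem thus reduces to proving $\mathring R_{M_i}<-E$ on $\Sym^2_0 T^*M_i$ for each irreducible factor $M_i$ of dimension at least three.

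For the irreducible step I would dualize. If $M_i=G/K$ has compact dual $M_i^c=U/K$, then at the common tangent Lie algebra $\mathfrak{p}$ one has $R_{M_i^c}=-R_{M_i}$, whence $\mathring R_{M_i^c}=-\mathring R_{M_i}$ and $E^c=-E>0$. The desired inequality becomes $\mathring R_{M_i^c}>-E^c$ on $\Sym^2_0 T^*M_i^c$, or, using the identity $\K(R)=2E^c-2\mathring R_{M_i^c}$ noted in the excerpt, the upper bound $\K(R)<4E^c$ on $\Sym^2_0\mathfrak{p}$. On an irreducible symmetric space of compact type, $\K(R)$ acts on $\Sym^2_0\mathfrak{p}$ diagonally, with eigenvalues equal to the $\mathfrak{k}$-Casimir values on the $K$-irreducible summands; these can in principle be read off from Freudenthal's formula once $\Sym^2\mathfrak{p}$ has been branched into $K$-irreducibles.

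The main obstacle is this last spectral check: I do not see a uniform Lie-theoretic argument that $\K(R)<4E^c$ holds on every $K$-irreducible piece, and Koiso's original proof carries it out along the Cartan classification of irreducible symmetric spaces of compact type on a case-by-case basis. The hypothesis $\dim M_i\ge 3$ enters precisely to exclude the borderline case in which $M_i^c$ is the round $2$-sphere and $M_i$ is a hyperbolic surface, where the bound saturates and nontrivial infinitesimal Einstein deformations in fact appear.
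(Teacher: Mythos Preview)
Your outline is correct and aligned with the paper, which gives no proof beyond the remark that Koiso's result ``uses the estimate $\mathring R<-E$ directly'' (as opposed to the sectional-curvature route of the preceding corollary); your reduction to irreducible de~Rham factors, treatment of the mixed and trace summands, dualization, and reading of the two-dimensional borderline are all sound and already more explicit than what the survey records.

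On the step you flag as the obstacle: the bound $\Cas^K_W<4E^c$ for every $K$-irreducible $W\subset\Sym^2_0\mathfrak{p}$ actually admits a uniform argument, so no case-by-case check along the Cartan classification is required. Since the Casimir $\mu\mapsto\langle\mu,\mu+2\rho_K\rangle$ is monotone for the dominance order on dominant weights, the maximum over irreducible constituents of $\mathfrak{p}^\C\otimes\mathfrak{p}^\C$ is attained on a Cartan component $[\lambda_i+\lambda_j]$, where the $\lambda_i$ are the highest weights of the (one or two) irreducible summands of $\mathfrak{p}^\C$; and one computes
\[
2\,\Cas^K_{[\lambda_i]}+2\,\Cas^K_{[\lambda_j]}-\Cas^K_{[\lambda_i+\lambda_j]}
=|\lambda_i-\lambda_j|^2+2\langle\lambda_i+\lambda_j,\rho_K\rangle\ \ge\ 0.
\]
Since all $\Cas^K_{[\lambda_i]}$ equal $\Cas^K_\mathfrak{p}=E^c$, this gives $\Cas^K_W\le 4E^c$, with equality only if $\lambda_i=\lambda_j$ and $\langle\lambda_i,\rho_K\rangle=0$, i.e., the semisimple part of $K$ acts trivially on $\mathfrak{p}$---which forces $\dim\mathfrak{p}\le 2$. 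This is precisely where your two-dimensional exclusion enters.
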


For symmetric spaces of compact type the situation is more delicate, but ultimately understood through the help of harmonic analysis; we come to this in Section~\ref{sec:homogeneous}.

On oriented four-manifolds, Fine--Krasnov--Singer \cite{FKS21} give a criterion for the infinitesimal non-deformability of an Einstein metric in terms of the curvature operator $\widehat R_+=\widehat R|_{\Lambda^2_+}$ on self-dual two-forms, using the strict stability for a different action functional whose critical points are Einstein metrics. Here we use the sign convention where positivity of the curvature operator $\widehat R$ implies positive sectional curvature, and we note that for Einstein metrics, $\widehat R$ preserves the splitting $\Lambda^2T^\ast M=\Lambda^2_+\oplus\Lambda^2_-$ into self-dual and anti-self-dual two-forms.

\begin{theorem}
Let $(M^4,g)$ be an oriented Einstein manifold. If $\widehat R_+<0$, then $\varepsilon(g)=0$.
\end{theorem}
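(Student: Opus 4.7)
Plan: The approach, following Fine--Krasnov--Singer, is to trade the Einstein--Hilbert variational problem for an auxiliary functional $\mathcal{F}$ on a different moduli space whose critical points correspond to oriented Einstein four-manifolds with definite $\widehat R_+$, and whose Hessian at such a critical point admits a clean expression involving $\widehat R_+$ itself. The hypothesis $\widehat R_+<0$ will make this Hessian strictly definite, and a correspondence between its kernel modulo gauge and the space $\varepsilon(g)$ will then yield the conclusion.

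Concretely, I would first recall the Fine--Krasnov--Singer setup: the variable is an $\mathrm{SO}(3)$-connection $A$ on an auxiliary rank-three bundle $E\to M^4$ whose self-dual curvature $F^+_A$ is a \emph{definite} section of $\Lambda^2_+\otimes\mathrm{ad}\,E$, meaning the induced map $\Lambda^2_+\to\mathrm{ad}\,E$ is an isomorphism of a definite sign. Such an $A$ canonically determines a conformal class on $M$ and, after a scale normalization, a metric $g_A$ in that class. The functional $\mathcal{F}(A)$ is set up so that, modulo $\mathrm{SO}(3)$-gauge and $\Diff(M)$, its critical points are precisely those $A$ for which $g_A$ is Einstein with $\widehat R_+$ of the prescribed sign.

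Next I would expand $\mathcal{F}$ to second order at such a critical point. The expected form of the Hessian is a sum of two terms: a strictly positive Weitzenböck piece involving the covariant derivative of the perturbation $a$, plus a zeroth-order piece proportional to $-\Ltwoinprod{\widehat R_+\,\alpha(a)}{\alpha(a)}$ for a natural linear map $a\mapsto\alpha(a)\in\Omega^2_+(M,\mathrm{ad}\,E)$. Under $\widehat R_+<0$ both pieces have the same definite sign, and the Weitzenböck piece is strictly positive on directions transverse to the gauge orbit, so $\mathcal{F}''_A$ is strictly definite on the gauge-transverse slice. Combined with the Fine--Krasnov--Singer identification of the (gauge-transverse) kernel of $\mathcal{F}''_A$ with $\varepsilon(g_A)$, this gives $\varepsilon(g)=0$.

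The main obstacle is precisely that last identification: one must verify that the linearization of the assignment $A\mapsto g_A$ is surjective onto $\TT(M)$ modulo $\Lie_{\X(M)}g$, so that every IED of $g$ lifts to a genuine variation of $A$, and that $\mathcal{F}''$ restricted to such lifts controls $\EH''$ on tt-tensors. A secondary but nontrivial subtlety lies in tracking sign conventions through the four-dimensional isomorphism $\Sym^2_0T^\ast M\cong\Lambda^2_+\otimes\Lambda^2_-$, which one has to handle carefully to ensure that the sign of the $\widehat R_+$ term in the Hessian ends up matching the hypothesis $\widehat R_+<0$ rather than the opposite.
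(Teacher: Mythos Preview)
The survey does not give its own proof of this theorem; it merely cites Fine--Krasnov--Singer \cite{FKS21} and summarizes their method in one sentence as ``using the strict stability for a different action functional whose critical points are Einstein metrics.'' Your proposal is precisely a fleshed-out version of that summary: the pure-connection functional on definite $\SO(3)$-connections, the identification of its critical points with Einstein metrics of definite $\widehat R_+$, and the Hessian computation showing strict definiteness under $\widehat R_+<0$. So your approach and the paper's (referenced) approach coincide.
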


Further stability conditions in terms of the Weyl curvature or, in the Kähler--Einstein case, the Bochner tensor, are contained in \cite{IN04,Kr15a,BK25}.

The above results motivate the guess that in the negative curvature regime, Einstein metrics tend to be stable.
Moreover, all known compact, Ricci-flat Einstein manifolds are $\EH$-stable (see Section~\ref{sec:parallelspinor}).

\begin{questype}{Conjecture}
\label{qu:positivemass}
Compact Einstein manifolds of nonpositive scalar curvature are $\EH$-stable.
\end{questype}

For Ricci-flat metrics, the linearized version is called the \emph{positive mass problem} \cite{Dai07}.

\section{The moduli space of Einstein metrics}
\label{sec:moduli}

How many Einstein metrics are there on a given compact manifold $M$? This is a difficult problem in general. Of course, it is sensible to count metrics $g_1,g_2$ which are \emph{isometric} (that is, $g_2=\varphi^\ast g_1$ for some diffeomorphism $\varphi\in\Diff$) or \emph{homothetic} (that is, $g_2=c^2g_1$ for some $c\in\R^\times$) as the same. Without restriction, we can homothetically scale any metric on $M$ so that it has total volume one. Quotienting the set of Einstein metrics on $\Met_1$ by the group of diffeomorphisms then leads to the \emph{moduli space} of Einstein metrics.

\begin{definition}[Einstein moduli space]
The quotient
\[\EMod=\{g\in\Met_1\,|\,g\text{ is Einstein}\}/\Diff\]
is called the \emph{Einstein moduli space} of $M$.
\end{definition}

Some authors instead take the moduli space to be the quotient by the group $\Diff^0$ of diffeomorphisms isotopic to the identity, that is, they distinguish between metrics which are not related by an element of $\Diff^0$. This approach has advantages in some situations, but for our treatment it does not matter.

We have a good understanding of $\EMod$ on Riemann surfaces (where it coincides with the moduli space of complex structures), on flat manifolds of dimension $3$ or $4$ (where it coincides with the moduli space of flat metrics \cite[\S12.B]{besse}), and on the K3 surface (where it coincides with the moduli space of Calabi--Yau metrics, see Section~\ref{sec:parallelspinor}). For a more detailed picture in dimension four, see the survey \cite{anderson} by Anderson.

In most cases, understanding the entire moduli space at once is out of reach. One may still ask whether, given some Einstein metric on $M$, there are other Einstein metrics nearby or not.

\begin{definition}[Rigidity]
An Einstein metric $g\in\Met_1$ is called \emph{rigid} if its isometry class $[g]$ is isolated in $\EMod$.
\end{definition}

Since equivalence classes of metrics are unwieldy objects, for a local study of the moduli space it is preferable to pass to a slice to the action of $\Diff$, that is, a suitable submanifold of $\Met_1$ tangent to the second summand in decomposition~\eqref{diffdecomp}. This is achieved by Ebin's Slice Theorem.

\begin{theorem}[Slice Theorem]
Fix $g\in\Met_1$. There exists a (real analytic) ILH-submanifold $\Slice_g\subset\Met_1$, called the \emph{Ebin slice} to the action of $\Diff$, which satisfies the following properties.
\begin{enumerate}[\upshape(i)]
    \item $\Slice_g$ is invariant under the isometry group $\Isom(g)$.
    \item If $\varphi\in\Diff$ satisfies $\varphi^\ast\Slice_g\cap\Slice_g\neq\emptyset$, then $\varphi\in\Isom(g)$.
    \item There is a local section $\chi: \Diff/\Isom(g)\dashrightarrow\Diff$, defined in a neighborhood of the identity coset, such that the local map
    \[\Diff/\Isom(g)\times\Slice_g\dashrightarrow\Met_1,\qquad (\bar\varphi,\tilde g)\mapsto\chi(\bar\varphi)^\ast\tilde g\]
    is a diffeomorphism onto a neighborhood of $g\in\Met_1$.
    \item $T_g\Slice_g=\ker\delta$.
\end{enumerate}
In particular, $\Slice_g/\Isom(g)$ is homeomorphic to a neighborhood of $[g]\in\Met_1/\Diff$.
\end{theorem}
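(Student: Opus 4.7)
The plan follows Ebin's classical construction: I would build $\Slice_g$ as the image of a small neighborhood of $0\in\ker\delta$ under an $\Isom(g)$-equivariant map into $\Met_1$, and then establish the local diffeomorphism property by applying an inverse function theorem to the orbit map
\[F: \Diff\times\Slice_g\to\Met_1,\qquad (\varphi,\tilde g)\mapsto\varphi^\ast\tilde g.\]
Since the ILH structure is too weak for a direct application of the inverse function theorem, the first step is to pass to Sobolev completions: fix $s>n/2+1$, let $\Met_1^s$ be the Hilbert manifold of $H^s$-metrics of unit volume and $\Diff^{s+1}$ the Hilbert topological group of $H^{s+1}$-diffeomorphisms, and define $\Slice_g^s$ as the image of a small ball in $\ker\delta\cap T_g\Met_1^s$ under the exponential map of an $\Isom(g)$-invariant Riemannian structure on $\Met^s$ (the $L^2$-metric is the natural choice). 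By construction, $T_g\Slice_g^s=\ker\delta\cap T_g\Met_1^s$ and $\Slice_g^s$ is $\Isom(g)$-invariant, which will deliver properties (iv) and (i).

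The next step is to analyze $F^s$ at $(\mathrm{id},g)$. Its differential sends $(X,h)$ to $\delta^\ast X^\flat+h$; by the decomposition~\eqref{diffdecomp}, the image is all of $T_g\Met_1^s$. A pair $(X,h)$ lies in the kernel iff $h=-\delta^\ast X^\flat\in\ker\delta$, so $\delta\delta^\ast X^\flat=0$, and pairing with $X^\flat$ in $L^2$ forces $\delta^\ast X^\flat=0$; thus the kernel is $\isom(g)\times\{0\}$. Quotienting, the induced map
\[\bar F^s: (\Diff^{s+1}/\Isom(g))\times\Slice_g^s\to\Met_1^s\]
has bijective differential at the basepoint, and the Hilbert inverse function theorem yields a local diffeomorphism. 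A local section $\chi$ of the principal bundle $\Diff\to\Diff/\Isom(g)$ as in (iii) exists because $\Isom(g)$ is a compact Lie group (Myers--Steenrod). Property (ii) near the identity coset is then immediate from injectivity of $\bar F^s$: if $\varphi^\ast\tilde g_1=\tilde g_2$ with $\tilde g_i\in\Slice_g$ and $\varphi$ close to $\Isom(g)$, one concludes $\varphi\in\Isom(g)$; for $\varphi$ far from $\Isom(g)$, one shrinks the slice and invokes properness of the $\Diff^{s+1}$-action on $\Met^s$ near $g$, itself a consequence of the compactness of $\Isom(g)$.

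The main obstacle is the passage back from Sobolev regularity to the smooth (and ultimately real-analytic) category. Ebin's regularity argument shows that $\Slice_g:=\Slice_g^s\cap\Met$ is a genuine ILH-submanifold of $\Met_1$: the key input is elliptic regularity for the overdetermined operator $\delta^\ast$ (whose kernel consists of smooth Killing one-forms), combined with the fact that $H^{s+1}$-isometries of smooth metrics are themselves smooth, so the local diffeomorphism supplied by the inverse function theorem restricts to a local diffeomorphism of the smooth objects independently of $s$. The real-analyticity refinement uses that pullback is a real-analytic operation on tensor fields and that the defining equations of $\Slice_g$ form an analytic elliptic system; the homeomorphism $\Slice_g/\Isom(g)\cong$ (nbhd of $[g]$) then follows at once from the quotient by the compact isotropy action.
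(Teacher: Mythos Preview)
The paper is a survey and does not supply a proof of the Slice Theorem; it is stated as a classical result (due to Ebin, with ILH refinements by Omori and Koiso) and used as a black box. So there is no proof in the paper to compare against.

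Your outline is the standard Ebin argument and is essentially correct. Two points worth tightening. First, the map $(\varphi,\tilde g)\mapsto\varphi^\ast\tilde g$ from $\Diff^{s+1}\times\Met^s$ to $\Met^s$ is \emph{not} jointly smooth (composition and pullback lose a derivative), so one cannot literally apply the Hilbert inverse function theorem to $F^s$ as written; Ebin circumvents this by fixing $\tilde g$ and exploiting that $\varphi\mapsto\varphi^\ast\tilde g$ is smooth from $\Diff^{s+1}$ to $\Met^s$ when $\tilde g$ is smooth, together with a careful implicit-function argument at each regularity level. Second, your exponential-map construction of the slice is fine, but since $\Met$ sits as an open cone in a vector space, the affine slice $\{g+h:h\in\ker\delta,\ \|h\|\text{ small}\}$ (followed by volume normalization) already does the job and is manifestly $\Isom(g)$-invariant and real analytic; invoking a weak $L^2$-exponential introduces unnecessary analytic subtleties. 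The regularity step you sketch---that an $H^{s+1}$-diffeomorphism pulling a smooth metric back to a smooth metric must itself be smooth---is indeed the crux, and follows from viewing $\varphi$ as a harmonic map between smooth Riemannian manifolds and invoking elliptic regularity.
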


For the following arguments, we fix some choice of slice $\Slice_g$.

\begin{definition}[Premoduli space]
Let $g$ be an Einstein metric. The subset $\widetilde\EMod_g$ of Einstein metrics in $\Slice_g$ is called the \emph{premoduli space} of Einstein metrics around $g$.
\end{definition}

The quotient $\widetilde\EMod_g/\Isom(g)$ is then a neighborhood of $[g]\in\EMod$. One may hope that the premoduli space around an Einstein metric $g$ is a manifold with tangent space $\varepsilon(g)$, but we will see that this is not the case in general.
\begin{theorem}
\label{thm:modulinonpos}
If $g$ is an Einstein metric with nonpositive scalar curvature, then the identity component $\Isom^0(g)$ acts trivially on $\widetilde\EMod_g$.
\end{theorem}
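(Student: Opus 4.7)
My plan splits by the sign of $E$, using the Bochner identity for Killing fields as the common tool.

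\textbf{Case $E<0$.} Any Killing field $X$ satisfies $\int_M|\nabla X|^2\,|\vol_g|=E\int_M|X|^2\,|\vol_g|\leq 0$, hence $X\equiv 0$; thus $\Isom^0(g)=\{e\}$ and the statement is vacuous.

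\textbf{Case $E=0$.} The same identity now gives $\nabla X=0$ for every Killing field, so $\isom^0(g)$ is an abelian algebra of parallel vector fields and $\Isom^0(g)=T^k$ is a torus. Fix $\tilde g\in\widetilde\EMod_g$ and $X\in\isom^0(g)$. By slice invariance (Slice Theorem, part (i)) and diffeomorphism invariance of the Einstein equation, the curve $\exp(tX)^\ast\tilde g$ stays inside $\widetilde\EMod_g$, so it suffices to show $\Lie_X\tilde g=0$.

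I would reduce this nonlinear claim to an infinitesimal one at $g$ using the real-analyticity of $\widetilde\EMod_g$ (Koiso) together with the fact that $g$ is a $T^k$-fixed point: if the induced $T^k$-representation on $T_g\widetilde\EMod_g\subseteq\varepsilon(g)$ is trivial, then (by Bochner's linearization theorem applied to the $T^k$-action on the ambient slice $\Slice_g$, or a direct Zariski tangent cone argument on the analytic subset $\widetilde\EMod_g$) the action itself is trivial on a neighborhood of $g$. The task then becomes showing $\nabla_Xh=\Lie_Xh=0$ for every $h\in\varepsilon(g)$ and $X\in\isom^0(g)$. For this I would invoke the de Rham splitting theorem: the universal cover of $(M,g)$ decomposes isometrically as $\R^k\times(\tilde N,g_N)$ with the parallel fields tangent to the flat factor, and a product decomposition of $\LL$ shows that $\ker\LL|_{\TT(M)}$ is spanned by TT-tensors constant along the $\R^k$-factor, which are automatically $T^k$-fixed on the compact quotient.

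The main obstacle is this last structural step: characterizing $\ker\LL|_{\TT(M)}$ on a (locally) product Ricci-flat manifold. A purely spectral alternative would exploit that $\nabla_X$ is skew-symmetric and commutes with $\LL$, hence acts on the finite-dimensional space $\varepsilon(g)$ by purely imaginary frequencies; ruling out nonzero frequencies via an energy estimate (using the compactness of $T^k$ and the isotypical decomposition of $\varepsilon(g)$) would then suffice.
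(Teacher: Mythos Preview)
Your $E<0$ case is fine. In the Ricci-flat case, however, your strategy of first showing that $T^k=\Isom^0(g)$ acts trivially on all of $\varepsilon(g)$ is both incomplete and quite possibly aimed at a false target. The de Rham splitting lives on the universal cover and does not in general descend to a product structure on $M$, so the assertion that elements of $\ker\LL|_{\TT(M)}$ are ``constant along the $\R^k$-factor'' has no clear meaning downstairs. The spectral alternative correctly notes that $\nabla_X$ is skew on the finite-dimensional space $\varepsilon(g)$, but skew endomorphisms of real vector spaces can have nonzero (purely imaginary) spectrum, and you give no mechanism to exclude this. In fact, if there existed a compact simply connected Ricci-flat manifold $(N,g_N)$ with an eigentensor $h_0\in\TT(N)$ satisfying $\LL^{N}h_0=-\mu h_0$ for some $\mu>0$, then on a product $T^l\times N$ with the flat factor chosen so that $\mu$ lies in the Laplace spectrum of $T^l$, the tensor $e^{i\lambda\cdot x}h_0$ (with $|\lambda|^2=\mu$) would lie in $\varepsilon(g)$ and fail to be $T^l$-invariant. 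Whether such $N$ exist is precisely the open positive mass problem mentioned later in the survey, so your intermediate claim cannot be established unconditionally.

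The paper does not spell out a proof, but the standard argument is direct and avoids $\varepsilon(g)$ altogether. For any $\tilde g\in\Slice_g$ one has $\Isom(\tilde g)\subseteq\Isom(g)$ by slice property~(ii): if $\varphi^\ast\tilde g=\tilde g$ then $\tilde g\in\varphi^\ast\Slice_g\cap\Slice_g$, forcing $\varphi\in\Isom(g)$. Now any $\tilde g\in\widetilde\EMod_g$ close to $g$ is again Ricci-flat (the Einstein constant is locally constant on the moduli space), so by Bochner its Killing fields are parallel and $\dim\Isom^0(\tilde g)=b_1(M)=\dim\Isom^0(g)$. A connected Lie subgroup of a connected Lie group of the same dimension is the whole group, hence $\Isom^0(\tilde g)=\Isom^0(g)$; in particular every $\varphi\in\Isom^0(g)$ fixes $\tilde g$.
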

Thus, in the nonpositive scalar curvature case, if the premoduli space turns out to be a manifold, the moduli space will be an \emph{orbifold} with local group $\Isom(g)/\Isom^0(g)$.

\subsection{Properties of the moduli space}

The global and even local structure of the Einstein moduli space is still poorly understood in many cases, although there has been some progress in the last decades. Since Einstein metrics are critical points of the Einstein--Hilbert functional, the connected components of $\EMod$ are separated by the Einstein constants.

\begin{theorem}
\label{thm:EMod}
The Einstein moduli space has the following properties.
\begin{enumerate}
    \item $\EMod$, like $\Met_1/\Diff$, is Hausdorff.
    \item $\EH$ is locally constant on $\EMod$.
    \item $\EMod$, like $\Met_1$, has a countable basis of topology, and therefore at most countably many connected components.
\end{enumerate}
\end{theorem}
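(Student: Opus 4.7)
The plan is to treat the three claims in turn, the Slice Theorem being the central tool throughout.

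For claim (1), I would separate two distinct classes $[g_1]\neq[g_2]\in\Met_1/\Diff$ using disjoint $\Diff$-saturated open sets. The Slice Theorem produces tube neighborhoods $\chi(\Diff/\Isom(g_i))\cdot V_i$ of $g_i$ with $V_i\subset\Slice_{g_i}$, which are open and $\Diff$-invariant. That these tubes can be chosen disjoint for small enough $V_i$ is the content of Ebin's theorem on closedness and properness of the $\Diff$-action on $\Met_1$. Projecting to $\Met_1/\Diff$ yields disjoint open neighborhoods of $[g_1],[g_2]$, and Hausdorffness is inherited by the subspace $\EMod$.

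For claim (2), the key input is Theorem~\ref{thm:hilbertcrit}: the differential $\EH'_g$ vanishes on $T_g\Met_1$ for every Einstein $g$. Thus for any $C^1$ curve $g_t$ in $\widetilde\EMod_{g_0}\subset\Slice_{g_0}\subset\Met_1$ one has
\[\frac{d}{dt}\EH(g_t)=\EH'_{g_t}(\dot g_t)=0,\]
so $\EH$ is constant along differentiable paths in the premoduli space, and local constancy descends from $\Met_1/\Diff$ to $\EMod$. To upgrade constancy along smooth curves to \emph{local} constancy in the topological sense on the possibly singular set $\widetilde\EMod_{g_0}$, I would invoke Koiso's theorem realizing the premoduli space locally as a finite-dimensional real-analytic subvariety of $\Slice_{g_0}$. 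The restriction $\EH|_{\widetilde\EMod_{g_0}}$ is then an analytic function whose differential vanishes on the smooth stratum; by the curve selection lemma (or a stratification argument) it is locally constant on the smooth part, and continuity extends this across the lower-dimensional singular locus.

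For claim (3), the existence of a countable basis propagates: $\Met$ is open in the separable Fréchet space $\Sy^2(M)$, hence second countable, which passes to the ILH-submanifold $\Met_1$. The quotient map $\Met_1\to\Met_1/\Diff$ is open, so images of a countable basis form a countable basis on $\Met_1/\Diff$ and therefore on the subspace $\EMod$. To bound the number of connected components, I would use that the Slice Theorem identifies a neighborhood of $[g]$ with $\Slice_g/\Isom(g)$, which is locally connected (being a quotient of a Fréchet manifold by a compact Lie group action). Thus $\EMod$ is locally connected, its connected components are open, and a disjoint family of nonempty open sets in a second-countable space is at most countable.

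The main obstacle is clearly (2): there is a real gap between constancy along smooth curves — which is immediate from $\EH'=0$ at critical points — and topological local constancy on a set that is not a priori locally path-connected by smooth arcs. Bridging this gap forces one to appeal to the real-analytic structure of the premoduli space; without Koiso's slicewise description one could at best conclude local constancy on the smooth locus.
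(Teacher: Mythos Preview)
The paper is a survey and does not supply its own proof of this theorem; it is stated as background, with the implicit reference being \cite[\S12]{besse}. So there is no paper proof to compare against, and your outline should be judged against the standard arguments.

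Your treatment of (1) and (2) is essentially the standard one. For (2) you correctly identify the real issue: criticality gives $\EH'_{g_t}(\dot g_t)=0$ along smooth curves, but one needs local arcwise connectedness of the premoduli space to turn this into topological local constancy. Invoking Koiso's description of $\widetilde\EMod_g$ as a real analytic subset of a finite-dimensional manifold, together with the curve selection lemma, is exactly right. This is also how the paper (implicitly, via Theorem~\ref{thm:arcwise}) and Besse proceed.

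There is, however, a genuine slip in your argument for (3). You deduce local connectedness of $\EMod$ from the fact that the Slice Theorem identifies a neighborhood of $[g]$ in $\Met_1/\Diff$ with $\Slice_g/\Isom(g)$. But that is a neighborhood in the ambient quotient $\Met_1/\Diff$, not in the subspace $\EMod$; and local connectedness does not pass to subspaces (think of $\mathbb{Q}\subset\R$, or the Cantor set, which is second countable with uncountably many components). What you actually need is local connectedness of $\widetilde\EMod_g$ itself. This again follows from Koiso's result that $\widetilde\EMod_g$ is a real analytic subset of a finite-dimensional manifold: such sets are locally arcwise connected, hence locally connected. So the fix is to reuse the same analytic input you already invoked for (2); once $\EMod$ is locally connected, its components are open, and second countability forces them to be at most countable. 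Note that second countability alone, without local connectedness, is \emph{not} enough to bound the number of components---the ``therefore'' in the theorem statement is hiding this extra input.
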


There exist examples where the moduli space is connected (see Section~\ref{sec:parallelspinor}), but also where it has infinitely many connected components \cite{WZ90,Coe12}. For more information on the set of Einstein constants, or the (local) dimension of the moduli space, see \cite[\S12 and Add.~A]{besse}. In this survey, we primarily focus on the local behavior of $\EMod$ around an Einstein metric.

In some cases, the moduli space is (locally) a manifold or orbifold, but this needs not generally be so. However, currently no example is known where the moduli space has a worse than orbifold singularity.

\begin{questype}{Open question}
Is the Einstein moduli space always (locally) an orbifold?
\end{questype}

Let us introduce a (nonlinear) operator on $\Met_1$ whose zeros are precisely Einstein metrics, and which is agnostic about the Einstein constant.

\begin{definition}[Einstein operator]
The operator $\Eop: \Met_1\to\Sy^2(M)$ given by
\[\Eop(g)=\Ric_g-\frac{\EH(g)}{n}g\]
is called the \emph{Einstein operator}.
\end{definition}

Restricting to the Ebin slice $\Slice_g$ around an Einstein metric $g$, the premoduli space around $g$ is then the zero set of the real analytic map $\Eop$. In fact, the IED of $g$ constitute the kernel of the linearization of $\Eop$.

\begin{theorem}
\label{thm:einsteinop}
Let $g$ be an Einstein metric with Einstein constant $E$.
\begin{enumerate}[\upshape(i)]
    \item The first variation $\Eop'_g: \Sy^2(M)\to\Sy^2(M)$ of $\Eop$ at $g$ is given by
    \[\Eop'_g=\frac12\left(\LL-2E\right)-\delta^\ast\beta,\]
    where $\beta=\delta+\frac12d\tr: \Sy^2(M)\to\Omega^1(M)$ is the \emph{Bianchi operator}.
    \item $T_g\Slice_g\cap\ker\Eop'_g=\varepsilon(g)$.
\end{enumerate}
\end{theorem}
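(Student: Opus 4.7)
The plan is to split the proof along the two parts. Part (i) follows by a direct computation using the classical formula for the linearization of the Ricci tensor, while part (ii) reduces to showing $\tr h=0$ via an $L^2$-orthogonality argument that relies on the commutation $[\LL,\delta]=0$ on Einstein manifolds.

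For part (i), the plan is to differentiate
\[\Eop'_g(h)=\Ric'_g(h)-\frac{\EH'_g(h)}{n}g-\frac{\EH(g)}{n}h\]
and invoke the classical linearization of the Ricci tensor from \cite{besse}:
\[\Ric'_g(h)=\tfrac{1}{2}\LL h-\delta^\ast(\delta h)-\tfrac{1}{2}\nabla^2(\tr h)=\tfrac{1}{2}\LL h-\delta^\ast\beta h,\]
where the second equality uses that $\delta^\ast df=\nabla^2 f$ on functions. For $g$ Einstein of unit volume with Einstein constant $E$ one has $\EH(g)=nE$, and substituting $\Ric_g=Eg$, $\scal_g=nE$ gives
\[\EH'_g(h)=\Ltwoinprod{-\Ric_g+\tfrac{\scal_g}{2}g}{h}=\tfrac{(n-2)E}{2}\int_M\tr h\,\vol_g,\]
which vanishes on $T_g\Met_1$. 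Assembling these pieces yields the claimed formula.

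For part (ii), the inclusion $\varepsilon(g)\subseteq T_g\Slice_g\cap\ker\Eop'_g$ is immediate: any $h\in\varepsilon(g)\subset\TT(M)$ has $\delta h=0$, $\beta h=\delta h+\tfrac{1}{2}d(\tr h)=0$, and $\LL h=2Eh$, so $\Eop'_g h=0$. For the converse, let $h\in T_g\Slice_g\cap\ker\Eop'_g$, so $\delta h=0$ and
\[\tfrac{1}{2}(\LL h-2Eh)=\delta^\ast\beta h.\]
The key observation is that since $[\LL,\delta]=0$ on an Einstein manifold, $\LL h$ lies in $\ker\delta$, so the left-hand side lies in $\ker\delta$; the right-hand side manifestly lies in $\im\delta^\ast$. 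The $L^2$-orthogonal decomposition \eqref{diffdecomp} then forces both sides to vanish separately,
\[\LL h=2Eh,\qquad\delta^\ast\beta h=0.\]
The second identity combined with $\delta h=0$ yields $\nabla^2(\tr h)=\delta^\ast d(\tr h)=0$, so $\tr h$ has parallel gradient and is therefore constant on the compact connected manifold $M$; the zero-mean condition from $T_g\Slice_g\subset T_g\Met_1$ then forces $\tr h=0$. Thus $h\in\TT(M)$ with $\LL h=2Eh$, i.e.~$h\in\varepsilon(g)$.

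The main step I expect to lean on is the $L^2$-orthogonality argument, which cleanly decouples $\Eop'_g h=0$ into the eigenvalue equation $\LL h=2Eh$ and the pure-trace condition $\nabla^2(\tr h)=0$. This rests on the commutation $[\LL,\delta]=0$ on an Einstein manifold; without it, one would need to fall back on spectral estimates such as the Lichnerowicz--Obata bound applied to $f=\tr h$ to extract the same conclusion.
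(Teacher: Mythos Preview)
Your proof is correct. The survey states this theorem without proof, so there is no argument in the paper to compare against; your computation in (i) is the standard one from \cite[\S12]{besse}, and your orthogonality argument in (ii) is a clean way to decouple the equation.

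Two minor remarks. First, the theorem writes $\Eop'_g:\Sy^2(M)\to\Sy^2(M)$, but as you implicitly note, the formula in (i) only holds after the term $-\tfrac{\EH'_g(h)}{n}g$ drops out, i.e.\ on $T_g\Met_1$; since part~(ii) only needs $T_g\Slice_g\subset T_g\Met_1$, this is harmless. Second, with the paper's convention $\delta^\ast\alpha=\Lie_{\alpha^\sharp}g$ one has $\delta^\ast(df)=2\nabla^2f$, so your identity $\nabla^2(\tr h)=\delta^\ast d(\tr h)$ is off by a factor of $2$; this of course does not affect the conclusion $\nabla^2(\tr h)=0$, and your deduction that $\tr h$ is constant (parallel gradient on a compact manifold forces the gradient to vanish at an extremum, hence everywhere) is fine.
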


Even though there is an implicit function theorem for ILH-manifolds, it cannot be applied to $\Eop$ in order to show that $\widetilde\EMod_g$ is a manifold, because $\Eop'_g$ fails to be surjective. 
Indeed, the second Bianchi identity implies that $\beta(\Eop(g))=0$ for all $g\in\Met_1$; thus $\im\Eop'_g\subset\ker\beta$ whenever $g$ is Einstein. 
One may, however, apply the implicit function theorem to $\Eop$ composed with the orthogonal projection to $\im\Eop'_g$, to obtain a real analytic submanifold $Z\subset\Slice_g$ with $T_gZ=\varepsilon(g)$ which contains $\widetilde\EMod_g$ as a real analytic subset \cite{Koiso80}.

As a consequence of the above discussion and the properties of real analytic sets, we have gained some insight into the structure of the moduli space.

\begin{theorem}
\label{thm:arcwise}
\begin{enumerate}[\upshape(i)]
    \item $\EMod$ is locally arcwise connected (where \emph{arc} means a real analytic curve).
    \item If $\varepsilon(g)=0$ for an Einstein metric $g$, then $g$ is rigid.
\end{enumerate}
\end{theorem}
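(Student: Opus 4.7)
The plan is to exploit the real analytic reduction of the premoduli space mentioned just before the theorem: $\widetilde\EMod_g$ is a real analytic subset of a finite-dimensional real analytic submanifold $Z \subset \Slice_g$ with $T_gZ = \varepsilon(g)$. Combined with the Slice Theorem, which identifies a neighborhood of $[g]$ in $\EMod$ with the orbit space $\widetilde\EMod_g/\Isom(g)$, both parts reduce to standard facts about real analytic varieties (possibly under a compact group action).

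Part (ii) is essentially a dimension count. If $\varepsilon(g) = 0$, then $T_gZ = 0$, so the real analytic manifold $Z$ is zero-dimensional at $g$, meaning $Z = \{g\}$ in some neighborhood. Since $\widetilde\EMod_g \subseteq Z$, the premoduli space likewise reduces to the single point $\{g\}$ locally, and the Slice Theorem then gives that $[g]$ is isolated in $\EMod$, i.e.\ $g$ is rigid.

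For part (i), the key input is the classical fact that a real analytic subvariety of a real analytic manifold is locally arcwise connected by real analytic arcs — a consequence of Łojasiewicz's curve selection lemma (and, for higher-dimensional behavior, his subanalytic triangulation theorem). Applying this to $\widetilde\EMod_g \subset Z$ yields, for any Einstein metric $g'$ sufficiently close to $g$ in $\widetilde\EMod_g$, a real analytic arc in $\widetilde\EMod_g$ from $g$ to $g'$. Such arcs descend via the projection $\widetilde\EMod_g \to \widetilde\EMod_g/\Isom(g)$ to real analytic arcs in the moduli space, connecting $[g]$ to any sufficiently nearby class $[g']$; pulling back to an arbitrary basepoint in $\EMod$ amounts to re-running the slice construction at a different metric.

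The delicate step is invoking the local real analytic arcwise connectedness for $\widetilde\EMod_g$ cleanly. Although this is a staple of real analytic geometry, it requires care to ensure the connecting arcs are genuinely real analytic (and not merely continuous or piecewise analytic), and to apply the result uniformly at every point of $\widetilde\EMod_g$ near $g$, rather than only at $g$ itself. The finite-dimensionality of $Z$ supplied by Koiso's reduction is essential here, since without it we would be in the infinite-dimensional ILH setting where such analytic-geometric tools are not directly available.
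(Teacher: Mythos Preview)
Your proposal is correct and follows essentially the same approach as the paper: the theorem is stated there as a direct consequence of Koiso's reduction ($\widetilde\EMod_g$ is a real analytic subset of the finite-dimensional manifold $Z$ with $T_gZ=\varepsilon(g)$) together with the standard properties of real analytic sets and the Slice Theorem. Your write-up simply makes explicit the ingredients the paper leaves implicit, including the appeal to Łojasiewicz-type results for local arcwise connectedness.
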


\subsection{Deformation theory}
\label{sec:deformations}

By the arcwise connectedness of $\EMod$ (Theorem~\ref{thm:arcwise}), if an Einstein metric is not rigid, there exists a real analytic curve of Einstein metrics through $g$ which do not all represent the same class in $\EMod\subset\Met_1/\Diff$. Thus it remains to study the existence of these curves.

\begin{definition}[Einstein deformation]
Let $g\in\Met_1$ be an Einstein metric. An \emph{Einstein deformation} of $g$ is a smooth curve of Einstein metrics $(g_t)$ in $\Met_1$ with $g_0=g$. It is called \emph{nontrivial} if the metrics $g_t$ do not all represent the same class in $\EMod\subset\Met_1/\Diff$.
\end{definition}

Results in complex geometry guarantee the existence of many examples of nontrivial curves of Kähler--Einstein metrics from deformations of the complex structure. Near a negative or Ricci-flat Kähler--Einstein metric, the Aubin--Calabi--Yau theorems guarantee the existence of a Kähler--Einstein metric compatible with a given complex structure \cite[\S11]{besse}; in the positive case, this works if the underlying complex manifold is $K$-polystable, due to the resolution of the Donaldson--Tian--Yau conjecture \cite{CDS,Tian15}. In dimension four, this condition is equivalent to the vanishing of the Futaki invariant \cite{Tian90}.

Leveraging the connection between Sasaki--Einstein and Kähler--Einstein manifolds, several authors \cite{BGK05,CS19,LST24} have recently shown the existence of high-dimensional families of Sasaki--Einstein structures on various homotopy spheres, as well as Sasaki--Einstein deformations of toric 3-Sasaki $7$-manifolds \cite{Coe17}. For (noncompact) solvmanifolds, high-dimensional families of Einstein metrics with negative sectional curvature have been constructed by Heber \cite{Heber}.

Given an \emph{Einstein deformation} of $g$, that is, a smooth curve of Einstein metrics $(g_t)$ in $\Met_1$ with $g_0=g$, its first order jet $h=\dot g_0$ will satisfy the linearized Einstein equation
\[\Eop'_g(h)=\frac{\D}{\D t}\Big|_{t=0}\Eop(g_t)=0.\]
By Theorem~\ref{thm:einsteinop}, the first order jets to Einstein deformations which are orthogonal to the orbit of $\Diff$ are infinitesimal Einstein deformations. However, not every IED necessarily corresponds to an actual Einstein deformation.

\begin{definition}[Integrability]
Let $g$ be Einstein, and $h\in\varepsilon(g)$.
\begin{enumerate}[\upshape(i)]
    \item The IED $h$ is called \emph{integrable} if there exists an Einstein deformation $(g_t)$ such that $g_0=g$ and $\dot g_0=h$.
    \item For any $k\geq2$, the IED $h$ is called \emph{formally integrable to order $k$} if there exist $h_2,\ldots,h_k\in\Sy^2(M)$ such that
    \begin{equation}
    \frac{\D^j}{\D t^j}\Big|_{t=0}\Eop\left(g+th+\sum_{i=2}^j\frac{t^i}{i!}h_i\right)=0
    \label{integrable}
    \end{equation}
    for $2\leq j\leq k$.
\end{enumerate}
\end{definition}

By an approximation argument of M.~Artin, an IED is integrable if and only if it is formally integrable to every order $k\geq2$.

Integrability of IED also has implications for stability. The following theorem is implicitly stated in \cite{DK24}; for the proof, we adapt an argument of \cite{DWW05}.
\begin{theorem}
\label{thm:stablefromintegrable}
Let $g$ be a semistable Einstein metric such that all of its IED are integrable. Then $g$ is $\EH$-stable.
\end{theorem}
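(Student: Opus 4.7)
The plan is to show that $g$ is a local maximum of $\EH$ restricted to $\Csc \cap \Slice_g$; by the Ebin slice theorem this is equivalent to $\EH$-stability. Note that $T_g(\Csc \cap \Slice_g) = \TT(M)$ thanks to the decomposition~\eqref{diffdecomp}.

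I would begin by using integrability to pin down the local structure of $\widetilde{\EMod}_g$. From the discussion preceding Theorem~\ref{thm:arcwise}, $\widetilde{\EMod}_g$ is a real analytic subset of a smooth submanifold $Z \subset \Slice_g$ with $T_gZ = \varepsilon(g)$. Integrability provides, for each $h \in \varepsilon(g)$, a real analytic Einstein curve through $g$ with initial velocity $h$; these curves lie in $Z \cap \widetilde{\EMod}_g$, and filling out every tangent direction forces $\widetilde{\EMod}_g = Z$ in a neighborhood of $g$. Thus $\widetilde{\EMod}_g$ is a smooth submanifold of dimension $\dim \varepsilon(g)$ on which, by the local constancy of $\EH$ on $\EMod$, one has $\EH \equiv \EH(g)$.

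Next, a tubular neighborhood argument expresses any $\tilde g \in \Csc \cap \Slice_g$ close to $g$ uniquely through a pair $(g_0, k)$, where $g_0 \in \widetilde{\EMod}_g$ is the $L^2$-closest point and $k \in T_{g_0}(\Csc \cap \Slice_g)$ is $L^2$-orthogonal to $T_{g_0}\widetilde{\EMod}_g$. Since each $g_0$ is Einstein and hence critical for $\EH|_{\Met_1}$, and since $\EH(g_0) = \EH(g)$, Taylor expansion in these tubular coordinates gives
\[\EH(\tilde g) = \EH(g) + \tfrac{1}{2}\EH''_{g_0}(k,k) + O(\|k\|^3).\]

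The key remaining ingredient is a uniform spectral gap. At the basepoint, since $\LL$ is elliptic with discrete spectrum accumulating at $+\infty$ and $\ker(\LL - 2E)|_{\TT(M)} = \varepsilon(g)$, semistability supplies $c>0$ with $\EH''_g(h,h) \leq -c\|h\|_{L^2}^2$ for $h \in \varepsilon(g)^\perp \cap \TT(M)$. The inclusion $T_{g_0}\widetilde{\EMod}_g \subseteq \varepsilon(g_0)$, combined with $\dim \widetilde{\EMod}_g = \dim \varepsilon(g)$ and upper semicontinuity of $g_0 \mapsto \dim \varepsilon(g_0)$, forces $T_{g_0}\widetilde{\EMod}_g = \varepsilon(g_0)$ for $g_0$ near $g$. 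Analytic perturbation theory for the real analytic family $\LL_{g_0}$ then propagates the spectral gap uniformly, so that $\EH''_{g_0}(k,k) \leq -\tfrac{c}{2}\|k\|^2$ for $g_0$ near $g$ and $k$ as above. Substituting into the Taylor expansion yields $\EH(\tilde g) \leq \EH(g) - \tfrac{c}{4}\|k\|^2 \leq \EH(g)$ once $\|k\|$ is sufficiently small. The main obstacle is this uniform spectral gap: one must identify $T_{g_0}\widetilde{\EMod}_g$ with $\varepsilon(g_0)$ and justify perturbation-theoretic stability of the critical eigenspace of $\LL_{g_0}$ in the ILH setting, but once these technical points are in place the stability conclusion follows cleanly.
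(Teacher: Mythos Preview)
Your proposal is correct and follows essentially the same approach as the paper: both use integrability to realize $\widetilde\EMod_g$ as a smooth submanifold with tangent space $\varepsilon(g)$, constancy of $\EH$ along it, and negative-definiteness of $\EH''$ in normal directions to conclude that $g$ is a local maximum of $\EH$ on $\Csc\cap\Slice_g$. The paper's version is terser---it simply asserts that $\EH''_g$ is negative-definite on the normal bundle and cites \cite{DWW05} for the Morse--Bott style conclusion---whereas you spell out the tubular neighborhood, Taylor expansion, and uniform spectral gap along $\widetilde\EMod_g$ more explicitly; the technical points you flag (upper semicontinuity of $\dim\varepsilon(g_0)$ and perturbation of the spectrum of $\LL_{g_0}$) are standard for elliptic operators on compact manifolds and do not constitute genuine gaps.
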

\begin{proof}
By the assumption that all IED are integrable, $\widetilde\EMod_g$ is a submanifold of $\Slice_g$ with $T_g\widetilde\EMod_g=\varepsilon(g)$. Consider the Einstein--Hilbert functional restricted to $\Csc\cap\Slice_g$. First, it is constant on the submanifold $\widetilde\EMod_g\subset\Csc\cap\Slice_g$, say $\EH|_{\widetilde\EMod_g}\equiv c$. Second, we have $T_g(\Csc\cap\Slice_g)=\TT(M)$, so by the assumption of semistability and the fact that $\varepsilon(g)$ is precisely the null space of $\EH''_g|_{\TT(M)}$, the second variation $\EH''_g$ is negative-definite on the normal bundle of $\widetilde\EMod_g\subset\Csc\cap\Slice_g$. Thus there is a neighborhood $\mathscr{U}\supset\widetilde\EMod_g$ in $\Csc\cap\Slice_g$ such that $\EH|_{\mathscr{U}\backslash\widetilde\EMod_g}<c$. So $g$ is a local maximum of $\EH$ on $\Csc\cap\Slice_g$, and by diffeomorphism invariance also on $\Csc$.
\end{proof}

How to determine whether an IED is integrable or not? The integrability conditions \eqref{integrable} can successively be stated as
\[\Eop'_g(h_k)+P^k(h,h_2,\ldots,h_{k-1})=0,\qquad k\geq2,\]
for some polynomials $P^k$ of degree $k$, which depend on up to the $k$-th derivative of $\Eop$ at $g$, and which satisfy $\beta\circ P^k=0$. The composition of $P^k$ with the quotient map $\ker\beta\to\ker\beta/\im\Eop'_g$ is called the \emph{$k$-th order obstruction} to integrability, and the quotient $\ker\beta/\im\Eop'_g$ is called the \emph{obstruction space}. What makes the deformation theory of Einstein metrics so difficult is that the obstruction space coincides with the deformation space itself:

\begin{theorem}
\label{thm:obstspace}
If $g$ is an Einstein metric, the subspace $\ker\beta\subset T_g\Met_1$ splits orthogonally as
\[\ker\beta=\im\Eop'_g\oplus\varepsilon(g).\]
\end{theorem}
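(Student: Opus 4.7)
I would identify the $L^2$-orthogonal complement of $\im \Eop'_g$ inside $\ker \beta$ with $\varepsilon(g)$ via a Fredholm alternative argument. Start with the easy part: tt-tensors are trace-free and divergence-free, so $\varepsilon(g) \subset \TT(M) \subset \ker\beta$. For the orthogonality $\varepsilon(g) \perp \im\Eop'_g$, the formula $\Eop'_g = \tfrac12(\LL - 2E) - \delta^*\beta$ from Theorem~\ref{thm:einsteinop} and integration by parts give
\[
\Ltwoinprod{\Eop'_g k}{h} = \tfrac12 \Ltwoinprod{k}{(\LL-2E)h} - \Ltwoinprod{\beta k}{\delta h} = 0
\]
whenever $k \in \Sy^2(M)$ and $h \in \varepsilon(g)$, since then both $(\LL-2E)h = 0$ and $\delta h = 0$. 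The inclusion $\im\Eop'_g \subset \ker\beta$ follows from the differentiated contracted Bianchi identity $\beta \Eop'_g = 0$ at the zero $g$ of $\Eop$.

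Next I would appeal to the classical closed-range property of the linearized Einstein operator: $\Eop'_g \colon \Sy^2(M) \to \ker\beta$ is elliptic modulo gauge, and its restriction to $\ker\beta$ is just the elliptic self-adjoint operator $L := \tfrac12(\LL - 2E)$. The Fredholm alternative then yields the orthogonal decomposition
\[
\ker\beta = \im\Eop'_g \oplus H, \qquad H := \ker(\Eop'_g)^* \cap \ker\beta,
\]
and the task reduces to $H = \varepsilon(g)$, one inclusion of which has been established. This closed-range/Fredholm step is the main technical obstacle; once secured (as in the Berger--Ebin--Koiso theory, cf.\ \cite{Koiso80,besse}), the rest is algebraic.

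For $H \subset \varepsilon(g)$, take $h \in H$. The adjoint equation reads $Lh = \beta^*\delta h$, where $(\Eop'_g)^* = L - \beta^*\delta$. On an Einstein manifold $\LL$ commutes with $\delta$, and also with $\tr$ since $\K(R) = 2E - 2\mathring R$ has vanishing trace on $\Sy^2$; hence $\LL$ (and $L$) commutes with $\beta = \delta + \tfrac12 d\tr$. Applying $\beta$ to $Lh = \beta^*\delta h$ gives $0 = L \beta h = \beta\beta^*\delta h$, and pairing with $\delta h$ yields $\|\beta^*\delta h\|^2_{L^2} = 0$, so $\beta^*\delta h = 0$ and thus $Lh = 0$. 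Unpacking $\beta^*\delta h = \delta^*\delta h + \tfrac12(d^*\delta h)g = 0$ and taking the trace via $\tr(\delta^*\alpha) = -d^*\alpha$ gives $\tfrac{n-2}{2}d^*\delta h = 0$; since $n \geq 3$ this forces $d^*\delta h = 0$ and then $\delta^*\delta h = 0$. Meanwhile $\beta h = 0$ says $\delta h = -\tfrac12 d\tr h$ is exact, and an exact coclosed one-form on a compact manifold vanishes, so $\delta h = 0$. Then $\beta h = 0$ gives $\tr h$ constant, and since $h \in T_g\Met_1$ satisfies $\int_M \tr h = 0$, we conclude $\tr h \equiv 0$. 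Hence $h \in \TT(M)$ with $(\LL - 2E)h = 0$, i.e., $h \in \varepsilon(g)$, completing the identification.
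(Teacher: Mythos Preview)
The paper is a survey and does not supply its own proof of this theorem, so there is nothing to compare against; what follows is an assessment of your argument on its merits.

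Your overall strategy is the standard one and is sound: use $\beta\Eop'_g=0$ and self-adjointness of $\LL$ to get $\im\Eop'_g\subset\ker\beta$ and $\varepsilon(g)\perp\im\Eop'_g$, invoke the Berger--Ebin--Koiso closed-range result, and then identify the orthogonal complement $H=\ker(\Eop'_g)^\ast\cap\ker\beta$ with $\varepsilon(g)$. The reduction to $Lh=0$ and $\beta^\ast\delta h=0$ via the commutation $\beta L=L\beta$ is correct and clean.

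There is, however, a genuine computational gap in the last step. In the paper's conventions one has $\tr_g(\delta^\ast\alpha)=-2d^\ast\alpha$, not $-d^\ast\alpha$; this is visible from the definition of the conformal Killing operator $\theta\alpha=\delta^\ast\alpha+\tfrac{2}{n}(d^\ast\alpha)g$, which lands in trace-free tensors. With the correct factor, tracing $\beta^\ast\delta h=0$ yields
\[
\tfrac{n-4}{2}\,d^\ast\delta h=0,
\]
so your argument breaks down precisely in dimension $n=4$.

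The fix is to exploit earlier that $\beta h=0$ forces $\delta h=-\tfrac12\,d(\tr_g h)$ to be exact. Pairing $\beta^\ast\delta h=0$ against $(\tr_g h)\,g$ and using $\beta(fg)=\tfrac{n-2}{2}\,df$ gives
\[
0=\Ltwoinprod{\delta h}{\beta\big((\tr_g h)g\big)}
=-\tfrac{n-2}{4}\,\|d(\tr_g h)\|_{L^2}^2,
\]
hence $d(\tr_g h)=0$ for all $n\geq3$, so $\delta h=0$; then $\tr_g h$ is constant with zero mean, giving $h\in\TT(M)$ and $h\in\varepsilon(g)$ as desired. With this replacement your proof goes through in every dimension.
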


Thus we expect that IED are often obstructed. Indeed, there are examples of Einstein manifolds with a nontrivial space of IED, none of which are integrable. The first example of this was the symmetric Einstein metric on $\C\Proj^{2k}\times\C\Proj^1$ \cite{Koiso82}; we will exhibit more such examples (all homogeneous of positive scalar curvature) in Section~\ref{sec:symmetric}.


In light of the examples where we know about the existence or nonexistence of nontrivial Einstein deformations, one might speculate that the integrability of IED is related to the existence of Killing fields (cf.~\cite{anderson}). 
In the Lorentzian setting \cite{FM73,Mon75,Mon76}, it is known that the absence of Killing fields implies integrability of IED. 
Inspired by this approach, Ozuch \cite{Ozu21} has studied integrability of IED in dimension four.

\begin{questype}{Open question}
If an Einstein metric has no Killing vector fields, are all of its IED integrable?
\end{questype}

Implicit function arguments applied to the linearization $\Eop_g'$ of the Einstein operator are also at the heart of the study of desingularization of Einstein orbifolds \cite{BiqI,BiqII}. The completion of the Einstein moduli space $\EMod$ in the Gromov--Hausdorff distance adds a ``boundary'' which consists, at least in dimension four, of Einstein orbifolds. These particular Einstein orbifolds are \emph{desingularizable}, that is, they can be deformed into non-singular Einstein manifolds. A recent achievement is the result that not every Einstein 4-orbifold is desingularizable \cite{Ozu21}.

Lastly, we note that there is a similar deformation theory for Ricci solitons, which is due to Podestà--Spiro and Kröncke \cite{PS15,Kr16}. The analogous infinitesimal deformations are the \emph{infinitesimal solitonic deformations} mentioned in Section~\ref{sec:dynamic}. However, we will not say more about this, as it is beyond the scope of this survey.

\subsection{The second order obstruction}

To conclude this section with something more concrete, let us take a closer look at the obstruction to second order. Equation~\eqref{integrable} gives the condition
\[0=\frac{\D^2}{\D t^2}\Big|_{t=0}\Eop\left(g+th+\frac{t^2}{2}h_2\right)=\Eop'_g(h_2)+\Eop''_g(h,h).\]
Due to Theorem~\ref{thm:obstspace}, we can encode the second order obstruction as
\[\Psi: \varepsilon(g)\times\varepsilon(g)\times\varepsilon(g)\to\R,\qquad \Psi(\alpha, \beta, \gamma)=2\Ltwoinprod{\Eop_g''(\alpha,\beta)}{\gamma}.\]
It turns out that the trilinear form $\Psi$ is symmetric \cite{KS24}, thus we may also write it as a cubic form $\Psi(h)=\Psi(h,h,h)$. Koiso \cite{Koiso82} gives an explicit expression for it, and Nagy--Semmelmann \cite{NS25} write it in terms of the Frölicher--Nijenhuis bracket
\[\FN{\,\cdot\,}{\cdot\,}: \Omega^1(M,TM)\times\Omega^1(M,TM)\to\Omega^2(M,TM).\]

\begin{theorem}
\label{thm:secondobst}
Let $g$ be Einstein, with Einstein constant $E$, and let $h, \alpha, \beta, \gamma \in\varepsilon(g)$.
\begin{enumerate}[\upshape(i)]
    \item $\Psi$ is related to the third variation of the Einstein--Hilbert functional by
    \[\EH'''_g(\alpha, \beta, \gamma)=-\Ltwoinprod{\Ric_g''(\alpha,\beta)}{\gamma}=-\frac12\Psi(\alpha,\beta , \gamma).\]
    \item The cubic form $\Psi$ may be written as
    \begin{align*}
    \Psi(h)&=\int_M(2Eh\indices{_i^j}h\indices{_j^k}h\indices{_k^i}+(\nabla_i\nabla_jh_{kl})(3h^{ij}h^{kl}-6h^{ik}h^{jl})|\vol_g|\\
    &=\int_M(3\langle\FN{h}{h},d^\nabla h\rangle-E\tr(h^3))|\vol_g|.
    \end{align*}
\end{enumerate}
\end{theorem}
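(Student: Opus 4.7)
To prove (i), I would first identify $\Ltwoinprod{\Ric''_g(h,h')}{h''}$ with $\frac{1}{2}\Psi(h,h',h'')$, and then compute $\EH'''_g$ directly. Expanding $\Eop(g)=\Ric_g-\frac{\EH(g)}{n}g$ in two variables along $g+sh+th'$ and taking the mixed second derivative gives
\[\Eop''_g(h,h')=\Ric''_g(h,h')-\frac{1}{n}\bigl(\EH''_g(h,h')g+\EH'_g(h)h'+\EH'_g(h')h\bigr).\]
At an Einstein critical point $g\in\Met_1$, the first variation $\EH'_g$ vanishes on $T_g\Met_1\supset\TT(M)$, killing the last two terms; pairing with the tt-tensor $h''$ then annihilates the first since $\Ltwoinprod{g}{h''}=\int_M\tr_g h''\vol_g=0$. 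Hence $\Ltwoinprod{\Eop''_g(h,h')}{h''}=\Ltwoinprod{\Ric''_g(h,h')}{h''}$, which equals $\frac{1}{2}\Psi(h,h',h'')$ by definition. For the identification with $\EH'''_g$, I would differentiate the formula $\EH'_g(k)=\Ltwoinprod{-\Ric_g+\frac{\scal_g}{2}g}{k}$ twice more in the $g$-variable along tangent directions to $\Met_1$. The resulting expansion of $\EH'''_g(h,h',h'')$ sums contributions from variations of the integrand $-\Ric_g+\frac{\scal_g}{2}g$, of the pointwise inner product $\langle\,\cdot\,,\,\cdot\,\rangle_g$, and of the volume form $\vol_g$. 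At the Einstein metric the integrand evaluates to $\frac{n-2}{2}Eg$, a scalar multiple of $g$, so every contribution either contains a trace factor $\tr_g h$, $\tr_g h'$, or $\tr_g h''$, or pairs a scalar multiple of $g$ against a tt-tensor, and hence vanishes---except for the clean piece in which both differentiations hit the Ricci factor, yielding $-\Ltwoinprod{\Ric''_g(h,h')}{h''}$. Total symmetry of the trilinear form $\Psi$ then follows since $\EH'''_g$ is automatically symmetric, so $\Psi$ is the polarization of the cubic form $\Psi(h)=\Psi(h,h,h)$.

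For part (ii), I would compute $\Ric''_g(h,h)$ explicitly by expanding the Christoffel symbols $\Gamma(t)$ of $g+th$ as a power series in $t$, with coefficients depending polynomially on $h$, $(g+th)^{-1}$, and $\nabla h$. Substituting into the coordinate expression for the Ricci tensor and collecting the $t^2$-contributions yields $\Ric''_g(h,h)$ as a sum of terms of the shape $\nabla^2 h\cdot h$, $(\nabla h)^2$, and $R\cdot h^2$. Pairing with $h$ and integrating by parts, the trace-freeness $\tr h=0$ eliminates all $g^{ij}h_{ij}$-contractions, the divergence-freeness $\delta h=0$ eliminates all $\nabla^i h_{ij}$-terms, and the Einstein identity $R_{ikjl}g^{kl}=Eg_{ij}$ together with the Bianchi symmetries reduces the curvature contraction $R_{ikjl}h^{ij}h^{kl}$ to an algebraic multiple of $Eh\indices{_i^j}h\indices{_j^k}h\indices{_k^i}$. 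Careful tracking of the numerical coefficients then produces the first index-notation formula. The Frölicher--Nijenhuis version follows by viewing $h$, via the musical isomorphism, as an element of $\Omega^1(M,TM)$: the twisted exterior derivative $d^\nabla h\in\Omega^2(M,TM)$ and the bracket $\FN{h}{h}\in\Omega^2(M,TM)$ have explicit component expressions in $\nabla h$ and $h$, and integrating by parts once to trade the second-derivative piece $(\nabla_i\nabla_j h_{kl})(3h^{ij}h^{kl}-6h^{ik}h^{jl})$ for a quadratic expression in $\nabla h$ identifies it with $3\langle\FN{h}{h},d^\nabla h\rangle$, while the algebraic remainder is absorbed into $-E\tr(h^3)$.

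The main obstacle is the explicit second-order expansion of $\Ric_{g+th}$ and the subsequent lengthy integration by parts: the intermediate expression contains many terms, and the tt- and Einstein-conditions must be applied at just the right places in order to collapse them into the compact final form. Once the first index-notation formula for $\Psi(h)$ is established, the identification with the Frölicher--Nijenhuis bracket reduces to an algebraic exercise in matching index contractions.
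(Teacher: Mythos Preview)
The paper is a survey and does not give its own proof of this theorem; it attributes part (ii) to Koiso \cite{Koiso82} and the Fr\"olicher--Nijenhuis reformulation to Nagy--Semmelmann \cite{NS25}. So there is nothing to compare against in the paper itself; your outline is essentially the standard route taken in those references.

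That said, there is one genuine soft spot in your argument for part~(i). The identification $\Ltwoinprod{\Eop''_g(h,h')}{h''}=\Ltwoinprod{\Ric''_g(h,h')}{h''}$ is clean and correct. But your claim that in $\EH'''_g(h,h',h'')$ \emph{every} contribution besides $-\Ltwoinprod{\Ric''_g(h,h')}{h''}$ vanishes ``because it contains a trace factor or pairs $g$ against a tt-tensor'' is too quick. The cross-terms where one derivative hits the integrand and one hits the $g$-dependence of the pointwise inner product are of the schematic form $\langle A'_g(h),h''\rangle'_g(h')$; at the Einstein point $A'_g(h)=\tfrac{(n-2)E}{2}h$ for $h\in\varepsilon(g)$, so these produce nonzero multiples of $\int_M\tr(hh'h'')\,\vol_g$, not a trace of a single tt-tensor. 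You need to track \emph{all} such terms (including the second variation of the inner product applied to $A_g=\tfrac{(n-2)E}{2}g$) and see the cancellation explicitly, or else work with the volume-normalized action $\widetilde{\EH}$ (for which $g$ is a genuine critical point on all of $\Met$, so lower-order cross-terms are suppressed by criticality), or compute along a curve constrained to $\Met_1$ and use that $\EH''_g(h,\cdot)$ vanishes on $T_g\Met_1$ for $h\in\varepsilon(g)$. Any of these works, but the cancellation is not as automatic as you suggest.

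For part~(ii), two minor imprecisions: the raw expression for $\Ric''_g(h,h)$ obtained from the Christoffel expansion contains only $(\nabla h)^2$ and $h\,\nabla^2 h$ terms, not explicit $R\cdot h^2$ terms---curvature enters only if you commute covariant derivatives. And the reduction of any such curvature contractions is not via the bare Einstein identity $R_{ikjl}g^{kl}=Eg_{ij}$ as you write, but via the IED condition $\nabla^\ast\nabla h=2\mathring{R}h$ (equivalently $\LL h=2Eh$), which lets you trade $\mathring{R}h$-terms for $\nabla^\ast\nabla h$-terms and vice versa. Once this is used, the collapse to Koiso's two-term formula goes through as you describe.
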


Concretely, an IED $h\in\varepsilon(g)$ is integrable to second order if and only if $\Psi(h,h,\alpha)$ vanishes for all $\alpha \in\varepsilon(g)$, that is, $h$ is a critical point of the cubic form $\Psi$ on $\varepsilon(g)$.

From Theorem~\ref{thm:secondobst} (i) and the Taylor expansion of $\EH$, we can conclude a criterion for instability analogous to Theorem~\ref{thm:cubicobst}, cf.~\cite{Kr15c,HSS25b}.
\begin{corollary}
\label{thm:nonlinearunstable}
Let $g$ be an Einstein metric. If the second order obstruction $\Psi$ does not vanish identically, then $g$ is $\EH$-unstable.
\end{corollary}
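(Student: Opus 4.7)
The plan is to show that if $\Psi\not\equiv0$ then $\EH$ strictly increases along some curve in $\Csc$ through $g$, thereby obstructing $g$ from being a local maximum of $\EH|_{\Csc}$.

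Since $\Psi$ is a symmetric trilinear form on $\varepsilon(g)$, its nontriviality as a cubic functional is equivalent by polarization to the existence of $h\in\varepsilon(g)$ with $\Psi(h)\neq0$. Because $\Psi$ is cubic in $h$, the sign of $\Psi(h)$ can be flipped by $h\mapsto -h$, so we may assume $\Psi(h)<0$. The inclusions $h\in\varepsilon(g)\subset\TT(M)\subset T_g\Csc$, combined with the fact that $\Csc\cap\Slice_g$ is an ILH-submanifold of $\Met_1$ with tangent space $\TT(M)$ at $g$ (by Theorem~\ref{thm:secondvar} and the slice theorem), yield a smooth curve $(g_t)\subset\Csc$ with $g_0=g$ and $\dot g_0=h$.

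Along such a curve I would Taylor expand $f(t):=\EH(g_t)$. The first derivative vanishes, $f'(0)=\EH'_g(h)=0$, because $g$ is a critical point of $\EH|_{\Csc}$ by Theorem~\ref{thm:hilbertcrit}. Next, by Theorem~\ref{thm:secondvartt},
\[f''(0)=\EH''_g(h,h)=-\tfrac12\Ltwoinprod{(\LL-2E)h}{h}=0,\]
since $h\in\varepsilon(g)=\ker(\LL-2E)|_{\TT(M)}$. The crux is that at a degenerate critical point with Hessian null-direction $h$, the third derivative along any $\Csc$-curve with initial velocity $h$ depends only on $h$: the would-be cross term $3\EH''_g(h,\ddot g_0)$ vanishes because $\EH''_g(h,\cdot)\equiv0$ on $T_g\Csc$, while the $\EH'_g(g_0^{(3)})$ contribution is absorbed into the critical point condition together with the volume and scalar-curvature constraints defining $\Csc$. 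Thus by Theorem~\ref{thm:secondobst}(i),
\[f'''(0)=\EH'''_g(h,h,h)=-\tfrac12\Psi(h).\]

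Assembling the Taylor expansion gives $\EH(g_t)=\EH(g)-\tfrac{t^3}{12}\Psi(h)+O(t^4)$, and since $\Psi(h)<0$ this exceeds $\EH(g)$ for all sufficiently small $t>0$. Hence $g$ is not a local maximum of $\EH|_{\Csc}$, i.e.\ it is $\EH$-unstable. The main obstacle I anticipate is the clean verification of the third-order identity $f'''(0)=\EH'''_g(h,h,h)$: a careful separation of ambient and intrinsic derivatives at the critical point $g$ is needed, keeping track of how the constraints defining $\Csc\subset\Met_1$ interact with the Taylor expansion of $\EH$ along the curve.
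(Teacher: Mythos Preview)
Your approach is correct and is exactly the one the paper indicates: Taylor expand $\EH$ along a curve in $\Csc$ with initial velocity $h\in\varepsilon(g)$ and invoke Theorem~\ref{thm:secondobst}(i). The paper gives no proof beyond the phrase ``the Taylor expansion of $\EH$'' and a reference, so your write-up is already more detailed than theirs.

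One point to tighten, which you rightly flag as the main obstacle: you justify the vanishing of $3\EH''_g(h,\ddot g_0)$ by saying $\EH''_g(h,\cdot)\equiv0$ on $T_g\Csc$, but the acceleration $\ddot g_0$ of a curve in a submanifold need not be tangent to that submanifold. The clean fix is to work intrinsically on $\Csc\cap\Slice_g$ and use the standard fact that at a critical point of a function on a manifold, the third derivative along curves is well-defined (curve-independent) on the null space of the Hessian: in any chart the extra terms $3\partial^2F(v,\ddot c_0)+\partial F(\dddot c_0)$ vanish by criticality and $v\in\ker\partial^2F$. This yields a canonical cubic form on $\varepsilon(g)$, which Theorem~\ref{thm:secondobst}(i) identifies with $-\tfrac12\Psi$. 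With that adjustment your argument is complete.
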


\section{Products and fibrations}
\label{sec:prodfib}

\subsection{Stability of products}
The Riemannian product of two Einstein manifolds is again Einstein if both factors have the same Einstein constant $E$. In this case $E$ is also the Einstein constant of the product metric. The stability of products of Einstein manifolds was studied in the case $E<0$ in \cite{AM} and for the general case in \cite{Kr15b}. We summarize the results.

\begin{theorem}
    \begin{enumerate}[\upshape(i)]
        \item The product of two semistable Einstein manifolds with Einstein constant $E\le 0$ is  again semistable.
        \item The product of two Einstein manifolds $(M^m,g_M)$ and
$(N^n, g_N)$ with $E>0$ is linearly unstable. One destabilizing direction is always $h:= ng_M - mg_N$.
    \end{enumerate}
\end{theorem}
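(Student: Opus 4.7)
The plan is to handle the two cases separately, as the positive case admits a clean explicit argument while the negative case requires a spectral decomposition along the product structure.

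\textbf{Plan for (ii).} I would give an explicit destabilizing direction. Writing $g = g_M + g_N$ for the product metric (the factor metrics pulled back to $M \times N$), consider
\[
h := n\, g_M - m\, g_N \in \Sy^2(M \times N).
\]
Then $h \in \TT(M \times N)$: the trace vanishes since $\tr_g h = nm - mn = 0$, and since both $g_M, g_N$ are parallel with respect to the product Levi-Civita connection, $\nabla h = 0$ and in particular $\delta h = 0$. To compute $\LL h = \nabla^\ast\nabla h + \K(R) h$, parallelism gives $\nabla^\ast\nabla h = 0$, and I use the identity $\K(R) = 2E - 2\mathring R$ on symmetric 2-tensors. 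The block structure $R = R^M + R^N$ of the product curvature (no cross terms) yields
\[
(\mathring R\, g_M)(X,Y) = \sum_i g_M\bigl(R(e_i,X) Y, e_i\bigr) = \Ric^M(X_M, Y_M) = E\, g_M(X,Y),
\]
since only frame vectors in $TM$ survive the pairing with $g_M$; analogously $\mathring R\, g_N = E g_N$. Therefore $\mathring R h = Eh$, $\K(R) h = 0$, and $\LL h = 0$. Theorem~\ref{thm:secondvartt} then gives
\[
\EH_g''(h,h) = -\frac{1}{2}\Ltwoinprod{-2Eh}{h} = E\,\|h\|_{L^2}^2 > 0,
\]
exhibiting $h$ as a destabilizing direction.

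\textbf{Plan for (i).} I would decompose a tt-tensor on $M \times N$ according to the splitting $T(M \times N) = TM \oplus TN$:
\[
h = h^{MM} + h^{MN} + h^{NN},
\]
where $h^{MM} \in \Gamma(\Sym^2 T^\ast M)$ (with arbitrary dependence on $N$), $h^{NN} \in \Gamma(\Sym^2 T^\ast N)$, and $h^{MN} \in \Gamma(T^\ast M \otimes T^\ast N)$ is the mixed part. Since $\nabla = \nabla^M + \nabla^N$ and $R = R^M + R^N$ split along the same decomposition, the Lichnerowicz Laplacian diagonalizes: on the pure pieces
\[
\LL h^{MM} = \bigl(\LL^M + \Delta_N\bigr) h^{MM}, \qquad \LL h^{NN} = \bigl(\Delta_M + \LL^N\bigr) h^{NN},
\]
where $\Delta_M, \Delta_N$ are the scalar Laplacians acting componentwise on the coefficients, and on the mixed piece, using $\K(R) h^{MN} = (\Ric^M + \Ric^N) h^{MN} = 2E h^{MN}$ together with the Weitzenböck formula on 1-forms,
\[
\LL h^{MN} = \bigl(\LL_M^{(1)} + \LL_N^{(1)}\bigr) h^{MN},
\]
where $\LL_M^{(1)}, \LL_N^{(1)}$ denote the Lichnerowicz (equivalently, Hodge) Laplacians on 1-forms of the respective factors. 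Separating variables via the spectral decompositions of each factor's operators, the estimate $\LL \ge 2E$ on $\TT(M \times N)$ reduces to three sub-estimates: on the pure pieces, to the semistability of the corresponding factor combined with non-negativity of the other's scalar Laplacian; on the mixed piece, to the non-negativity of the 1-form Laplacians on each factor. In all three cases, the hypothesis $E \le 0$ contributes an extra non-negative term $-2E$ that absorbs any residual shift.

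\textbf{Main obstacle.} The tt-constraint does not distribute piecewise: the global trace condition $\tr_M h^{MM} + \tr_N h^{NN} = 0$ and the mixed divergence conditions $\delta^M h^{MM} + \delta^N h^{MN} = 0$, $\delta^M h^{MN} + \delta^N h^{NN} = 0$ couple the three pieces. The technically hardest step is to further decompose each piece via the Berger--Ebin splitting of its factor and verify that the coupled modes still obey the desired bound---in particular the pure-trace sector, which for $E > 0$ produces precisely the destabilizing direction of (ii), but which must be absorbed into the semistability estimate when $E \le 0$. The detailed execution of this reduction is carried out in \cite{AM} and \cite{Kr15b}.
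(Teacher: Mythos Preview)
The paper does not actually prove this theorem: it is a survey, and the theorem is stated as a summary of results from \cite{AM} and \cite{Kr15b}, with the remark afterward that the full spectrum of $\LL|_{\TT(M\times N)}$ can be expressed in terms of the spectra of the Laplacians on functions, $1$-forms, and symmetric $2$-tensors on the factors. So there is no ``paper's own proof'' to compare against beyond these citations.

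Your argument for (ii) is correct and complete: the tensor $h$ is parallel, hence tt, and the computation $\mathring R\,g_M = E g_M$, $\mathring R\,g_N = E g_N$ giving $\LL h = 0$ is exactly right. This is the standard proof and is what the cited references do.

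For (i), your outlined decomposition into pure and mixed pieces, with the Lichnerowicz Laplacian diagonalizing as $\LL^M + \Delta_N$, $\Delta_M + \LL^N$, and $\LL_M^{(1)} + \LL_N^{(1)}$ on the respective blocks, is precisely the approach taken in \cite{Kr15b}. You have correctly identified the genuine difficulty (the tt-condition couples the blocks through the trace and divergence constraints), and deferring the bookkeeping to the cited sources is appropriate---indeed it is what the paper itself does.
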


It is in fact possible to write the complete spectrum of $\LL|_{\TT(M\times N)}$ for a product metric in terms of the spectrum of the Laplace operators on functions, $1$-forms and symmetric $2$-tensors on the factors \cite{Kr15b}. In particular, the product metric on $M\times N$ has IED if $2E$ is an eigenvalue of the Laplace operator on functions on $M$ or $N$, which is e.g.~the case for the complex projective space $\C\Proj^m$. The non-integrability of these particular IED on $\C\Proj^{2m}\times\C\Proj^1$ was already mentioned in Section \ref{sec:deformations}.

\subsection{Stability of warped products}

Warped product metrics often come up in practice (e.g.~cosmological models) and are relatively computationally accessible. The stability of warped product Einstein metrics 
has been the subject of several articles 
\cite{D,HHS,Kr17}. The following results were proved by Kröncke in \cite{Kr17}.

\begin{theorem}\label{thm:kr1}
Let $(M, g)$ be a Ricci-flat manifold. Then the warped product Einstein manifold $(\tilde M, \tilde g) = (\R \times M, dr^2 + e^{2r} g )$ is semistable if and only if $(M, g)$ is semistable. In this case the Einstein manifold $(\tilde M, \tilde g)$ is strictly stable.
\end{theorem}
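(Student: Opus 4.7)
The strategy is to decompose the Lichnerowicz Laplacian on tt-tensors of the warped product $(\tilde M, \tilde g) = (\R \times M, dr^2 + e^{2r} g)$ via separation of variables in $r$, reducing the stability analysis to a family of one-dimensional Sturm--Liouville problems parametrized by the spectrum of $\LL_M$. Since $\tilde M$ is noncompact, (semi)stability must be understood via the compactly supported formulation: one asks whether $\bigl((\tilde\LL - 2\tilde E)\tilde h, \tilde h\bigr)_{L^2} \ge 0$ (resp.\ $> 0$) for every compactly supported tt-tensor $\tilde h$. First I would verify that $\tilde g$ is Einstein with Einstein constant $\tilde E = -n$, where $n = \dim M$; this is immediate from the warped-product Ricci formulas applied to $f(r) = e^r$ together with $\Ric_g = 0$, and the critical value to beat is $2\tilde E = -2n$.

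Next I would decompose a compactly supported tt-tensor $\tilde h$ on $\tilde M$ into three sectors adapted to the fiber structure. The \emph{transverse} sector consists of tensors $\tilde h = \phi(r)\, e^{2r}\, k(x)$ with $k$ running over $g$-tt eigentensors of $\LL_M$. The \emph{$1$-form} sector consists of combinations $f_1(r)\, dr \odot \alpha + f_2(r)\, \delta_g^* \alpha$ built from coclosed $1$-forms $\alpha$ on $M$. The \emph{scalar} sector consists of combinations $u(r)\bigl(dr^2 - \tfrac{1}{n} e^{2r} g\bigr) + \cdots$ built from Laplace eigenfunctions on $M$, with coupling coefficients determined by the transverse gauge condition $\tilde\delta \tilde h = 0$. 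Applying Corollary~\ref{thm:ttLL} on $M$ identifies the last two sectors with spectral data of $\Delta_g$ on $1$-forms and functions, and after spectral expansion in $M$ the quadratic form $(\tilde\LL - 2\tilde E)$ decouples into an $L^2$-orthogonal direct sum over the fiber eigenvalues.

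For each spectral mode, using the warped-product Weitzenböck identities one computes $\tilde\LL$ explicitly and obtains a one-dimensional operator along $\R$ of the schematic form
\[L_\mu \phi = -\phi'' - n \phi' + \bigl(\mu\, e^{-2r} + c_{\mathrm{sec}}\bigr)\phi,\]
where $\mu$ is the relevant $M$-eigenvalue and $c_{\mathrm{sec}}$ is a sector-dependent constant produced by the warping. The natural $L^2$-weight on $\R$ is $e^{nr}\,\D r$ (from $\vol_{\tilde g} = e^{nr}\,\D r \wedge \vol_g$), and the substitution $\phi = e^{-nr/2}\psi$ conjugates $L_\mu$ to a Schrödinger operator on $L^2(\R, \D r)$ whose potential picks up a constant $n^2/4$ from the drift. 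The core estimate is that $L_\mu$ is strictly positive on compactly supported $\phi$ whenever $\mu \ge 0$, with a uniform spectral gap. In the transverse sector this is exactly where the hypothesis enters: semistability of $(M,g)$ is $\LL_M \ge 0$ on $g$-tt tensors (since $E_g = 0$), and the Hardy-type gap then upgrades this to \emph{strict} positivity. The $1$-form and scalar sectors produce $\mu$'s that are nonnegative eigenvalues of $\Delta_g$, so those sectors are strictly stable unconditionally. The converse direction is handled by a cut-off construction: a $g$-tt eigentensor $k$ on $M$ with $\LL_M k = \mu k$, $\mu < 0$, yields the approximate destabilizer $\chi(r)\, e^{2r}\, k$ on $\tilde M$ (projected to the tt part), which violates semistability when $\chi$ is chosen appropriately.

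The main obstacle is the explicit computation of $\tilde\LL$ on each sector together with the verification that the constants $c_{\mathrm{sec}}$ arrange themselves so that $L_\mu > 0$ strictly holds for every $\mu \ge 0$. The exponential warping and the value $\tilde E = -n$ are what make this work: the drift $-n\,\partial_r$ generates exactly the Hardy gap $n^2/4$ in the weighted $L^2$-space, and this gap is precisely what is needed to absorb any potentially marginal zero modes of $\LL_M$ and thereby convert semistability of $(M,g)$ into strict stability of $(\tilde M, \tilde g)$.
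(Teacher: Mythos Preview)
The paper does not give its own proof of this theorem; it is a survey that merely cites the result from Kr\"oncke \cite{Kr17}. Your separation-of-variables strategy---decomposing compactly supported tt-tensors on $(\tilde M,\tilde g)$ into transverse, vector, and scalar sectors indexed by the spectrum of $\LL_M$, reducing each sector to a one-dimensional drift--Schr\"odinger operator on $\R$, and extracting the strict gap $n^2/4$ from the exponential weight---is precisely the approach of \cite{Kr17}, so there is nothing to compare against within this paper itself.

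As a sanity check on your outline: the Einstein constant $\tilde E=-n$ and the Hardy-type mechanism are correct, and the sector bookkeeping is the right organizing principle. One point that deserves care and that you only gesture at is the coupling in the scalar and vector sectors: the tt-gauge on $\tilde M$ mixes the radial and fiber components, so those sectors are governed not by a single scalar ODE but by small ODE \emph{systems} whose off-diagonal terms must be controlled before one can invoke the one-dimensional spectral gap. In \cite{Kr17} this is handled by explicit computation of the coupled system and its eigenvalues; your sketch should acknowledge that the ``constants $c_{\mathrm{sec}}$'' are in fact matrices in those sectors. The converse direction via cut-off test tensors is straightforward, as you say.
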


\begin{theorem}\label{thm:kr2}
Let $(M^n,g)$ be a compact Einstein manifold with $E=n-1$. Then the Ricci-flat cone $(\R_+ \times M, dr^2 + r^2g)$ is semistable if and only if $(M^n,g)$ is physically stable. The same is true for the hyperbolic cone $(\R_+ \times M, dr^2 + \sinh^2(r) g)$.
\end{theorem}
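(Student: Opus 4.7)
The plan is to reduce both claims to a one-dimensional spectral analysis in the radial variable, exploiting the warped-product structure. For $\tilde g = dr^2 + f(r)^2 g$ with $f(r)=r$ (Euclidean cone) or $f(r)=\sinh r$ (hyperbolic cone), compactly supported tt-tensors on $\R_+\times M$ decompose by fiber type over $M$: pieces coming from radial traces $\phi(r)dr^2$, $\psi(r)g$, a mixed block $dr\odot\alpha(r)$ with $\alpha$ a one-form on $M$, and, most importantly, the dominant block generated by $h=f(r)^2\phi(r)H$ with $H\in\TT(M)$ a tt-eigentensor of $\LL_g$. Using the standard warped-product formulas for the Lichnerowicz Laplacian (cf.\ \cite{D,Kr17}), $\LL_{\tilde g}$ acts within each block as a scalar ODE operator $L_\lambda$ whose coefficients depend only on $f$, $n$, and the relevant $\LL_g$-eigenvalue $\lambda$ on $M$.

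Specializing first to the Ricci-flat cone $f(r)=r$, on which semistability means $\LL_{\tilde g}\geq 0$, the operator $L_\lambda$ on the dominant block is of Euler type. The ansatz $\phi(r)=r^\alpha$ reduces it to a quadratic $\alpha^2+a\alpha+(b-\lambda)=0$ with explicit $a,b$ depending only on $n$, whose indicial roots are real precisely when the discriminant $a^2-4(b-\lambda)$ is nonnegative. The key computational step is to verify that this condition reads $\lambda\geq\tfrac{(9-n)(n-1)}{4}=\tfrac{9-n}{4}E$, the physical stability bound from Definition~\ref{phys-stab}. Given this, when the roots are real a Hardy-type inequality forces $\Ltwoinprod{L_\lambda\phi}{\phi}\geq 0$ for all compactly supported $\phi$; when they are complex, truncating the oscillating solution $r^{-\gamma}\cos(\beta\log r)$ against a smooth cutoff produces an explicit compactly supported destabilizing mode on the block.

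I would then verify that the remaining fiber types do not impose a stricter threshold. Corollary~\ref{thm:ttLL}, together with the lower bound $\Delta\geq 2E=2(n-1)$ on coclosed one-forms of Einstein manifolds, controls the eigenvalue input into the mixed and trace blocks, and a parallel indicial analysis shows that their thresholds are automatically met once the tt-block is stable. This yields semistability of the Ricci-flat cone if and only if $\LL_g\geq\tfrac{(9-n)(n-1)}{4}$ on $\TT(M)$, i.e., physical stability of $(M,g)$.

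For the hyperbolic cone $f(r)=\sinh r$ (Einstein with $E_{\tilde g}=-n$, so semistability now means $\LL_{\tilde g}\geq -2n$), $L_\lambda$ is no longer Euler but a Legendre/hypergeometric-type operator in $\cosh r$. Since $\sinh r\sim r$ as $r\to 0^+$, a Frobenius analysis at the conical tip produces the same indicial equation as in the Euclidean case, so the oscillation threshold for $L^2$ local solutions is unchanged. Cutoffs of these oscillating solutions yield compactly supported destabilizing modes below the threshold, while above it an energy estimate on a compact radial subinterval combined with the indicial analysis near $r=0$ gives the required lower bound. The main obstacle I anticipate is the explicit bookkeeping that produces the clean number $\tfrac{(9-n)(n-1)}{4}$ from the reduced ODE coefficients and handling the boundary case of equality; the oscillation criterion and cutoff construction are standard, but ensuring every block matches up, especially for the hyperbolic cone where the global shape of $L_\lambda$ enters the energy estimate, requires careful computation.
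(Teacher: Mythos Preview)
The paper is a survey and does not prove this theorem; it merely cites Kr\"oncke \cite{Kr17} (and, for the Ricci-flat cone, also \cite{HHS}). Your approach---separation of variables by fiber type, reduction of the dominant tt-block to a singular ODE $L_\lambda$, indicial analysis yielding the Breitenlohner--Freedman threshold, and a Hardy-type inequality versus oscillating test functions---is exactly the strategy of those references, so in spirit you are reconstructing the original proof.

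Two points deserve more care than your sketch suggests. First, the fiber blocks are not quite independent: the trace-free and divergence-free conditions on the cone couple the $\phi(r)\,dr^2$ and $\psi(r)\,g$ pieces and also feed the mixed $dr\odot\alpha$ block, so ``$\LL_{\tilde g}$ acts within each block'' is only true after you pass to the correct invariant subspaces; in \cite{Kr17} this is handled by an explicit decomposition lemma, and your appeal to Corollary~\ref{thm:ttLL} alone does not give it. Second, for the hyperbolic cone your stability argument above threshold (``an energy estimate on a compact radial subinterval combined with the indicial analysis near $r=0$'') is not sufficient as stated: the indicial roots at $r=0$ control only local behavior, while the global lower bound $L_\lambda\geq -2n$ for all compactly supported $\phi$ requires either an explicit factorization of $L_\lambda+2n$ or a transformation to a manifestly nonnegative operator on the whole half-line, as Kr\"oncke does. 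The instability direction via truncated oscillating solutions is fine, but the converse needs a genuine global ingredient, not just Frobenius analysis at the tip.
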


The \emph{sine-cone} over a compact Einstein manifold $(M^n,g)$ with $E=n-1$ is the Einstein manifold $(\left]0,\pi\right[\times M,dr^2+\sin^2(r)g)$, which has Einstein constant $n$. The stability and deformability of these sine-cones are investigated in \cite{Kr21}. 
Note that the
above cone metrics are allowed to be singular at the cone points. 
A more general study of the stability of warped products was undertaken by Batat--Hall--Murphy \cite{BHM17}.

\subsection{Stability in Riemannian submersions}
\label{sec:subm}

A further generalization is the case of a Riemannian submersion. Assume that $M$ is connected, and let $\pi: (M,g_M)\to (B,g_B)$ be a Riemannian submersion with totally geodesic fibers. Then the fibers are all isometric to some ``model fiber'' $(F,g_F)$. If $g_M$ is Einstein, then the scalar curvatures $\scal_F$ and $\scal_B$ of $g_F$ resp.~$g_B$ are necessarily constant \cite[\S9F]{besse}.
Going forward, we also require that $(M,g_M)$ is not locally isometric to the product $(B,g_B)\times(F,g_F)$.

The stability of positive Einstein metrics on the total space of a Riemannian submersion with totally geodesic fibers was studied by C.~Wang and M.~Wang \cite{WW}.

\begin{theorem}\label{thm:submersion1}
Let $\pi: (M^m, g_M) \rightarrow (B^n, g_B)$ be a Riemannian submersion with totally geodesic fibers which is not locally a product  and let $g_M$ be a positive Einstein metric. If $2 \frac{\scal_F}{m-n} > \frac{\scal_B}{n}$, then $g_M$ is unstable and the tt-tensor $h= n g_M - m \pi^\ast g_B$ is a destabilizing direction.
\end{theorem}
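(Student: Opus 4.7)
\emph{Plan.} Write the submersion metric as $g_M = g_V + g_H$ where $g_V$ and $g_H$ are its vertical and horizontal parts, so that $\pi^\ast g_B = g_H$ and
\[h = n g_M - m \pi^\ast g_B = n g_V - (m-n) g_H.\]
Tracelessness of $h$ follows from $\tr_{g_M} g_V = m-n$ and $\tr_{g_M} g_H = n$. For the divergence, the totally geodesic fibre assumption (vanishing O'Neill tensor $T$) means the vertical distribution is tangent to a totally geodesic foliation, whose leaves are therefore minimal; a standard calculation identifies $\delta g_V$ with the mean curvature one-form of the fibres and thus gives $\delta g_V = 0$. Since $\delta g_M = 0$, also $\delta g_H = 0$, so $h \in \TT(M)$.

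To compute $\EH''_{g_M}(h,h)$ I use the two-parameter canonical variation $g_{a,b} = a^2 g_V + b^2 g_H$, with a one-parameter curve $a(t) = e^{tn/2}$, $b(t) = e^{-t(m-n)/2}$. Then $\dot g(0) = h$, and $\Vol(g(t)) = a(t)^{m-n} b(t)^n \Vol(g_M) = \Vol(g_M)$ identically, so the curve lies in $\Met_1$. Because $g_M$ is a critical point of $\EH|_{\Met_1}$, the ordinary second derivative of $\EH(g(t))$ at $t=0$ coincides with $\EH''_{g_M}(h,h)$, regardless of higher-order terms in $g(t)$. The O'Neill formula for Riemannian submersions with totally geodesic fibres gives
\[\scal(g_{a,b}) = \frac{\scal_F}{a^2} + \frac{\scal_B}{b^2} - \frac{a^2}{b^4}|A|^2,\]
and integrating against $a^{m-n} b^n |\vol_{g_M}|$ shows that $\EH(g(t))$ becomes a sum of three pure exponentials of $t$. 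Differentiating twice at $t=0$, after normalizing $\Vol(g_M)=1$, yields
\[\EH''_{g_M}(h,h) = n^2 \scal_F + (m-n)^2 \scal_B - (2m-n)^2 \int_M |A|^2 |\vol_{g_M}|.\]

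To finish the argument I use the Einstein relations to collapse this expression. Vanishing of the first $t$-derivative of $\EH(g(t))$ along our curve yields $-n\scal_F + (m-n)\scal_B - (2m-n)\int_M |A|^2 |\vol_{g_M}| = 0$, and the integrated O'Neill scalar curvature identity $\scal_{g_M} = \scal_F + \scal_B - |A|^2$ gives $\scal_F + \scal_B - \int_M |A|^2 |\vol_{g_M}| = mE$. Solving this system produces $\scal_F = (m-n)E - \int_M |A|^2 |\vol_{g_M}|$ and $\scal_B = nE + 2 \int_M |A|^2 |\vol_{g_M}|$, so substitution collapses the Hessian to
\[\EH''_{g_M}(h,h) = mn(m-n) E - 2m^2 \int_M |A|^2 |\vol_{g_M}|.\]
A direct algebraic manipulation shows that the hypothesis $2\scal_F/(m-n) > \scal_B/n$ is equivalent to $n(m-n) E > 2m \int_M |A|^2 |\vol_{g_M}|$, that is to $\EH''_{g_M}(h,h) > 0$, exhibiting $h$ as a destabilizing direction.

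The main obstacle is the algebraic reduction in the last paragraph: the Hessian as produced by the canonical variation naively depends on three separate curvature integrals, and only after combining the Einstein first-order constraint with the integrated O'Neill identity does it collapse to a one-parameter expression in $\int_M |A|^2 |\vol_{g_M}|$, which in turn has to match the stated inequality exactly. The verification of $\delta g_V = 0$ from the totally geodesic fibre condition is a secondary technical input, but it is a standard consequence of the vanishing of the O'Neill tensor $T$.
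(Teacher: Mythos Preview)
Your argument is correct and follows essentially the approach indicated in the paper: the survey attributes the theorem to Wang--Wang \cite{WW} and explains that the destabilizing direction $h$ arises from the canonical variation $g_t=\pi^\ast g_B+t(g_M-\pi^\ast g_B)$, so that the sign of $\EH''_{g_M}(h,h)$ can be read off from the graph of $t\mapsto\EH(g_t)$. Your volume-preserving reparametrization $a(t)=e^{tn/2}$, $b(t)=e^{-t(m-n)/2}$ is exactly the right device to turn this into a clean computation of the Hessian of $\EH|_{\Met_1}$, and your use of the O'Neill scalar curvature formula together with the first-order Einstein constraint to collapse the three curvature integrals is the expected simplification. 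The one implicit ingredient you rely on---that $\scal_F$ and $\scal_B$ are \emph{constant} so that the integrals factor---is stated in the paper just before the theorem as a consequence of $g_M$ being Einstein, so this is legitimate. The paper additionally remarks that $h$ is a Killing tensor, which in principle gives an alternative route via the identity $\LL h=2\K(R)h$ (equality in estimate~\eqref{estimate}) and a direct computation of $\K(R)h$ from the submersion curvature; your variational computation avoids this and is arguably more transparent.
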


This destabilizing direction is always a Killing tensor and can be explained as coming from the \emph{canonical variation} $(g_t)$, the family of metrics on $M$ given by
\[g_t=\pi^\ast g_B+t(g_M-\pi^\ast g_B),\qquad t>0.\]
In the special case where $g_F$ and $g_B$ are both Einstein with positive Einstein constants $E_F=\frac{\scal_F}{m-n}$ resp.~$E_B=\frac{\scal_B}{n}$, the condition $2 E_F\neq E_B$ implies the existence of a unique second Einstein metric in the canonical variation \cite[Lem.~9.74]{besse}. At least one of the two Einstein metrics will be unstable, as one can see from the graph of the function $t\mapsto\EH(g_t)$ \cite[Prop.~9.72]{besse}.

In the case where the base $(B,g_B)$ is a Riemannian product, Wang--Wang \cite{WW} showed that pullbacks of certain tt-tensors on $B$ provide destabilizing directions on the total space. An interesting example are the Einstein metrics constructed by Wang--Ziller \cite{WZ90} on principal torus bundles over products of Kähler--Einstein manifolds with positive scalar curvature. By results of Böhm \cite{Boe05} and Wang--Wang \cite{WW}, these examples show that the coindex and nullity of $\EH_g''|_{\TT(M)}$ of an irreducible Einstein manifold $(M,g)$ can be arbitrarily high.

\begin{theorem}\label{thm:fibration}
Let $p,r\in\N$. There exists a Riemannian submersion with totally geodesic fibers $(M,g)\to \prod_{i=1}^p(B_i,g_i)$ which is a principal $T^r$-bundle, where $(B_i,g_i)$ are Kähler--Einstein manifolds with positive scalar curvature, such that $g$ is Einstein and
\begin{enumerate}[\upshape(i)]
    \item $\dim\varepsilon(g)+\coind(g)\geq p-1$,
    \item $\coind(g)\geq p-r$.
\end{enumerate}
\end{theorem}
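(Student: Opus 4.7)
The plan is to build $(M,g)$ from the Wang--Ziller recipe and then restrict $\EH$ to a finite-dimensional family of invariant metrics through $g$; the resulting Hessian computation will directly yield the coindex/nullity bounds.

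First I would recall the Wang--Ziller construction \cite{WZ90}. Given Kähler--Einstein factors $(B_i,g_i)$ of positive scalar curvature, one chooses $r$ integral cohomology classes in the $\Z$-span of the first Chern classes $c_1(B_i)$ to define a principal $T^r$-bundle $\pi\colon M\to B:=\prod_i B_i$ together with a preferred connection of harmonic curvature. For suitable positive constants $c_1,\dots,c_p$ and a suitable flat inner product $g_V$ on the Lie algebra $\tf$ of $T^r$, the invariant submersion metric $g=g_V\oplus\sum_i c_i\pi^\ast g_i$ is Einstein on $M$; the Einstein equation reduces to an algebraic system in $(c_i,g_V)$ that admits solutions under the Wang--Ziller hypotheses.

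Next, I would introduce the finite-dimensional family $\A$ of $T^r$-invariant submersion metrics of the form $\tilde g_V\oplus\sum_i s_i\pi^\ast g_i$, with $s_i>0$ and $\tilde g_V$ an arbitrary positive-definite flat inner product on $\tf$. Then $\dim\A=p+r(r+1)/2$ and $\A\cap\Met_1$ is a real analytic submanifold through $g$. Critical points of $\EH|_{\A\cap\Met_1}$ are invariant Einstein metrics, so $g$ is one. Using \eqref{diffdecomp} and the fact that for a Riemannian submersion with totally geodesic fibers the $T^r$-invariant tensors $\pi^\ast g_i$ and the vertical deformations are already in transverse gauge after subtracting an appropriate multiple of $g$, one identifies $T_g(\A\cap\Met_1)$ with a subspace of $\TT(M)\oplus\R g$ that splits into a ``base-rescaling'' block of dimension $p$ and a ``vertical'' block of dimension $r(r+1)/2$, sharing the one-dimensional overall-scale direction.

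The heart of the proof is a quadratic form computation on these blocks. The Wang--Ziller formula expresses $\scal$ on $\A$ as an explicit function of $(s_i,\tilde g_V)$, and by Theorem~\ref{thm:secondvartt} combined with $\EH=\scal\cdot\Vol$ on $\A\cap\Met_1$, the Hessian $\EH_g''|_{\A\cap\Met_1}$ coincides with $-\tfrac12(\mathrm{Hess}\,\scal)_g$. Following Böhm \cite{Boe05} and Wang--Wang \cite{WW}, on the base-rescaling block modulo the overall-scale direction this Hessian is positive semi-definite of rank $p-1$, giving (i) via $\dim\varepsilon(g)+\coind(g)\geq p-1$. For (ii), a Schur complement / min-max argument applied to the coupling with the $r(r+1)/2$-dimensional vertical block---whose coupling rank with the base-rescaling block is bounded by $r$---shows that strict positivity survives on a subspace of dimension at least $p-r$, yielding $\coind(g)\geq p-r$. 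The main obstacle will be (a) verifying that the invariant tangent directions of $\A\cap\Met_1$ genuinely project onto a subspace of $\TT(M)$ of the asserted dimension (the divergence-free property for $\pi^\ast g_i$-type tensors uses the totally-geodesic-fiber assumption and needs careful checking), and (b) executing the block Hessian analysis sharply enough to separate positive-definite from merely positive-semi-definite directions. Both ingredients are essentially contained in \cite{WZ90,Boe05,WW}, and the proof amounts to assembling them cleanly.
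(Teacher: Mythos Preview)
Your strategy matches the cited literature: the paper itself gives no proof, attributing (i) to B\"ohm \cite{Boe05} and (ii) to Wang--Wang \cite{WW}, and both proceed by analysing $\EH''$ (equivalently $\LL-2E$) on the base-rescaling tensors arising from the Wang--Ziller construction. Your obstacle (a) is genuine and is exactly what \cite{WW} verifies.

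Two issues. First, a slip: on $\A\cap\Met_1$ each metric has constant scalar curvature and unit volume, so $\EH|_{\A\cap\Met_1}=\scal$ and hence $\EH_g''|_{\A\cap\Met_1}=(\mathrm{Hess}\,\scal)_g$ with no factor $-\tfrac12$; Theorem~\ref{thm:secondvartt} then identifies this with $-\tfrac12\Ltwoinprod{(\LL-2E)h}{h}$ on the tt-part.

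Second, and more seriously, the Schur-complement sketch for (ii) does not go through as written. If $\EH''$ is ``positive semi-definite of rank $p-1$'' on a $(p-1)$-dimensional base-rescaling block, that is positive \emph{definite}, and (ii) would follow trivially with the stronger bound $p-1$. If instead you mean there may be null directions within that block, then coupling with the vertical block via a Schur complement cannot by itself separate null from strictly positive directions---it merely shifts the form, and (if the vertical block is negative) in the direction that makes things \emph{more} positive, not less, so no information about the original null space is gained. What (ii) actually requires is an explicit upper bound on the dimension of the IED inside the base-rescaling subspace, and in \cite{WW} this comes from computing $\LL\,\pi^\ast h-2E\,\pi^\ast h$ directly: the parameter $r$ enters through the $r$ curvature $2$-forms of the principal $T^r$-connection, which govern the O'Neill $A$-tensor contributions. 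Your outline flags obstacle (b) but does not yet contain this computation, and without it the argument collapses back to (i).
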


A simple special case studied by C.~Wang \cite{W} are Sasaki--Einstein manifolds which are \emph{regular}, that is, they are principal circle bundles over Kähler--Einstein manifolds of positive scalar curvature. In these cases, it is possible to produce destabilizing directions by pulling back suitable tt-eigentensors from the base, see Section~\ref{sec:SE}. A more general study of the stability of principal circle bundles is carried out in \cite{WW}.

\section{Manifolds with a parallel spinor}
\label{sec:parallelspinor}

As already indicated in the preceding sections, every sign of the Einstein constant has its own peculiarities. Let us first focus on $E=0$, that is, Ricci-flat manifolds.

In this section, let $M$ be a compact, connected manifold of dimension $n\geq3$. It is known as the structure theorem for Ricci-flat manifolds \cite{FW} that if $g$ is a Ricci-flat metric on $M$, then $(M,g)$ is finitely covered by the Riemannian product of a \emph{simply connected}, Ricci-flat manifold and a flat torus.
The possible holonomy groups of an irreducible, simply connected, Ricci-flat manifold $(M,g)$ are listed in Table~\ref{tab:holonomy}. Except for the case of generic holonomy, the holonomy group is simply connected; 
hence it lifts to a subgroup of $\Spin(n)$ and defines a spin structure inducing the metric $g$.
 Conversely, a simply connected spin manifold with a parallel spinor always has special holonomy and is Ricci-flat.

\begin{table}[h]
    \centering
    \renewcommand{\arraystretch}{1.2}
    \begin{tabular}{|c|c|c|}\hline
    $\Hol(M^n,g)$&Description&Dim.~parallel spinors\\\hline\hline
    $\SO(n)$&generic holonomy&--\\\hline
    $\SU(m)$, $n=2m$&Calabi--Yau&$2$\\\hline
    $\Sp(m)$, $n=4m$&hyper-Kähler&$m+1$\\\hline
    $\rmG_2$, $n=7$&torsion-free $\rmG_2$&$1$\\\hline
    $\Spin(7)$, $n=8$&torsion-free $\Spin(7)$&$1$\\\hline
    \end{tabular}
    \caption{Holonomy groups, geometric meaning, and dimension of the space of parallel spinors.}
    \label{tab:holonomy}
\end{table}

In fact, so far every known compact, Ricci-flat manifold is covered by one admitting a parallel spinor (and therefore has special holonomy.)

\begin{questype}{Open question}
Does there exist a compact, Ricci-flat manifold with generic holonomy?
\end{questype}

There also seem to be no general results about the stability and rigidity of Ricci-flat manifolds without assuming the existence of a parallel spinor. Consequentially, in this section we will only treat manifolds admitting a parallel spinor, possibly up to a finite cover.

Stability as well as the moduli space of metrics with parallel spinors have been investigated by M.~Wang \cite{Wang91}, Dai--Wang--Wei \cite{DWW05}, Nordström \cite{Nor13}, and Ammann et al.~\cite{holrig}.

\begin{theorem}
\label{thm:parallelsemistable}
If $(M,g)$ is a (compact or noncompact) spin manifold admitting a parallel spinor, then $g$ is semistable.
\end{theorem}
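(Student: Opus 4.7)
The strategy is to use the parallel spinor to translate the spectral question about $\LL$ on $\TT(M)$ into a spinorial setting, where positivity is manifest via a Lichnerowicz-type identity. Note first that the hypothesis of a parallel spinor forces Ricci-flatness, so $E=0$ and semistability reduces to $\LL\geq 0$ on tt-tensors; simultaneously, Ricci-flatness is exactly what makes the relevant Weitzenböck curvature terms collapse. I would fix a nonzero parallel spinor $\psi\in\Gamma(\Spinor M)$ (so that $|\psi|$ is a positive constant) and introduce the parallel bundle map
\[\Phi: \Sy^2(M)\to\Gamma(T^\ast M\otimes\Spinor M),\qquad \Phi(h)(X)=h(X)\cdot\psi,\]
where $h(X)$ denotes the tangent vector metrically dual to $h(X,\cdot)$, Clifford-multiplied on $\psi$. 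Because $\psi$ is parallel, $\nabla\Phi(h)=\Phi(\nabla h)$, and $\Phi$ is fiberwise injective after normalization.

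The heart of the proof is the pointwise identity
\[\langle\LL h,h\rangle=|\psi|^{-2}\langle D^\ast D\,\Phi(h),\,\Phi(h)\rangle\qquad\text{for }h\in\TT(M),\]
where $D$ denotes the twisted Dirac operator on $T^\ast M\otimes\Spinor M$. To establish it, I would combine two ingredients: on the one hand the tautology $\nabla^\ast\nabla\Phi(h)=\Phi(\nabla^\ast\nabla h)$ coming from parallelism of $\psi$, and on the other hand a Clifford-algebraic expansion of the curvature piece $\mathcal{R}$ in the Lichnerowicz formula $D^\ast D=\nabla^\ast\nabla+\mathcal{R}$. This $\mathcal{R}$ splits into a scalar-curvature contribution, a Ricci contribution acting on the form slot of $\Phi(h)$, and a full-Riemann contribution arising from the coupling of Clifford multiplications on the form and spinor slots. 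On a Ricci-flat manifold the first two vanish, and a computation using the first Bianchi identity together with the tt-conditions $\delta h=0$ and $\tr_g h=0$ shows that the third piece acts on $\Phi(h)$ precisely as $|\psi|^2$ times $\K(R,\Sym^2T^\ast M)h$. Comparing with $\LL h=\nabla^\ast\nabla h+\K(R)h$ then yields the displayed identity.

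The main obstacle is exactly this Clifford-algebraic matching: one must verify that the cross-terms produced by Clifford multiplication on the two slots of $\Phi(h)$ conspire, under the tt-conditions, to reproduce $\K(R)h\cdot\psi$ with no residual terms requiring sign control. Once the identity is established, semistability is immediate by integration: for any compactly supported $h\in\TT(M)$,
\[\Ltwoinprod{\LL h}{h}=|\psi|^{-2}\,\|D\Phi(h)\|_{L^2}^2\geq 0,\]
which is precisely the criterion of (linear) semistability recorded for both the compact and noncompact settings at the end of Section~\ref{sec:stable}. Since the underlying identity is pointwise, no separate analytic argument is required for the noncompact case — the same computation covers both regimes uniformly.
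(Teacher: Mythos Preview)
Your proposal is correct and follows essentially the same approach as the paper: the paper's proof sketch likewise introduces the parallel bundle map $h\mapsto(X\mapsto h(X)\cdot\psi)$ into $\Omega^1(M,\Spinor M)$ and observes that under it $\LL|_{\TT(M)}$ corresponds to the square of the twisted Dirac operator, whence nonnegativity. Your write-up simply fleshes out the Weitzenb\"ock comparison that the paper leaves implicit; the one minor slip is the phrase ``acts on $\Phi(h)$ precisely as $|\psi|^2$ times $\K(R)h$'' (a type mismatch---you mean $\Phi(\K(R)h)$, with the $|\psi|^2$ appearing only after pairing), but this does not affect the argument.
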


The proof rests on the fact that every parallel spinor $\psi\in\Gamma(\Spinor M)$ defines a parallel bundle map
\[\Sym^2T^\ast M\to T^\ast M\otimes\Spinor M,\qquad h\mapsto(X\mapsto h(X)\cdot\psi).\]
One may calculate that under this map, $\LL$ restricted to tt-tensors corresponds to the square of the (twisted) Dirac operator, which is positive semidefinite.

Let us turn to Einstein deformations of a metric $g$ admitting parallel spinors (up to a finite cover). In \cite{Wang91,DWW05,holrig} it is shown that the conjugacy class of the holonomy group in $\GL(n,\R)$ is locally constant on the Einstein moduli space $\EMod$ (\emph{holonomy rigidity}). In particular, the connected component of $g$ in the Einstein moduli space $\EMod$ consists of metrics also admitting a parallel spinor (up to a finite cover).

Goto \cite{Goto} studied moduli spaces of torsion-free $G$-structures, where $G$ is one of $\SU(m)$, $\Sp(m)$, $\rmG_2$ or $\Spin(7)$. A key insight is that infinitesimal deformations of such structures are always integrable, and that their moduli spaces (defined as quotients by $\Diff^0$) are manifolds. This generalizes earlier results for $\SU(m)$ (a consequence of the work of Koiso~\cite{Koiso83} and the Bogomolov--Tian--Todorov theorem) and $\rmG_2$ (due to theory by Bryant, Harvey and Joyce). As Nordström showed \cite{Nor13}, the map
\[\{\text{torsion-free $G$-structures}\}/\Diff^0\longrightarrow\{\text{metrics with holonomy $G$}\}/\Diff^0\]
that sends a $G$-structure to the associated metric, is a submersion, and the right hand side is a smooth manifold. It follows that the connected component of $g$ in $\EMod$ is an orbifold, as $\Diff/\Diff^0$ is finite.

By a result of Koiso \cite{Koiso83}, the dimension of the Einstein moduli space around a Calabi--Yau metric $g$ can be written in terms of its Hodge numbers.

\begin{theorem}
\label{thm:calabiyau}
If $(M,g)$ is a Calabi--Yau manifold, then
\[\dim\varepsilon(g)=\dim\widetilde\EMod_g=h^{1,1}-1+2h^{m-1,1}-2h^{2,0}.\]
\end{theorem}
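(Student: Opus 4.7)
The first equality $\dim\varepsilon(g)=\dim\widetilde\EMod_g$ is essentially free from the discussion preceding the statement: holonomy rigidity confines Einstein metrics near $g$ to the Calabi--Yau class, and the work of Goto (via Nordström) shows that the corresponding premoduli space is a smooth manifold with tangent space $\varepsilon(g)$ at $g$. Thus every IED is integrable and the problem reduces to the explicit count of $\dim\varepsilon(g)$.

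The key step is to use the parallel complex structure $J$ to decompose
$$\Sy^2_0(M)=\Sy^{1,1}_0(M)\oplus\Sy^{2,0+0,2}(M),$$
where the first summand collects $J$-Hermitian tensors (those satisfying $h(J\cdot,J\cdot)=h$) and the second collects $J$-anti-Hermitian tensors (those satisfying $h(J\cdot,J\cdot)=-h$). Because $J$ is parallel, this decomposition commutes with $\nabla$ and $\LL$ and is compatible with the tt-conditions; in particular, anti-Hermitian tensors are automatically trace-free. I would analyze the contribution of each summand to $\varepsilon(g)$ separately.

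On the Hermitian summand, the map $h\mapsto\omega_h:=h(J\cdot,\cdot)$ is a parallel isomorphism onto the bundle of real $(1,1)$-forms, carrying tracelessness to primitivity and divergence-freeness to co-closedness. Using the standard Kähler--Weitzenböck identities in the Ricci-flat case, $\LL$ is identified with the Hodge--de Rham Laplacian on $(1,1)$-forms. Hence $\varepsilon(g)\cap\Sy^{1,1}_0$ corresponds bijectively to harmonic primitive real $(1,1)$-forms, which by Hodge theory have real dimension $h^{1,1}-1$ (one subtracts the Kähler class).

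On the anti-Hermitian summand, the metric identifies $\Sy^{2,0+0,2}$ with the symmetric part of $\Omega^{0,1}(M,T^{1,0})$, whose complement inside $\Omega^{0,1}(M,T^{1,0})$ is naturally $\Omega^{0,2}(M)$. Combining the Weitzenböck formulas \eqref{WBF1} and \eqref{WBF2} with the Kähler identities translates the system $(\LL h=0,\ \delta h=0)$ into the Kodaira--Spencer harmonicity $\bar\partial\Phi=\bar\partial^*\Phi=0$ for the corresponding $\Phi\in\Omega^{0,1}(M,T^{1,0})$. The parallel holomorphic volume form then induces the Calabi--Yau isomorphism $H^1(M,T^{1,0})\cong H^{m-1,1}(M)$ of complex dimension $h^{m-1,1}$, and under this isomorphism the antisymmetric subspace of harmonic $T^{1,0}$-valued $(0,1)$-forms corresponds precisely to the genuine harmonic $(0,2)$-forms, of complex dimension $h^{2,0}$. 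The symmetric subspace therefore has complex dimension $h^{m-1,1}-h^{2,0}$, i.e. real dimension $2(h^{m-1,1}-h^{2,0})$. Summing the two contributions yields the claimed formula. The main obstacle is precisely this last bookkeeping: correctly identifying the metric-induced symmetric/antisymmetric decomposition of $\Omega^{0,1}(M,T^{1,0})$ with $\Sy^{2,0+0,2}\oplus\Omega^{0,2}$ and matching the Lichnerowicz equation on $h$ with the $\bar\partial$-Laplace equation on $\Phi$, where the two Weitzenböck formulas together with the Kähler identities do the heavy lifting.
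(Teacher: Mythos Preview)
Your argument is correct and follows essentially the same route that the paper sketches (via Koiso's results recapitulated in Section~\ref{sec:einsteincomplex}): split $\varepsilon(g)=\varepsilon^+(g)\oplus\varepsilon^-(g)$, identify $\varepsilon^+(g)$ with primitive harmonic real $(1,1)$-forms as in Theorem~\ref{thm:hermitian}, identify $\varepsilon^-(g)$ with the symmetric ICD inside $H^1(M,\Theta)$ as in Theorems~\ref{thm:ICDdolbeault}--\ref{thm:ICDsymm}, and then count using the Calabi--Yau isomorphism $H^1(M,\Theta)\cong H^{m-1,1}$. One small wording fix: the identification of the \emph{antisymmetric} ICD with harmonic $(2,0)+(0,2)$-forms (contributing $h^{2,0}$) is induced by the metric, not by the Calabi--Yau isomorphism---the latter is used only to evaluate $\dim_\C H^1(M,\Theta)=h^{m-1,1}$.
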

For the relation between deformations of the Einstein and the complex structure, see the discussion in Section~\ref{sec:einsteincomplex}. We note that $h^{2,0}=0$ if $\Hol(M,g)=\SU(m)$, $m\geq3$, and $h^{2,0}=1$ if $\Hol(M,g)=\Sp(m)$ \cite{Joyce}.

The moduli space of torsion-free $\rmG_2$ structures on a given compact $7$-manifold $M$ has dimension $b_3(M)$ \cite{Joyce}, and the tangent space at a given $\rmG_2$-structure (the space of infinitesimal $\rmG_2$-deformations) is its space of harmonic $3$-forms. We note that if $M$ admits a metric of holonomy equal to $\rmG_2$,
with $\rmG_2$-structure $\sigma\in\Omega^3(M)$, then all harmonic $3$-forms on $(M,g,\sigma)$ are of the form $h\diamond\sigma$ for some $h\in\Sy^2(M)$, where $\diamond$ denotes the infinitesimal $\gl(TM)$-action on tensors; the corresponding infinitesimal deformations of the metric $g$ are exactly $h\diamond g=2h$. Imposing the volume constraint $\tr_g h=0$, we obtain:

\begin{theorem}
\label{thm:g2}
If $(M,g)$ is a torsion-free $\rmG_2$-manifold, then
\[\dim\varepsilon(g)=\dim\widetilde\EMod_g=b_3(M)-1.\]
\end{theorem}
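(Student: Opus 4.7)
The plan is to follow the schema outlined in the paragraph preceding the statement, combining three ingredients already at our disposal. First, by Joyce's theorem the moduli space $\mathcal{M}_{\rmG_2}$ of torsion-free $\rmG_2$-structures on $M$ modulo $\Diff^0$ is a smooth manifold of dimension $b_3(M)$, with tangent space $\mathcal{H}^3(M)$ at any $[\varphi]$, and all infinitesimal $\rmG_2$-deformations are integrable. Second, Nordström's submersion result cited in Section~\ref{sec:parallelspinor} shows that the projection $\mathcal{M}_{\rmG_2}\to\Met_1/\Diff^0$ descends to a smooth submersion onto a neighborhood of $[g]$ in $\EMod$; together with the integrability input, this realizes the premoduli $\widetilde\EMod_g$ as a smooth manifold of dimension $\dim\varepsilon(g)$. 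The problem thus reduces to computing the dimension of the image.

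For this, I would analyze the linearized projection $\mathcal{H}^3(M)\to\Sy^2(M)$ given by $h\diamond\varphi\mapsto 2h$. The pointwise $\rmG_2$-equivariant map $h\mapsto h\diamond\varphi$ from $\Sym^2 T^\ast M=\mathbf{1}\oplus\mathbf{27}$ into $\Lambda^3 T^\ast M=\mathbf{1}\oplus\mathbf{7}\oplus\mathbf{27}$ is nonzero on each irreducible summand of the source, and hence injective with image exactly the $\mathbf{1}\oplus\mathbf{27}$ component. Conversely, under the strict-holonomy hypothesis $\Hol=\rmG_2$ there are no $\rmG_2$-invariants in $\R^7$, so combined with Bochner's theorem on the Ricci-flat manifold $(M,g)$ we obtain $b_1(M)=0$; Joyce's refined Hodge decomposition $\mathcal{H}^3=\mathcal{H}^3_{\mathbf{1}}\oplus\mathcal{H}^3_{\mathbf{7}}\oplus\mathcal{H}^3_{\mathbf{27}}$ with $\mathcal{H}^3_{\mathbf{7}}\cong\mathcal{H}^1(M)$ then forces $\mathcal{H}^3_{\mathbf{7}}=0$. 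Therefore every harmonic 3-form is of the form $h\diamond\varphi$ for a unique $h\in\Sy^2(M)$, identifying $\mathcal{H}^3(M)$ with a $b_3(M)$-dimensional subspace of $\Sy^2(M)$.

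Finally, the $-1$ in the formula comes from volume normalization. The homothety direction $h=\lambda g$ (constant $\lambda$) corresponds, via $h\mapsto h\diamond\varphi$, to the one-dimensional subspace $\R\cdot\varphi\subset\mathcal{H}^3(M)$, and is exactly what is killed by restricting to $\Met_1$ and $\mathcal{M}_{\rmG_2}^{\vol=1}$ (equivalently, by imposing the infinitesimal constraint $\int\tr_g h|\vol_g|=0$). Quotienting by this one-dimensional scaling on both sides yields $\dim\varepsilon(g)=\dim\widetilde\EMod_g=b_3(M)-1$. The main obstacle I anticipate is the verification $\mathcal{H}^3_{\mathbf{7}}=0$ under strict holonomy, which requires the refined Hodge theory of Joyce together with the vanishing $b_1(M)=0$; once that ingredient is granted, the rest is a bookkeeping argument in $\rmG_2$-representation theory and dimension counting.
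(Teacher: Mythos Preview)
Your proposal is correct and follows essentially the same route as the paper: Joyce's description of the $\rmG_2$-moduli space with tangent space the harmonic $3$-forms, the identification of harmonic $3$-forms with $h\diamond\varphi$ for $h\in\Sy^2(M)$ under strict holonomy, and the $-1$ from the volume constraint. You have in fact unpacked the step the paper leaves implicit---namely, that $\mathcal{H}^3_{\mathbf{7}}=0$ via $b_1(M)=0$ and Joyce's refined Hodge decomposition is precisely what makes ``all harmonic $3$-forms are of the form $h\diamond\varphi$'' work.
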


Quite similarly, for torsion-free $\Spin(7)$-structures on a compact, oriented $8$-manifold $M$ admitting a metric of holonomy $\Spin(7)$, the moduli space has dimension $b_4^-(M)+1$, where $b_4^-(M)$ is the dimension of the space \emph{anti-self-dual} harmonic $4$-forms for a given $\Spin(7)$-structure $\Omega$. Again, the infinitesimal $\Spin(7)$-deformations are all of the form $h\diamond\Omega$ for some $h\in\Sy^2(M)$, thus:

\begin{theorem}
\label{thm:spin7}
If $(M,g)$ is a torsion-free $\Spin(7)$-manifold,
\[\dim\varepsilon(g)=\dim\widetilde\EMod_g=b_4^-(M).\]
\end{theorem}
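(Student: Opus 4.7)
The strategy is to imitate the $\rmG_2$ case (Theorem~\ref{thm:g2}), invoking Goto's unobstructedness result for torsion-free $\Spin(7)$-structures together with Nordström's submersion onto metrics. By these results, the moduli space of torsion-free $\Spin(7)$-structures modulo $\Diff^0$ is a smooth manifold of dimension $b_4^-(M)+1$, whose tangent space at the Cayley form $\Omega$ consists of the harmonic $4$-forms in the appropriate $\Spin(7)$-invariant subbundle, and this moduli maps submersively onto the connected component of $g$ in $\EMod$. In particular every IED of $g$ is integrable, so $\widetilde\EMod_g$ is a smooth submanifold of the Ebin slice $\Slice_g$ with tangent space $\varepsilon(g)$, immediately giving $\dim\varepsilon(g) = \dim\widetilde\EMod_g$.

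The remaining content is a dimension count via the $\Spin(7)$-representation theory. The bundle $\Lambda^4 T^\ast M$ decomposes as $\Lambda^4_1 \oplus \Lambda^4_7 \oplus \Lambda^4_{27} \oplus \Lambda^4_{35}$ of respective ranks $1,7,27,35$; since $\Omega$ is self-dual and $\Lambda^4_1 = \langle\Omega\rangle$, a rank count forces $\Lambda^4_+ = \Lambda^4_1 \oplus \Lambda^4_7 \oplus \Lambda^4_{27}$ and $\Lambda^4_- = \Lambda^4_{35}$. The parallel bundle map $h \mapsto h \diamond \Omega$ realizes a pointwise isomorphism $\Sym^2 T^\ast M \cong \Lambda^4_1 \oplus \Lambda^4_{35}$, carrying $\R g$ onto $\langle\Omega\rangle = \Lambda^4_1$ and $\Sym^2_0 T^\ast M$ onto $\Lambda^4_-$. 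Thus every infinitesimal $\Spin(7)$-deformation writes uniquely as $h \diamond \Omega$ for some $h \in \Sy^2(M)$, and the induced metric variation is $h \diamond g = 2h$.

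Restricting to harmonic representatives, parallelism of $\Omega$ gives a one-dimensional space of harmonic sections of $\Lambda^4_1$ (namely $\R\Omega$), while Hodge theory on the compact oriented $8$-manifold produces a $b_4^-(M)$-dimensional space of harmonic sections of $\Lambda^4_- = \Lambda^4_{35}$. Under the bijection above, the first factor corresponds to the pure-trace direction $h = cg$, i.e.~the scaling direction violating the volume constraint $\int_M(\tr_g h)\vol_g = 0$, while the second corresponds precisely to the traceless $h$. Imposing unit volume removes the $\Lambda^4_1$-summand and leaves $\dim\varepsilon(g) = b_4^-(M)$, as claimed. The one delicate point, inherited from the $\rmG_2$ setting, is to confirm that the forgetful map from the $\Spin(7)$-moduli onto the Einstein moduli is locally injective rather than merely submersive: this is ensured by the injectivity of $h\mapsto h\diamond\Omega$ on $\Sym^2 T^\ast M$ together with the fact that a torsion-free $\Spin(7)$-structure determines its metric uniquely via $\Spin(7)\subset\SO(8)$, so no further correction is needed.
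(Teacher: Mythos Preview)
Your proof is correct and follows essentially the same route as the paper's terse argument preceding the theorem: Goto's unobstructedness plus Nordström's submersion give smoothness of the premoduli and integrability of IED, and then the identification of infinitesimal $\Spin(7)$-deformations with $h\diamond\Omega$ for $h\in\Sy^2(M)$, combined with the volume constraint $\tr_gh=0$, yields the dimension $b_4^-(M)$. Your added representation-theoretic detail (the decomposition $\Lambda^4_+=\Lambda^4_1\oplus\Lambda^4_7\oplus\Lambda^4_{27}$, $\Lambda^4_-=\Lambda^4_{35}$, and $\Sym^2_0T^\ast M\cong\Lambda^4_{35}$) is correct and makes explicit what the paper leaves implicit.

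One minor remark: your final paragraph about local injectivity of the forgetful map is slightly overthought. For the dimension count you only need that the tangent map $h\diamond\Omega\mapsto 2h$ is injective (obvious) and surjective (by the submersion property); together these already force the Spin(7) and metric moduli to have the same dimension before volume-normalization. The observation that a torsion-free $\Spin(7)$-structure determines its metric uniquely is true and explains \emph{why} the submersion has discrete fibers here (unlike the $\SU(m)$ case), but it is not logically required for the argument.
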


We turn to flat manifolds. It is well known that every such manifold is isometric to a quotient $\R^n/G$, where $G$ is a \emph{Bieberbach subgroup} of the group of affinities $\Orth(n)\ltimes\R^n$. Any flat metric is clearly semistable because $\LL=\nabla^\ast\nabla$ is a nonnegative operator. Kröncke \cite{Kr15b} studied the IED of such manifolds.

\begin{theorem}
Let $(M,g)$ be flat, and suppose the holonomy representation decomposes into irreducible representations as
\[\R^n\cong(\rho_1)^{n_1}\oplus\ldots(\rho_k)^{n_k}.\]
The IED of $g$ are precisely the parallel sections of $\Sym^2_0T^\ast M$, and
\[\dim\varepsilon(g)=\sum_i\frac{n_i(n_i+1)}{2}-1.\]
\end{theorem}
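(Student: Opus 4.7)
The plan is to use flatness to reduce the IED equation to a parallelism condition, then count parallel sections via invariant theory of the holonomy representation.

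\textbf{Reduction to parallel tensors.} Since $g$ is flat we have $R=0$, so the Einstein constant is $E=0$ and the curvature endomorphism $\K(R)$ vanishes on every geometric bundle. Hence the Lichnerowicz Laplacian collapses to the connection Laplacian $\LL=\nabla^\ast\nabla$, and $\varepsilon(g)=\ker(\LL-2E)|_{\TT(M)}=\ker(\nabla^\ast\nabla)|_{\TT(M)}$. On the compact manifold $M$, pairing $\nabla^\ast\nabla h=0$ with $h$ and integrating gives $\Ltwoinprod{\nabla h}{\nabla h}=0$, so every IED is parallel. Conversely, any parallel section of $\Sym^2_0T^\ast M$ is trace-free by construction, divergence-free (since $\delta h=-\sum_i e_i\intprod\nabla_{e_i}h=0$), and trivially in the kernel of $\nabla^\ast\nabla$; hence it lies in $\varepsilon(g)$. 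This identifies $\varepsilon(g)$ with the space of parallel sections of $\Sym^2_0T^\ast M$ and settles the first claim.

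\textbf{Dimension count via invariants.} Parallel sections of a geometric bundle are determined by their value at any basepoint $p\in M$, and that value must be fixed by the holonomy action on the fiber. Hence $\dim\varepsilon(g)=\dim(\Sym^2_0\R^n)^{\Hol(g)}$, after identifying $T_pM\cong\R^n$ and dualizing via the metric. Writing the $i$-th isotypic component as $\R^{n_i}\otimes V_i$ with $V_i\cong\rho_i$ an irreducible real representation of $\Hol(g)$, one expands
\[
\Sym^2\!\Big(\bigoplus_i\R^{n_i}\otimes V_i\Big)=\bigoplus_i\Big(\Sym^2\R^{n_i}\otimes\Sym^2V_i\,\oplus\,\Lambda^2\R^{n_i}\otimes\Lambda^2V_i\Big)\,\oplus\,\bigoplus_{i<j}\R^{n_i}\otimes\R^{n_j}\otimes V_i\otimes V_j.
\]
For $\rho_i$ of real type, Schur's lemma gives $\dim(\Sym^2V_i)^{\Hol}=1$ (spanned by the restriction of the metric) and $\dim(\Lambda^2V_i)^{\Hol}=0$; cross terms vanish because $V_i\not\cong V_j$ for $i\neq j$. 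Summing yields $\dim(\Sym^2\R^n)^{\Hol}=\sum_i\tfrac{n_i(n_i+1)}{2}$, and subtracting one for the always-present trace direction (spanned by $g$ itself) gives the claimed formula.

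\textbf{Main obstacle.} The clean count above hinges on each $\rho_i$ being of real type, so that only the ``inner-product'' invariant contributes from each isotype. If complex or quaternionic summands appear, $(\Lambda^2V_i)^{\Hol}$ acquires dimension (one, respectively three) and additional invariants enter through the $\Lambda^2\R^{n_i}$ factors, so the formula has to be adjusted type by type. Apart from this representation-theoretic bookkeeping the argument is essentially automatic.
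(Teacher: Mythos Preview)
The paper does not actually prove this theorem; it is quoted from Kr\"oncke \cite{Kr15b}, and the only argument in the surrounding text is the observation that $\LL=\nabla^\ast\nabla$ on a flat manifold (hence semistability). Your reduction of $\varepsilon(g)$ to parallel sections of $\Sym^2_0T^\ast M$ is exactly this observation carried to its conclusion, and is correct.

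Your dimension count is also correct, and the ``main obstacle'' you flag is not a gap in your argument but a genuine caveat about the statement as reproduced in the survey. For any real irreducible $V_i$ one has $\dim(\Sym^2V_i)^{\Hol}=1$ regardless of type, but $\dim(\Lambda^2V_i)^{\Hol}$ equals $0$, $1$, or $3$ according as $\rho_i$ is of real, complex, or quaternionic type. Hence the term $\Lambda^2\R^{n_i}\otimes(\Lambda^2V_i)^{\Hol}$ contributes an additional $\binom{n_i}{2}$ or $3\binom{n_i}{2}$ in the non-real cases, and the stated formula $\sum_i\tfrac{n_i(n_i+1)}{2}-1$ undercounts whenever a non-real-type summand appears with multiplicity $n_i\geq 2$. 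Such summands certainly occur among holonomy groups of compact flat manifolds (any cyclic holonomy of order at least three acts via a two-dimensional rotation of complex type), so the formula in the survey should be read either with an implicit real-type hypothesis or with the type-by-type correction you describe.
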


For a flat torus $(T^n,g_0)$, the holonomy is trivial, so $\dim\varepsilon(g)=\frac{n(n+1)}{2}-1$. On a flat manifold $(M,g)$ of any dimension, the moduli space of flat metrics consists precisely of those $g+h$, $h\in\varepsilon(g)$, which are positive-definite. In particular, the moduli space of flat metrics forms a connected component in $\EMod$, and all IED are integrable. In dimension $n\leq4$, it is known that Einstein metrics on $T^n$ are automatically flat, so in these cases $\EMod$ is connected
\cite[12.17]{besse}.

In the (irreducible or flat) examples above, we see that all IED are integrable since they come from infinitesimal deformations of the $G$-structure. In \cite{holrig} it is shown that the IED of a Riemannian product of Ricci-flat manifolds are integrable if this holds for the IED of the factors. Together with the structure theorem for Ricci-flat manifolds we can finally conclude:

\begin{theorem}
If $(M,g)$ admits a parallel spinor, then all IED of $g$ are integrable.
\end{theorem}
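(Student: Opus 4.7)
The plan is to reduce the problem to the cases already handled in the excerpt, using the structure theorem for compact Ricci-flat manifolds. First, I would take a finite Riemannian cover $\pi\colon(\tilde M,\tilde g)\to(M,g)$ with $\tilde M=N\times T^k$, where $N$ is simply connected and Ricci-flat (hence of one of the special holonomies $\SU(m)$, $\Sp(m)$, $\rmG_2$, or $\Spin(7)$, since the assumption that $g$ admits a parallel spinor passes to the universal cover of $M$, and thus to $N$) and $T^k$ is a flat torus. Let $\Gamma$ be the finite deck group, acting on $(\tilde M,\tilde g)$ by isometries.

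Next, I would show that the premoduli space $\widetilde\EMod_{\tilde g}$ around $\tilde g$ is a smooth manifold of tangent space $\varepsilon(\tilde g)$. The result from \cite{holrig} reduces this to integrability of IED on each factor. On the simply connected factor $N$, the theorems of Goto and Nordström (as discussed in the excerpt) give a smooth moduli space of metrics with the prescribed special holonomy, so by Theorems~\ref{thm:calabiyau}, \ref{thm:g2}, and~\ref{thm:spin7} every IED is integrable. On the flat factor $T^k$, the excerpt already records that the space of flat metrics forms a connected open subset of $g_0+\varepsilon(g_0)$, again yielding integrability.

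Finally, I would descend from $\tilde M$ to $M$ by an equivariance argument. Ebin's slice can be chosen $\Isom(\tilde g)$-invariant, so $\Gamma$ acts smoothly on $\widetilde\EMod_{\tilde g}$ with fixed point $\tilde g$. Since $\Gamma$ is finite, the fixed set $(\widetilde\EMod_{\tilde g})^\Gamma$ is a smooth submanifold with tangent space $\varepsilon(\tilde g)^\Gamma$. Pullback by $\pi$ identifies a neighborhood of $g$ in $\widetilde\EMod_g$ with $(\widetilde\EMod_{\tilde g})^\Gamma$ and correspondingly identifies $\varepsilon(g)$ with $\varepsilon(\tilde g)^\Gamma$. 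Consequently $\widetilde\EMod_g$ is smooth at $g$ of dimension $\dim\varepsilon(g)$, i.e.\ every IED of $g$ is integrable.

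The main obstacle is the last step: one needs to verify that an Einstein deformation of $\tilde g$ whose initial velocity lies in $\varepsilon(\tilde g)^\Gamma$ can actually be chosen $\Gamma$-equivariant, so that it descends to $M$. This is exactly the content of the smoothness of the $\Gamma$-fixed locus in $\widetilde\EMod_{\tilde g}$, but it is the point where one has to be careful about the compatibility between the Ebin slice, the isometry action, and the deck group; analogous equivariance arguments appear in \cite{DWW05,holrig}, so the difficulty is bookkeeping rather than a new analytic input.
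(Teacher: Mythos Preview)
Your proposal is correct and follows essentially the same route as the paper: invoke the structure theorem to pass to a finite cover $N\times T^k$, use the Goto--Nordstr\"om theory (Theorems~\ref{thm:calabiyau}--\ref{thm:spin7}) on the special-holonomy factor and the explicit description of flat moduli on the torus factor, and appeal to \cite{holrig} for the product. The paper compresses all of this into two sentences and simply cites \cite{holrig} together with the structure theorem, whereas you spell out the descent from the finite cover via the $\Gamma$-fixed locus of the Ebin slice; this is exactly the bookkeeping that the paper leaves implicit (and which is indeed handled in \cite{holrig}), so your added care is appropriate rather than a deviation.
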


As a consequence of Theorem~\ref{thm:stablefromintegrable}, we thus obtain a stronger stability result.

\begin{corollary}
If $(M,g)$ admits a parallel spinor, then $g$ is $\EH$-stable.
\end{corollary}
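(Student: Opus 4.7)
The plan is to combine the three ingredients that have just been assembled in this section. First, by Theorem~\ref{thm:parallelsemistable} (the parallel-spinor construction yielding $\LL \geq 0$ on tt-tensors via the Dirac operator), the metric $g$ is semistable. Second, by the theorem immediately preceding the corollary, every infinitesimal Einstein deformation of $g$ is integrable. With these two facts in hand, Theorem~\ref{thm:stablefromintegrable} applies verbatim and delivers $\EH$-stability.

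Concretely, I would begin the proof by noting that since $M$ carries a parallel spinor (possibly after passing to a finite cover in the reducible/flat case handled by the structure theorem), $g$ is Ricci-flat and Theorem~\ref{thm:parallelsemistable} yields $\EH''_g \leq 0$ on $\TT(M)$. Then I would invoke the integrability statement: every $h \in \varepsilon(g)$ is tangent to an actual curve of Einstein metrics through $g$. This is really where all the work sits, but it has been done in the preceding paragraphs by appealing to Goto's result on torsion-free $G$-structures (for the irreducible cases $\SU(m)$, $\Sp(m)$, $\rmG_2$, $\Spin(7)$), the flat case treated via parallel symmetric tensors, and the product-lifting argument of \cite{holrig} that glues these together via the structure theorem for Ricci-flat manifolds.

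The two inputs combine exactly as in Theorem~\ref{thm:stablefromintegrable}: integrability makes $\widetilde\EMod_g$ a smooth submanifold of $\Slice_g$ with $T_g \widetilde\EMod_g = \varepsilon(g)$, on which $\EH$ is locally constant; semistability means $\EH''_g$ is negative semidefinite on $\TT(M) = T_g(\Csc \cap \Slice_g)$, with null space exactly $\varepsilon(g)$, hence strictly negative definite on a normal complement to $\widetilde\EMod_g$ inside $\Csc \cap \Slice_g$. A neighborhood of $g$ in $\Csc \cap \Slice_g$ therefore realizes $g$ as a local maximum of $\EH$, and diffeomorphism invariance upgrades this to a local maximum on all of $\Csc$.

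There is no genuine obstacle at the level of this corollary: the real difficulties were absorbed into Theorem~\ref{thm:parallelsemistable} (the Weitzenböck/Dirac argument) and into the preceding integrability theorem (the unobstructedness of torsion-free $G$-structures and the reduction to the simply connected, irreducible case). The only mild subtlety to flag is that semistability and integrability of IED are invariants of $g$, not of the chosen $G$-structure, so the finite-cover reduction that enters the integrability theorem is harmless for the conclusion here.
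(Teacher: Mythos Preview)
Your proposal is correct and matches the paper's approach exactly: the paper states the corollary ``as a consequence of Theorem~\ref{thm:stablefromintegrable}'', combining the semistability from Theorem~\ref{thm:parallelsemistable} with the integrability of all IED established in the immediately preceding theorem. Your third paragraph even reproduces the argument inside Theorem~\ref{thm:stablefromintegrable}, which is more than strictly necessary here but certainly not wrong.
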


In fact, this result can be strengthened in two ways. First, the precise statement in \cite{holrig} says that \emph{every} metric of nonnegative scalar curvature near $g$ has to admit a parallel spinor. Second, in some cases one can show that $g$ is supreme (cf.~\S\ref{sec:yamabe}), \cite{LeB99}.

\begin{theorem}
Let $(M^n,g)$ be compact, simply connected, and irreducible.
\begin{enumerate}[\upshape(i)]
    \item If $(M,g)$ is Calabi--Yau, then $g$ is supreme if and only if $n\not\equiv6\mod 8$.
    \item If $(M,g)$ is a torsion-free $\rmG_2$-manifold, then $g$ is not supreme.
    \item If $(M,g)$ is torsion-free $\Spin(7)$-manifold, then $g$ is supreme.
\end{enumerate}
\end{theorem}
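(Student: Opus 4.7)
The plan is to reduce supremality of the Einstein metric $g$ to a question about the smooth topology of $M$---namely, whether $M$ admits a metric of positive scalar curvature---and then resolve the latter case by case using index-theoretic obstructions from parallel spinors. Since $g$ is Ricci-flat, $\scal_g\equiv 0$, and therefore $\widetilde{\EH}(g)=0$. As a constant scalar curvature metric, $g$ is a Yamabe minimizer in its conformal class, so $\Y(M,[g])=0$ and consequently $\Y(M)\geq 0$. Thus $g$ is supreme if and only if $\Y(M)=0$, which by Theorem~\ref{thm:yamabepos} is equivalent to $M$ carrying no metric of positive scalar curvature (PSC).

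For case (i), I would apply Hitchin's $\alpha$-invariant $\alpha(M)\in KO_n$, an index-theoretic refinement of the $\widehat{A}$-genus obtained from the $\Cl_n$-linear Dirac operator. Existence of a PSC metric forces $\alpha(M)=0$ via the Lichnerowicz formula $D^2=\nabla^\ast\nabla+\scal/4$; conversely, by Stolz's resolution of the Gromov--Lawson--Rosenberg conjecture, every simply connected spin manifold of dimension $n\geq 5$ with $\alpha(M)=0$ admits a PSC metric. Using the two parallel spinors of the Calabi--Yau $m$-fold together with the structure of $KO_n$, one verifies that $\alpha(M)\neq 0$ precisely when $n\equiv 0,2,4\mod 8$, while for $n\equiv 6\mod 8$ one has $KO_6=0$ and no obstruction remains. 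In the borderline dimension $n=4$ one can bypass Stolz: the K3 surface satisfies $\widehat{A}=2\neq 0$ directly, and the reducible case $n=2$ is excluded by irreducibility.

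For case (ii), $n=7$, so $KO_7=0$ and $\alpha(M)=0$ automatically; since $M$ is simply connected spin, Stolz yields a PSC metric on $M$, whence $\Y(M)>0$ and $g$ cannot be supreme. For case (iii), $n=8$, and for holonomy $\Spin(7)$ the unique (up to scale) parallel spinor contributes to the integer-valued Dirac index, giving $\widehat{A}(M)\neq 0$; by Lichnerowicz this rules out PSC, so $\Y(M)=0$ and $g$ is supreme.

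The main technical obstacle is case (i): correctly identifying the contribution of the parallel spinors to the $\Cl_n$-linear Dirac index in each residue class modulo $8$, in order to pin down exactly when the $\alpha$-invariant survives. The implication ``parallel spinor forces $\alpha(M)\neq 0$ and hence no PSC'' is a routine Lichnerowicz-type computation, whereas the converse---namely the existence of a PSC metric in case (i) for $n\equiv 6\mod 8$ and in case (ii)---rests crucially on Stolz's deep positive solution of the PSC existence problem for simply connected spin manifolds, which I would invoke as a black box.
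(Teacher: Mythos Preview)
Your proposal is correct and follows essentially the same route as the paper: reduce supremality of a Ricci-flat metric to the non-existence of a PSC metric via Theorem~\ref{thm:yamabepos}, then use the $\alpha$-invariant (the paper calls it the Atiyah/Hitchin invariant) together with the parallel spinors for the obstruction direction, and invoke Stolz's theorem for the existence direction in dimensions $8k+6$ and $7$. The paper cites LeBrun \cite{LeB99} for the verification that $\alpha(M)\neq0$ in the Calabi--Yau case with $n\not\equiv6\mod8$ and in the $\Spin(7)$ case, which is exactly the technical step you flag as the main obstacle.
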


As a consequence of Theorem~\ref{thm:yamabepos}, a Ricci-flat metric on $M$ is supreme if and only if $M$ does not admit a metric of positive scalar curvature. There are obstructions to the existence of such metrics on spin manifolds, such as the index of the Dirac operator $D$ in dimension $4m$, or the Hitchin invariant $\dim\ker D\mod2$ in dimension $8m+2$. LeBrun \cite{LeB99} shows that in the Calabi--Yau ($n\not\equiv6\mod 8$) or $\Spin(7)$ case, one of these invariants does not vanish, which rules out the existence of a positive scalar curvature metric. On the other hand, the \emph{Atiyah invariant}, whose vanishing is an equivalent condition for the existence of a positive scalar curvature metric on a compact, simply connected spin manifold, automatically vanishes in dimensions $8m+6$ and $7$ \cite{LM,Stolz}.


Lastly, we mention a classical example in dimension four where the Einstein moduli space is understood. As a consequence of the Hitchin--Thorpe inequality, every Einstein metric on the $K3$ manifold or any of its quotients is hyper-Kähler. Algebraic geometry allows us to identify $\EMod$ with a dense open subset of the locally symmetric space
\[(\Orth(3,19)\cap\GL(22,\Z))\backslash \Orth(3,19) / (\Orth(3)\times\Orth(19))\]
of dimension $57$, see for example \cite[\S12.K]{besse}.

\section{Kähler--Einstein metrics}
\label{sec:ke}

Many of the methods of complex and spin geometry that are applicable for Calabi--Yau manifolds still work for the more general Kähler--Einstein metrics. Since the Ricci-flat case was already treated in the preceding section, we shall focus on Kähler--Einstein metrics with nonzero scalar curvature.

\subsection{Stability}

First, there is a spin$^\rmc$-analogue of Theorem~\ref{thm:parallelsemistable} by Dai--Wang--Wei \cite{DWW07}.

\begin{theorem}
\label{thm:spincsemistable}
Let $(M,g)$ be a compact, spin$^\rmc$, Einstein manifold of nonpositive scalar curvature. If $(M,g)$ admits a parallel spin$^c$-spinor, then $g$ is semistable.
\end{theorem}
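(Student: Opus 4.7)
The plan is to adapt the spinor-theoretic argument that underlies Theorem~\ref{thm:parallelsemistable} to the spin$^\rmc$ setting. Let $\psi\in\Gamma(\Spinor^\rmc M)$ be a parallel spin$^\rmc$or, that is $\nabla^A\psi=0$, where $\nabla^A$ is the spin$^\rmc$ connection associated to a unitary connection $A$ on the auxiliary line bundle. Consider the parallel bundle map
\[\Sigma_\psi: \Sy^2(M)\to\Omega^1(M,\Spinor^\rmc M),\qquad\Sigma_\psi(h)=\sum_i e^i\otimes(h(e_i)\cdot\psi),\]
where $h(e_i)$ denotes the $g$-dual vector of $h(e_i,\cdot)$. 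Since $\psi$ is parallel, $\Sigma_\psi$ intertwines the Levi-Civita connection on $\Sy^2(M)$ with the tensor connection $\nabla\otimes\nabla^A$ on the target, and one checks directly that $\|\Sigma_\psi(h)\|^2$ is pointwise a positive multiple of $|h|^2$ (using that Clifford multiplication by orthonormal frame vectors is unitary on spinors).

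The next step is to apply the twisted Dirac operator $D^A: \Omega^1(M,\Spinor^\rmc M)\to\Omega^1(M,\Spinor^\rmc M)$ associated to $\nabla\otimes\nabla^A$. Because $\Sigma_\psi$ is parallel, a direct calculation shows that $D^A\Sigma_\psi(h)$ is expressible purely in terms of $\nabla h$, and that on the subspace $\TT(M)\subset\Sy^2(M)$ (where $\tr_g h=0$ and $\delta h=0$) the composition $(D^A)^2\circ\Sigma_\psi$ can be compared, via the Weitzenböck formula for the twisted Dirac operator, to $\nabla^\ast\nabla$ plus a curvature term of the form $\tfrac14\scal_g+\tfrac12 c(F^A)$ acting on the spinor factor, together with an additional Riemannian curvature contribution coming from the $T^\ast M$-twist. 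One then evaluates $\|D^A\Sigma_\psi(h)\|_{L^2}^2\ge0$ and translates it back into an inequality on $\Sy^2$ via the adjoint $\Sigma_\psi^\ast$; exploiting the parallelism of $\Sigma_\psi$, the $\nabla^\ast\nabla$-piece pulls back to the connection Laplacian on $h$, while the curvature contributions pull back to the endomorphism $\K(R)-2\mathring R$ (so to $\LL-2E$ on tt-tensors) up to a remainder governed by $c(F^A)\cdot\psi$.

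The crucial input is the integrability condition for a parallel spin$^\rmc$or, which reads $\frac12\sum_{i,j}e_i\cdot e_j\cdot(R_{e_i,e_j}+F^A(e_i,e_j))\psi=0$ and yields $c(F^A)\cdot\psi=-\tfrac12\scal_g\,\psi=-\tfrac{n}{2}E\psi$. Substituting this identity into the above Weitzenböck comparison, all curvature terms of indefinite sign collapse into a single multiple of $E$, and under the standing hypothesis $\scal_g\le0$ (equivalently $E\le0$) this residual term is $\ge0$. Altogether we obtain
\[0\le\Ltwoinprod{D^A\Sigma_\psi(h)}{D^A\Sigma_\psi(h)}=\Ltwoinprod{(\LL-2E)h}{h}+(\text{nonnegative remainder})\]
for every $h\in\TT(M)$, so $\LL\ge 2E$ on $\TT(M)$. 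By Theorem~\ref{thm:secondvartt} this is precisely the statement $\EH''_g(h,h)\le0$, hence $g$ is semistable.

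The main obstacle is the careful bookkeeping of the curvature terms. Unlike in the genuine spin case where a parallel spinor already forces $\Ric_g=0$ and the Weitzenböck formula reduces cleanly to $D^2=\nabla^\ast\nabla$ on the spinor bundle, in the spin$^\rmc$ case the parallel spinor only forces the mixed relation $c(F^A)\cdot\psi+\tfrac12\scal_g\psi=0$, so both the scalar curvature term and the $F^A$ term survive and must be shown to combine into a sign-definite quantity. The hypothesis $\scal_g\le0$ is used exactly at this point to absorb the $F^A$-contribution, and it is also what prevents the argument from yielding semistability in the positive scalar curvature case.
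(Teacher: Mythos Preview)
Your overall strategy is exactly the one the paper has in mind: it does not prove the theorem but cites Dai--Wang--Wei \cite{DWW07}, whose argument is the spin$^\rmc$ adaptation of the sketch given after Theorem~\ref{thm:parallelsemistable}. So the route is correct. Two points, however, need repair.

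\textbf{The final inequality is written the wrong way round.} As stated,
\[0\le\|D^A\Sigma_\psi(h)\|_{L^2}^2=\Ltwoinprod{(\LL-2E)h}{h}+(\text{nonnegative remainder})\]
yields only $\Ltwoinprod{(\LL-2E)h}{h}\ge-(\text{remainder})$, which is vacuous. What the Weitzenb\"ock computation actually produces (after the dust settles, and up to the positive constant $|\psi|^2$) is
\[\Ltwoinprod{(\LL-2E)h}{h}=\|D^A\Sigma_\psi(h)\|_{L^2}^2-E\,\|h\|_{L^2}^2,\]
and it is \emph{this} arrangement that gives the conclusion: both summands on the right are nonnegative precisely when $E\le0$. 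Equivalently, in your formulation the remainder should be \emph{nonpositive} (it is $E\|h\|^2$), not nonnegative.

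\textbf{The $c(F^A)$ bookkeeping needs one more identity.} In the Weitzenb\"ock term, $c(F^A)$ acts on $h(e_k)\cdot\psi$, not on $\psi$ directly. Commuting $c(F^A)$ past Clifford multiplication by a vector $v$ produces an extra contraction term $2(\iota_vF^A)^\sharp\cdot\psi$. The scalar identity $c(F^A)\psi=-\tfrac12\scal_g\,\psi$ alone does not control this; you also need the Ricci-type consequence of the integrability condition, namely $\Ric(X)\cdot\psi=(\iota_XF^A)^\sharp\cdot\psi$, which on an Einstein manifold reads $(\iota_XF^A)^\sharp\cdot\psi=EX\cdot\psi$. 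Only after invoking \emph{both} identities do the $\tfrac14\scal_g$ and $\tfrac12c(F^A)$ contributions collapse to a clean multiple of $E\Phi$, which then feeds into the displayed formula above.
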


For the rest of this section, we let $(M,g,J)$ be a compact Kähler--Einstein manifold with $\Ric_g=Eg$. Then $M$ admits a compatible spin$^\rmc$ structure with a parallel spin$^c$-spinor. In fact, simply connected, irreducible spin$^\rmc$-manifolds carrying a parallel spin$^c$-spinor are known to either be Kähler or admit a parallel spinor \cite{spinc}.

\begin{corollary}
\label{thm:KEsemistable}
If $g$ is Kähler--Einstein with $E<0$, then $g$ is semistable.
\end{corollary}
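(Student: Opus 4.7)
The plan is to derive this corollary as a direct application of Theorem~\ref{thm:spincsemistable}. We need only verify the two hypotheses of that theorem: that $(M,g)$ is spin$^\rmc$ and admits a parallel spin$^\rmc$or, and that its scalar curvature is nonpositive.

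First, I would recall that any Kähler manifold $(M,g,J)$ carries a \emph{canonical} spin$^\rmc$ structure, constructed from the almost complex structure $J$. The associated spin$^\rmc$or bundle is isomorphic to the bundle $\Lambda^{0,\bullet}T^\ast M$ of anti-holomorphic forms, with auxiliary line bundle the anticanonical bundle $K_M^{-1}$. The constant section $1\in\Gamma(\Lambda^{0,0}T^\ast M)$ is covariantly constant with respect to the spin$^\rmc$ connection induced by the Levi-Civita connection on $TM$ together with the Chern connection on $K_M^{-1}$, producing a parallel spin$^\rmc$or on $(M,g)$. (See e.g.~\cite{spinc} for this construction, which is indeed what characterizes Kähler manifolds among simply connected irreducible spin$^\rmc$ manifolds admitting a parallel spin$^\rmc$or.)

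Second, the hypothesis $E<0$ immediately gives $\scal_g=nE<0\leq0$, so the nonpositivity condition is satisfied. Applying Theorem~\ref{thm:spincsemistable} then yields that $g$ is semistable.

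There is essentially no obstacle: the corollary is a one-line deduction from the preceding theorem plus the classical fact that Kähler manifolds are canonically spin$^\rmc$ with parallel spin$^\rmc$ors. The only point worth being careful about is to invoke the canonical (not necessarily the spin) spin$^\rmc$ structure, since $M$ need not be spin; this is precisely why the upgraded spin$^\rmc$ version of the semistability result is needed in place of Theorem~\ref{thm:parallelsemistable}.
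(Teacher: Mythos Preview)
Your proof is correct and follows exactly the paper's approach: the corollary is obtained by applying Theorem~\ref{thm:spincsemistable} to the canonical spin$^\rmc$ structure on a Kähler manifold, which admits a parallel spin$^\rmc$or, together with the fact that $E<0$ gives nonpositive scalar curvature. Your additional explanation of the canonical spin$^\rmc$ structure (via $\Lambda^{0,\bullet}T^\ast M$) is more detailed than the paper's brief remark, but the argument is identical.
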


It is, however, not clear whether they are also $\EH$-stable. In dimension four, LeBrun used Seiberg--Witten theory to show that the answer is yes \cite[Thm.~3, Thm.~4]{LeB95}, cf.~\cite[Thm.~3.6]{LeB99}:

\begin{theorem}
\label{thm:KEsupreme}
If $g$ is Kähler--Einstein with $E<0$ and $\dim M=4$, then $g$ is supreme. Conversely, every other supreme Einstein metric on $M$ is Kähler--Einstein.
\end{theorem}

In contrast, Petean showed that any compact simply connected manifold of dimension $\geq5$ has nonnegative Yamabe invariant, so negative scalar curvature metrics on it are never supreme \cite{petean}.

Turning to positive scalar curvature, there is a simple criterion for instability \cite{CHI04}.

\begin{theorem}
\label{thm:KEunstable}
If $g$ is Kähler--Einstein with $E>0$, then $\coind(g)\geq b_2(M)-1$.
\end{theorem}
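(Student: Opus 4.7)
My plan is to construct a $(b_2(M)-1)$-dimensional subspace of $\TT(M)$ on which $\EH''_g$ is strictly positive, built from primitive harmonic $(1,1)$-forms. Since $g$ being Kähler--Einstein with $E>0$ forces $(M,J)$ to be Fano, Kodaira vanishing gives $h^{p,0}(M)=0$ for $p>0$. Thus the Hodge decomposition of $H^2(M,\R)$ reduces to its $(1,1)$-part, giving $b_2(M)=h^{1,1}(M)$, and the space of real harmonic $(1,1)$-forms orthogonal to the Kähler form $\omega$ (the primitive ones) has real dimension $b_2(M)-1$.

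To each such primitive harmonic $(1,1)$-form $\omega'$ I associate the symmetric $2$-tensor $h(X,Y)=\omega'(JX,Y)$. Symmetry of $h$ is equivalent to $\omega'$ being of type $(1,1)$; the trace equals $\langle\omega',\omega\rangle=0$ by primitivity; and $\delta h=0$ follows from coclosedness of $\omega'$ together with $\nabla J=0$. So $h\in\TT(M)$, and since $J$ is pointwise invertible, the assignment $\omega'\mapsto h$ is injective. The crux is then to prove $\LL h=0$. Because $J$ is parallel, the identification $\omega'\leftrightarrow h$ between real $(1,1)$-forms and $J$-invariant symmetric $2$-tensors is a parallel bundle isomorphism and therefore commutes with the rough Laplacian $\nabla^*\nabla$. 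Using the Einstein formulas $\K(R)=2E-2\widehat R$ on $\Lambda^2 T^*M$ and $\K(R)=2E-2\mathring R$ on $\Sym^2 T^*M$, what remains is to check that $\widehat R\omega'$ corresponds to $\mathring R h$ under the identification; this follows by an index-juggling from the Kähler identity $R(X,Y,JZ,JW)=R(X,Y,Z,W)$ combined with the first Bianchi identity. Consequently, the isomorphism intertwines $\LL$ on symmetric tensors with the Hodge Laplacian $\Delta_{\mathrm H}$ on forms, and harmonicity of $\omega'$ forces $\LL h=0$.

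Theorem~\ref{thm:secondvartt} then gives
\[\EH''_g(h,h)=-\tfrac12\Ltwoinprod{\LL h-2Eh}{h}=E\|h\|_{L^2}^2>0\]
for every nonzero $h$ produced in this way, so the $(b_2(M)-1)$-dimensional subspace of such $h$ realises the claimed lower bound $\coind(g)\geq b_2(M)-1$. The main obstacle is the curvature-matching step, i.e.\ verifying that $\widehat R$ on $(1,1)$-forms and $\mathring R$ on $J$-invariant symmetric tensors agree under the complex-structure identification on a Kähler manifold; this is precisely where both the Kähler and Einstein hypotheses enter in a nontrivial way. The remaining ingredients (Kodaira vanishing, the tt-property of $h$, and the second-variation formula) are essentially routine.
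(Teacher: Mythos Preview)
Your proposal is correct and follows essentially the same approach as the paper: the parallel bundle isomorphism $\alpha\mapsto\alpha\circ J$ between primitive real $(1,1)$-forms and trace-free Hermitian symmetric $2$-tensors carries harmonic forms to tt-tensors in $\ker\LL$, and Kodaira vanishing gives the dimension count $b_2(M)-1$. The paper short-circuits your curvature-matching step by invoking the general fact that $\LL$ commutes with parallel bundle maps (so the isomorphism intertwines $\LL$ on symmetric tensors directly with the Hodge Laplacian on forms, without splitting off $\nabla^\ast\nabla$ and $\K(R)$ separately); in particular the Einstein hypothesis is needed only for $E>0$ and for $h^{2,0}=0$, not for the $\LL$-identification itself.
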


The only known examples where $\coind(g)>b_2(M)-1$ are symmetric spaces; see (7) and (9) in Theorem~\ref{thm:symmstable}. The proof idea for Theorem~\ref{thm:KEunstable} is to produce destabilizing directions directly from harmonic forms. A recent result by Biquard--Ozuch \cite{cKunstable} on Einstein metrics which are conformally Kähler, particular to dimension four, uses a similar idea.

\begin{theorem}
If $(M^4,g)$ is Einstein with $E>0$, conformally Kähler, and not half-conformally flat, then $\coind(g)\geq b_2^-(M)$.
\end{theorem}

\subsection{Infinitesimal Einstein and complex deformations}
\label{sec:einsteincomplex}

Analogous to the decomposition of differential forms into $(p,q)$-forms, symmetric $2$-tensors split according to how they interact with the complex structure $J$. In the sequel, we freely identify $2$-tensors with endomorphisms of $TM$ using the metric $g$,
\[h(X,Y)=g(hX,Y)\qquad\forall X,Y\in\X(M).\]

\begin{definition}
A $2$-tensor $h\in\Gamma(T^\ast M\otimes T^\ast M)$ is called \emph{(anti-)Hermitian} if it (anti-)\allowbreak commutes with $J$. The bundles of Hermitian/anti-Hermitian symmetric $2$-tensors are respectively denoted with
\begin{align*}
\Sym^+T^\ast M&=\{h\in\Sym^2T^\ast M\,|\,h\circ J=J\circ h\},\\
\Sym^-T^\ast M&=\{h\in\Sym^2T^\ast M\,|\,h\circ J=-J\circ h\}.
\end{align*}
\end{definition}

Since $J$ is parallel, these subbundles are also parallel. Their spaces of sections shall be denoted with $\Sy^\pm(M)$. Koiso \cite[Prop.~7.3]{Koiso83} has shown that the space of IED splits accordingly,
\[\varepsilon(g)=\varepsilon^+(g)\oplus\varepsilon^-(g),\qquad \varepsilon^\pm(g)=\varepsilon(g)\cap\Sy^\pm(M).\]
Moreover, there is a parallel bundle isomorphism between Hermitian symmetric $2$-tensors and real $(1,1)$-forms,
\[\Lambda^{1,1}_\R T^\ast M\to\Sym^+T^\ast M,\qquad \alpha\mapsto\alpha\circ J.\]
This isomorphism maps primitive $(1,1)$-forms (the orthogonal complement of the Kähler form) to traceless symmetric $2$-tensors, and coclosed $(1,1)$-forms to divergence-free $2$-tensors. In particular, since $\LL$ commutes with parallel bundle maps, primitive harmonic $(1,1)$-forms correspond to tt-tensors in the kernel of $\LL$. Together with the fact that harmonic $2$-forms on Kähler--Einstein manifolds with $E>0$ are always of type $(1,1)$, this proves Theorem~\ref{thm:KEunstable}.

On the other hand, since the Hodge--de Rham Laplacian is nonnegative, we obtain a vanishing result.
\begin{theorem}
\label{thm:hermitian}
\begin{enumerate}[\upshape(i)]
\item If $E<0$, then $\varepsilon^+(g)=0$.
\item If $E=0$, then $\dim\varepsilon^+(g)=h^{1,1}-1$.
\end{enumerate}
\end{theorem}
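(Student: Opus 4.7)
The strategy is to reduce the problem to standard Hodge theory on $(1,1)$-forms via the parallel bundle isomorphism $\Phi\colon \Lambda^{1,1}_\R T^\ast M \to \Sym^+ T^\ast M$, $\alpha \mapsto \alpha\circ J$, recorded just before the theorem. The first step is to observe that since $J$ is parallel, so is $\Phi$, and since $\LL$ commutes with parallel bundle maps, $\Phi$ intertwines the Lichnerowicz Laplacian on Hermitian symmetric $2$-tensors with $\LL$ on real $(1,1)$-forms. On forms the latter coincides with the Hodge--de Rham Laplacian $\Delta$. Combined with the observations (also already stated in the paragraph preceding the theorem) that $\Phi$ sends primitive forms to traceless tensors and coclosed forms to divergence-free tensors, one obtains a natural bijection
\[
\varepsilon^+(g) \;\cong\; \bigl\{\alpha \in \Gamma(\Lambda^{1,1}_\R T^\ast M)\;\big|\; \alpha \text{ primitive},\; \delta\alpha=0,\; \Delta\alpha=2E\alpha \bigr\}.
\]

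For part (i), if $E<0$ then $2E$ is strictly negative, whereas $\Delta$ is nonnegative on a compact manifold; hence the only solution is $\alpha=0$, which gives $\varepsilon^+(g)=0$.

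For part (ii), the equation $\Delta\alpha=0$ forces $\alpha$ to be harmonic, in particular automatically coclosed, so $\varepsilon^+(g)$ is identified with the space of primitive harmonic real $(1,1)$-forms. By the Lefschetz decomposition on the compact Kähler manifold $(M,g,J)$,
\[
H^{1,1}(M,\R) \;=\; \R\,\omega \;\oplus\; H^{1,1}_{\mathrm{prim}}(M,\R),
\]
so the primitive harmonic $(1,1)$-forms have real dimension $h^{1,1}-1$. (In complex dimension one this is vacuous: $h^{1,1}=1$ and there are indeed no nontrivial Hermitian IED.)

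The main conceptual step is the translation of the tt-conditions under $\Phi$, which turns a second-order PDE problem on symmetric $2$-tensors into a purely spectral question for $\Delta$ on real $(1,1)$-forms; once this is in hand, (i) follows from nonnegativity of $\Delta$ and (ii) from the Lefschetz decomposition. Nothing deeper is required, since the parallelism of $\Phi$ guarantees it preserves both the eigenspace decomposition of $\LL$ and the divergence/trace structure.
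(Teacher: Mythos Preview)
Your proof is correct and follows essentially the same approach as the paper: the parallel isomorphism $\Lambda^{1,1}_\R T^\ast M\to\Sym^+T^\ast M$ intertwines $\LL$ with the Hodge--de Rham Laplacian and carries the tt-conditions to ``primitive and coclosed'', reducing (i) to nonnegativity of $\Delta$ and (ii) to counting primitive harmonic real $(1,1)$-forms. Your explicit invocation of the Lefschetz decomposition for the dimension count in (ii) is a welcome clarification but not a departure from the paper's argument.
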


Infinitesimal Einstein deformations are closely linked to infinitesimal deformations of the complex structure. This was thoroughly investigated by Koiso in \cite{Koiso83}. Recall that a complex structure on a manifold $M$ is a section $J\in\Gamma(\End TM)$ such that $J^2=-\Id$ whose Nijenhuis tensor vanishes, $N_J=0$. Differentiating this in $J$, while staying orthogonal to the $\Diff$-orbit of $J$, which has tangent space
\[T_J(\Diff\cdot J)=\Lie_{\X(M)}J,\]
leads to the following definition.

\begin{definition}
Let $(M,g,J)$ be a complex manifold with a Riemannian metric. An \emph{infinitesimal complex deformation} (ICD) of $J$ is a section $I\in\Gamma(\End TM)$ such that
\begin{enumerate}[\upshape(i)]
    \item $IJ+JI=0$, i.e.~$I$ is anti-Hermitian,
    \item $N_J'(I)=0$, and
    \item $I\perp\Lie_{\X(M)}J.$
\end{enumerate}
\end{definition}

Koiso characterized ICD in terms of the Dolbeault operator for the twisted complex
\[0\to\Gamma(T^{1,0}M)\stackrel{\bar\partial^\nabla}{\to}\Omega^{0,1}(M,T^{1,0}M)\stackrel{\bar\partial^\nabla}{\to}\Omega^{0,2}(M,T^{1,0}M)\to\ldots\]
The cohomology of this complex is, by Dolbeault's theorem, isomorphic to the sheaf cohomology of the sheaf $\Theta$ of holomorphic vector fields.

\begin{theorem}
\label{thm:ICDdolbeault}
Let $(M,g,J)$ be a Kähler manifold.
\begin{enumerate}[\upshape(i)]
    \item An anti-Hermitian section $I\in\Gamma(\End TM)$, viewed as an element of the space $\Omega^{0,1}(M,T^{1,0}M)$, is an ICD if and only if $\bar\partial^\nabla I=0$ and $(\bar\partial^\nabla)^\ast I=0$.
    \item The space of ICD of $J$ is canonically isomorphic to $H^1(M,\Theta)$.
\end{enumerate}
\end{theorem}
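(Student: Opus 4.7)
The plan is to handle parts (i) and (ii) separately, with (ii) being essentially a formal consequence of (i) combined with Hodge theory on the elliptic Dolbeault complex.

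For part (i), the starting point is to make the identification between anti-Hermitian endomorphisms and elements of $\Omega^{0,1}(M,T^{1,0}M)$ precise. An anti-Hermitian $I\in\Gamma(\End TM)$ anticommutes with $J$, hence exchanges the $\pm i$-eigenbundles $T^{1,0}M$ and $T^{0,1}M$ inside $TM\otimes\C$. The restriction $I|_{T^{0,1}M}\colon T^{0,1}M\to T^{1,0}M$ is therefore a section of $(T^{0,1}M)^\ast\otimes T^{1,0}M\cong\Omega^{0,1}(M,T^{1,0}M)$, and its complex conjugate recovers $I|_{T^{1,0}M}$, so no information is lost. Part (i) then reduces to two equivalences.

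The first equivalence is $N_J'(I)=0\Leftrightarrow\bar\partial^\nabla I=0$ for anti-Hermitian $I$. Differentiating the Nijenhuis formula $N_J(X,Y)=[JX,JY]-J[JX,Y]-J[X,JY]-[X,Y]$ at $J$ in the direction $I$ yields
\[
N_J'(I)(X,Y)=[IX,JY]+[JX,IY]-I[JX,Y]-J[IX,Y]-I[X,JY]-J[X,IY].
\]
Using the Kähler condition $\nabla J=0$, each Lie bracket can be rewritten using the Levi-Civita connection, and the identification of $I$ as an element of $\Omega^{0,1}(M,T^{1,0}M)$ is exploited to specialize to $X,Y\in T^{0,1}M$. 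After simplification the resulting expression is a nonzero constant multiple of $(\bar\partial^\nabla I)(X,Y)\in T^{1,0}M$. This is the classical Kodaira--Spencer identification of integrable infinitesimal complex structure deformations, made especially clean in the Kähler setting because the connection is torsion-free and commutes with $J$.

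The second equivalence is $I\perp\Lie_{\X(M)}J$ in $L^2\Leftrightarrow(\bar\partial^\nabla)^\ast I=0$. The key identity is $\Lie_XJ=[J,\nabla X]$, immediate from $\nabla J=0$. For $Y\in T^{0,1}M$ a direct calculation gives $(\Lie_XJ)(Y)=-2i[X,Y]^{1,0}$; decomposing $X\otimes\C=X^{1,0}+X^{0,1}$ and using that $[X^{0,1},Y]\in T^{0,1}M$ by integrability of the antiholomorphic distribution, one obtains $(\Lie_XJ)|_{T^{0,1}M}=2i\,\bar\partial^\nabla(X^{1,0})$. Hence under the identification, $\Lie_{\X(M)}J$ corresponds precisely to the image of $\bar\partial^\nabla\colon\Gamma(T^{1,0}M)\to\Omega^{0,1}(M,T^{1,0}M)$, and its $L^2$-orthogonal complement is $\ker(\bar\partial^\nabla)^\ast$. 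Combining the two equivalences proves (i). For part (ii), once (i) is in place, the space of ICDs is the space of harmonic forms in $\Omega^{0,1}(M,T^{1,0}M)$ for the elliptic Dolbeault complex $(\Omega^{0,\bullet}(M,T^{1,0}M),\bar\partial^\nabla)$. The Hodge theorem on a compact Kähler manifold identifies this canonically with the Dolbeault cohomology $H^{0,1}_{\bar\partial}(M,T^{1,0}M)$, and Dolbeault's theorem then identifies the latter with the sheaf cohomology $H^1(M,\Theta)$ of the holomorphic tangent sheaf.

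The main obstacle is the first equivalence in part (i): unwinding $N_J'(I)$ into $\bar\partial^\nabla I$ requires carefully tracking the $T^{1,0}$-versus-$T^{0,1}$ components of each term and invoking $\nabla J=0$ at the right places to cancel the torsion-type contributions that would otherwise appear. The second equivalence, although less technically involved, likewise hinges essentially on the Kähler condition via $\Lie_XJ=[J,\nabla X]$ and on the integrability of $T^{0,1}M$.
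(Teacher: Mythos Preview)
The paper is a survey and does not supply its own proof of this theorem; it simply attributes the result to Koiso \cite{Koiso83}. Your proof sketch is correct and follows the standard argument. The identification of anti-Hermitian endomorphisms with $\Omega^{0,1}(M,T^{1,0}M)$, the computation showing $N_J'(I)=0\Leftrightarrow\bar\partial^\nabla I=0$ via the K\"ahler condition $\nabla J=0$, and the observation that $\Lie_XJ$ corresponds to $\bar\partial^\nabla(X^{1,0})$ (so that the gauge condition becomes $(\bar\partial^\nabla)^\ast I=0$) are all accurate. Part (ii) then follows from Hodge theory and the Dolbeault isomorphism exactly as you indicate.

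One small remark: in the second equivalence you should make explicit that the real-linear map $X\mapsto X^{1,0}$ from $\X(M)$ to $\Gamma(T^{1,0}M)$ is surjective (indeed bijective), so that the image of $X\mapsto\Lie_XJ$ under the identification is genuinely all of $\im\bar\partial^\nabla$ and not just its ``real part''. You also implicitly use that the identification $I\mapsto I|_{T^{0,1}M}$ respects the $L^2$ inner products up to a constant factor, so that $L^2$-orthogonality to $\Lie_{\X(M)}J$ translates to $L^2$-orthogonality to $\im\bar\partial^\nabla$; this is routine but worth a sentence.
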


This establishes the connection with the well-known deformation theory of Kodaira--Spencer \cite{KNS}.

We return to the setting of Kähler--Einstein manifolds. Using the metric, one may view an ICD also as a contravariant $2$-tensor. Koiso showed that the symmetric and skew-symmetric parts of an ICD are again ICD.

\begin{theorem}
\label{thm:ICDsymm}
Let $(M,g,J)$ be Kähler--Einstein.
\begin{enumerate}[\upshape(i)]
    \item If $I\in\Gamma(T^\ast M\otimes T^\ast M)$ is an ICD, then $\LL I=2EI.$
    \item The space of symmetric ICD coincides with $\varepsilon^-(g)$.
    \item Skew-symmetric ICD coincide with the parallel $2$-forms of type $(2,0)+(0,2)$. If $E=0$, these are precisely the harmonic forms of this type; if $E\neq0$, all ICD are symmetric.
\end{enumerate}
\end{theorem}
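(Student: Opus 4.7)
The plan is to use the Dolbeault characterization of ICDs from Theorem~\ref{thm:ICDdolbeault} together with the parallel $J$-type decomposition of $T^\ast M \otimes T^\ast M$, translating Dolbeault harmonicity into the Lichnerowicz eigenvalue equation via the Weitzenböck identities available on a Kähler--Einstein manifold.

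For part (i), I view an ICD $I$ as an element of $\Omega^{0,1}(M, T^{1,0}M)$; then by Theorem~\ref{thm:ICDdolbeault}(i), $I$ is harmonic for the twisted Dolbeault Laplacian $\Delta_{\bar\partial^\nabla}$. On a Kähler manifold, the Bochner--Kodaira--Nakano identity for the holomorphic tangent bundle expresses $\Delta_{\bar\partial^\nabla}$ as $\frac{1}{2} \nabla^\ast\nabla$ plus a curvature term involving $\Ric$ and the full curvature tensor $R$; under $\Ric = Eg$ the Ricci contribution collapses to a multiple of $E$, and the remainder can be absorbed via the Weitzenböck formula $\LL = \nabla^\ast\nabla + \K(R)$ into a scalar action of $\K(R)$ on the $\nabla$-parallel subbundle of anti-Hermitian tensors. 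Careful matching of real versus complex factors then translates the harmonicity of $I$ into $\LL I = 2EI$.

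For part (ii), I would prove both inclusions between symmetric ICDs and $\varepsilon^-(g)$. A symmetric ICD $I$ is anti-Hermitian by definition and satisfies $\LL I = 2EI$ by~(i), so it remains to show $I \in \TT(M)$. Trace-freeness is automatic: in a $J$-adapted orthonormal frame $e_1, Je_1, \ldots, e_m, Je_m$, the identity $I\circ J = -J\circ I$ together with the skew-symmetry of $J$ gives $g(I(Je_i), Je_i) = -g(Ie_i, e_i)$, so that $\tr I = 0$. For divergence-freeness, I would unwind the condition $(\bar\partial^\nabla)^\ast I = 0$ in a local $J$-adapted frame: the symmetry of $I$ together with $\nabla J = 0$ yields $\delta I = 0$. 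Conversely, given $I \in \varepsilon^-(g)$, the Weitzenböck identities of part~(i) applied in reverse show $\Delta_{\bar\partial^\nabla} I = 0$, whence $\bar\partial^\nabla I = 0$ and $(\bar\partial^\nabla)^\ast I = 0$ follow by $L^2$-pairing on the compact manifold $M$; these in turn imply the ICD conditions $N_J'(I) = 0$ and $I \perp \Lie_{\X(M)}J$ via Theorem~\ref{thm:ICDdolbeault}.

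For part (iii), I identify a skew-symmetric anti-Hermitian endomorphism $I$ with the real $2$-form $\omega(X, Y) = g(IX, Y)$, which lies in the $\nabla$-parallel subbundle of $2$-forms of type $(2,0)+(0,2)$ (characterized by $\omega(JX, JY) = -\omega(X, Y)$). The ICD conditions $\bar\partial^\nabla I = 0 = (\bar\partial^\nabla)^\ast I$ translate, via the metric identifications on a Kähler manifold, into $\bar\partial \omega = 0 = \bar\partial^\ast \omega$; thus $\omega$ is a harmonic form (using $\Delta_d = 2\Delta_{\bar\partial}$). But part~(i) also gives $\Delta_d \omega = \LL \omega = 2E\omega$, so harmonicity forces $2E\omega = 0$: for $E \neq 0$ this yields $\omega = 0$, so all ICDs are symmetric, and (trivially) skew-symmetric ICDs coincide with parallel $(2,0)+(0,2)$-forms, both being zero in this case. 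For $E = 0$, the Weitzenböck term $\K(R)$ vanishes on $(2,0)+(0,2)$-forms (a consequence of the standard identity $\K(R) = 2E$ on this subbundle of a Kähler--Einstein manifold), so $\Delta_d = \nabla^\ast\nabla$ there and harmonic forms coincide with parallel ones by integration by parts on compact $M$. The principal obstacle throughout is the careful bookkeeping of real vs.~complex factors when translating Dolbeault identities into identities involving $\LL$ and $\K(R)$; once these are in place, all three assertions follow cleanly.
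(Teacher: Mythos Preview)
Your approach is sound and follows essentially the route of Koiso's original argument in \cite{Koiso83}, which the paper cites without reproducing a proof (this is a survey). The Dolbeault characterization from Theorem~\ref{thm:ICDdolbeault}, combined with the Bochner--Kodaira--Nakano/Weitzenb\"ock identities on a K\"ahler--Einstein background, is exactly the right machinery.

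Two small points are worth tightening. First, the identity $\K(R)=2E$ on $\Lambda^{2,0}\oplus\Lambda^{0,2}$ that you invoke is correct but deserves a one-line justification rather than being called ``standard'': on a K\"ahler manifold the curvature satisfies $R_{\alpha\bar\gamma\beta\bar\delta}=R_{\alpha\bar\delta\beta\bar\gamma}$ (from the first Bianchi identity together with the vanishing of the $(2,0)$-components of $R$), so the contraction $R_{ikjl}\omega^{kl}$ against an antisymmetric $(2,0)+(0,2)$-form vanishes, leaving only the Ricci contribution $2E\omega$. Second, in the case $E\neq0$ you assert that parallel $(2,0)+(0,2)$-forms are ``trivially'' zero; this is not automatic from holonomy considerations (products are allowed), but it does follow immediately from the same identity: a parallel form is harmonic, hence $0=\Delta_d\omega=\LL\omega=\nabla^\ast\nabla\omega+2E\omega=2E\omega$. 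Once these are made explicit, the uniform statement ``skew-symmetric ICD $=$ parallel $(2,0)+(0,2)$-forms'' follows directly from $\LL=\nabla^\ast\nabla+2E$ on that bundle together with part~(i), without needing to split into cases.
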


A consequence is the dimension formula from the previous section (Theorem~\ref{thm:calabiyau}). Together with Theorem~\ref{thm:hermitian} we also obtain the following.

\begin{corollary}
\label{thm:KEdimIED}
Suppose $E\neq0$. Then
\[\dim\varepsilon(g)\geq 2\dim_\C H^1(M,\Theta),\]
with equality if $E<0$.
\end{corollary}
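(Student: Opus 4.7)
The plan is to combine the preceding four theorems and exploit the splitting $\varepsilon(g)=\varepsilon^+(g)\oplus\varepsilon^-(g)$ into Hermitian and anti-Hermitian IED. First I would write
\[\dim\varepsilon(g)=\dim\varepsilon^+(g)+\dim\varepsilon^-(g)\geq\dim\varepsilon^-(g),\]
so it suffices to show $\dim\varepsilon^-(g)=2\dim_\C H^1(M,\Theta)$ under the assumption $E\neq 0$, and then to kill $\varepsilon^+(g)$ when $E<0$.

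Next, I would identify the summand $\varepsilon^-(g)$ with the full real space of infinitesimal complex deformations. By Theorem~\ref{thm:ICDsymm}(ii), $\varepsilon^-(g)$ is precisely the space of \emph{symmetric} ICD, and by Theorem~\ref{thm:ICDsymm}(iii), the hypothesis $E\neq 0$ rules out any skew-symmetric ICD. Hence $\varepsilon^-(g)$ coincides, as a real vector space, with the entire space of ICD of $J$.

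Now I would invoke Theorem~\ref{thm:ICDdolbeault}(ii) to get a canonical real-linear isomorphism between the ICD and $H^1(M,\Theta)$. Since the latter is naturally a \emph{complex} vector space (coming from its realization as Dolbeault cohomology $H^{0,1}(M,T^{1,0}M)$), its real dimension is twice its complex dimension, yielding $\dim\varepsilon^-(g)=2\dim_\C H^1(M,\Theta)$. This establishes the inequality for $E\neq 0$. For the equality statement, Theorem~\ref{thm:hermitian}(i) immediately gives $\varepsilon^+(g)=0$ when $E<0$, collapsing $\dim\varepsilon(g)=\dim\varepsilon^-(g)$.

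There is essentially no analytic obstacle here — the entire argument is formal once the previous four theorems are in hand. The only point requiring real care is the factor of $2$: one must keep track of the distinction between the real vector space structure on the ICD and the complex structure on $H^1(M,\Theta)$ that the Dolbeault identification endows it with. The only place where the hypothesis $E\neq 0$ is used is to exclude skew-symmetric ICD via Theorem~\ref{thm:ICDsymm}(iii); in the Calabi--Yau case $E=0$, the skew-symmetric parallel $(2,0)+(0,2)$ forms would have to be subtracted off, recovering Theorem~\ref{thm:calabiyau}.
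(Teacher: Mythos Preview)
Your proof is correct and follows exactly the approach the paper intends: the corollary is stated immediately after Theorems~\ref{thm:hermitian}, \ref{thm:ICDdolbeault} and \ref{thm:ICDsymm} with the phrase ``Together with Theorem~\ref{thm:hermitian} we also obtain the following,'' and your argument spells out precisely this combination. Your attention to the real-versus-complex dimension count and the role of the hypothesis $E\neq0$ in eliminating skew-symmetric ICD is exactly right.
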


\subsection{Integrability}

One may compare the deformation theory of Einstein metrics with that of complex structures \cite{KNS}. The obstructions to the integrability of ICD (cf.~Section~\ref{sec:deformations}) live in the second cohomology $H^2(M,\Theta)$. In particular, if $H^2(M,\Theta)=0$, then every infinitesimal complex deformation is integrable into an analytic curve of complex structures. See \cite{DWW07} for some instructive examples of $H^1(M,\Theta)$ and $H^2(M,\Theta)$ on complex manifolds.

In the Kähler--Einstein case, the deformation theories of Einstein metrics and of complex structures interact in subtle ways. We summarize the results of \cite{Koiso83}.

\begin{theorem}
Let $(M,g,J)$ be Kähler--Einstein.
\begin{enumerate}[\upshape(i)]
    \item 
    Suppose $E<0$ and all ICD are integrable. Then
    then so are all IED; moreover, every Einstein metric near $g$ is also Kähler (for some complex structure), and the Kähler--Einstein metrics form an open subset of $\EMod$.
    \item 
    Suppose $E>0$, all ICD are integrable, and $(M,J)$ has no holomorphic vector fields. Then all IED in $\varepsilon^-(g)$ are integrable, and the Kähler--Einstein metrics form a submanifold of $\widetilde\EMod_g$ with tangent space $\varepsilon^-(g)$.
\end{enumerate}
\end{theorem}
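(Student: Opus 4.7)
The plan is to use the integrability of infinitesimal complex deformations to produce, for each IED, a curve of complex structures on $M$, and then upgrade it to a curve of Kähler--Einstein metrics via an appropriate existence theorem.

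For $E\neq 0$, Theorem~\ref{thm:ICDsymm}(iii) says every ICD is symmetric, and by Theorem~\ref{thm:ICDsymm}(ii) the map $I\mapsto g(I\cdot,\cdot)$ identifies the space of ICD with $\varepsilon^-(g)$; moreover, Theorem~\ref{thm:hermitian}(i) gives $\varepsilon(g)=\varepsilon^-(g)$ in the negative case. So in both parts the hypothesis ``all ICD are integrable'' is equivalent to every $h\in\varepsilon^-(g)$ being, up to a Lie derivative, the initial velocity of a smooth curve $(J_t)$ of complex structures based at $J$.

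For (i), given $h\in\varepsilon(g)=\varepsilon^-(g)$ and such a family $(J_t)$, the Einstein constraint $E<0$ forces $c_1(M,J)<0$, and this topological condition passes to every $J_t$. The Aubin--Yau theorem then supplies, for each $t$, a unique Kähler--Einstein metric $g_t$ on $(M,J_t)$ with $\Ric_{g_t}=Eg_t$, varying smoothly in $t$ by an implicit function argument for the complex Monge--Ampère equation. Differentiating the Kähler compatibility $g_t(J_t\cdot,\cdot)=-g_t(\cdot,J_t\cdot)$ at $t=0$ identifies the anti-Hermitian part of $\dot g_0$ with (half of) $g(\dot J_0\cdot,\cdot)$, so $\dot g_0$ matches $h$ modulo gauge, and $h$ is integrable. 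With every IED integrable, $\widetilde\EMod_g$ is a real analytic submanifold of $\Slice_g$ of dimension $\dim\varepsilon(g)$, which is exhausted near $g$ by the Kähler--Einstein subfamily just constructed; hence every Einstein metric close to $g$ is Kähler--Einstein, and by diffeomorphism invariance this gives an open subset of $\EMod$.

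For (ii) the Aubin--Yau theorem is not available, so the plan is to apply the implicit function theorem directly to the complex Monge--Ampère equation. The condition of having no holomorphic vector fields is open, as it amounts to a kernel dimension of an elliptic operator depending continuously on the complex structure; by upper semicontinuity $(M,J_t)$ inherits it for small $t$. The linearization of the Monge--Ampère map at $g$ is $\Delta-2E$ on zero-average functions, and by the Matsushima--Lichnerowicz identity its kernel is in bijection with the real holomorphic vector fields of $(M,J)$, hence trivial. This yields a smooth family $g_t$ of Kähler--Einstein metrics on $(M,J_t)$; the matching of $\dot g_0$ with $h$ proceeds as in (i), and the Kähler--Einstein locus inside $\widetilde\EMod_g$ emerges as a submanifold with tangent space $\varepsilon^-(g)$.

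The main obstacle is the implicit function step in (ii): since the Kähler class $[\omega_{J_t}]$ generally moves with $t$, one must either set up the equation on a moving family of classes or work on a fibered space of Kähler potentials, and extract smooth dependence of $g_t$ on $t$; the absence of holomorphic vector fields is precisely what gives a uniformly invertible linearization along the curve. In (i) this step is subsumed by Aubin--Yau, leaving the dimension count and the derivation of $\dot g_0=h$ from the Kähler compatibility as the technical points.
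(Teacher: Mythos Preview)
The paper does not prove this theorem; it is stated as a summary of results from \cite{Koiso83}. Your outline is precisely Koiso's strategy: identify $\varepsilon^-(g)$ with the space of ICD via Theorems~\ref{thm:hermitian} and~\ref{thm:ICDsymm}, integrate each ICD to a curve of complex structures, and then produce a companion curve of Kähler--Einstein metrics by Aubin--Yau in the negative case and by an implicit function argument on the Monge--Ampère equation in the positive case, where the absence of holomorphic vector fields kills the kernel of the linearization via Matsushima. The dimension count at the end of (i)---the Kähler--Einstein family has tangent space all of $\varepsilon(g)$, hence is open in the analytic set $\widetilde\EMod_g\subset Z$---is also how Koiso concludes openness.

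One minor point: your worry about the Kähler class ``moving with $t$'' in (ii) is slightly misplaced. For a Kähler--Einstein metric with $E>0$ the Kähler class is forced to be a fixed positive multiple of $c_1(M,J_t)$, and $c_1$ is constant in integer cohomology along a continuous family of complex structures. What does move is a choice of background Kähler form in that class, needed to set up the Monge--Ampère equation on a fixed function space; this is handled by Kodaira--Spencer stability of the Kähler condition, which furnishes a smooth family of background Kähler metrics, after which the IFT applies uniformly as you describe.
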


However, even if $E<0$, in which case there is a one-to-one correspondence between IED and ICD, it could be the case that there exist integrable IED whose corresponding ICD are not integrable.

One may ask whether the Kähler--Einstein metrics always form an open subset in the Einstein moduli space, that is, whether any nearby Einstein metric is also Kähler. As in Section~\ref{sec:parallelspinor}, we call this \emph{holonomy rigidity}. This is true in dimension four, but in contrast to the Calabi--Yau case, it remains unanswered in general.

\begin{theorem}
\label{thm:KE4hol}
If $\dim M=4$, any Kähler--Einstein metric (with $E\neq0$) on $M$ is holonomy rigid.
\end{theorem}
\begin{proof}
Let $g$ be Kähler--Einstein with Einstein constant $E$. For $E<0$, the statement follows from Theorem~\ref{thm:KEsupreme}: the metric $g$ is supreme, and since $\EH$ is locally constant on $\EMod$ (Theorem~\ref{thm:EMod}), any nearby Einstein metric is also supreme, hence Kähler--Einstein.

On the other hand, if $(M^4,g,J)$ is Kähler--Einstein with $E>0$, then $(M,J)$ is a del Pezzo surface \cite{CLW}. By \cite[Cor.~1]{LeBrunAGAG}, $g$ is holonomy rigid.
\end{proof}

\begin{questype}{Open question}
Are Kähler--Einstein metrics (with $E\neq0$) holonomy rigid?
\end{questype}

Without any assumption on the ICD, using Theorem~\ref{thm:hermitian} and an explicit formula for the second order obstruction, Nagy--Semmelmann \cite{NS25} have shown that IED are at least integrable to second order:

\begin{theorem}
\label{thm:ICD2}
Let $(M,g,J)$ be Kähler--Einstein with $E<0$. Then all IED are integrable to second order.
\end{theorem}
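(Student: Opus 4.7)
The plan is to exploit the fact that the cubic form $\Psi$ of Theorem~\ref{thm:secondobst} simplifies drastically on anti-Hermitian tensors. Since $\Psi$ is a symmetric trilinear form on $\varepsilon(g)$, polarization reduces the problem to proving that $\Psi(h,h,h)=0$ for every IED $h \in \varepsilon(g)$: the second order obstruction $\Psi(h,h,\,\cdot\,)|_{\varepsilon(g)}$ is then automatic from $h \mapsto \Psi(h,h,h)$ vanishing on the subspace.

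First I would invoke Theorem~\ref{thm:hermitian}(i): since $E<0$, one has $\varepsilon^+(g)=0$, so every IED satisfies $h\circ J = -J\circ h$. Consequently $h^3$ is again anti-Hermitian, whence $\tr(h^3) = \tr(Jh^3J^{-1}) = -\tr(h^3) = 0$. Plugging this into the formula
\[
\Psi(h) = \int_M\bigl(3\langle \FN{h}{h}, d^\nabla h\rangle - E\tr(h^3)\bigr)|\vol_g|
\]
from Theorem~\ref{thm:secondobst}(ii) eliminates the $\tr(h^3)$-term, reducing the problem to
\[
\int_M \langle \FN{h}{h}, d^\nabla h\rangle|\vol_g| = 0.
\]

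The main obstacle is this last identity, and here one has to use Kähler structure beyond just anti-Hermiticity. I would invoke Theorem~\ref{thm:ICDsymm}(ii) to reinterpret $h$ as a symmetric infinitesimal complex deformation, and then write $h=\alpha+\bar\alpha$ with $\alpha\in\Omega^{0,1}(M,T^{1,0}M)$; by Theorem~\ref{thm:ICDdolbeault}(i) this gives $\bar\partial^\nabla\alpha=0$ and $(\bar\partial^\nabla)^\ast\alpha=0$. A bidegree decomposition then shows that $d^\nabla h$ is of pure type $(1,1)$ in $\Omega^2(M,TM)$, while $\FN{h}{h}$ decomposes into Kodaira--Spencer brackets $[\alpha,\alpha]^{\mathrm{FN}}$ of type $(0,2)$, its conjugate of type $(2,0)$, and a mixed $(1,1)$-piece $2[\alpha,\bar\alpha]^{\mathrm{FN}}$; only the mixed piece contributes to the pointwise pairing. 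The hardest step is to manipulate the resulting $(1,1)$-pairing by integration by parts, exploiting the Kähler identities together with the closedness and coclosedness of $\alpha$, so as to extract sufficient cancellations to force the remaining integral to vanish.
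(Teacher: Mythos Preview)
Your approach matches what the survey indicates: the paper does not give a proof but cites \cite{NS25}, saying only that the result follows ``using Theorem~\ref{thm:hermitian} and an explicit formula for the second order obstruction.'' Your reduction via polarization is sound, and the argument that $\tr(h^3)=0$ for anti-Hermitian $h$ is correct and complete (indeed $Jh^3J^{-1}=-h^3$ since $h^3$ again anticommutes with $J$).

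The gap is in the Frölicher--Nijenhuis term, which you yourself flag as ``the hardest step.'' Your type bookkeeping for $d^\nabla h$ is fine: since $\bar\partial^\nabla\alpha=0$, one has $d^\nabla\alpha=\partial^\nabla\alpha\in\Omega^{1,1}(T^{1,0}M)$, and conjugately for $\bar\alpha$. The claim that $\FN{\alpha}{\alpha}\in\Omega^{0,2}(T^{1,0}M)$ (the Kodaira--Spencer bracket) is also correct on an integrable complex manifold. However, the assertion that the mixed bracket $\FN{\alpha}{\bar\alpha}$ is of \emph{pure} type $(1,1)$ is not obvious and deserves a careful check: the Frölicher--Nijenhuis bracket is not bilinear over $C^\infty$ in the vector-field slot, so bigrading is not automatic, and one must verify there is no leakage into $(2,0)\oplus(0,2)$ (with values in $T^{1,0}\oplus T^{0,1}$ respectively). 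Even granting purity, your final paragraph is not a proof but a wish: ``manipulate the resulting $(1,1)$-pairing by integration by parts\ldots so as to extract sufficient cancellations.'' This is precisely where the content of \cite{NS25} lies, and you have not supplied the mechanism. The K\"ahler identities together with $\bar\partial^\nabla\alpha=0$ and $(\bar\partial^\nabla)^\ast\alpha=0$ are the right tools, but one needs an explicit identity relating $\langle\FN{\alpha}{\bar\alpha},\partial^\nabla\alpha\rangle$ (and its conjugate) to a divergence, and you have not produced one. As written, the proposal is a correct outline with the decisive computation missing.
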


Dai--Wang--Wei \cite{DWW07} have further asked whether there exist compact Kähler--Einstein manifolds with $E<0$ with nonintegrable ICD or IED. In dimension four, Horikawa showed the existence of examples where ICD are obstructed to second order \cite[\S6]{Hori}. As a consequence of Theorem~\ref{thm:KE4hol}, the corresponding IED are also obstructed (although necessarily to higher order by Theorem~\ref{thm:ICD2}). In view of Theorem~\ref{thm:KEsupreme}, this example also shows that the converse of Theorem~\ref{thm:stablefromintegrable} is false. In dimensions other than four, the question by Dai--Wang--Wei is still open.

\begin{questype}{Open question}
Does there exist a compact Kähler--Einstein manifold $(M,g,J)$ of negative scalar curvature with $\dim M\neq4$ admitting nonintegrable ICD or IED?
\end{questype}

In the positive scalar curvature case, IED seem to be generically obstructed; see Section~\ref{sec:symmetric}, in particular Theorem~\ref{thm:symm2ndorder} (5).


\section{Quaternion-Kähler metrics}
\label{sec:qk}

Quaternion-Kähler manifolds are Riemannian manifolds $(M^{4m}, g)$ with holonomy group contained in $\Sp(m)\cdot \Sp(1) \subset \SO(4m)$.
For $m \ge 2$ they are automatically Einstein; see \cite[\S14]{besse} for an introduction. Furthermore, they are de Rham irreducible if the scalar curvature is diﬀerent from zero. If the scalar curvature vanishes, the holonomy reduces further to $\Sp(m)$, i.e., the manifold is then hyper-Kähler and falls under Section~\ref{sec:parallelspinor}. In the case of negative scalar curvature, Kröncke--Semmelmann proved the following result \cite{KS24}.

\begin{theorem}
\label{thm:qknegstable}
Every (compact or noncompact) quaternion-Kähler manifold of negative scalar curvature is strictly stable.
\end{theorem}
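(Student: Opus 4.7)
The proof plan is to leverage the parallel subbundle decomposition of $\Sym^2_0 T^\ast M$ coming from the reduced holonomy $\Sp(m)\cdot\Sp(1)$ and combine it with the Weitzenböck estimate \eqref{estimate} to bound the Lichnerowicz Laplacian pointwise from below by $2E$.

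First, I would use the holonomy reduction to obtain a parallel decomposition of the trace-free symmetric tensor bundle. After complexification and using the standard identification $T^\ast M\otimes\C\cong E\otimes H$ (with $E,H$ the defining representations of $\Sp(m)$ and $\Sp(1)$), one finds
\[\Sym^2_0 T^\ast M\otimes\C\cong\Lambda^2_0 E\oplus(\Sym^2 E\otimes\Sym^2 H).\]
Both summands are parallel and therefore preserved by the Lichnerowicz Laplacian as well as by $\delta$ and $\delta^\ast$. Because of the Weitzenböck estimate \eqref{estimate}, namely $\LL\geq2\K(R)$ on tt-tensors, it suffices to prove the pointwise bound $\K(R)>E\cdot\Id$ on $\Sym^2_0 T^\ast M$ as a symmetric endomorphism.

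Second, I would analyze $\K(R)$ using the structure of the QK curvature tensor. It is classical that $R$ decomposes as $R=\nu R_{\qH\Proj^m}+R_{\mathrm{hk}}$, where $\nu$ is proportional to the scalar curvature (hence negative in our setting) and $R_{\mathrm{hk}}\in\Sym^4 E$ is a hyperkähler-type component. The universal part $\nu R_{\qH\Proj^m}$ contributes to $\K(R)$ via scalars on each parallel summand, determined by Casimir eigenvalues of $\Sp(m)\cdot\Sp(1)$, while the contribution of $R_{\mathrm{hk}}$ is analyzed using its specific $\Sym^4 E$-structure. Once $\K(R)>E\cdot\Id$ is established pointwise, the desired estimate follows: for every compactly supported tt-tensor $h$ one has
\[\Ltwoinprod{\LL h}{h}\geq2\Ltwoinprod{\K(R)h}{h}>2E\Ltwoinprod{h}{h},\]
which is strict stability in both the compact and noncompact senses defined in the excerpt. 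The advantage of this uniform pointwise approach is that trace-free Killing tensors, where \eqref{estimate} is an equality, are automatically handled.

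The main obstacle is the curvature computation: while the action of $\nu R_{\qH\Proj^m}$ on each parallel summand is transparent via Casimir operators, the hyperkähler component $R_{\mathrm{hk}}$ has no definite sign on arbitrary 2-tensors. Verifying that the total contribution $\K(R)-E\cdot\Id$ remains strictly positive on both summands requires exploiting the negativity of $\nu$ (which makes the Casimir contribution of $\nu R_{\qH\Proj^m}$ favorable relative to $E<0$) together with a careful sign-analysis of $\K(R_{\mathrm{hk}})$ leveraging the $\Sym^4 E$-constraint. One expects the $\Sym^4 E$-structure to force $\K(R_{\mathrm{hk}})$ to contribute nonnegatively on one summand and to respect a Kato-type bound on the other; establishing these facts via representation-theoretic identities on the Weitzenböck curvature is the technical heart of the proof and the place where the special geometry of quaternion-Kähler manifolds enters in an essential way.
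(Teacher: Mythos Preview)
Your strategy has a genuine gap at its ``technical heart'': the pointwise inequality $\K(R)>E\cdot\Id$ on $\Sym^2_0T^\ast M$ is never established, and there is no clear reason it should hold. You correctly observe that $R_{\mathrm{hk}}\in\Sym^4\qE$ has no definite sign on symmetric $2$-tensors; the problem is that its magnitude is also completely unconstrained. Since the Casimir contribution of $\nu R_{\qH\Proj^m}$ is of fixed size (proportional to $E$) while $\K(R_{\mathrm{hk}})$ can in principle be arbitrarily large in either direction, no representation-theoretic identity on $\Sym^4\qE$ alone will pin down the sign of $\K(R)-E\cdot\Id$. In short, you are attempting a pointwise curvature estimate in a situation where only the holonomy is controlled, not the curvature tensor itself.

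The paper's argument (from \cite{KS24}) bypasses pointwise curvature analysis entirely. The observation is that the $\Sp(m)\cdot\Sp(1)$-structure furnishes an injective \emph{parallel} bundle map $\Sym^2T^\ast M\hookrightarrow\Lambda^4T^\ast M$. Because the map is parallel, it intertwines $\LL$ on symmetric $2$-tensors with the Hodge--de Rham Laplacian on $4$-forms, which is nonnegative. Hence $\LL\geq0$ on all of $\Sy^2(M)$, and since $E<0$ this immediately gives $\LL\geq0>2E$ on tt-tensors, i.e.\ strict stability. This is an operator inequality, not a curvature inequality; it converts the problem into the positivity of $dd^\ast+d^\ast d$, which requires no sign information on $R_{\mathrm{hk}}$ whatsoever.
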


In Salamon's $\qE$-$\qH$-formalism, where $\qH$ and $\qE$ are the locally defined vector bundles associated to the standard representations of $\Sp(1)$ resp.~$\Sp(n)$, so that $TM^\C\cong\qH\otimes\qE$, the bundle of symmetric $2$-tensors splits as
\[\Sym^2T^\ast M^\C\cong(\Sym^2\qH\otimes\Sym^2\qE)\oplus\Lambda^2_0\qE\oplus\C.\]
The proof of Theorem~\ref{thm:qknegstable} is based on the simple observation that there is an injective parallel bundle map $\Sym^2T^\ast M\to\Lambda^4T^\ast M$, which takes the Lichnerowicz Laplacian $\LL$ on symmetric $2$-tensors to the nonnegative Hodge Laplacian on $4$-forms. Hence $\LL\geq0$ on $\Sy^2(M)$, and the metric is strictly stable because $E<0$ by assumption.

For \emph{positive} quaternion-Kähler manifolds ($E>0$), the situation is more complicated and the stability question is still open. The only known complete positive examples are symmetric spaces, called \emph{Wolf spaces}. They are all strictly stable with the exception of the complex $2$-plane Grassmannian $\Gr_2\C^n$, $n\geq4$, which is semistable but $\EH$-unstable, cf.~Theorem~\ref{thm:symmstable} and Corollary~\ref{thm:symmunstable}.

\begin{questype}{LeBrun--Salamon Conjecture}
Every (compact) positive quaternion-Kähler manifold is isometric to a Wolf space.
\end{questype}

On a positive quaternion-Kähler manifold, the Lichnerowicz Laplacian on $\Sy^2_0(M)$ satisfies an a-priori bound which is not quite enough for stability \cite{H06}.

\begin{theorem}
If $(M^{4m},g)$ is a positive quaternion-Kähler manifold, then $\LL\geq2E\frac{m+1}{m+2}$ on trace-free symmetric $2$-tensors.
\end{theorem}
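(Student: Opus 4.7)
The plan is to estimate the Weitzenböck curvature endomorphism $\K(R)$ pointwise on trace-free symmetric $2$-tensors and then invoke $\LL = \nabla^\ast\nabla + \K(R)$ together with $\nabla^\ast\nabla \geq 0$. To do so I would use the $\qE$–$\qH$-formalism to split
\[(\Sym^2_0 T^\ast M)\otimes \C \;\cong\; (\Sym^2\qH \otimes \Sym^2\qE) \;\oplus\; \Lambda^2_0\qE\]
into parallel, irreducible $\Sp(m)\cdot\Sp(1)$-subbundles. Since $\LL$ commutes with any parallel endomorphism, it preserves each summand, and it suffices to establish the bound on each separately.

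On a quaternion-Kähler manifold the curvature tensor is constrained to the form $R = \nu R_0 + W$, where $R_0$ is the (suitably normalised) model curvature of the symmetric space $\Ham\Proj^m$, $\nu$ is a positive multiple of $E$, and $W$ is the ``quaternionic Weyl'' piece which takes values in $\spf(m) \subset \so(TM)$. Correspondingly $\K(R) = \nu\K(R_0) + \K(W)$. On each of the two parallel summands above, $\K(R_0)$ acts as a scalar given by the Casimir of the corresponding representation of $\spf(m)\oplus\spf(1)$. After translating between the normalisations of $\nu$ and $E$, a direct representation-theoretic calculation identifies the smaller of these two Casimir values as $2E(m+1)/(m+2)$.

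The main obstacle is to show that the Weyl contribution $\K(W)$ is nonnegative on $\Sym^2_0 T^\ast M$. Since $W$ is trivial on the $\qH$-factor and acts only through $\Sp(m)$, $\K(W)$ operates pointwise as an $\Sp(m)$-equivariant symmetric endomorphism on $\Sym^2\qE$ and on $\Lambda^2_0\qE$. The strategy would be to realise $\K(W)$ as a sum of squares of suitable algebraic operators built from $W$, paralleling the hyper-Kähler case $\nu = 0$, in which $R = W$ and the corresponding semistability ultimately descends from the squared twisted Dirac operator construction underlying Theorem~\ref{thm:parallelsemistable}. Combining this with the Casimir computation yields $\K(R) \geq 2E(m+1)/(m+2)$ pointwise on $\Sym^2_0 T^\ast M$, and hence the same lower bound for $\LL$. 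As a sanity check, in the symmetric (Wolf space) case $W = 0$ and the bound is attained on the summand of smaller Casimir, consistent with the fact that the spectrum of $\LL$ on Wolf spaces is accessible via harmonic analysis.
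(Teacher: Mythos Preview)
Your argument has a genuine gap at the step you yourself flag as the main obstacle: the pointwise nonnegativity of $\K(W)$ on $\Sym^2_0 T^\ast M$. The hyper-Kähler analogy does not supply this. Theorem~\ref{thm:parallelsemistable} is a \emph{global} statement: it realises $\LL$ on tt-tensors as the square of a twisted Dirac operator, giving $\LL\geq0$ in the $L^2$ sense. It does not say that $\K(R)=-2\mathring{R}\geq0$ pointwise on a hyper-Kähler manifold, and indeed there is no reason for the curvature operator of the second kind to be nonpositive everywhere on, say, a K3 surface. So ``realising $\K(W)$ as a sum of squares'' is not what the parallel-spinor argument does, and you have not indicated any alternative mechanism. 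Your sanity check is also mistaken: among the Wolf spaces only $\Ham\Proj^m$ has vanishing quaternionic Weyl tensor $W$; the other Wolf spaces (e.g.\ $\Gr_2\C^n$) have $W\neq0$, so the bound being attained there says nothing about whether the strategy ``drop $\K(W)$'' can work.

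The proof in \cite{H06} does not proceed via a pointwise curvature estimate. Instead it uses Weitzenb\"ock formulas for the \emph{generalised gradients} (Stein--Weiss operators) associated to the $\Sp(m)\cdot\Sp(1)$-structure: one decomposes $T^\ast M\otimes V$ for each irreducible summand $V\subset\Sym^2_0 T^\ast M$ into $\Sp(m)\cdot\Sp(1)$-irreducibles, obtaining first-order operators $P_i$ as projections of $\nabla$. The corresponding Weitzenb\"ock identities express $\LL$ on $\Gamma(V)$ as a linear combination of the nonnegative operators $P_i^\ast P_i$ plus an explicit multiple of the scalar curvature; the quaternionic Weyl piece is absorbed into the differential-operator side rather than appearing as a zeroth-order term to be bounded. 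The constant $2E\tfrac{m+1}{m+2}$ then falls out of the conformal-weight/Casimir bookkeeping for these gradients. This is closer in spirit to the embedding $\Sym^2T^\ast M\hookrightarrow\Lambda^4T^\ast M$ used for Theorem~\ref{thm:qknegstable} than to a naked bound on $\K(R)$.
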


In the following unpublished result of Homma--Semmelmann \cite{HS25}, the possible destabilizing directions are narrowed down.

\begin{theorem}
Let $(M^{4m}, g)$ be a positive quaternion-Kähler manifold. The eigenspace $\ker(\LL-\lambda)|_{\TT(M)}$ to an eigenvalue $2E\frac{m+1}{m+2}\leq\lambda\leq 2E$ is
\begin{enumerate}[\upshape(i)]
    \item isomorphic to $\ker(\Delta-\lambda)|_{C^\infty(M)}$ if $2E\frac{m+1}{m+2}<\lambda\leq2E$,
    \item isomorphic to $\ker(\LL-2E\frac{m+1}{m+2})|_{\Gamma(VM)}$ if $\lambda=2E\frac{m+1}{m+2}$, where $VM$ is such that $VM^\C=\Sym^2\qH\otimes\Sym^2\qE$.
\end{enumerate}
\end{theorem}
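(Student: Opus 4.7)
The plan is to exploit the parallel decomposition of $\Sym^2 T^\ast M$ under the $\Sp(m)\cdot\Sp(1)$-holonomy, isolate eigenvalues of $\LL$ on each parallel summand, and then extract the tt-eigenspaces via Corollary~\ref{thm:ttLL}. First I would record the $\Sp(m)\cdot\Sp(1)$-invariant splitting
\[\Sym^2 T^\ast M^\C = VM \oplus \Lambda^2_0\qE \oplus \C g, \qquad VM = \Sym^2\qH \otimes \Sym^2\qE,\]
with all three summands parallel, so that $\LL$ preserves each and acts on the trivial line bundle as the scalar Laplacian. Using the Weitzenböck formula $\LL = \nabla^\ast\nabla + \K(R)$ together with the explicit form of the QK curvature, I would compute $\K(R)$ on each summand and establish the strict pointwise inequality $\LL > 2E$ on sections of $\Lambda^2_0\qE$. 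Equivalently, the sharper bound $\LL \geq 2\K(R)$ from~\eqref{estimate} on $\ker\delta$ does the job once $\K(R) > E$ is verified on that summand. Combined with the a priori bound $\LL \geq 2E\tfrac{m+1}{m+2}$ on $\Sy^2_0$ from the preceding theorem, this forces every $\Sy^2_0$-eigenform of $\LL$ with eigenvalue in $[2E\tfrac{m+1}{m+2}, 2E]$ to lie entirely inside $\Gamma(VM)$.

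Next I would invoke Corollary~\ref{thm:ttLL}(ii), giving for $\lambda \neq 2E$ the short exact sequence
\[0 \to \ker(\Delta - \lambda)|_{\Omega^1(M)} \stackrel{\theta}{\longrightarrow} \ker(\LL - \lambda)|_{\Sy^2_0(M)} \to \ker(\LL - \lambda)|_{\TT(M)} \to 0.\]
The Lichnerowicz bound $\Delta \geq 2E$ on coclosed $1$-forms (immediate from~\eqref{estimate} applied to $\Omega^1$) restricts contributions in our range to exact $1$-forms, so $\ker(\Delta - \lambda)|_{\Omega^1(M)} \cong \ker(\Delta - \lambda)|_{C^\infty(M)}$ naturally via $f\mapsto df$ whenever $\lambda < 2E$. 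For part (ii), at $\lambda = 2E\tfrac{m+1}{m+2}$, this value is below the first Laplace eigenvalue on functions of a positive QK manifold, so the $1$-form eigenspace is trivial and the sequence degenerates to $\ker(\LL - \lambda)|_{\TT(M)} \cong \ker(\LL - \lambda)|_{\Sy^2_0(M)} = \ker(\LL - \lambda)|_{\Gamma(VM)}$, as claimed. For part (i), with $\lambda \in (2E\tfrac{m+1}{m+2}, 2E]$, the content reduces to establishing the dimension identity
\[\dim\ker(\LL - \lambda)|_{\Gamma(VM)} = 2\dim\ker(\Delta - \lambda)|_{C^\infty(M)},\]
after which the quotient by the copy sitting as $\theta(d(\cdot))$ in $\Gamma(VM)$ descends isomorphically to the tt-eigenspace.

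Establishing this exact doubling is the main technical obstacle. One copy of $\ker(\Delta-\lambda)|_{C^\infty(M)}$ is realised by $f\mapsto\theta(df) = \nabla^2 f - \tfrac{\lambda}{n} f g$, which by the containment above automatically lies in $\Gamma(VM)$. The second copy is constructed by applying a parallel endomorphism of $VM$ (arising from the $\Sp(1)$-action on the $\Sym^2\qH$-factor) to $\theta(df)$, producing an eigensection of $\LL$ with the same eigenvalue that is linearly independent from $\theta(df)$. Proving that the spectrum of $\LL|_{\Gamma(VM)}$ inside the open strip $(2E\tfrac{m+1}{m+2}, 2E]$ is exhausted by such pairs should follow from a Weitzenböck-type identity matching $\LL|_{VM}$ with $\Delta$ on scalars up to a parallel zeroth-order shift; this can be checked representation-theoretically via Casimir computations on the Wolf symmetric spaces, and extended to the general positive QK case through universal Weitzenböck identities on the twistor space. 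The endpoint $\lambda = 2E$ requires additional care to incorporate duals of Killing vector fields into the $1$-form side of the exact sequence.
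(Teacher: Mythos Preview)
The paper does not give a proof of this theorem; it is quoted from the unpublished manuscript \cite{HS25}. So there is no ``paper's own proof'' to compare against, and your proposal must be judged on its own merits.

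Your overall architecture---split $\Sym^2_0T^\ast M$ along the holonomy, control the spectrum on each summand, then read off the tt-eigenspace via Corollary~\ref{thm:ttLL}---is reasonable. However, the heart of your argument for part~(i), the ``doubling'' $\dim\ker(\LL-\lambda)|_{\Gamma(VM)}=2\dim\ker(\Delta-\lambda)|_{C^\infty(M)}$, does not work as you describe. You propose to produce a second independent eigensection by applying a parallel endomorphism of $VM$ coming from the $\Sp(1)$-action on $\Sym^2\qH$. But $\Sym^2\qH\otimes\Sym^2\qE$ is an irreducible $\Sp(m)\cdot\Sp(1)$-module of real type (both tensor factors are adjoint representations, hence real), so the only holonomy-invariant, i.e.~parallel, endomorphisms of $VM$ are scalars. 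There is no nontrivial parallel endomorphism to apply, and your second copy is not constructed. The closing appeal to Casimir checks on Wolf spaces and ``universal Weitzenb\"ock identities on the twistor space'' is a hope, not an argument.

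There is a secondary gap in your exclusion of the $\Lambda^2_0\qE$-summand. The estimate $\LL\geq2\K(R)$ from \eqref{estimate} holds only on $\TT(M)$, not on all of $\Sy^2_0(M)$, whereas your strategy requires ruling out \emph{all} $\Lambda^2_0\qE$-eigensections in the range (not just divergence-free ones) before invoking Corollary~\ref{thm:ttLL}. Moreover, the pointwise bound $\K(R)>E$ on $\Lambda^2_0\qE$ is asserted but not established; on a general positive quaternion-K\"ahler manifold this is a nontrivial curvature statement. You should either supply a genuine Weitzenb\"ock identity for $\LL$ on $\Gamma(\Lambda^2_0\qE)$ (e.g.\ via its parallel identification with a subbundle of $\Lambda^2T^\ast M$), or restructure the argument so that the exclusion is only needed on the divergence-free part.
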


By a result of LeBrun \cite{LeB88}, a positive quaternion-Kähler metric admits no nontrivial deformation through quaternion-Kähler metrics. It is however unknown whether it can have Einstein deformations of generic holonomy. A resolution of the LeBrun--Salamon Conjecture would reduce this problem to the study of $\Gr_2\C^n$, the only Wolf space admitting IED (see Theorem~\ref{thm:symm2ndorder} for partial progress on the latter).

\section{Manifolds with Killing spinors}
\label{sec:killingspinor}

Let $(M^n,g)$ be a Riemannian spin manifold with spinor bundle $\Sigma M$. A \emph{Killing spinor} is a section $\varphi\in\Gamma(\Sigma M)$ such that $\nabla_X\varphi = \lambda X \cdot \varphi$ holds for all vector fields $X$. Here $\nabla$ is the connection on  $\Sigma M$ induced from the Levi-Civita connection of $g$, $\cdot$ is the Clifford multiplication and $\lambda$ is some complex constant.

If such a Killing spinor exists, the metric $g$ is Einstein and the constant $\lambda$ is related to the scalar curvature by $E = 4(n-1) \lambda^2$. Hence the constant $\lambda$ is either real or purely imaginary. A nonzero Killing spinor is called \emph{real} in the first and \emph{imaginary} in the second case. If $\lambda=0$, the manifold is Ricci-flat and we are in the case of Section~\ref{sec:parallelspinor}. So we assume $\lambda\neq0$ in this section. For more details on spin geometry and Killing spinors see \cite{BFGK}.

Baum \cite{Baum} showed that manifolds with imaginary Killing spinors can be written as warped products.

\begin{theorem}
Any complete, connected spin manifold carrying a imaginary Killing spinor is isometric to $(\R \times M, dr^2 + e^{2 r}g)$, where $(M,g)$ is a complete, connected spin manifold with a parallel spinor.
\end{theorem}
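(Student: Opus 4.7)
The plan is to use an imaginary Killing spinor $\varphi$ to produce a distinguished nowhere-vanishing gradient vector field whose flow trivializes $M$ as a warped product. After rescaling the metric we may assume $\lambda = i/2$, so $\nabla_X\varphi = \tfrac{i}{2} X\cdot\varphi$. Associated to $\varphi$ is its \emph{Dirac current} $V \in \X(M)$, defined by $g(V,X) = i\langle X\cdot\varphi,\varphi\rangle$; skew-adjointness of Clifford multiplication by real vectors makes the right-hand side real.

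The first step is a sequence of pointwise identities. Differentiating $|\varphi|^2$ and using the Killing spinor equation gives $X(|\varphi|^2) = g(V,X)$, so $V = \grad |\varphi|^2$. Differentiating $g(V,Y)$ and applying the Clifford relation $XY+YX = -2g(X,Y)$ yields
\[
\nabla_X V \;=\; |\varphi|^2\, X,
\]
i.e.\ $V$ is a \emph{closed conformal} (gradient conformal) vector field. Consequently $X|V|^2 = 2g(\nabla_X V, V) = 2|\varphi|^2 g(V,X) = 2|\varphi|^2 X(|\varphi|^2)$, whence $|V|^2 = |\varphi|^4 + C$; evaluating at any critical point of $|\varphi|^2$ (or using that $V$ and $\varphi$ vanish simultaneously) shows $C=0$, so $|V| = |\varphi|^2$.

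The second step is to use these identities to trivialize $M$. Because $|V| = |\varphi|^2$ and $V = \grad|\varphi|^2$, a zero of $V$ would force $|\varphi|$ to attain its infimum $0$ at an interior point, contradicting the unique-continuation property of Killing spinors. Hence $V$ is nowhere vanishing, and the unit field $U := V/|V|$ is a geodesic gradient field whose flow $\Phi_t$ is defined for all $t$ by completeness of $g$. Setting $r := \tfrac{1}{2}\log|\varphi|^2$, one computes $dr(U) = 1$, so $r$ is a smooth surjection $M\to\R$. Let $N := r^{-1}(0)$ and $h := g|_{TN}$; then $(t,p) \mapsto \Phi_t(p)$ is a diffeomorphism $\R\times N \to M$. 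Using $\nabla_X V = |\varphi|^2 X$ together with $|V|=|\varphi|^2 = e^{2r}$, the shape operator of each level hypersurface is $e^{r}\Id$, which integrates to
\[
g \;=\; dr^2 + e^{2r}\, h.
\]

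The third and most delicate step is to recover a parallel spinor on $(N,h)$. The spin structure on the simply-connected warp $\R\times N$ pulls back to a spin structure on $N$, and the spinor bundle of $M$ restricted to $N$ can be canonically identified with (a copy of) the spinor bundle of $N$; under this identification the Killing spinor equation on $M$ decomposes into a transverse component along $\partial_r$ and a tangential component along $N$. A direct computation, using the warped-product Koszul formula to express $\nabla^M$ in terms of $\nabla^N$ and the conformal factor $e^{r}$, transforms the equation $\nabla^M_X \varphi = \tfrac{i}{2}X\cdot\varphi$ for $X\in TN$ into $\nabla^N_X\psi = 0$, where $\psi$ is obtained from $\varphi|_N$ by rescaling by $e^{-r/2}$ and (if needed) projecting to a half-spinor component. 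The completeness of $h$ follows from that of $g$, since $(N,h)$ sits as a totally umbilical leaf in the complete warped product and its geodesics lift to bounded-length curves in $M$. The main obstacle in the whole argument is the bookkeeping in this last step: one must carefully track how the spinor bundles, Clifford actions, and Levi-Civita connections on $M$ and $N$ are related under the warped-product identification, and verify that the resulting spinor on $N$ is indeed globally defined and parallel.
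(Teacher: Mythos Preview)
The paper does not actually prove this theorem; it merely quotes it as a result of Baum \cite{Baum}. Your outline follows Baum's original strategy (Dirac current, closed conformal vector field, warped product splitting, parallel spinor on the leaf), so in that sense there is nothing to compare.

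There is, however, a genuine gap in your argument at the step where you conclude $C=0$ in the identity $|V|^2 = |\varphi|^4 + C$. Your justification --- ``evaluating at any critical point of $|\varphi|^2$'' or ``using that $V$ and $\varphi$ vanish simultaneously'' --- does not work. A nonzero Killing spinor is nowhere vanishing, so $\varphi$ never has zeros; hence the second option gives nothing. And if $|\varphi|^2$ \emph{did} have a critical point $p$, then $V(p)=0$ while $|\varphi(p)|^4>0$, which would force $C = -|\varphi(p)|^4 < 0$, not $C=0$. In fact, Cauchy--Schwarz applied to $g(V,X)=i\langle X\cdot\varphi,\varphi\rangle$ only yields $|V|^2\le |\varphi|^4$, i.e.\ $C\le 0$, and both cases genuinely occur: Baum calls the spinors with $C=0$ \emph{type I} and those with $C<0$ \emph{type II}. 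On hyperbolic space $\mathbb{H}^n$ there exist imaginary Killing spinors of type II, so one cannot prove $C=0$ from the local identities alone.

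Baum's actual proof treats the two types separately. For type I your argument goes through essentially as written. For type II one observes that $f=|\varphi|^2$ satisfies $\mathrm{Hess}(f)=f\,g$ with $f$ bounded below by $\sqrt{-C}>0$, and a classical rigidity argument (following the complete geodesics orthogonal to the level sets and analyzing the critical locus $\{|V|=0\}$) forces $(M,g)$ to be isometric to $\mathbb{H}^n$ --- which is itself the warped product $(\R\times\R^{n-1},\,dr^2+e^{2r}g_{\mathrm{eucl}})$ and hence still satisfies the conclusion. You need to supply this second branch; the rest of your sketch is fine.
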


As a consequence of Theorems~\ref{thm:kr1} and~\ref{thm:parallelsemistable}, we obtain stability, cf.~\cite{W}.

\begin{corollary}
Complete manifolds with imaginary Killing spinors are strictly stable.
\end{corollary}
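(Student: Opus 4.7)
The plan is essentially to chain together Baum's structure theorem with the two results the paper explicitly cites. First I would invoke the theorem of Baum stated just above: a complete connected spin manifold $(N,\tilde g)$ carrying an imaginary Killing spinor is isometric to a warped product $(\R\times M,\,dr^2+e^{2r}g)$, where $(M,g)$ is a complete connected spin manifold admitting a parallel (Riemannian) spinor. Since stability is an isometry invariant notion, it suffices to establish strict stability for this warped product model.

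Next I would feed the factor $(M,g)$ into Theorem~\ref{thm:parallelsemistable}: because $(M,g)$ is spin and carries a parallel spinor, it is automatically Ricci-flat and $\EH$-semistable (with the semistability statement understood in the linear sense valid also in the noncompact setting, i.e.\ $\langle\LL h,h\rangle_{L^2}\ge 0$ for all compactly supported tt-tensors $h$). This gives us both hypotheses that the warped product result needs: a Ricci-flat base and linear semistability.

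Finally I would apply Theorem~\ref{thm:kr1}: for a Ricci-flat semistable $(M,g)$, the warped product $(\R\times M,\,dr^2+e^{2r}g)$ is not only semistable but in fact strictly stable. Combining this with the isometry from Baum's theorem yields strict stability of $(N,\tilde g)$, which is exactly the claim.

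There is essentially no hard step here, because all the analytic work has been done in the two theorems being cited; the only thing to verify is that the hypotheses line up correctly in the noncompact setting. The one point to be careful about is that ``strictly stable'' in the noncompact case must be interpreted in the linear sense introduced earlier in the paper, namely $\langle \LL h,h\rangle_{L^2}>2E\langle h,h\rangle_{L^2}$ for all nonzero compactly supported tt-tensors, and one should note that since here $E=4(n-1)\lambda^2<0$, this is genuinely a strict positivity statement on the linearized Einstein operator $\LL-2E$.
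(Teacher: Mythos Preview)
Your proposal is correct and follows exactly the paper's approach: the paper derives the corollary directly from Baum's structure theorem combined with Theorems~\ref{thm:kr1} and~\ref{thm:parallelsemistable}, which is precisely the chain of implications you spell out. Your added remarks about interpreting strict stability in the noncompact linear sense and about the sign of $E$ are accurate and simply make explicit what the paper leaves implicit.
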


Connected spin manifolds with real Killing spinors with Killing number $\lambda \neq 0$ are irreducible and either non-symmetric or of constant curvature. Since they have $E>0$, they are compact if they are complete; thus, in the sequel we assume that $M$ is compact.

On the standard sphere, the spinor bundle is trivialized by Killing spinors. Moreover, the metric cone over a manifold with a Killing spinor admits a parallel spinor. This allows a partial classification of manifolds with real Killing spinors, due to Bär \cite{Baer}, which is listed in Table~\ref{tab:killingspinors}.


\begin{table}[ht]
    \centering
    \renewcommand{\arraystretch}{1.2}
    \begin{tabular}{|c|c|c|c|}\hline
    $\Hol(M^n,\nabla^{\mathrm{c}})\subseteq$&Description&Dim.~KS&Metric cone\\\hline\hline
    trivial&standard sphere $(S^n,g_0)$&$2^{\lfloor n/2\rfloor}$&Euclidean space\\\hline
    $\SU(3)$, $n=6$&strict nearly Kähler&$1$&torsion-free $\rmG_2$\\\hline
    $\rmG_2$, $n=7$&proper nearly parallel $\rmG_2$&$1$&torsion-free $\Spin(7)$\\\hline
    $\U(m)$,&\multirow{2}*{Sasaki--Einstein}&\multirow{2}*{$2$}&\multirow{2}*{Calabi--Yau}\\
    $n=2m+1$&&&\\\hline
    $\Sp(m)\cdot\Sp(1)$,&\multirow{2}*{3-Sasaki}&\multirow{2}*{$m+2$}&\multirow{2}*{hyper-Kähler}\\
    $n=4m+3$&&&\\\hline
    \end{tabular}
    \caption{Classes of manifolds with real Killing spinors. Here $\nabla^{\mathrm{c}}$ denotes the \emph{canonical connection} of $(M,g)$, a spin connection with parallel skew-symmetric torsion which parallelizes the Killing spinors (KS).}
    \label{tab:killingspinors}
\end{table}

The standard sphere is well known to be strictly stable. For all other classes we have partial results stating instability under certain additional conditions or for special cases.

A weak stability result was shown by Gibbons--Hartnoll--Pope \cite{GHP03}, but also follows more generally from Theorem~\ref{thm:kr2} and the semistability of the metric cone, Theorem~\ref{thm:parallelsemistable}.

\begin{theorem}
\label{thm:killingphys}
Spin manifolds admitting a real Killing spinor are physically stable.
\end{theorem}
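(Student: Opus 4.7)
The plan is to combine the cone construction with the two semistability results recalled earlier in the survey. After rescaling, we may assume that the Killing number satisfies $\lambda = \pm 1/2$, so that the Einstein constant becomes $E = n-1$. With this normalization, the metric cone $(\widehat M, \widehat g) = (\R_+ \times M, dr^2 + r^2 g)$ is Ricci-flat, and, as noted in the discussion preceding Table~\ref{tab:killingspinors}, the Killing spinor on $(M,g)$ extends to a parallel spinor on $(\widehat M,\widehat g)$.

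First, I would invoke Theorem~\ref{thm:parallelsemistable}, whose statement explicitly allows noncompact spin manifolds, applied to $(\widehat M,\widehat g)$. This yields that the cone metric $\widehat g$ is semistable in the sense that $\Ltwoinprod{\LL h}{h} \geq 0$ for all compactly supported $h \in \TT(\widehat M)$ (since $E = 0$ on the cone).

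Next, I would apply Theorem~\ref{thm:kr2} to the compact Einstein manifold $(M^n,g)$ with Einstein constant $E = n-1$: the semistability of the Ricci-flat cone over $(M,g)$ is equivalent to the physical stability of $(M,g)$, i.e.\ the bound $\LL \geq \tfrac{9-n}{4}E$ on $\TT(M)$. Combining these two inputs gives the claimed physical stability.

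The proof is thus a two-line composition of black-box results, and I would not expect any genuine obstacle beyond checking the normalization. The only delicate point is ensuring that Theorem~\ref{thm:parallelsemistable} indeed applies at the singular cone point; this is handled by the fact that the theorem works with compactly supported variations in the noncompact case, so the singularity at $r = 0$ may be removed without affecting the argument. No integrability or analytic subtleties arise, and the rescaling to $E = n-1$ loses no generality since both physical stability and the existence of a Killing spinor are preserved under homothetic rescaling (with the Killing number $\lambda$ scaling accordingly).
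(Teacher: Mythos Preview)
Your proposal is correct and matches the paper's own argument essentially verbatim: the paper states that the result ``follows more generally from Theorem~\ref{thm:kr2} and the semistability of the metric cone, Theorem~\ref{thm:parallelsemistable},'' which is precisely the two-step composition you describe. Your attention to the normalization $E=n-1$ and to the cone point is appropriate and introduces no discrepancy with the paper's reasoning.
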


\begin{questype}{Open question}
Do there exist stable manifolds with
real Killing spinors other than the
standard sphere?
\end{questype}

\subsection{Nearly Kähler manifolds in dimension $6$}

A nearly Kähler manifold is an almost Hermitian manifold $(M, J, g)$ with $(\nabla_XJ)X=0$ for all tangent vectors $X$, where $\nabla$ is the Levi-Civita connection of $g$. It is called \emph{strict} if it is not Kähler. The dimension $6$ turns out to be special: 
only in this dimension strict nearly K\"ahler manifolds admit Killing spinors and hence are Einstein.
 Compact, six-dimensional strict nearly Kähler manifolds are also called \emph{Gray manifolds}. The homogeneous Gray manifolds have been classified by Butruille \cite{But}: one has
\[S^6 = \rmG_2/\SU(3),\quad S^3 \times S^3 = \frac{\SU(2)^3}{\diag(\SU(2))}, \quad \C\Proj^3 = \SO(5)/\U(2), \quad F_{1,2} = \SU(3)/T^2.\]
In all cases the nearly Kähler metric is the normal metric induced by a multiple of the Killing form (see Section~\ref{sec:normal}). It is the round metric on $S^6$, but not the product metric on $S^3\times S^3$ and not the Kähler--Einstein metric in the last two cases.

In addition there exist inhomogeneous nearly Kähler metrics of cohomogeneity one on $S^6$ and $S^3\times S^3$, constructed by Foscolo--Haskins \cite{FH}. The full classification of Gray manifolds is an open problem.

Semmelmann--Wang--Wang \cite{SWW20} gave a topogical lower bound on the Hilbert coindex by constructing eigentensors of $\LL$ from harmonic forms.

\begin{theorem}
\label{thm:NK}
Let $(M^6,g, J)$ be a Gray manifold. Then $\coind(g)\geq b_2(M)+b_3(M)$.
\end{theorem}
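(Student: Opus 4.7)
My plan is to construct an explicit subspace $W \subset \TT(M)$ of dimension $b_2(M) + b_3(M)$ on which the second variation $\EH_g''$ is positive-definite; by Theorem~\ref{thm:secondvartt}, this is equivalent to producing tt-tensors with $\LL$-eigenvalue strictly less than $2E$. I would set $W = W_2 \oplus W_3$ by embedding the spaces of harmonic $2$- and $3$-forms into $\TT(M)$ via linear maps built from the $\SU(3)$-structure $(J,\omega,\Psi^+,\Psi^-)$ of the Gray manifold, which is parallel with respect to the canonical connection with parallel skew-symmetric torsion.

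The natural candidates are: a map $\Phi_2: \Omega^2(M) \to \Sy^2_0(M)$ sending $\alpha$, viewed as a skew-symmetric endomorphism, to the $J$-Hermitian symmetric part of $\alpha \circ J$; and a map $\Phi_3: \Omega^3(M) \to \Sy^2_0(M)$ obtained by symmetric contraction of the $3$-form against $\Psi^+$, e.g.~$\Phi_3(\beta)(X,Y) = \langle X \intprod \beta, Y \intprod \Psi^+ \rangle + \langle Y \intprod \beta, X \intprod \Psi^+ \rangle$, with a trace correction. Three steps need to be carried out. First, show $\Phi_k$ sends harmonic forms into $\TT(M)$, i.e.~into divergence-free tensors; this follows from the nearly Kähler identities relating $\nabla J$, $d\omega$ and $d\Psi^\pm$, which reduce $\delta \Phi_k(\alpha)$ to a combination involving $d^\ast \alpha$ and vanishing algebraic contributions. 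Second, show that $\Phi_2$ and $\Phi_3$ are injective on harmonic forms and their images are linearly independent; this is representation-theoretic, since the two maps land in different irreducible $\SU(3)$-summands of $\Sym^2_0 T^\ast M$. Third, and most importantly, verify an intertwining identity of the form $\LL \Phi_k(\alpha) = \Phi_k(\Delta \alpha) + c_k \Phi_k(\alpha)$ for explicit constants $c_k$ depending on $E$, so that on harmonic $\alpha$ the image is an $\LL$-eigentensor with eigenvalue $c_k$.

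The main obstacle is the third step, the Weitzenböck analysis. Using $\LL = \nabla^\ast \nabla + \K(R)$ on both sides and comparing through the maps $\Phi_k$, one must compute the commutator of $\nabla^\ast \nabla$ with $\Phi_k$ (which involves covariant derivatives of $J$ and $\Psi^+$, all expressible through the parallel $\SU(3)$-torsion), and separately the difference $\K(R) \circ \Phi_k - \Phi_k \circ \K(R)$ on the relevant bundles. Here the nearly Kähler curvature decomposes into a constant-curvature piece and a torsion-induced contribution, both acting by explicit scalars on irreducible $\SU(3)$-summands, which makes the computation tractable. The heart of the argument is the verification that the resulting scalars satisfy $c_2, c_3 < 2E$ strictly; this converts each harmonic $k$-form into a genuine destabilizing direction and yields $\coind(g) \geq b_2(M) + b_3(M)$. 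A side benefit of this approach is that the construction makes transparent which $\SU(3)$-representation the destabilizing directions sit in, which should be useful for refining the bound in specific examples.
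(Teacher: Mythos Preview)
Your plan is essentially the paper's approach: build $\SU(3)$-equivariant bundle maps from $2$- and $3$-forms into $\Sym^2_0 T^\ast M$ and show that harmonic forms land on destabilizing $\LL$-eigentensors. The maps you write down are (up to normalization) exactly the isomorphisms $\Lambda^{1,1}_\R \cong \Sym^+$ and $\Lambda^{2,1}_\R \cong \Sym^-$ that the paper uses.

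Two differences are worth flagging. First, the paper organizes the computation more cleanly by routing through the canonical connection $\nabla^{\mathrm{c}}$: since your maps $\Phi_k$ are $\nabla^{\mathrm{c}}$-parallel, they automatically intertwine the Hodge Laplacian on forms with the \emph{standard Laplacian} $\bar\Delta = (\nabla^{\mathrm{c}})^\ast\nabla^{\mathrm{c}} + \K(\bar R)$ on symmetric tensors, and then a single comparison formula between $\LL$ and $\bar\Delta$ produces the constants $c_k$. Your proposed direct Weitzenb\"ock computation with the Levi-Civita connection will get there, but is substantially messier because $\Phi_k$ is not $\nabla$-parallel. Second, you do not state (and will need) the type constraints on harmonic forms on a Gray manifold: harmonic $2$-forms are automatically primitive of type $(1,1)$, and harmonic $3$-forms lie in $\Lambda^{2,1}_\R$. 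Without this, $\Phi_2$ has a kernel on $(2,0)+(0,2)$-forms and $\Phi_3$ has a kernel on the $\R\Psi^\pm$ and $\omega\wedge\Lambda^1$ pieces, so your injectivity claim on harmonic forms is not purely representation-theoretic but also uses these Hodge-theoretic facts specific to the nearly K\"ahler setting.
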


In particular, all known examples of Gray manifolds except $S^6$ are unstable, and a simply connected \emph{semistable} Gray manifold must be a rational homology sphere. Schwahn \cite{Sch22b} showed that this bound is attained for the homogeneous examples, while Solé-Farré \cite{cohom1coindex} improved the lower bound to $\coind(g)\geq4$ for the Foscolo--Haskins metrics on $S^3\times S^3$.

The proof of Theorem  \ref{thm:NK} uses the same idea as that of Theorem \ref{thm:KEsemistable}. A harmonic $2$-form on a strict nearly Kähler manifold has to be a section of $\Lambda^{1,1}_\R T^\ast M$, which is isomorphic to $\Sym^+T^\ast M$. Similarly, a harmonic $3$-form must be a section of $\Lambda^{(2,1)+(1,2)}_\R T^\ast M\cong\Sym^-T^\ast M$. The involved bundle isomorphisms are not parallel under $\nabla$, but under the canonical connection $\nabla^{\mathrm{c}}$. These isomorphisms interchange the Hodge Laplacian with the standard Laplacian of $\nabla^{\mathrm{c}}$. A comparison formula between $\LL$ and this standard Laplacian finally shows that a harmonic form is mapped to a destabilizing $\LL$-eigentensor.

The space of IED of Gray manifolds was studied by Moroianu--Semmelmann \cite{MS11}.

\begin{theorem}
Let $(M^6,g)$ be a Gray manifold. Then
\[\varepsilon(g)\cong\ker(\Delta-2E/5)\oplus\ker(\Delta-6E/5)\oplus\ker(\Delta-12E/5)\]
where $\Delta$ is considered on primitive co-closed real $(1,1)$-forms. The last eigenspace corresponds to the space of infinitesimal deformations of the nearly Kähler structure.
\end{theorem}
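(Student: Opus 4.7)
The plan is to identify the space of IED, consisting of tt-tensors $h$ satisfying $\LL h = 2Eh$, with a direct sum of Hodge Laplacian eigenspaces on primitive co-closed real $(1,1)$-forms, by exploiting the parallel $\SU(3)$-structure of a Gray manifold.

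First, I would decompose trace-free symmetric $2$-tensors into Hermitian and anti-Hermitian parts with respect to $J$, and further split each piece into $\SU(3)$-irreducible subbundles. Although $J$ fails to be Levi-Civita parallel (as $(M,g)$ is nearly but not genuinely Kähler), it is parallel under the canonical connection $\nabla^{\mathrm{c}}$ whose torsion is skew-symmetric and proportional to $\nabla J$. Consequently each irreducible subbundle is $\nabla^{\mathrm{c}}$-parallel. I would then use a Weitzenböck-type identity comparing the Lichnerowicz Laplacian $\LL = \nabla^\ast\nabla + \K(R)$ with the canonical connection Laplacian $(\nabla^{\mathrm{c}})^\ast \nabla^{\mathrm{c}}$; the difference involves only the curvature $\K(R)$, the torsion of $\nabla^{\mathrm{c}}$, and zeroth-order cross-terms, all of which preserve the $\SU(3)$-decomposition and act by scalars on each irreducible summand.

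Second, I would construct explicit $\nabla^{\mathrm{c}}$-parallel bundle maps from each relevant $\SU(3)$-summand of $\TT(M)$ into primitive co-closed real $(1,1)$-forms. For the Hermitian piece this is the classical isomorphism $\Lambda^{1,1}_\R T^\ast M \to \Sym^+T^\ast M$, $\alpha\mapsto\alpha\circ J$, which sends primitive co-closed $(1,1)$-forms to trace-free divergence-free Hermitian symmetric tensors. For the anti-Hermitian piece, one uses the nearly Kähler $3$-form to pair $\Sym^- T^\ast M$ with a space of $(1,1)$-forms by contraction. Under each identification, the eigenvalue equation $\LL h = 2Eh$ translates to $\Delta \beta = \mu\beta$ for the Hodge Laplacian on $(1,1)$-forms, with $\mu$ shifted from $2E$ by a representation-theoretic constant depending on the $\SU(3)$-summand. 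It is precisely these shifts that produce the three distinct values $2E/5$, $6E/5$ and $12E/5$.

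Third, I would identify the summand $\ker(\Delta - 12E/5)$ with the space of infinitesimal nearly Kähler deformations, matching the parameterization of such deformations by primitive co-closed $(1,1)$-eigenforms established in earlier work of Moroianu--Nagy--Semmelmann. This provides the geometric interpretation stated in the final assertion of the theorem.

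The main obstacle is the explicit determination of the Weitzenböck shifts for each $\SU(3)$-summand. Since the curvature of a Gray manifold decomposes into a scalar piece and a piece algebraically expressible through the nearly Kähler torsion, and since the torsion of $\nabla^{\mathrm{c}}$ has constant $\SU(3)$-invariant norm on a Gray manifold, the computation reduces to evaluating Casimir-type invariants on the relevant irreducible $\SU(3)$-representations. It is this representation-theoretic calculation that produces the precise numerical factors $2/5$, $6/5$ and $12/5$ in the Laplacian eigenvalues on $(1,1)$-forms.
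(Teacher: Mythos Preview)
Your overall strategy --- pass to the canonical connection $\nabla^{\mathrm{c}}$, compare Laplacians, and exploit the $\SU(3)$-structure --- is the right toolkit, and is indeed what Moroianu--Semmelmann do. But two concrete steps in your plan fail as written.

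First, the difference $\LL - (\nabla^{\mathrm{c}})^\ast\nabla^{\mathrm{c}}$ is \emph{not} zeroth order. Writing $\nabla = \nabla^{\mathrm{c}} + A$ with $A$ the contorsion, the rough Laplacians differ by first-order terms $A^\ast\nabla^{\mathrm{c}} + (\nabla^{\mathrm{c}})^\ast A$ in addition to the zeroth-order $A^\ast A$. On a Gray manifold these first-order pieces are nonzero and do \emph{not} preserve the Hermitian/anti-Hermitian splitting: they exchange $\Sym^+$ and $\Sym^-$. So your assertion that the comparison terms ``preserve the $\SU(3)$-decomposition and act by scalars on each irreducible summand'' is false, and one cannot treat $h^+$ and $h^-$ independently. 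The equation $\LL h = 2Eh$ is a genuinely \emph{coupled} system for the two pieces.

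Second, there is no $\nabla^{\mathrm{c}}$-parallel bundle map $\Sym^- T^\ast M \to \Lambda^{1,1}_\R T^\ast M$. As $\SU(3)$-modules, $\Sym^-$ is the real form of $\Sym^2\C^3\oplus\Sym^2\overline{\C^3}$ (rank $12$), whereas primitive real $(1,1)$-forms are the adjoint representation (rank $8$); these are inequivalent irreducibles, so every equivariant algebraic map between them is zero. The natural algebraic partner of $\Sym^-$ is the primitive part of $\Lambda^{2,1}_\R$, as the paper itself notes in the proof sketch of the preceding theorem. That the answer involves \emph{three} eigenspaces on $(1,1)$-forms, rather than two matching $\Sym^\pm$, is already a signal that a purely algebraic block-diagonal picture is too coarse.

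The actual argument in \cite{MS11} writes the TT and eigenvalue conditions in terms of the Hermitian Laplacian of \cite{MS10} (which \emph{does} respect type) plus the first-order coupling coming from the intrinsic torsion. The divergence-free condition then allows the anti-Hermitian data to be expressed through first-order operators applied to primitive $(1,1)$-forms. The three values $2E/5$, $6E/5$, $12E/5$ emerge not as independent Casimir shifts on separate summands, but from diagonalising this coupled first-order system on $(1,1)$-forms.
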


Of the homogeneous examples, only the flag manifold $\SU(3)/T^2$ admits IED \cite{MS10}, which are at the same time infinitesimal nearly Kähler deformations. Foscolo \cite{F17} developed the deformation theory for nearly Kähler structures and showed that the infinitesimal deformations on $\SU(3)/T^2$ are not integrable, i.e.~the metric corresponds to an isolated point in the moduli space of nearly Kähler structures. In fact, the metric is also isolated in the Einstein moduli space, see Theorem~\ref{thm:flagrigid}.

\subsection{Nearly parallel $\rmG_2$-manifolds}
\label{sec:nearlyg2}

A nearly parallel $\rmG_2$-manifold is a complete $7$-dimensional spin manifold $(M^7,g)$ with a real Killing spinor. Equivalently, one has a $3$-form $\sigma\in\Omega^3(M)$ whose stabilizer at each point is isomorphic to the group $\rmG_2$ and such that $d\sigma = \tau_0 \star \sigma$ for some $\tau_0\in\mathbb{R}\setminus\{0\}$. 
This determines the scalar curvature by $E=3\tau_0^2/8$. 
Note that $\tau_0\neq0$ excludes the torsion-free case, which was already discussed in \ref{sec:parallelspinor}.
Excluding the standard sphere, these manifolds fall into three classes, depending on whether the dimension of the space of real Killing spinors is 3, 2 or 1. These are, respectively, $3$-Sasaki manifolds, Sasaki--Einstein manifolds which are not $3$-Sasaki, and \emph{proper} nearly parallel $\rmG_2$ manifolds. We shall mostly focus on the proper case; Sasaki--Einstein and $3$-Sasaki manifolds will be considered separately, in general dimension, in Section~\ref{sec:SE}.

Homogeneous nearly parallel $\rmG_2$ manifolds have been classified by Friedrich et al.~\cite{FKMS}. In the proper case we have the squashed $7$-sphere $(S^7, g_{\mathrm{sq}})$, the isotropy-irreducible Berger space $\SO(5)/\SO(3)_{\mathrm{irr}}$ and the Aloff--Wallach spaces $N_{k,l}=\SU(3)/\U(1)_{k,l}$, where the embedding depends on nonzero $k,l\in\Z$ with $(k,l) \neq (1,1)$.

Similar to Theorem~\ref{thm:NK}, Semmelmann--Wang--Wang gave a topological bound for the Hilbert coindex of nearly parallel $\rmG_2$-manifolds \cite{SWW22}.

\begin{theorem}\label{thm:g2-A}
Let  $(M^7,g)$ be a nearly parallel $\rmG_2$-manifold. Then $\coind(g)\geq b_3(M)$.
\end{theorem}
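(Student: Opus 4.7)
The strategy is to mimic the proof of Theorem~\ref{thm:NK}: I will construct an injective map from the space $\mathcal{H}^3(M)$ of harmonic $3$-forms into $\TT(M)$ whose image consists of $\LL$-eigentensors to a common eigenvalue $\mu > 2E$, producing the required destabilizing directions by Theorem~\ref{thm:secondvartt}. The key algebraic input is the $\rmG_2$-equivariant decomposition of $3$-forms
\[\Lambda^3T^\ast M = \Lambda^3_1 \oplus \Lambda^3_7 \oplus \Lambda^3_{27},\]
where $\Lambda^3_1 = \R\sigma$, $\Lambda^3_7 \cong T^\ast M$, and $\Lambda^3_{27}$ is identified with $\Sym^2_0T^\ast M$ via the standard map $h\mapsto h\diamond\sigma$, $(h\diamond\sigma)_{ijk} = h\indices{_i^l}\sigma_{ljk} + h\indices{_j^l}\sigma_{ilk} + h\indices{_k^l}\sigma_{ijl}$. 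All three subbundles and this isomorphism are parallel under the canonical connection $\nabla^{\mathrm{c}}$ of Table~\ref{tab:killingspinors}.

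First I would show that any harmonic $3$-form lies entirely in $\Lambda^3_{27}$. Writing $\omega = f\sigma + \beta + h\diamond\sigma$ with $f\in C^\infty(M)$, $\beta \in \Gamma(\Lambda^3_7)$, $h\in\Sy^2_0(M)$, and using the identities $d\sigma = \tau_0\star\sigma$ and $d\star\sigma = 0$ together with the $\rmG_2$-type decomposition of $d$ and $d^\ast$ available from the nearly parallel structure, the conditions $d\omega = 0 = d^\ast\omega$ force $f = 0$ and $\beta = 0$ (the nonvanishing of $\tau_0$ is essential here). This produces an injection $\mathcal{H}^3(M)\hookrightarrow\Sy^2_0(M)$ into trace-free symmetric tensors.

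Next I would compare the Hodge Laplacian on $\Omega^3(M)$ (which vanishes on $\mathcal{H}^3(M)$) with the Lichnerowicz Laplacian $\LL$ on $\Sym^2_0 T^\ast M$ via the $\nabla^{\mathrm{c}}$-parallel bundle isomorphism above. Both operators differ from the standard Laplacian $(\nabla^{\mathrm{c}})^\ast\nabla^{\mathrm{c}}$ by Weitzenböck-type curvature and torsion terms. Since the torsion of $\nabla^{\mathrm{c}}$ is a constant multiple of $\sigma$ and the curvature contributions are controlled by $\tau_0$ and $\scal_g = 21\tau_0^2/8$, the comparison yields an identity $\LL h = \mu\,h$ for every $\omega = h\diamond\sigma \in \mathcal{H}^3(M)$, with an explicit positive constant $\mu$ determined by the $\rmG_2$-action on $\Lambda^3_{27}$. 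An eigenvalue computation should confirm $\mu > 2E = 3\tau_0^2/4$.

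Finally, the divergence-freeness of $h$ follows either directly from the coclosedness of $\omega$ (the bundle isomorphism intertwines $d^\ast$ with $\delta$ up to torsion terms whose action on $\Lambda^3_{27}$ vanishes) or from Corollary~\ref{thm:ttLL} (ii), which is available since $\mu \neq 2E$. Combining these ingredients yields $\mathcal{H}^3(M) \hookrightarrow \ker(\LL - \mu)|_{\TT(M)}$, and hence $b_3(M) \leq \coind(g)$. The main technical obstacle is the Weitzenböck comparison: since the nearly parallel $\rmG_2$-structure is not integrable, one must carefully track the torsion contributions to both Laplacians and keep consistent sign conventions in order to pin down $\mu$ and verify the strict inequality $\mu > 2E$, which is the whole point of the argument.
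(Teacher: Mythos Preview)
Your overall architecture matches the paper's proof almost exactly: harmonic $3$-forms lie in $\Lambda^3_{27}$, the $\nabla^{\mathrm{c}}$-parallel isomorphism $\Lambda^3_{27}\cong\Sym^2_0T^\ast M$ carries the Hodge Laplacian to the standard Laplacian of $\nabla^{\mathrm{c}}$, and a comparison formula with $\LL$ then produces tt-eigentensors.

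However, there is a genuine error in the direction of your key inequality. By Theorem~\ref{thm:secondvartt},
\[\EH_g''(h,h)=-\tfrac12\Ltwoinprod{\LL h-2Eh}{h},\]
so a tt-eigentensor with $\LL h=\mu h$ is \emph{destabilizing} precisely when $\mu<2E$, not $\mu>2E$. The Hilbert coindex counts eigenvalues of $\LL|_{\TT(M)}$ \emph{below} the critical value $2E$. Thus the computation you propose must verify $\mu<2E$, and ``the whole point of the argument'' is the opposite inequality from the one you stated twice. This is not merely a typo, since it affects what you are trying to extract from the Weitzenb\"ock comparison: you need the torsion and curvature corrections to push $\LL$ \emph{down} relative to the (vanishing) Hodge Laplacian, not up. With that correction the argument goes through as in \cite{SWW22}.

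A minor remark: invoking Corollary~\ref{thm:ttLL}~(ii) alone does not establish $\delta h=0$; it only gives a surjection onto $\ker(\LL-\mu)|_{\TT(M)}$ whose kernel is the image of $\Omega^1$-eigenforms, so you would still need to control that kernel to preserve injectivity. Your first option, checking that $d^\ast\omega=0$ forces $\delta h=0$ directly via the bundle isomorphism, is the cleaner route.
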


The proof is similar to that of Theorem~\ref{thm:NK}; it uses that harmonic $3$-forms are sections of 
the subbundle $\Lambda^3_{27}T^\ast M\subset\Lambda^3T^\ast M$, which is defined by the irreducible $27$-dimensional representation of $\rmG_2$, and which is the image of $\Sym^2_0T^\ast M$ under $\nabla^{\mathrm{c}}$-parallel bundle isomorphism $h\mapsto h\diamond\sigma$.

This bound is not sharp, as evidenced by the homogeneous examples: the squashed sphere $(S^7,g_{\mathrm{sq}})$ is linearly unstable as a consequence of Theorem~\ref{thm:submersion1}, the Aloff--Wallach spaces have $b_3=0$ and are linearly unstable by the work of Wang--Wang \cite{WW2} (cf.~Section~\ref{sec:nonnormal}), and on the Berger space $\SO(5)/\SO(3)_{\mathrm{irr}}$, which is a rational homology sphere, one can find destabilizing directions by Fourier-analytic methods (\cite{SWW22}, see Section~\ref{sec:normal}).

It is know that $3$-Sasaki manifolds have $b_3=0$.
There are several examples of complete Sasaki--Einstein manifolds with $b_3>0$. However we do not know any example of a proper nearly parallel $\rmG_2$-manifold with nonzero third Betti number.

The space of IED of a nearly parallel $\rmG_2$-manifold has been studied by Alexandrov--Semmelmann \cite{nearlyg2}.

\begin{theorem}
Let $(M^7,g,\sigma)$ be a nearly parallel $\rmG_2$-manifold with $d\sigma = \tau_0 \star \sigma$. Then
\[\varepsilon(g)\cong\ker(\star d+\tau_0)|_{\Omega^3_{27}(M)}\oplus\ker(\star d-\tau_0/2)|_{\Omega^3_{27}(M)}\oplus\ker(dd^\ast-\tau_0^2/2)|_{\Omega^3_{27}(M)}\]
where $\Omega^3_{27}(M)=\Gamma(\Lambda^3_{27}T^\ast M)$.
\end{theorem}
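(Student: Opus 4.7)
The plan is to realize $\varepsilon(g)$ as an eigenspace inside $\Omega^3_{27}(M)$ and then diagonalize the resulting operator using the natural first-order differential operators of the $\rmG_2$-structure. The first ingredient is the $\nabla^{\mathrm{c}}$-parallel bundle isomorphism $\Phi\colon\Sym^2_0T^\ast M\to\Lambda^3_{27}T^\ast M$, $h\mapsto h\diamond\sigma$, already used in the proof of Theorem~\ref{thm:g2-A}. Trace-freeness is built into $\Lambda^3_{27}$, so that the tt-condition reduces to the divergence-free condition $\delta h=0$, and it remains to rewrite both this condition and the critical eigenvalue equation $\LL h=2Eh=\tfrac{3\tau_0^2}{4}h$ as spectral conditions on $\alpha:=\Phi(h)\in\Omega^3_{27}(M)$. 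By Corollary~\ref{thm:ttLL}(i), it suffices to determine $\ker(\LL-2E)|_{\Sy^2_0(M)}$ and then quotient by the image of $\delta^\ast\colon\ker(\Delta-2E)|_{\Omega^1(M)}\to\Sy^2_0(M)$.

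The second step is to derive the comparison formula. Since $\nabla$ differs from the canonical connection $\nabla^{\mathrm{c}}$ by an explicit tensor built from the $\nabla^{\mathrm{c}}$-parallel torsion $\sigma$, and using the defining identity $d\sigma=\tau_0\star\sigma$, one expresses $d\alpha$ and $d^\ast\alpha$ in terms of divergence- and Killing-type operators on $h$ together with zeroth-order corrections proportional to $\tau_0$. Combining these with the Weitzenböck formula on $\Omega^3_{27}(M)$ relating the Hodge Laplacian $\Delta=dd^\ast+d^\ast d$ to $(\nabla^{\mathrm{c}})^\ast\nabla^{\mathrm{c}}$ plus curvature terms, one arrives at an identity of the shape
\[\LL h-2Eh\;\longleftrightarrow\;\Bigl(\Delta+\tfrac{\tau_0}{2}\star d-\tfrac{\tau_0^2}{2}\Bigr)\alpha,\]
where the zeroth-order shift is algebraically determined from $\tau_0$ via the curvature identities on a nearly parallel $\rmG_2$-manifold, and where one uses that $\star d$ is self-adjoint on $\Omega^3(M^7)$.

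The third step is to decompose the eigenspace spectrally. On coclosed forms one has $dd^\ast\alpha=0$ and $d^\ast d\alpha=(\star d)^2\alpha$, so the equation above becomes
\[\Bigl((\star d)^2+\tfrac{\tau_0}{2}\star d-\tfrac{\tau_0^2}{2}\Bigr)\alpha=0,\]
which factors as $(\star d+\tau_0)(\star d-\tau_0/2)\alpha=0$ and yields the first two direct summands. On the orthogonal complement, consisting of exact forms $\alpha\in\Omega^3_{27}(M)$ (for which $d\alpha=0$, hence $\star d\alpha=0$), the equation collapses to $dd^\ast\alpha=\tfrac{\tau_0^2}{2}\alpha$, giving the third summand. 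Pairwise orthogonality of the three pieces is automatic, as they are eigenspaces of the commuting self-adjoint operators $\star d$ and $dd^\ast$ to distinct eigenvalues.

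The principal obstacle will be the careful computation of $d\alpha$ and $d^\ast\alpha$ for $\alpha\in\Omega^3_{27}(M)$, together with the bookkeeping of how $d$ interacts with the $\rmG_2$-irreducible splitting $\Omega^4=\Omega^4_1\oplus\Omega^4_7\oplus\Omega^4_{27}$. In particular, one must verify that the components of $d\alpha$ falling outside $\Omega^4_{27}$ are precisely accounted for by the image of the Killing operator appearing in Corollary~\ref{thm:ttLL}(i), so that the spectral identification on $\Omega^3_{27}(M)$ faithfully captures the space of infinitesimal Einstein deformations of $g$.
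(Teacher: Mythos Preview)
The paper is a survey and does not prove this theorem; it merely states the result and attributes it to Alexandrov--Semmelmann \cite{nearlyg2}. There is therefore no proof in the paper to compare your proposal against.

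That said, your outline is broadly in line with the strategy of the original reference: transfer the problem to $\Omega^3_{27}$ via the $\nabla^{\mathrm{c}}$-parallel isomorphism $h\mapsto h\diamond\sigma$, derive a comparison formula between $\LL$ on trace-free symmetric $2$-tensors and a second-order operator on $\Omega^3_{27}$ built from $d$, $d^\ast$ and $\star$, and then read off the spectral decomposition. Two points deserve more care than your sketch suggests. First, $\star d$ does not a priori preserve $\Omega^3_{27}$, so the factorization $(\star d+\tau_0)(\star d-\tau_0/2)$ cannot be applied blindly inside $\Omega^3_{27}$; one has to first show that for solutions of the critical equation the $\Omega^4_1\oplus\Omega^4_7$-components of $d\alpha$ vanish, which is exactly the content of the divergence-free condition under $\Phi$. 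Second, your dichotomy ``coclosed versus exact within $\Omega^3_{27}$'' is not the Hodge decomposition of $\Omega^3_{27}$ (neither $d$ nor $d^\ast$ preserves this subspace), so the orthogonal splitting into the three summands requires a separate argument rather than following automatically. Both of these are handled in \cite{nearlyg2} by a detailed analysis of the first-order operators between the $\rmG_2$-type components of $\Omega^\bullet$, which is precisely the ``principal obstacle'' you flag at the end.
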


The first summand corresponds to infinitesimal deformations of the nearly parallel $\rmG_2$-structure. It is further shown in \cite{nearlyg2} that the Berger space and the squashed $S^7$ do not admit IED, while the proper nearly parallel $\rmG_2$-metric on $N_{1,1}$ does, and all its IED are also infinitesimal nearly parallel $\rmG_2$-deformations. Later, Nagy--Semmelmann \cite{NS21,NS23} showed that these do not integrate to an actual family of nearly parallel $\rmG_2$-metrics.

Besides the homogeneous examples, there is one other known source of proper nearly parallel $\rmG_2$-manifolds. This is the second, \emph{squashed} Einstein metric $g_{\mathrm{sq}}$ in the canonical variation (see Section~\ref{sec:subm}) on a $3$-Sasaki manifold $(M^7,g)$. In fact, these manifolds fiber over $4$-dimensional orbifolds. The proper nearly parallel $\rmG_2$-metric $g_{\mathrm{sq}}$ is obtained by scaling the $3$-Sasaki metric on the vertical distribution $\mathcal{V}$, spanned by the three Reeb vector fields $\xi_1,\xi_2,\xi_3\in\X(M)$, by a factor of $1/5$. Examples are the squashed Einstein metric on $S^7$ (fibering over $S^4$) and one of the homogeneous Einstein metrics on $N_{1,1}$ (fibering over $\C\Proj^2$). A complete description of the space of destabilizing directions in terms of eigenfunctions was given by Nagy--Semmelmann \cite[Thm.~1.3]{NS24}. Let $C^\infty_{\mathrm{b}}(M)$ denote the space of \emph{basic functions}, that is,
\[C^\infty_{\mathrm{b}}(M)=\{f\in C^\infty(M)\,|\,\Lie_{\xi_1}f=\Lie_{\xi_2}f=\Lie_{\xi_3}f=0\}.\]

\begin{theorem}\label{thm:g2-B}
Let $(M^7, g)$ be a $3$-Sasaki manifold, $\Ric_g=6g$, and let $g_{\mathrm{sq}}$ be the nearly parallel $\rmG_2$-metric in the canonical variation of $g$, for which $\tau_0=12/\sqrt{5}$.
\begin{enumerate}[\upshape(i)]
    \item The space of IED of $g_{\mathrm{sq}}$ is isomorphic to
    \[\varepsilon(g_{\mathrm{sq}})\cong\ker(\star d+\tau_0)\big|_{\Omega^3_{27}(M)}\cong\ker(\Delta-24)|_{C^\infty_{\mathrm{b}}(M)}.\]
    \item Suppose that $g$ does not have constant sectional curvature. Then the sum of the destabilizing eigenspaces of $\LL|_{\TT(M)}$ for the metric $g_{\mathrm{sq}}$ is isomorphic to
    \[\R \oplus \mathbf{H}^-_4 \oplus \bigoplus_{16 < \mu < 24} \ker (\Delta - \mu)|_{C^\infty_{\mathrm{b}}(M)}.\]
    The summand $\R$ is spanned by the destabilizing direction from Theorem~\ref{thm:submersion1}, while the space $\mathbf{H}^-_4$ corresponds to certain horizontal harmonic $2$-forms, cf.~\cite[(41)]{NS23}.
\end{enumerate}
\end{theorem}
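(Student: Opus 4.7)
The plan is to specialize the Alexandrov--Semmelmann decomposition of $\varepsilon(g_{\mathrm{sq}})$ recorded for general nearly parallel $\rmG_2$-manifolds to the squashed 3-Sasaki setting, and then to carry out a systematic decomposition of $\Sy^2_0(M)$ under the 3-Sasaki fibration $\pi\colon M\to\mathcal{O}^4$ onto the quaternion-Kähler base orbifold. The global strategy is representation-theoretic: both $\Lambda^3_{27}T^\ast M$ and $\Sym^2_0 T^\ast M$ split into $\nabla^{\rmc}$-parallel subbundles under the $\Sp(m)\Sp(1)$-structure of $\mathcal{O}$, and on each isotypic piece the operators $\star d$, $dd^\ast$, and $\LL$ of $g_{\mathrm{sq}}$ can be rewritten as a basic (or horizontal) Laplacian plus an explicit algebraic shift determined by the 3-Sasaki curvature and the vertical squashing factor $1/5$.

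For part (i), I would start from the three-summand decomposition of $\varepsilon(g_{\mathrm{sq}})$ inside $\Omega^3_{27}(M)$ and argue that on a squashed 3-Sasaki the summands $\ker(\star d-\tau_0/2)$ and $\ker(dd^\ast-\tau_0^2/2)$ vanish. This should follow by computing $\star d$ on each $\nabla^{\rmc}$-parallel piece of $\Lambda^3_{27}T^\ast M$: the quaternionic structure on $\mathcal{V}\cong\R^3$ forces the relevant isotypic components for those two eigenvalues to be trivial. The surviving first summand is the space of infinitesimal nearly parallel $\rmG_2$-deformations. To match it with $\ker(\Delta-24)|_{C^\infty_{\mathrm{b}}(M)}$, one parameterizes such a deformation by a basic function $f$ through an explicit ansatz mixing the Killing form $\sigma$ with $\xi_i^\flat$-valued correction terms; with the normalizations $\tau_0=12/\sqrt5$ and $E=\tfrac{54}{5}$, the equation $\star d\alpha=-\tau_0\alpha$ becomes $\Delta_b f=24f$.

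For part (ii), I would apply Corollary~\ref{thm:ttLL} (for $g_{\mathrm{sq}}$) to pass between $\Sy^2_0(M)$- and $\TT(M)$-eigentensors of $\LL$, then decompose $\Sym^2_0 T^\ast M$ under the splitting $TM=\mathcal{V}\oplus\mathcal{H}$ and the $\Sp(m)\Sp(1)$-structure on $\mathcal{H}$. The summand $\R$ in the statement is precisely the Wang--Wang destabilizing direction in the canonical variation provided by Theorem~\ref{thm:submersion1}. The summand $\mathbf{H}^-_4$ is produced from primitive horizontal anti-self-dual harmonic $2$-forms via the same $\nabla^{\rmc}$-parallel bundle isomorphism that underpins the proofs of Theorems~\ref{thm:NK} and \ref{thm:g2-A}. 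The remaining family $\bigoplus_{16<\mu<24}\ker(\Delta-\mu)|_{C^\infty_{\mathrm{b}}(M)}$ is obtained by applying an explicit second-order operator built from the $\xi_i^\flat$ to a basic eigenfunction; the output is a tt-tensor whose $\LL$-eigenvalue depends affinely on $\mu$, and the destabilization condition $\LL<2E=\tfrac{108}{5}$ together with a positivity bound from the algebraic shift yields the open range $16<\mu<24$, with $\mu=24$ recovering the IED of part~(i).

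The main obstacle I anticipate is the systematic bookkeeping in part~(ii): one must enumerate \emph{every} $\Sp(m)\Sp(1)$-isotypic piece of $\Sym^2_0 T^\ast M$, compute the algebraic shift relating $\LL$ on that piece to a basic or horizontal Laplacian, and rule out any further destabilizing eigenspaces outside the three families listed. The combination of Weitzenböck calculus on $g_{\mathrm{sq}}$ with the representation theory of the base, and the matching of the exact numerical constants $16$, $24$, and $108/5$, is where the delicate work lies and where spurious contributions are most likely to be overlooked.
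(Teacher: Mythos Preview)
The paper is a survey and does not contain its own proof of Theorem~\ref{thm:g2-B}; the result is quoted from \cite[Thm.~1.3]{NS24}, with the definition of $\mathbf{H}^-_4$ deferred to \cite[(41)]{NS23}. There is therefore no in-paper argument to compare your proposal against.

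That said, your outline is broadly consistent with the machinery the survey invokes for the neighboring results: the Alexandrov--Semmelmann three-summand description of $\varepsilon(g)$ on nearly parallel $\rmG_2$-manifolds, the $\nabla^{\rmc}$-parallel bundle isomorphisms used in the proofs of Theorems~\ref{thm:NK} and \ref{thm:g2-A}, and the canonical-variation destabilizing direction from Theorem~\ref{thm:submersion1}. One small correction: on a $7$-dimensional $3$-Sasaki manifold the base orbifold is $4$-dimensional, so the relevant structure group is $\Sp(1)\cdot\Sp(1)\cong\SO(4)$, not $\Sp(m)\cdot\Sp(1)$ for general $m$; your isotypic decomposition should be phrased accordingly. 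Your own caveat about part~(ii) is well placed: the actual proof in \cite{NS23,NS24} does require a complete enumeration of the isotypic pieces and a careful exclusion of all other potential small eigenvalues, and this is where the substance of the argument lies rather than in the identification of the three listed summands.
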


In case of the squashed metric $g_{\mathrm{sq}}$ on $N(1,1)$, the only destabilizing direction comes from the canonical variation, so $\coind(g_{\mathrm{sq}})=1$ \cite[Thm.~9.3]{NS24}. There are no known examples of squashed metrics with $\coind(g_{\mathrm{sq}})>1$.

In summary we can say that all known examples of proper nearly parallel $\rmG_2$-manifolds are linearly unstable.

\subsection{Sasaki--Einstein and $3$-Sasaki manifolds}
\label{sec:SE}


A similar result to Theorems~\ref{thm:NK} and~\ref{thm:g2-A} also holds in the Sasaki--Einstein case \cite{SWW22}.

\begin{theorem}\label{thm:ES2}
Let $(M^{2m+1}, g)$ be a Sasaki--Einstein manifold, $m\geq2$. Then $\coind(g)\geq b_2(M)$. Moreover, if $m=3$, then $\coind(g)\geq b_2(M)+b_3(M)$.
\end{theorem}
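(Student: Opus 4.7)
The plan is to follow the template of Theorems~\ref{thm:NK} and~\ref{thm:g2-A}: identify a natural subbundle of differential forms in which all harmonic forms of the relevant degree must lie; build a $\nabla^{\mathrm{c}}$-parallel injection from this subbundle into $\TT(M)$; and then compare the Hodge--de~Rham Laplacian with the Lichnerowicz Laplacian via a Weitzenböck-type identity to conclude that each harmonic form corresponds to an $\LL$-eigentensor with eigenvalue strictly less than $2E$, hence destabilizing by Theorem~\ref{thm:secondvartt}.

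For the estimate $\coind(g)\geq b_2(M)$, valid for all $m\geq 2$, I would start from the standard Hodge theory of compact Sasaki manifolds: every harmonic $2$-form is basic, of type $(1,1)$ on the horizontal distribution $\mathcal{H}=\ker\eta$, and primitive with respect to the transverse Kähler form. Thus harmonic $2$-forms inject into $\Gamma(\Lambda^{1,1}_0\mathcal{H}^\ast)$. The assignment $\alpha\mapsto\alpha\circ J$, extended by zero on the Reeb direction, produces a $\nabla^{\mathrm{c}}$-parallel bundle embedding of $\Lambda^{1,1}_0\mathcal{H}^\ast$ into the horizontal Hermitian trace-free part of $\Sym^2_0T^\ast M$; basicness and primitivity together with parallelity of the embedding ensure that the resulting symmetric $2$-tensors are transverse, hence tt. Since the embedding is $\nabla^{\mathrm{c}}$-parallel it intertwines the standard Laplacians $(\nabla^{\mathrm{c}})^\ast\nabla^{\mathrm{c}}$; writing $\Delta=(\nabla^{\mathrm{c}})^\ast\nabla^{\mathrm{c}}+F$ on $2$-forms and $\LL=(\nabla^{\mathrm{c}})^\ast\nabla^{\mathrm{c}}+G$ on symmetric $2$-tensors, one reads off the $\LL$-eigenvalue on the image of a harmonic $2$-form by inserting $\Delta\alpha=0$ and $\Ric_g=2m\,g$. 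The expected outcome is an eigenvalue strictly less than $2E=4m$, giving $b_2(M)$ destabilizing tt-directions.

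For the case $m=3$, a compact Sasaki--Einstein $7$-manifold admits a real Killing spinor and is therefore a nearly parallel $\rmG_2$-manifold, so Theorem~\ref{thm:g2-A} already supplies $b_3(M)$ destabilizing eigentensors constructed from harmonic $3$-forms in $\Lambda^3_{27}T^\ast M$. To obtain $\coind(g)\geq b_2(M)+b_3(M)$ I need only check that the two constructions produce linearly independent destabilizing directions; this I would establish by decomposing $\Sym^2_0T^\ast M$ under the transverse $\U(3)$-structure and observing that the harmonic-$2$-form images lie in the horizontal Hermitian summand, while the harmonic-$3$-form images lie in a complementary irreducible summand (essentially the horizontal anti-Hermitian part, matched with $\Lambda^3_{27}T^\ast M$ via its $\nabla^{\mathrm{c}}$-parallel identification). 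Since both embeddings are $\nabla^{\mathrm{c}}$-parallel and land in distinct $\U(3)$-isotypical components, the two families are pointwise, and hence $L^2$-, orthogonal.

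The main obstacle I anticipate is the precise Weitzenböck comparison yielding the destabilizing eigenvalue in the $b_2$-part: one has to carry out careful torsion bookkeeping for the canonical Sasaki connection $\nabla^{\mathrm{c}}$, whose torsion involves $\eta$ and $d\eta$, and to rule out the borderline case of eigenvalue exactly equal to $2E$, since that would only contribute to the nullity and not to the coindex. A secondary, more representation-theoretic point for $m=3$ is to make explicit how $\Lambda^{1,1}_0\mathcal{H}^\ast$ and $\Lambda^3_{27}T^\ast M$ sit inside $\Sym^2_0T^\ast M$ as $\nabla^{\mathrm{c}}$-parallel subbundles and to confirm that their images intersect trivially.
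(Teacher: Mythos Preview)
Your proposal is correct and follows essentially the same approach as the paper, which explicitly frames Theorem~\ref{thm:ES2} as ``a similar result to Theorems~\ref{thm:NK} and~\ref{thm:g2-A}'' and refers to \cite{SWW22} for the details: harmonic forms are shown to lie in a subbundle that is $\nabla^{\mathrm{c}}$-parallel isomorphic to a piece of $\Sym^2_0T^\ast M$, and a comparison between the standard Laplacian of $\nabla^{\mathrm{c}}$ and $\LL$ yields destabilizing eigentensors. Your handling of the $m=3$ case via the nearly parallel $\rmG_2$ structure and a $\U(3)$-isotypical decomposition to separate the $b_2$- and $b_3$-contributions is exactly the mechanism used there.
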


That is, if a simply connected $7$-dimensional Sasaki--Einstein manifold is semistable, it must be a rational homology sphere.

A Sasaki--Einstein manifold $(M^{2m+1},g)$ is called \emph{regular} if the Reeb vector field generates a free circle action. This implies that $(M,g)$ is a principal circle bundle over a Kähler--Einstein manifold $(B^{2m},g_B)$ of positive scalar curvature, and the fibration is a Riemannian submersion with totally geodesic fibers. C.~Wang showed that if $(B,g_B)$ has a tt-eigentensor to a small enough eigenvalue, then it can be pulled back to a destabilizing direction on $(M,g)$ \cite{W}.

\begin{theorem}
\label{thm:sasakiregular}
Let $(M^{2m+1},g)$ be a regular Sasaki--Einstein manifold, $\Ric_g=2mg$, and $\pi: (M,g)\to (B,g_B)$ the corresponding circle bundle, $\Ric_{g_B}=(2m+2)g_B$. If there exists $h\in\TT(B)$ such that $\Ltwoinprod{\LL h}{h}<(4m-4)\Ltwoinprod{h}{h}$, then $\pi^\ast h\in\TT(M)$ is a destabilizing direction for $(M,g)$.
\end{theorem}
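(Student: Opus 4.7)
The plan is to verify that $\pi^*h\in\TT(M)$ and then to establish a comparison identity of the shape
\[\Ltwoinprod{\LL_M(\pi^*h)}{\pi^*h}_{M}\;=\;\Vol(S^1)\bigl(\Ltwoinprod{\LL_B h}{h}_{B}+4\,\Ltwoinprod{h}{h}_{B}\bigr),\]
together with the analogous Fubini-type identity $\Ltwoinprod{\pi^*h}{\pi^*h}_{M}=\Vol(S^1)\,\Ltwoinprod{h}{h}_{B}$. Plugging the hypothesis $\Ltwoinprod{\LL_B h}{h}<(4m-4)\Ltwoinprod{h}{h}$ into this identity yields $\Ltwoinprod{\LL_M\pi^*h}{\pi^*h}<4m\,\Ltwoinprod{\pi^*h}{\pi^*h}$, and since $E_M=2m$, Theorem~\ref{thm:secondvartt} immediately gives $\EH''_g(\pi^*h,\pi^*h)>0$, i.e.~$\pi^*h$ is a destabilizing direction.

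For the TT property, work in an adapted local orthonormal frame $\{\xi,\tilde e_1,\ldots,\tilde e_{2m}\}$ on $M$, where $\xi$ is the Reeb vector field and $\tilde e_i$ are horizontal lifts of a local orthonormal frame $(e_i)$ of $(B,g_B)$. Since $\pi^*h$ annihilates $\xi$, its $g$-trace equals $\tr_{g_B}h=0$. For the divergence, expand $\delta_M\pi^*h$ in this frame using the Sasakian identities $\nabla^M_X\xi=-JX$ for horizontal $X$ and $\nabla^M_\xi\xi=0$: the $\xi$-slot contribution vanishes because $\xi$ is Killing and $\pi^*h(\xi,\cdot)=0$, while the horizontal sum reproduces $\pi^*(\delta_B h)=0$ up to O'Neill-type terms of the form $\sum_i A_{\tilde e_i}\tilde e_i$, which cancel because the O'Neill tensor is skew on horizontal pairs (essentially $A_{\tilde X}\tilde Y=\omega_B(X,Y)\,\xi$ up to sign).

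The heart of the proof is the comparison formula for $\LL_M\pi^*h$. Split the rough Laplacian on $M$ into horizontal and vertical parts in the adapted frame. The horizontal sum is compared to $\pi^*(\nabla^{B*}\nabla^B h)$ through O'Neill's submersion formulas; the discrepancy produces explicit correction terms controlled by $A$ and the second fundamental form of the fibers (which vanishes). The vertical part $-\nabla^{M,2}_{\xi,\xi}\pi^*h$ is computed by using that $\pi^*h$ is basic (so $\Lie_\xi\pi^*h=0$), which combined with $\nabla^M_X\xi=-JX$ yields $(\nabla^M_\xi\pi^*h)(X,Y)=\pi^*h(JX,Y)+\pi^*h(X,JY)$ for horizontal $X,Y$; iterating gives an algebraic contribution proportional to $\pi^*h$. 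Finally the difference $\K(R_M)-\pi^*\K(R_B)$ between the standard curvature endomorphisms is evaluated from the O'Neill curvature identities, using the normalizations $\Ric_M=2mg$ and $\Ric_B=(2m+2)g_B$ to fix constants. Summing all horizontal, vertical, and curvature contributions should yield (after integration) the shift by $+4$ in the displayed formula.

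The main obstacle is precisely this last step: bookkeeping of the O'Neill correction terms in both the rough Laplacian and the Weitzenböck curvature endomorphism, and in particular controlling how the vertical derivative $\nabla^M_\xi$ interacts with the type decomposition $\pi^*h=\pi^*h^{+}+\pi^*h^{-}$ into $J$-Hermitian and $J$-anti-Hermitian parts (on which $\nabla^M_\xi$ acts with different eigenvalues). One expects the pointwise comparison to split asymmetrically on these two summands, with the asymmetry cancelling only after integration against $\pi^*h$ in the $L^2$-pairing. Organizing this via the parallel bundle isomorphism $\Sym^+T^*B\cong\Lambda^{1,1}_\R T^*B$ on the base and exploiting that $\pi^*h$ is basic should make the cancellations transparent and produce the claimed shift by $4$ uniformly on $\TT(B)$.
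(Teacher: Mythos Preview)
The paper is a survey and does not prove this theorem; it simply attributes the result to C.~Wang \cite{W}. So there is no in-text proof to compare against, only the outline implicit in the surrounding discussion: pull back a tt-tensor from the base, verify it is tt on the total space, and compare Lichnerowicz Laplacians via the Sasakian/O'Neill identities. Your plan follows exactly this route, and the verification that $\pi^\ast h\in\TT(M)$ is correct.

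The genuine gap is the one you yourself flag. Your displayed target identity with a uniform shift of $+4$ is not obviously true as a pointwise equality, and your own computation shows why: from $\Lie_\xi\pi^\ast h=0$ and $\nabla_X\xi=-\Phi X$ you get $-\nabla_\xi^2\pi^\ast h=0$ on pullbacks of Hermitian tensors and $-\nabla_\xi^2\pi^\ast h=4\pi^\ast h$ on pullbacks of anti-Hermitian tensors. So the vertical rough Laplacian alone already distinguishes the two types. For the theorem to hold for \emph{every} $h\in\TT(B)$, either the horizontal O'Neill corrections and the curvature difference $\mathring R_M-\pi^\ast\mathring R_B$ must exactly compensate this asymmetry, or one only needs an inequality (shift $\leq 4$ on each type) rather than the equality you wrote. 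You have not established either, and the hopeful sentence about the asymmetry ``cancelling only after integration'' is not justified: both $\Sy^+$ and $\Sy^-$ are $L^2$-orthogonal and $\LL$-invariant, so there is no cross-term to absorb a discrepancy.

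Concretely, what remains is to compute, on horizontal basic $2$-tensors, both $(\nabla^{M\ast}\nabla^M-\pi^\ast\nabla^{B\ast}\nabla^B)\pi^\ast h$ (tracking the nonzero components $(\nabla^M_{\tilde e_i}\pi^\ast h)(\xi,\cdot)=\mp h(Je_i,\cdot)$ that you have not yet accounted for) and $(\mathring R_M-\pi^\ast\mathring R_B)\pi^\ast h$ via the O'Neill curvature formula with $A_{\tilde X}\tilde Y=\omega_B(X,Y)\xi$. Doing this separately on $\Sym^+$ and $\Sym^-$ and checking that the total shift is $\leq 4$ on each is exactly the content of Wang's computation in \cite{W}, and is what your sketch is missing.
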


For a regular Sasaki--Einstein manifold $(M,g)$ fibering over an irreducible Hermitian symmetric space, instability does not follow from Theorem~\ref{thm:ES2}, as it has $b_2(M)=0$. However, Wang--Wang~\cite{WW2} proved the linear instability of the Sasaki--Einstein metric on the Stiefel manifold $\SO(n+2)/\SO(n)$, $n\geq3$, which fibers over the complex quadric $\SO(n+2)/(\SO(n)\times\SO(2))$, and of $\Sp(n)/\SU(n)$ for $n\ge 4$, fibering over the Hermitian symmetric space $\Sp(n)/\U(n)$ (see Section~\ref{sec:nonnormal}).

By a result of Pedersen--Poon \cite{PP} it is known that $3$-Sasaki metrics cannot be deformed through a nontrivial curve of $3$-Sasaki metrics. It is unknown whether a metric with a Killing spinor can have an Einstein deformation to a metric without a Killing spinor. At least, the analogous result to the holonomy rigidity for parallel spinors \cite{holrig} is false for Killing spinors and their canonical connection: as shown by van Coevering \cite{Coe17}, there exist examples of $7$-dimensional toric $3$-Sasaki metrics, admitting deformations through Sasaki--Einstein metrics
with just two Killing spinors.

\section{Homogeneous spaces}
\label{sec:homogeneous}

An excellent source of examples are manifolds which are homogeneous. Throughout this final section we assume that $M^n$ is homogeneous and fix a presentation $M=G/H$, where $G$ is a compact, connected Lie group and $H$ a closed subgroup (the isotropy subgroup of a given \emph{base point} $o\in M$). Without restriction we also assume that the action $G\curvearrowright M$ is \emph{almost effective}, that is, only a discrete subgroup of $G$ acts trivially on $M$.

\subsection{Invariant Einstein metrics}

Denote the Lie algebras of $G$ and $H$ by $\gf$ and $\hf$, respectively, and choose a reductive decomposition
\[\gf=\hf\oplus\mf,\]
that is, $\Ad(H)\mf\subset\mf$. Then the $H$-module $\mf$ is canonically identified with the isotropy representation $T_oM$, and $G$-invariant tensor fields on $M$ correspond to $H$-invariant tensors on $\mf$. In particular, we can identify
\[\Sy^2(M)^G\cong(\Sym^2\mf^\ast)^H,\]
thus the open cone $\Met^G\subset\Sy^2(M)^G$ of $G$-invariant metrics is a manifold of dimension at most $n(n+1)/2$. A natural gauge group acting on $\Met^G$ is the group
\[\Aut(G/H)=\{\varphi\in\Aut(G)\,|\,\varphi(H)=H\},\]
which acts on $M$ by diffeomorphisms and on $\Met^G$ by pullback. We note that two $G$-invariant metrics might be isometric via a diffeomorphism not in $\Aut(G/H)$. Deciding this is a difficult problem; see \cite[Thm.~5]{DZ79} for left-invariant metrics on compact simple Lie groups. At least on the infinitesimal level it is true that
\begin{equation}
T_g(\Aut(G/H)\cdot g)=T_g(\Diff\cdot g\cap\Met^G)
\label{tgdiffinvariant}
\end{equation}
for any $g\in\Met^G$.

\begin{definition}
\begin{enumerate}[\upshape(i)]
    \item The moduli space of invariant Einstein metrics is defined as
    \[\EMod^G=\{g\in\Met_1^G\,|\, g\text{ is Einstein}\}/\Aut(G/H).\]
    \item A $G$-invariant Einstein metric $g$ is called \emph{$G$-rigid} if its class $[g]$ is isolated in $\EMod^G$.
\end{enumerate}
\end{definition}


There are other sensible choices of gauge groups. First, one might take only the inner automorphisms contained in $\Aut(G/H)$, which leads us to consider the normalizer $N_G(H)$, resp.~the subgroup $N_G(H)/Z(G)\subset\Aut(G/H)$. Second, one could take the group $\Diff^G$ of diffeomorphisms commuting with the $G$-action. By virtue of $G$ being compact, this group is isomorphic to $N_G(H)/H$ \cite[\S1, Cor.~4.3]{Bredon}, albeit acting on $M$ in a different way than $N_G(H)$. However, both $N_G(H)$ and $\Diff^G$ still have the same orbits on $\Met^G$. Further, $N_G(H)/Z(G)$ and $\Aut(G/H)$ share the same identity component, so the notion of $G$-rigidity stays the same even if one considers invariant Einstein metrics only up to the action of $N_G(H)$.

The moduli space $\EMod^G$ has been studied for example by Böhm--Wang--Ziller \cite{BWZ04}. They show that the set of Einstein metrics in $\Met_1^G$ is a semialgebraic set. In particular, it admits a local stratification into real algebraic smooth submanifolds, and is locally arcwise connected. A consequence of the local arcwise connectedness of both $\EMod^G$ and $\EMod$ together with \eqref{tgdiffinvariant} is that rigidity (in the sense of Section~\ref{sec:moduli}) implies $G$-rigidity.

Recall that homogeneous Einstein metrics necessarily have $E\geq0$, with $E=0$ implying that the manifold is a torus (see Section~\ref{sec:parallelspinor} for a description of the Einstein moduli space), while $E>0$ implies that $G/H$ has finite fundamental group. In the latter case, the semisimple part of $G$ still acts transitively on $G/H$, so we may assume without restriction that $G$ is semisimple. Then $G$ has only finitely many outer automorphisms; in particular, $\Aut(G/H)$ is compact.

A key result by Böhm--Wang--Ziller \cite{BWZ04} is the following.

\begin{theorem}
\label{thm:invariantmodulicompact}
If $G/H$ has finite fundamental group, then $\EMod^G$ is compact.
\end{theorem}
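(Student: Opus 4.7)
The plan is to exploit a Böhm--Wang--Ziller type analysis of the scalar curvature functional on $\Met_1^G$ in order to show that every sequence of classes in $\EMod^G$ admits a convergent subsequence whose limit is again an invariant Einstein metric. First I would make the standard reduction: since $G/H$ has finite fundamental group, the Alekseevsky--Kimelfeld theorem excludes Ricci-flat homogeneous metrics, so every $G$-invariant Einstein metric has $E>0$, and passing to the semisimple part of $G$ we may assume without loss of generality that $G$ is compact semisimple. Then $\Aut(G/H)\subset\Aut(G)$ is a compact Lie group; the space $\Met^G\cong\{h\in(\Sym^2\mf^\ast)^H\,|\,h>0\}$ is a finite-dimensional open convex cone; $\Met_1^G$ is a finite-dimensional real-analytic manifold; and the Einstein equation defines a closed real-analytic subset of $\Met_1^G$ whose image in the Hausdorff quotient $\Met_1^G/\Aut(G/H)$ is closed.

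Next, I would take a sequence $([g_i])$ in $\EMod^G$ and argue by contradiction. If $([g_i])$ has no convergent subsequence, then no choice of representatives $g_i$ and of automorphisms $\varphi_i\in\Aut(G/H)$ can make $\varphi_i^\ast g_i$ converge in $\Met_1^G$. The core input, due to Böhm--Wang--Ziller, is that any such divergent sequence can, after passing to a further subsequence and normalizing the representatives, be described as ``collapsing'' along some chain of intermediate closed subgroups $H\subsetneq K_1\subsetneq\cdots\subsetneq K_\ell\subsetneq G$: the eigenvalues of $g_i$ on a suitably refined $\Ad(H)$-invariant decomposition $\mf=\mf_1\oplus\cdots\oplus\mf_r$ (compatible with the chain) degenerate at different orders of magnitude, with the unit volume constraint forcing simultaneous collapses and blow-ups. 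For any such sequence, an explicit expansion of the scalar curvature in terms of these eigenvalues and the structure constants of $\gf$ shows that $\scal_{g_i}\to-\infty$. But each $g_i$ is Einstein with $\scal_{g_i}=nE_i>0$, giving a contradiction.

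From this contradiction, every sequence in $\EMod^G$ has a convergent subsequence $[g_{i_k}]\to[g]$ in $\Met_1^G/\Aut(G/H)$. Closedness of the Einstein condition ensures that $g$ is Einstein, and Alekseevsky--Kimelfeld again excludes the limiting case $E=0$, so $[g]\in\EMod^G$ and compactness follows.

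The main obstacle is precisely the Böhm--Wang--Ziller estimate $\scal_{g_i}\to-\infty$ along divergent sequences. It requires a careful description of the ``ideal boundary'' of $\Met_1^G/\Aut(G/H)$ in terms of chains of intermediate subgroups, together with a delicate expansion of the scalar curvature in an eigenbasis adapted to such a chain, tracking precisely which structure-constant terms dominate and force $\scal_{g_i}$ down to $-\infty$. The remaining steps reduce to the finite-dimensionality of $\Met_1^G$, the compactness of $\Aut(G/H)$, and the classical structure theory of compact homogeneous spaces.
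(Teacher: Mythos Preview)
The paper does not prove this theorem itself; as a survey, it simply attributes the result to Böhm--Wang--Ziller \cite{BWZ04}. There is thus no in-paper argument to compare against, but your sketch contains a genuine gap. The step you isolate as ``the main obstacle''---that an expansion of the scalar curvature in terms of eigenvalues and structure constants forces $\scal_{g_i}\to-\infty$ along \emph{every} divergent sequence in $\Met_1^G$---is exactly the assertion that $\EH|_{\Met_1^G}$ is proper. This is false in general: the very next theorem quoted in the paper (Wang--Ziller \cite{WZ86}) says that $\EH|_{\Met_1^G}$ is bounded above and proper \emph{if and only if} $\hf\subset\gf$ is a maximal subalgebra. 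Whenever an intermediate subgroup $H\subsetneq K\subsetneq G$ exists, one can produce divergent unit-volume sequences (shrink the fibre $K/H$ and expand the base $G/K$ at compensating rates) along which $\scal$ stays bounded, so the desired contradiction with $\scal_{g_i}=nE_i>0$ never materialises.

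The actual Böhm--Wang--Ziller argument uses the Einstein condition far more substantially than the single inequality $\scal>0$. After extracting a subsequence collapsing along a chain $H=K_0\subsetneq\cdots\subsetneq K_\ell=G$ (this part of your outline is correct), one must analyse the full \emph{Ricci tensor}, not merely its trace, on the blocks of $\mf$ adapted to the chain, and show that the diagonal blocks of $\Ric_{g_i}$ scale at mutually incompatible rates, so that $\Ric_{g_i}=E_i g_i$ cannot persist along the degeneration. Your reduction to semisimple $G$, the compactness of $\Aut(G/H)$, and the collapsing framework are all appropriate; what is missing is precisely this block-wise Ricci analysis, and it cannot be replaced by the scalar-curvature-only estimate you propose.
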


Looking at the known examples in fact motivates the following conjecture of theirs.

\begin{questype}{Finiteness Conjecture}
If $G/H$ has finite fundamental group, then $\EMod^G$ is finite. Equivalently, all $G$-invariant Einstein metrics are $G$-rigid.
\end{questype}

Invariant Einstein metrics may also be characterized purely variationally.

\begin{theorem}
\label{thm:critinvariant}
A metric $g\in\Met_1^G$ is Einstein if and only if it is a critical point of $\EH|_{\Met_1^G}$.
\end{theorem}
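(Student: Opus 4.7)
The forward direction is immediate from Theorem~\ref{thm:hilbertcrit}: if $g\in\Met_1^G$ is Einstein, then it is a critical point of $\EH|_{\Met_1}$, and hence its first variation vanishes on the subspace $T_g\Met_1^G\subset T_g\Met_1$. All the content is in the converse, and the approach I would take is a direct use of the $G$-invariance of the $L^2$-gradient of $\EH$, essentially a concrete instance of Palais' principle of symmetric criticality.

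Recall that at any metric $g$, the first variation is $\EH'_g(h)=\Ltwoinprod{T}{h}$ with $T:=-\Ric_g+\tfrac{\scal_g}{2}g$. The first step is to observe that when $g\in\Met^G$, both $\Ric_g$ and $\scal_g$ are $G$-invariant, so $T\in\Sy^2(M)^G$. The second step is to pin down the tangent space $T_g\Met_1^G$: since $\Met_1^G$ is cut out from the open cone $\Met^G\subset\Sy^2(M)^G$ by the single smooth constraint $\Vol(g)=1$, whose linearization at $g$ is $h\mapsto\tfrac12\int_M\tr_gh\,|\vol_g|=\tfrac12\Ltwoinprod{g}{h}$, one has
\[T_g\Met_1^G=\bigl\{h\in\Sy^2(M)^G\,\bigl|\,\Ltwoinprod{g}{h}=0\bigr\},\]
which is a closed codimension-one subspace of $\Sy^2(M)^G$ whose $L^2$-orthogonal complement inside $\Sy^2(M)^G$ is precisely the line $\R g$ (using $g\in\Sy^2(M)^G$).

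The third step finishes the argument. If $g$ is a critical point of $\EH|_{\Met_1^G}$, then $\Ltwoinprod{T}{h}=0$ for all $h\in T_g\Met_1^G$. Combined with $T\in\Sy^2(M)^G$, this forces $T\in\R g$, say $T=cg$, and then $-\Ric_g+\tfrac{\scal_g}{2}g=cg$ gives $\Ric_g=\bigl(\tfrac{\scal_g}{2}-c\bigr)g$, i.e.~$g$ is Einstein.

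The only step that is not purely formal is the identification $T_g\Met_1^G=T_g\Met_1\cap\Sy^2(M)^G$ together with the fact that the orthogonal complement of $T_g\Met_1^G$ \emph{inside} $\Sy^2(M)^G$ is one-dimensional and spanned by $g$ — in other words, ensuring that the $L^2$-pairing of $\Sy^2(M)$ restricts non-degenerately to $\Sy^2(M)^G$ in the relevant way, so that the projection of $T$ onto this complement can be read off. I would expect this to be the main point requiring care; once it is in place, the rest is bookkeeping. A more robust (if slightly heavier) alternative, should this prove awkward, is to define the averaging operator $\mathrm{Av}_G\colon\Sy^2(M)\to\Sy^2(M)^G$ via integration over Haar measure, verify that it is an $L^2$-orthogonal projection mapping $T_g\Met_1$ to $T_g\Met_1^G$, and then compute $\EH'_g(h)=\Ltwoinprod{T}{h}=\Ltwoinprod{T}{\mathrm{Av}_G(h)}=\EH'_g(\mathrm{Av}_G(h))=0$ for all $h\in T_g\Met_1$, recovering criticality on the full $\Met_1$ and invoking Theorem~\ref{thm:hilbertcrit}.
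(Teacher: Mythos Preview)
Your argument is correct and is the standard proof via Palais' principle of symmetric criticality: the $L^2$-gradient $T=-\Ric_g+\tfrac{\scal_g}{2}g$ is $G$-invariant because $g$ is, the tangent space $T_g\Met_1^G$ is the $L^2$-orthogonal complement of $\R g$ inside the finite-dimensional space $\Sy^2(M)^G$, and hence criticality on $\Met_1^G$ forces $T\in\R g$. The only remark I would add is that the nondegeneracy of the $L^2$-pairing on $\Sy^2(M)^G$ is immediate (it is positive definite), so the point you flag as ``requiring care'' is in fact trivial; and since $G$ acts transitively, $\scal_g$ is automatically constant, so the final step yields an Einstein metric without any appeal to the contracted Bianchi identity.

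As for comparison with the paper: this is a survey, and the theorem is stated without proof, with a reference to \cite[Lem.~3.6]{Gstab1} for a generalization to noncompact unimodular $G$. Your argument is exactly the one that underlies that reference in the compact case.
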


In fact, for an appropriate notion of volume fixing, this remains true for non-compact unimodular $G$ \cite[Lem.~3.6]{Gstab1}. The discussion in Section~\ref{sec:stable} behaves well under restriction to $G$-invariant variations, as worked out by J.~Lauret \cite{Gstab1}. For example,
\begin{equation}
T_g\Met_1^G=\Sy^2_0(M)^G=T_g(\Aut(G/H)\cdot g)\oplus\TT(M)^G
\label{decompinvariant}
\end{equation}
for any $g\in\Met_1^G$. If we denote $\mf^\hf=\{X\in\mf\,|\,[\hf,X]=0\}$, then $\hf\oplus\mf^\hf$ is the Lie algebra of $N_G(H)$ and $\Aut(G/H)$. The following theorem tells us that the space of trivial variations is in many cases not a problem.

\begin{theorem}
For any $X\in\mf$, let $S(X)\in\Sym^2\mf^\ast$ be given by
\[S(X)(Y,Z)=g_o(\pr_\mf[X,Y],Z)+g_o(\pr_\mf[X,Z],Y),\qquad Y,Z\in\mf.\]
Then, at any $g\in\Met^G$, the space $T_g(\Aut(G/H)\cdot g)$ is identified with
\[S(\mf^\hf)\subset(\Sym^2_0\mf^\ast)^H.\]
Moreover, $T_g(\Aut(G/H)\cdot g)=0$ if any one of the following conditions is satisfied:
\begin{enumerate}[\upshape(i)]
    \item $\mf^\hf=0$,
    \item $G/H$ is naturally reductive, i.e.~$S=0$,
    \item $\mf$ is multiplicity-free, e.g.~if $\rk(G)=\rk(H)$.
\end{enumerate}
\end{theorem}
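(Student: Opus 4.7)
The plan is to compute the image of the infinitesimal action of the identity component $\Aut^0(G/H)$ at $g$. Inner automorphisms $c_h$ preserve $H$ iff $h \in N_G(H)$, so $\Aut^0(G/H)$ comes from $N_G^0(H)$, whose Lie algebra is $\hf \oplus \mf^\hf$. Since $\hf$ acts trivially on invariant metrics by $\Ad(H)$-equivariance of $g_o$, only $\mf^\hf$ contributes effectively to the orbit tangent space.

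For $X \in \mf^\hf$, the diffeomorphism $\bar c_{\exp(tX)}(pH) = \exp(tX)\,p\exp(-tX)H$ of $G/H$ fixes the base point $eH$, and its differential there, under $T_{eH}(G/H) \cong \mf$, is $A_t = \pr_\mf \circ \Ad(\exp(tX))|_\mf$. The pullback metric at the base point is $(\bar c_{\exp(tX)}^\ast g)_{eH}(Y, Z) = g_o(A_tY, A_tZ)$, whose $t$-derivative at $0$ is precisely $g_o(\pr_\mf[X,Y], Z) + g_o(Y, \pr_\mf[X,Z]) = S(X)(Y,Z)$. This establishes the identification $T_g(\Aut(G/H)\cdot g) = S(\mf^\hf)$. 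Containment in $(\Sym^2_0\mf^\ast)^H$ is automatic, since diffeomorphisms preserve total volume and the orbit therefore lies in $\Met_1^G$.

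Conditions (i) and (ii) follow immediately from the formula for $S$, with (ii) being precisely the definition of naturally reductive. The main obstacle will be (iii), for which I would fix a bi-invariant inner product $B$ on $\gf$ and take $\mf = \hf^{\perp_B}$. Then $\ad(X)$ is $B$-skew on $\gf$, and $\hf \perp_B \mf$ yields $B(\pr_\mf[X,Y], Z) = -B(Y, \pr_\mf[X,Z])$ for $X \in \mf^\hf$ and $Y, Z \in \mf$; thus $\pr_\mf \circ \ad(X)|_\mf$ is $B$-skew. By $H$-equivariance together with Schur's lemma and multiplicity-freeness, this endomorphism preserves each irreducible summand $W_i \subseteq \mf$ and restricts there to an element of the commutant $\End_H(W_i) \in \{\R, \C, \Ham\}$. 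Multiplicity-freeness also forces every $H$-invariant inner product on $W_i$ to be a positive scalar multiple of $B|_{W_i}$ (in all three types, the self-adjoint positive-definite elements of the commutant form a one-dimensional cone), so $B$-skewness of $\pr_\mf \circ \ad(X)|_{W_i}$ propagates to skewness with respect to any invariant $g_o$. Combined with the $g_o$-orthogonality of distinct summands (again by Schur), this gives $S(X) = 0$.

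The equal-rank assertion in (iii) is in fact subsumed by (i): when $\rk(G) = \rk(H)$, a maximal torus $T \subseteq H$ is maximal in $G$, so $\gf^T = \mathfrak{t} \subseteq \hf$, whence $\mf^\hf \subseteq \mf^T = 0$.
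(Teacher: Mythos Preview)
Your argument is correct. The paper is a survey and states this result without proof (drawing on Lauret's work \cite{Gstab1}), so there is no in-paper proof to compare against; your computation of the infinitesimal action via $A_t = \pr_\mf \circ \Ad(\exp tX)|_\mf$ and the Schur-based treatment of (iii) are the standard route.

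Two minor remarks. First, in handling (iii) you pass to the specific reductive complement $\mf = \hf^{\perp_B}$. This is legitimate because the orbit $\Aut(G/H)\cdot g$ and hence its tangent space are intrinsic, and multiplicity-freeness depends only on the $H$-module $\gf/\hf \cong \mf$; but you should say so explicitly, since $S$ and $\mf^\hf$ as written do depend on the choice of complement. Second, your final observation that equal rank already gives $\mf^\hf = 0$ is correct and suffices for the conclusion, but the theorem's ``e.g.'' actually asserts that equal rank implies multiplicity-freeness. That implication also holds: two isomorphic $H$-irreducibles in $\mf_\C$ would share a weight for the common maximal torus $T$, contradicting one-dimensionality of the root spaces of $\gf_\C$. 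Either route yields the desired vanishing.
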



The variational characterization in Theorem~\ref{thm:critinvariant} and the splitting~\eqref{decompinvariant} motivate the following $G$-invariant version of stability.

\begin{definition}[$G$-stability]
A $G$-invariant Einstein metric $g$ is called
\begin{enumerate}[\upshape(i)]
    \item strictly (linearly) $G$-stable if $\EH_g''$ is negative-definite on $\TT(M)^G$,
    \item $G$-stable if $g$ is a local maximum of $\EH$ restricted to $\Met_1^G$,
    \item (linearly) $G$-semistable if $\EH_g''$ is negative-semidefinite on $\TT(M)^G$,
    \item $G$-unstable if it is not $G$-stable,
    \item linearly $G$-unstable if there exists $h\in\TT(M)^G$ such that $\EH_g''(h,h)>0$,
    \item strongly $G$-unstable if $\EH_g''$ is positive-definite on $\TT(M)^G$.
\end{enumerate}
\end{definition}

As before, the implications (i)$\Rightarrow$(ii)$\Rightarrow$(iii) and (vi)$\Rightarrow$(v)$\Rightarrow$(iv) hold. The dimension of the maximal subspace of $\TT(M)^G$ on which $\EH_g''$ is positive-definite is called the \emph{$G$-coindex}, and is denoted by $\coind_G(g)$. In contrast to Definition~\ref{def:stable}, even though the Hilbert coindex is always finite, strong $G$-instability is a possibility since $\TT(M)^G$ is also finite-dimensional.

Of course, an invariant Einstein metric being stable in the sense of Definition~\ref{def:stable} implies the corresponding notion of $G$-stability.

The variational characterization of homogeneous Einstein metrics (Theorem~\ref{thm:critinvariant}) can sometimes also be used to predict their existence. For example, Wang--Ziller proved the following result \cite{WZ86}.

\begin{theorem}
$\EH|_{\Met_1^G}$ is bounded from above and proper if and only if $\hf\subset\gf$ is a maximal subalgebra. In this case, $\EH|_{\Met_1^G}$ assumes its global maximum, so there is at least one $G$-stable invariant Einstein metric.
\end{theorem}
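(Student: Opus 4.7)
The strategy is to reduce each assertion to the behavior of scalar curvature along one-parameter degenerations of $G$-invariant metrics, and then deduce existence by a compactness argument combined with Theorem~\ref{thm:critinvariant}.

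First I would observe that on the compact homogeneous space $M = G/H$, every $G$-invariant metric has constant scalar curvature, so $\EH|_{\Met_1^G}(g) = \scal_g\cdot\Vol(g) = \scal_g$. Boundedness and properness of $\EH|_{\Met_1^G}$ therefore translate into the corresponding statements about the scalar curvature functional on the finite-dimensional semialgebraic manifold $\Met_1^G$, where ``proper'' means that any sequence escaping every compact subset satisfies $\scal_{g_n}\to -\infty$.

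For the direction ``$\hf$ not maximal $\Rightarrow$ $\EH|_{\Met_1^G}$ not bounded above or not proper'', I would pick an intermediate closed subgroup $H\subsetneq K\subsetneq G$ and exploit the resulting Riemannian submersion $G/H\to G/K$ with totally geodesic fiber $F=K/H$. Fix a $G$-invariant background metric $g_0$ adapted to this splitting and study its canonical variation $g_t=g_0|_\mathcal{H}\oplus t\,g_0|_\mathcal{V}$. One has $\Vol(g_t)=t^{\dim F/2}\Vol(g_0)$ and the O'Neill formula $\scal_{g_t}=\scal_F/t+\scal_B-t|A|^2$, whence
\[
\widetilde\EH(g_t)=c_1\scal_F\,t^{\dim F/n-1}+c_2\scal_B\,t^{\dim F/n}-c_3|A|^2\,t^{\dim F/n+1}
\]
for some positive constants $c_i$. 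Since $\dim F<n$, if the fiber metric is chosen with $\scal_F>0$ (possible whenever $K/H$ is not a flat torus, via a normal metric), then letting $t\to 0^+$ drives $\widetilde\EH(g_t)\to +\infty$, so $\EH$ is not bounded above. In the degenerate case where $K/H$ is a torus ($\scal_F\equiv 0$), the same family provides an escaping sequence in $\Met_1^G$ along which $\widetilde\EH$ stays bounded, so $\EH$ fails to be proper. Either way the ``only if'' direction is established.

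For the converse ``$\hf$ maximal $\Rightarrow$ $\EH|_{\Met_1^G}$ bounded above and proper'', I would argue by contradiction: let $(g_n)\subset\Met_1^G$ be a sequence escaping every compact subset and attempt to keep $\scal_{g_n}$ bounded below. Choose an $\Ad(H)$-isotypic decomposition $\mf=\bigoplus_i\mf_i$ and express each $g_n$ via its eigenvalues $\lambda_i^{(n)}$ on the blocks, using the standard formula
\[
\scal_g=\tfrac12\sum_i\frac{d_ib_i}{\lambda_i}-\tfrac14\sum_{i,j,k}[ijk]\frac{\lambda_k}{\lambda_i\lambda_j},
\]
with nonnegative structure constants $[ijk]$ encoding the bracket relations between blocks. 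Escape from compact sets forces some ratio $\lambda_i/\lambda_j$ to tend to $0$ or $\infty$ (subject to $\prod_i\lambda_i^{d_i}$ being fixed by the volume constraint). The decisive observation is that any such degeneration keeping the sum bounded below singles out an $\Ad(H)$-invariant subspace $\mf'\subsetneq\mf$ whose bracket with itself lies in $\hf\oplus\mf'$; then $\hf\oplus\mf'$ is a subalgebra strictly between $\hf$ and $\gf$, contradicting maximality. Hence $\scal_{g_n}\to-\infty$, giving simultaneously boundedness above and properness. Enumerating the possible degenerations and extracting the dominating negative asymptotic term of the scalar curvature formula is the technical heart of the argument and the main obstacle.

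Granting boundedness and properness, the existence of a $G$-stable Einstein metric is immediate: a maximizing sequence for $\EH|_{\Met_1^G}$ lies in a superlevel set, which by properness is compact, so a subsequence converges to some $g_\ast\in\Met_1^G$ that attains the supremum. By Theorem~\ref{thm:critinvariant} the metric $g_\ast$ is Einstein, and being a global maximum of $\EH$ on $\Met_1^G$ it is $G$-stable by definition.
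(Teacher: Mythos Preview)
The paper does not give its own proof of this theorem; it is a survey and simply attributes the result to Wang--Ziller \cite{WZ86}. There is therefore nothing in the paper to compare your argument against.

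Your outline follows the Wang--Ziller strategy, and the final paragraph (extracting a maximizer from properness and invoking Theorem~\ref{thm:critinvariant}) is correct as stated. Two points deserve care, however. First, the hypothesis gives an intermediate \emph{subalgebra} $\kf$, not a closed intermediate subgroup; the connected subgroup with Lie algebra $\kf$ need not be closed in $G$, so you cannot directly form the Riemannian submersion $G/H\to G/K$ with totally geodesic fibers. One should instead work with the $\Ad(H)$-invariant splitting $\mf=(\kf\cap\mf)\oplus\mf^\perp$ and compute the scalar curvature of the canonical variation algebraically, without appealing to O'Neill. Second, the scalar curvature formula you use in the converse direction,
\[
\scal_g=\tfrac12\sum_i\frac{d_ib_i}{\lambda_i}-\tfrac14\sum_{i,j,k}[ijk]\frac{\lambda_k}{\lambda_i\lambda_j},
\]
is valid only for metrics diagonal with respect to a fixed $Q$-orthogonal decomposition. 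When the isotropy representation has repeated summands, $\Met^G$ contains non-diagonal metrics, and the analysis of degenerations must accommodate these; this is where the honest work in \cite{WZ86} lies, and your ``decisive observation'' that a bounded-below degeneration singles out an intermediate subalgebra is exactly the statement requiring proof rather than an input to it.
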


In most other cases, we expect $G$-instability. Under certain conditions, the \emph{simplicial complex theorem} of Böhm \cite{Boe04}, a generalization of the \emph{graph theorem} \cite{BWZ04}, guarantees the existence of saddle points of $\EH|_{\Met_1^G}$, that is, $G$-unstable homogeneous Einstein metrics.

\subsection{Harmonic analysis}
\label{sec:harmana}

A great advantage of the homogeneous setting is that analytic problems on homogeneous vector bundles can often be reduced to questions of representation theory. Given a unitary representation $\pi: H\to\GL(V)$ of the isotropy group $H$, the \emph{induced representation} of $G$ is the space $L^2(G,V)^H$ of $V$-valued functions on $G$ which are equivariant under the right-action of $H$, that is
\[L^2(G,V)^H=\{f\in L^2(G,V)\,|\,f(xh)=\pi(h)^{-1}f(x)\ \forall x\in G,\ h\in H\},\]
with $G$ acting by left-translation, i.e.
\[(x\cdot f)(y)=f(x^{-1}y),\qquad x,y\in G,\ f\in L^2(G,V)^H.\]
This unitary representation of $G$ is canonically isomorphic to the \emph{left-regular representation} $L^2(M,VM)$ of $L^2$-sections of the homogeneous vector bundle $VM=G\times_\pi V$, with $G$ acting via pullback.

We let $\widehat G$ denote the set of equivalence classes of irreducible representations of $G$ (over $\C$ usually parametrized by their highest weights). A consequence of the Peter--Weyl theorem and Frobenius reciprocity is the following decomposition of $G$-modules.

\begin{theorem}
\label{thm:peterweyl}
If $VM=G\times_\pi V$ is a homogeneous vector bundle, then
\[L^2(M,VM)\cong\closedsum_{\gamma\in\hat G}[\gamma]\otimes\Hom_H([\gamma],V).\]
\end{theorem}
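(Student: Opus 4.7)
The plan is to reduce the statement to the classical Peter--Weyl theorem on $L^2(G)$ via the identification $L^2(M,VM)\cong L^2(G,V)^H$ and an application of Frobenius reciprocity.

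First I would fix the setup: for the homogeneous bundle $VM=G\times_\pi V$, sections correspond to $H$-equivariant $V$-valued functions on $G$, giving a unitary isomorphism $L^2(M,VM)\cong L^2(G,V)^H$ intertwining the left regular action of $G$. This reduces the problem to decomposing the right-hand side. Since $L^2(G,V)\cong L^2(G)\otimes V$ as a $(G\times H)$-module, where $G$ acts by left translation and $H$ by right translation (tensored with $\pi$), taking $H$-invariants commutes with the tensor product in the sense $L^2(G,V)^H\cong(L^2(G)\otimes V)^H$.

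Next I would invoke the Peter--Weyl decomposition of the two-sided regular representation:
\[L^2(G)\cong\closedsum_{\gamma\in\widehat G}[\gamma]\otimes[\gamma]^\ast,\]
where the left $G$ acts on $[\gamma]$ and the right $G$ (hence $H$ by restriction) acts on $[\gamma]^\ast$. Tensoring with $V$ and taking $H$-invariants on the right factor yields
\[L^2(M,VM)\cong\closedsum_{\gamma\in\widehat G}[\gamma]\otimes([\gamma]^\ast\otimes V)^H.\]
Finally, Frobenius reciprocity (or just the natural isomorphism $([\gamma]^\ast\otimes V)^H\cong\Hom_H([\gamma],V)$) rewrites the multiplicity space in the desired form, and the left $G$-action is carried entirely on the $[\gamma]$-factor.

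The only genuinely delicate point is the Hilbert-space completion encoded in $\closedsum$: one must check that the algebraic decomposition of the dense subspace of $K$-finite vectors (provided by Peter--Weyl) extends to the claimed orthogonal Hilbert sum after taking $H$-invariants, and that the isomorphism is unitary with respect to the natural inner products. This follows because taking $H$-invariants is a bounded projection (averaging over the compact group $H$) and thus commutes with the Hilbert-space closure. The finite-dimensionality of each summand $[\gamma]\otimes\Hom_H([\gamma],V)$, and the orthogonality of different isotypic components, then make the identification an isomorphism of unitary $G$-representations.
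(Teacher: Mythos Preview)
Your proposal is correct and follows exactly the approach the paper indicates: the paper does not give a detailed proof but simply states that the theorem is ``a consequence of the Peter--Weyl theorem and Frobenius reciprocity,'' which is precisely the route you take via $L^2(M,VM)\cong L^2(G,V)^H$, the biregular Peter--Weyl decomposition, and the identification $([\gamma]^\ast\otimes V)^H\cong\Hom_H([\gamma],V)$.
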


The finite-dimensional vector space $\Hom_H([\gamma],V)$ simply counts the multiplicity of the representation $[\gamma]$ in $L^2(M,VM)$. Its elements are called the \emph{Fourier coefficients} for the \emph{Fourier mode} $\gamma$.

As a consequence of Schur's Lemma, any $G$-equivariant (differential) operator between homogeneous vector bundles $D:\Gamma(VM)\to \Gamma(WM)$ descends to a discrete family $(D|_\gamma)_{\gamma\in\widehat G}$ of linear maps
\[D|_\gamma: \Hom_H([\gamma],V)\longrightarrow\Hom_H([\gamma],W).\]
To the study of stability of homogeneous Einstein metrics, we are of course interested in the case where $D=\LL: \Sy^2_0(M)\to\Sy^2_0(M)$. Since $\delta$ is also $G$-equivariant, restricting $\LL$ to tt-tensors amounts to considering the restriction of $\LL|_\gamma$ to suitable subspaces of $\Hom_H([\gamma],\Sym^2_0\mf)$, cf.~Corollary~\ref{thm:ttLL}.

With the reductive decomposition $\gf=\hf\oplus\mf$ fixed, the $H$-principal bundle $G\to G/H$ admits a distinguished invariant connection, the \emph{canonical reductive connection}, defined by projecting the Maurer--Cartan form of $G$ to $\hf$. From this we obtain a covariant derivative $\bar\nabla$ on any homogeneous vector bundle with the property that $G$-invariant sections are $\bar\nabla$-parallel. Differentiating a section of $VM$ with respect to $\bar\nabla$ thus corresponds to taking a directional derivative in $C^\infty(G,V)^H$. This makes expressing invariant differential operators in terms of $\bar\nabla$ convenient for computational purposes, as we will see later.

\subsection{Symmetric spaces}
\label{sec:symmetric}

The linear stability and infinitesimal deformability of the symmetric Einstein metric on simply-connected, irreducible symmetric spaces of compact type was studied by Koiso \cite{Koiso80} and decided for most cases, with the last gaps filled by Gasqui--Goldschmidt \cite{GG96}, Schwahn \cite{Sch22a} and Semmelmann--Weingart \cite{SW22}.

\begin{theorem}
\label{thm:symmstable}
The simply connected, irreducible symmetric spaces of compact type fall into three classes:
\begin{enumerate}[\upshape(i)]
    \item those which are semistable and admit IED:
    \begin{enumerate}[\upshape(1)]
        \item $\SU(n)/\SO(n)$ with $n\geq 3$,
        \item $\SU(n)=(\SU(n)\times\SU(n))/\diag(\SU(n))$ with $n\geq3$,
        \item $\SU(2n)/\Sp(n)$ with $n\geq3$,
        \item $\rmE_6/\rmF_4$,
        \item $\Gr_k\C^{k+l}=\SU(k+l)/\rmS(\U(k)\times\U(l))$ with $k,l\geq2$,
    \end{enumerate}
    \item those which are linearly unstable and do not admit IED:
    \begin{enumerate}[\upshape(1)]
    \setcounter{enumii}{5}
        \item $\Sp(n)$ with $n\geq2$,
        \item $\Sp(n)/\U(n)$ with $n\geq3$,
        \item $\Sp(k+l)/(\Sp(k)\times\Sp(l))$ with $k,l\geq2$,
        \item $\SO(5)/(\SO(2)\times\SO(3))$,
    \end{enumerate}
    \item all others, which are strictly stable.
\end{enumerate}
\end{theorem}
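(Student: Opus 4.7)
The plan is to combine the representation-theoretic description of the Lichnerowicz Laplacian on a symmetric space with the classification of simply connected, irreducible, compact symmetric spaces, so as to reduce each case to a finite computation with root systems and branching rules. Throughout, I write $M = G/H$ with $G$ compact simple (or $G = K\times K$ in the group case) and $\gf = \hf \oplus \mf$ the Cartan decomposition, and I normalize the metric to be induced by the negative of the Killing form on $\mf$.

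The essential input is that, on a symmetric space, the connection Laplacian on any homogeneous bundle $VM$ is given by the Casimir difference $\nabla^\ast\nabla = \Cas_G - \Cas_H$, while the curvature endomorphism $\K(R)$ on $\Sym^2_0 T^\ast M$ reduces to an algebraic expression in the isotropy representation that also integrates into a Casimir-type scalar on each $H$-isotypic component. Consequently, by Theorem~\ref{thm:peterweyl}, $\LL$ acts as a scalar $\mu(\gamma)$ on each summand $[\gamma]\otimes \Hom_H([\gamma], \Sym^2_0\mf)$ of $L^2(M,\Sym^2_0T^\ast M)$, and $\mu(\gamma)$ is explicitly computable from $\langle \gamma+2\rho_G, \gamma\rangle$ and the highest weight of the $H$-subrepresentation. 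Using Corollary~\ref{thm:ttLL}, the transverse-traceless spectrum is recovered by subtracting the contribution coming from one-forms. Linear stability of $g$ is then equivalent to $\mu(\gamma) > 2E$ for every $\gamma$ whose Fourier coefficient on $\Sym^2_0\mf$ is not captured by $\Omega^1(M)$, and $\varepsilon(g)\neq 0$ corresponds to the existence of some $\gamma$ achieving equality.

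With this criterion in hand, the argument proceeds case by case along Cartan's list. For each irreducible compact symmetric space one decomposes $\Sym^2_0\mf = \mf\odot\mf/\R$ as an $H$-module (often using the complexification $\mf^\C$ and, in the Hermitian case, the bigrading $\mf^\C = \mf^+\oplus\mf^-$), locates the candidate representations $[\gamma]$ of $G$ whose restriction to $H$ contains one of these summands with small Casimir $\Cas_G(\gamma)$, and compares $\mu(\gamma)$ with $2E$. For the spaces in class (iii) a uniform lower-bound argument (Koiso's estimate, together with the observation that the only low-lying $\gamma$ are excluded either by the trace or divergence condition) yields strict stability. For class (ii) one exhibits a specific $\gamma$ and an explicit tt-direction with $\mu(\gamma) < 2E$; the five unstable entries correspond exactly to symmetric spaces where $\Sym^2_0\mf$ contains the adjoint representation of $H$ (or an analog), producing the well-known destabilizing Killing tensors. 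For class (i), identifying the IED requires constructing the specific equivariant embeddings $[\gamma]\hookrightarrow L^2(M,\Sym^2_0T^\ast M)$ landing in $\ker(\LL - 2E)|_{\TT(M)}$: these are the classical Koiso deformations on $\SU(n)/\SO(n)$, $\SU(n)$, $\SU(2n)/\Sp(n)$, and their analogs on the Grassmannians $\Gr_k\C^{k+l}$ and on $\rmE_6/\rmF_4$.

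The main obstacle is the last step: for each space in class (i), and especially for $\rmE_6/\rmF_4$ and the Grassmannians $\Gr_k\C^{k+l}$ with $k,l\geq 2$, one must verify that the representation achieving $\mu(\gamma) = 2E$ actually contributes a nonzero space of transverse traceless eigentensors (rather than being cancelled by a one-form contribution via Corollary~\ref{thm:ttLL}), and, conversely, that no smaller $\mu(\gamma)$ exists that would downgrade semistability to instability. This is precisely where the analyses of Koiso and of Gasqui--Goldschmidt, and the later work of Schwahn and Semmelmann--Weingart, are required, since for the exceptional cases and certain Grassmannians the relevant branching multiplicities and Casimir comparisons are delicate and cannot be subsumed into a single general lemma. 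All other items (ruling out IED in class (ii), confirming strict stability in class (iii)) follow from the general eigenvalue estimate together with the table of fundamental representations and their Casimirs.
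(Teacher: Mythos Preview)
Your approach is essentially the one the paper sketches: Peter--Weyl decomposition, Casimir eigenvalues via Freudenthal's formula, Corollary~\ref{thm:ttLL} to pass to tt-tensors, and a case-by-case run through Cartan's list, with the delicate cases deferred to Koiso, Gasqui--Goldschmidt, Schwahn, and Semmelmann--Weingart.

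One point, however, is stated in a way that obscures the crucial simplification. You write that $\mu(\gamma)$ is computable from $\langle\gamma,\gamma+2\delta_\gf\rangle$ \emph{and} the highest weight of the $H$-subrepresentation. On a symmetric space this second dependence disappears: since $\nabla=\bar\nabla$, one has $\nabla^\ast\nabla=\Cas^G-\Cas^H$ and $\K(R)=\Cas^H$ on the fiber (Theorem~\ref{thm:laplacenormal}), so the $H$-Casimir terms cancel and $\LL=\Cas^G$ acts by the single scalar $\langle\gamma,\gamma+2\delta_\gf\rangle$ on the entire isotypic block $[\gamma]\otimes\Hom_H([\gamma],\Sym^2_0\mf)$, regardless of which $H$-summand is involved. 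This is exactly what makes the tabulation feasible, and it is how the paper phrases the method. Your more elaborate bookkeeping would eventually collapse to the same thing, but as written it suggests an unnecessary layer of computation. Also, two small inaccuracies: class~(ii) has four families, not five; and the destabilizing directions there are not uniformly accounted for by trace-free Killing tensors coming from the adjoint of $H$---that heuristic does not match all cases and is not the mechanism the cited references use.
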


Koiso utilized the fact that for these metrics, the Lichnerowicz Laplacian $\LL$ coincides with the quadratic Casimir operator of $G$ on the left-regular representation, see Theorem~\ref{thm:laplacenormal}.

\begin{definition}
Let $Q$ be a $G$-invariant inner product on $\gf$, and $\pi: G\to\GL(W)$ a representation of $G$. The \emph{quadratic Casimir operator} of $G$ on $W$ is the $G$-equivariant endomorphism defined by
\[\Cas^{G,Q}_W=-\sum_i\pi_\ast(X_i)^2\in\End_G(W),\]
where $(X_i)$ a $Q$-orthonormal basis of $\gf$.
\end{definition}

Since $\Cas^{G,Q}_W$ is $G$-equivariant, it must act as multiplication by a constant when $W$ is irreducible. In the case where $G$ is semisimple, irreducible complex representations are parametrized by their highest weights, and this constant can be easily calculated using Freudenthal's formula.

\begin{theorem}
\label{thm:freudenthal}
Let $G$ be a compact semisimple Lie group, $Q$ a $G$-invariant inner product on $\gf$, and assume that a maximal torus $\tf$ and a set of simple roots have been chosen. Then, for any dominant integral weight $\gamma$, the Casimir operator acts on the highest weight module $[\gamma]$ as multiplication by the constant
\[\Cas^{G,Q}_{\gamma}=\langle\gamma,\gamma+2\delta_\gf\rangle,\]
where the inner product on $\tf^\ast$ is induced by $Q$, and $\delta_\gf=\frac12\sum_{\alpha>0}\alpha$ is the half-sum of positive roots of $\gf$.
\end{theorem}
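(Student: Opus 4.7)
The plan is to prove Freudenthal's formula along classical lines: complexify, rewrite $\Cas^{G,Q}$ in a basis adapted to the root space decomposition, and read off its action on a highest weight vector.

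First I would extend $Q$ complex-bilinearly to $\gf^\C$; the extension remains $\ad$-invariant and non-degenerate. With $\hf=\tf^\C$, the root space decomposition $\gf^\C=\hf\oplus\bigoplus_{\alpha\in\Phi}\gf^\C_\alpha$ is $Q$-orthogonal except that $Q$ pairs $\gf^\C_\alpha$ non-degenerately with $\gf^\C_{-\alpha}$. Choose a $Q$-orthonormal basis $(H_j)_{j=1}^r$ of $\tf$ (equivalently of $\hf$) and root vectors $E_\alpha\in\gf^\C_\alpha$ normalized so that $Q(E_\alpha,E_{-\alpha})=1$.

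The key point is that the element $\sum_a Y_a Y^a\in U(\gf^\C)$ is independent of the choice of a pair of $Q$-dual bases, so passing from the original $Q$-orthonormal basis of $\gf$ to the adapted basis gives
\[\Cas^{G,Q}=-\sum_{j=1}^r H_j^2-\sum_{\alpha\in\Phi}E_\alpha E_{-\alpha},\]
the overall sign matching the paper's convention $\Cas^{G,Q}=-\sum_i\pi_\ast(X_i)^2$. Splitting the root sum over $\Phi^+\sqcup(-\Phi^+)$ and invoking the standard relation $[E_\alpha,E_{-\alpha}]=H_\alpha$, where $H_\alpha\in\hf$ is the $Q$-dual of $\alpha$, rewrites
\[\sum_{\alpha\in\Phi}E_\alpha E_{-\alpha}=\sum_{\alpha\in\Phi^+}\bigl(2E_{-\alpha}E_\alpha+H_\alpha\bigr).\]
Apply this to the highest weight vector $v_\gamma\in[\gamma]$: the defining relations $E_\alpha v_\gamma=0$ for $\alpha\in\Phi^+$ and $Hv_\gamma=\gamma(H)v_\gamma$ for $H\in\hf$ annihilate the $E_{-\alpha}E_\alpha$ terms and collapse everything to a scalar multiple of $v_\gamma$. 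Since $\Cas^{G,Q}$ is $G$-equivariant, Schur's lemma promotes this scalar to the eigenvalue on all of $[\gamma]$.

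It remains to translate the resulting expression into the inner product notation of the statement. Under the $Q$-induced isomorphism $\hf\cong\hf^\ast$, the orthonormal family $(H_j)$ dualizes to an orthonormal basis of $\tf^\ast$, while $H_\alpha$ corresponds to $\alpha$. Consequently $\sum_j\gamma(H_j)^2=\langle\gamma,\gamma\rangle$ and $\sum_{\alpha\in\Phi^+}\gamma(H_\alpha)=2\langle\gamma,\delta_\gf\rangle$, which combine to $\Cas^{G,Q}_\gamma=\langle\gamma,\gamma+2\delta_\gf\rangle$.

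The main obstacle, though rather minor, is not conceptual but lies in reconciling real and complex sign conventions: weights and roots take purely imaginary values on the compact Cartan $\tf$ but real values on $i\tf$, so the positive definite inner product on $\tf^\ast$ referenced in the statement must be carefully matched to the complex-bilinear extension of $Q$ that appears naturally in the root space formalism. Once the signs are tracked, the calculation above is essentially formal.
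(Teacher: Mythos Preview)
Your argument is the standard classical proof of Freudenthal's formula and is correct; the only delicate point you already flag, namely tracking the sign in passing from the compact real form to the complexification so that the induced inner product on $\tf^\ast$ comes out positive definite, is indeed the one place where care is needed.

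The paper, however, does not prove this theorem at all: it is a survey, and Theorem~\ref{thm:freudenthal} is stated as a known classical fact (explicitly attributed to Freudenthal in the preceding sentence) and then immediately used as a tool for computing the spectrum of $\LL$ on symmetric spaces. So there is no paper-proof to compare against; your write-up simply supplies the standard derivation that the paper takes for granted.
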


Together with Theorem~\ref{thm:peterweyl} and Corollary~\ref{thm:ttLL}, this allows one to find the spectrum of $\LL$ on tt-tensors and thus prove Theorem~\ref{thm:symmstable}. Using the same approach for the Laplacian on functions and the shrinker entropy $\nu$, Cao--He analyzed the dynamical stability of compact symmetric spaces under Ricci flow \cite{CH15}.

For the spaces $G/H$ in Theorem~\ref{thm:symmstable} (i), it remains to check whether they are $\EH$-stable, and whether their IED are integrable. In each of the cases (1)--(5) it turns out that $\varepsilon(g)\cong\gf$ as a $G$-module -- that is, the space of IED is abstractly isomorphic to the space of Killing fields. This isomorphism was made explicit in \cite{BHMW,NS25,HSS25a,HSS25b}.

The spaces (1)--(4) may be described as the compact duals of the spaces
\[\SL(n,\mathbb{A})/\SU(n,\mathbb{A})\]
where $\mathbb{A}$ runs through the normed division algebras $\R,\C,\Ham,\Oct$, and $n=3$ in case (4). They are closely related to simple formally real Jordan algebras, and the bundle map from Killing fields to IED can be thought of as coming from the Jordan multiplication \cite{HSS25b}. In contrast, on the complex Grassmannians (5) the parametrization works via a certain first order differential operator \cite{HSS25a}.

In all cases (1)--(5), the integrability of IED to second order was studied using the expression in Theorem~\ref{thm:secondobst}. For (1), (3) and (5), the second order obstruction $\Psi$ is a nonzero multiple of the unique invariant cubic form on $\gf=\su(m)$, resp.~in case (2) of a particular invariant cubic form on $\gf=\su(n)\oplus\su(n)$. On the exceptional space (4), the second order obstruction vanishes identically as there are no invariant cubic forms on $\ef_6$. These results allow one to explicitly describe the locus of IED which are integrable to second order \cite{BHMW,NS25,HSS25a,HSS25b}.

\begin{theorem}
\label{thm:symm2ndorder}
Let $(G/H,g)$ be one of the spaces in Theorem~\ref{thm:symmstable} (i).
\begin{enumerate}[\upshape(i)]
    \item The set $\mathcal{Q}\subset\varepsilon(g)$ of IED which are integrable to second order is
    \begin{itemize}
        \item isomorphic to the variety
        \[\left\{X\in\su(m)\,\middle|\,X^2=\frac{\tr(X^2)}{m}\Id_m\right\}\]
        in cases (1), (3) and (5) ($\gf=\su(m)$),
        \item isomorphic to its diagonal image in $\gf=\su(m)\oplus\su(m)$ in case (2),
        \item equal to $\varepsilon(g)\cong\gf=\ef_6$ in case (4).
    \end{itemize}
    \item In particular, in cases (1), (2), (5) with $m$ odd, $\mathcal{Q}=0$ and hence the symmetric Einstein metric is rigid.
\end{enumerate}
\end{theorem}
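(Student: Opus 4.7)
The approach is to invoke Theorem~\ref{thm:secondobst}(i), which tells us that $\mathcal{Q}$ is exactly the critical set of the cubic form $\Psi$ on $\varepsilon(g)$. The paragraph immediately preceding the theorem already identifies $\Psi$ up to a nonzero scalar: in cases (1), (3), (5) it is the unique $G$-invariant cubic form on $\gf = \su(m)$; in case (2) it is a specific invariant cubic form on $\gf = \su(n) \oplus \su(n)$; and in case (4) it vanishes because $\ef_6$ admits no invariant cubic form. Under the stated isomorphism $\varepsilon(g) \cong \gf$ of $G$-modules, the computation of $\mathcal{Q}$ therefore reduces to a purely algebraic critical-point analysis of an explicit invariant polynomial on a classical Lie algebra, with case (4) being immediate: $\Psi \equiv 0$ makes every element critical, so $\mathcal{Q} = \varepsilon(g) \cong \ef_6$.

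For cases (1), (3), (5) I would work with the representative $X \mapsto \tr(X^3)$ on $\su(m)$ and differentiate: the derivative at $X$ in direction $Y \in \su(m)$ is $3\tr(X^2 Y)$. Skew-Hermiticity of $X$ makes $X^2$ Hermitian, so we decompose $X^2 = \alpha\Id_m + B$ with $\alpha = \tr(X^2)/m \in \mathbb{R}$ and $B$ traceless Hermitian. Then $\tr(X^2 Y) = \tr(BY)$ for all traceless $Y$, and the non-degeneracy of the trace pairing on $\su(m)$ forces $B = 0$, yielding the variety $\{X^2 = (\tr(X^2)/m)\Id_m\}$ as stated. Case (2) is handled identically in each factor, with the extra input that the specific invariant cubic form on $\su(n) \oplus \su(n)$ produced by the identification $\varepsilon(g) \cong \gf$ is symmetric under the outer automorphism swapping the two copies (which extends the $G$-action on the symmetric space $(\SU(n) \times \SU(n))/\diag$), pinning the critical locus to the diagonal image of the one-factor variety.

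For part (ii), given $X \in \su(m)$ with $X^2 = \alpha\Id_m$ and $\alpha \in \mathbb{R}$, we have $-m\alpha = -\tr(X^2) = \tr(XX^\ast) \geq 0$, so $\alpha \leq 0$. If $\alpha < 0$, then $iX$ is Hermitian with spectrum $\{\pm\sqrt{-\alpha}\}$; tracelessness of $iX$ forces the two eigenspace multiplicities to coincide, which is impossible for odd $m$. Hence $\alpha = 0$, so $X^2 = 0$, and then $\|X\|^2 = \tr(X^\ast X) = -\tr(X^2) = 0$ gives $X = 0$. Thus $\mathcal{Q} = \{0\}$. Rigidity follows because $\widetilde\EMod_g$ is a real analytic subset of the Ebin slice whose tangent cone at $g$ sits inside $\varepsilon(g)$, so any nontrivial real analytic arc through $g$ would have a nonzero initial velocity that is necessarily formally integrable to all orders, in particular to second order, contradicting $\mathcal{Q} = \{0\}$; combined with the arcwise connectedness from Theorem~\ref{thm:arcwise}(i), this rules out any nearby non-homothetic Einstein metric.

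The only step I expect to require genuine care is case (2): pinning down the exact invariant cubic form on $\su(n) \oplus \su(n)$ induced by the $G$-equivariant identification $\varepsilon(g) \cong \gf$ and verifying that its critical scheme really does collapse to the diagonal image of the one-factor variety, rather than to the a priori larger product of two such varieties. The outer-automorphism swap argument is the natural mechanism, but one must check that the identification respects this symmetry and that the particular linear combination of $\tr(X^3)$ and $\tr(Y^3)$ arising is the one whose Hessian forces the diagonal constraint.
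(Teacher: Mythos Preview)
Your overall strategy---identify $\varepsilon(g)\cong\gf$, recognize $\Psi$ as a $G$-invariant cubic on $\gf$, and compute its critical locus---is exactly the approach of the references \cite{BHMW,HSS25a,HSS25b} that the survey cites (the paper itself gives no proof beyond the paragraph preceding the theorem). Your handling of cases (1), (3), (5), the $\ef_6$ case (4), and the odd-$m$ rigidity argument in (ii) is correct, modulo the cosmetic point that $\tr(X^3)$ is purely imaginary on $\su(m)$, so the real invariant cubic is $i\tr(X^3)$; this does not affect the critical-set computation.

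Case (2) has a genuine gap, and the swap-symmetry mechanism you propose cannot close it. The $G$-invariant cubics on $\su(n)\oplus\su(n)$ are exactly the combinations $a\tr(X^3)+b\tr(Y^3)$: there are no invariant cross-terms, because the two summands carry the adjoint action of \emph{different} factors of $G=\SU(n)\times\SU(n)$, and $G$-module automorphisms of $\su(n)\oplus\su(n)$ are just independent rescalings of the factors. Swap-invariance forces at most $a=b$, but for $\Psi=a(\tr X^3+\tr Y^3)$ the gradient conditions $\tr(X^2U)=0$ and $\tr(Y^2W)=0$ decouple completely, giving critical locus $V\times V$, not the diagonal $\Delta V$. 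So no linear combination of $\tr(X^3)$ and $\tr(Y^3)$ has the diagonal as its critical set; your hoped-for ``Hessian forces the diagonal constraint'' cannot happen from symmetry alone. The actual result for case (2) requires the explicit Jordan-algebra parametrization of $\varepsilon(g)$ carried out in \cite{HSS25b} (and \cite{BHMW}), where $\Psi$ is computed directly rather than pinned down by invariance.
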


In \cite{Jen71,Nik21,BWZ04} it was proven, by considering nearby metrics invariant under a smaller transitive group, that $\SU(n)$ (2) and $\SU(2n)/\Sp(n)$ (3) are (nonlinearly) $\EH$-unstable despite being semistable. With Theorem~\ref{thm:symm2ndorder} and Corollary~\ref{thm:nonlinearunstable}, we have a more general result.

\begin{corollary}
\label{thm:symmunstable}
The Einstein manifolds (1), (2), (3) and (5) from Theorem~\ref{thm:symmstable} are $\EH$-unstable.
\end{corollary}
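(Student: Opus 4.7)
The plan is to reduce this to a direct application of Corollary~\ref{thm:nonlinearunstable}, using Theorem~\ref{thm:symm2ndorder} as the key input. Since $\EH$-instability follows whenever the second order obstruction $\Psi$ does not vanish identically on $\varepsilon(g)$, it suffices in each of the four cases to rule out $\Psi\equiv 0$.

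The conceptual observation that makes the argument immediate is the following: by the characterization stated just before Theorem~\ref{thm:symm2ndorder}, an IED $h$ lies in the locus $\mathcal{Q}$ of second-order integrable IED precisely when $h$ is a critical point of the cubic polynomial $\Psi(h)=\Psi(h,h,h)$ on $\varepsilon(g)$. Since $\Psi$ is a homogeneous cubic, its differential is $d\Psi_h(h')=3\Psi(h,h,h')$, so the critical set of $\Psi$ equals all of $\varepsilon(g)$ if and only if $\Psi$ vanishes identically. Thus $\EH$-instability will follow from Corollary~\ref{thm:nonlinearunstable} as soon as I verify that $\mathcal{Q}$ is a proper subset of $\varepsilon(g)$ in each case.

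Step two is then to read off from Theorem~\ref{thm:symm2ndorder}(i) that $\mathcal{Q}$ is a proper subvariety in each of the four cases. In cases (1), (3), (5) (with $\gf=\su(m)$ for $m=n$, $2n$, $k+l$ respectively), the variety
\[\mathcal{Q}\cong\left\{X\in\su(m)\,\middle|\,X^2=\tfrac{\tr(X^2)}{m}\,\Id_m\right\}\]
is proper since $m\ge 3$ in every subcase: any element such as $X=i\cdot\diag(2,-1,-1,0,\dots,0)\in\su(m)$ has $X^2=-\diag(4,1,1,0,\dots,0)$, which is not a scalar multiple of $\Id_m$. In case (2), $\mathcal{Q}$ is the diagonal image of such a variety inside $\su(n)\oplus\su(n)$, and any element supported only on one factor fails to lie on the diagonal, so $\mathcal{Q}\subsetneq\varepsilon(g)$ here as well.

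Combining these two observations, $\Psi$ does not vanish identically on $\varepsilon(g)$ in cases (1), (2), (3), (5), and Corollary~\ref{thm:nonlinearunstable} concludes. I do not expect a significant obstacle, as the substantive analytic and representation-theoretic content has been absorbed into Theorem~\ref{thm:symm2ndorder}; the role of this corollary is to package the observation that \emph{the critical locus is a proper subvariety} together with the nonlinear instability criterion. The argument obviously cannot extend to case (4), $\rmE_6/\rmF_4$, since there $\mathcal{Q}=\varepsilon(g)$—a direct reflection of the vanishing of all invariant cubic forms on $\ef_6$.
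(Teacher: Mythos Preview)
Your proof is correct and follows essentially the same route as the paper: combine Theorem~\ref{thm:symm2ndorder} with Corollary~\ref{thm:nonlinearunstable}. The only cosmetic difference is that the paper (in the paragraph preceding Theorem~\ref{thm:symm2ndorder}) directly records that $\Psi$ is a nonzero multiple of a specific invariant cubic form in cases (1), (2), (3), (5), whereas you instead deduce $\Psi\not\equiv0$ from the fact that its critical locus $\mathcal{Q}$ is a proper subvariety; both arguments are immediate and equivalent.
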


The $\EH$-stability of $\rmE_6/\rmF_4$ (4) as well as the integrability to third order on all of the spaces (1)--(5) is currently unclear, but we strongly suspect the symmetric Einstein metrics to be rigid. We remark that the first ever example of an Einstein metric which has IED, but which is rigid, namely the symmetric Einstein metric on $\C\Proj^{2n}\times\C\Proj^1$, was given by Koiso \cite{Koiso82}; Theorem~\ref{thm:symm2ndorder} gives the first irreducible examples.

There is an analogous, but as of today incomplete discussion on the solitonic rigidity of compact symmetric spaces and their dynamical stability under Ricci flow. The complex projective spaces and complex Grassmannians, which are semistable for the shrinker entropy, admit infinitesimal solitonic deformations (ISD) from $2E$-eigenfunctions \cite{CH15}. Using Theorem~\ref{thm:cubicobst}, Knopf--Šešum \cite{KS19} and Kröncke \cite{Kr20} independently showed the dynamical instability of $\C\Proj^n$, $n\geq2$, even though it is strictly stable for $\EH$. Similarly, Hall--Murphy--Waldron showed the dynamical instability of $\Gr_k\C^n$ for $n\neq 2k$ \cite{HMW21}.

Those ISD of $\C\Proj^n$ which are integrable to second order have been completely characterized \cite{Kr16}. As it turns out, they are all obstructed to second order if $n$ is even. Remarkably, Li--Zhang \cite{LZ23} have worked out the third order obstruction for conformal ISD (cf.~Theorem~\ref{thm:nu2nd} (ii))
for odd $n$, showing that none of them are integrable.

\begin{theorem}
The Fubini--Study metric $\C\Proj^n$ is solitonically rigid, that is, every Ricci soliton close to it must be isometric to it (up to scale).
\end{theorem}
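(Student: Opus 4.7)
The plan is to adapt the Einstein deformation framework from Section~\ref{sec:moduli} to the Ricci soliton setting (following Podestà--Spiro \cite{PS15} and Kröncke \cite{Kr16}) and show that every infinitesimal solitonic deformation (ISD) of the Fubini--Study metric $g$ is obstructed at some finite order. By an Artin-type approximation argument, valid also for solitons, the solitonic premoduli space around $g$ then collapses to the single class $[g]$, which gives the rigidity claim.

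I would first identify the space of ISD. Since the Fubini--Study metric is strictly linearly $\EH$-stable (see Theorem~\ref{thm:symmstable}), we have $\varepsilon(g)=0$, so by the discussion in Section~\ref{sec:dynamic} the ISD are parametrized by the first nontrivial Laplace eigenspace $V := \ker(\Delta - 2E)|_{C^\infty(\C\Proj^n)}$, each $f \in V$ contributing the pure-trace direction $fg$. Harmonic analysis on $\C\Proj^n = \SU(n+1)/\rmS(\U(n)\times\U(1))$ via Theorem~\ref{thm:peterweyl} identifies $V$ as an irreducible $\SU(n+1)$-module isomorphic to the adjoint representation $\su(n+1)$: each $f_A \in V$ corresponds to a trace-free Hermitian matrix $A$ via restriction of the quadratic form $z \mapsto \langle Az,z\rangle/|z|^2$ from $\C^{n+1}\setminus\{0\}$ and descent to $\C\Proj^n$.

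Next I would write down the soliton analogue of the second-order obstruction form from Theorem~\ref{thm:secondobst}. By $\SU(n+1)$-equivariance this is an invariant cubic form on $V \cong \su(n+1)$, hence a scalar multiple of $A \mapsto \tr(A^3)$; the scalar (which must be checked to be nonzero) is extracted from Kröncke's explicit second-order formula \cite{Kr16}. The corresponding second-order integrable locus is then cut out by the vanishing of the gradient of $\tr(A^3)$, i.e., the condition $A^2 = \tfrac{\tr(A^2)}{n+1}\Id$, meaning trace-free Hermitian matrices whose spectrum consists of at most two opposite eigenvalues. For $n$ even, so $n+1$ odd, the trace-free condition combined with equal multiplicities of the $\pm$ eigenvalues forces this variety to contain only $A = 0$. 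Hence every nonzero ISD is obstructed to second order in this case, and solitonic rigidity follows at once.

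The main obstacle is the odd $n$ case. There the second-order integrable locus is a positive-dimensional variety of rescaled balanced projections $A = c(P - \tfrac{1}{2}\Id)$ with $P$ of rank $(n+1)/2$, modulo the isometry action. To rule these out I would solve the second-order correction equation $\Eop'_g(h_2) = -\Eop''_g(h,h)$ for the soliton operator on this locus — feasible in principle by decomposing $\Sy^2(\C\Proj^n)$ into $\SU(n+1)$-isotypic components and inverting $\Eop'_g$ on the appropriate complement — and then assemble the third-order obstruction as a quartic $\SU(n+1)$-invariant form on $V$. By invariant theory this quartic lies in the two-dimensional space spanned by $\tr(A^4)$ and $\tr(A^2)^2$, so proving nonvanishing on the second-order integrable locus reduces to pinning down two scalar coefficients. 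This explicit computation (essentially Li--Zhang's contribution \cite{LZ23}) is the hardest part. Once the quartic is shown not to vanish identically on the locus, one concludes non-integrability of every nonzero ISD, combines with the even case, and applies the approximation argument to close the proof.
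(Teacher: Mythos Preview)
Your proposal is correct and follows essentially the same route the paper outlines through its citations: the paper does not give a self-contained proof but attributes the even-$n$ case to Kr\"oncke \cite{Kr16} (all ISD obstructed at second order via the cubic $\int f^3\sim\tr(A^3)$) and the odd-$n$ case to Li--Zhang \cite{LZ23} (third-order obstruction on the remaining locus), which is exactly your plan. Your identification of the second-order locus with $\{A:A^2=\tfrac{\tr(A^2)}{n+1}\Id\}$ and the parity argument forcing $A=0$ when $n+1$ is odd match the paper's statement precisely.
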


\subsection{Normal homogeneous spaces}
\label{sec:normal}

The method of reducing calculations to Casimir operators works more generally on normal homogeneous spaces, that is, homogeneous spaces $M=G/H$ with a reductive decomposition $\gf=\hf\oplus\mf$ which is orthogonal under some $G$-invariant inner product $Q$ on $\gf$, and with an invariant Riemannian metric $g$ determined by $g_o=Q|_\mf$. If $G$ is semisimple and $Q=-B_\gf$ is minus the Killing form, then $g$ is called the \emph{standard metric}.

Let $VM=G\times_\pi V$ be a homogeneous vector bundle, let $\bar\nabla$ denote the canonical reductive connection from Section~\ref{sec:harmana}, $\bar R$ its curvature on $TM$, and
\[\bar\Delta=\bar\nabla^\ast\bar\nabla+\K(\bar R,VM)\]
be the \emph{standard Laplacian} on $\Gamma(VM)$ associated to $\bar\nabla$ \cite{SW19}. The following theorem generalizes the fact that the Lichnerowicz Laplacian coincides with a Casimir operator on symmetric spaces \cite{MS10}.

\begin{theorem}
\label{thm:laplacenormal}
$\bar\Delta=\Cas^{G,Q}_{\Gamma(VM)}$ and $\K(\bar R,VM)=\Cas^{H,Q}_V$.
\end{theorem}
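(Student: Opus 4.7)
The plan is to compute both sides at the base point $o\in M=G/H$, using the Peter--Weyl identification $\Gamma(VM)\cong C^\infty(G,V)^H$ (sections correspond to equivariant functions $f\colon G\to V$ with $f(gh)=\pi(h)^{-1}f(g)$), and to exploit that the canonical reductive connection $\bar\nabla$ has the property that $G$-invariant sections are $\bar\nabla$-parallel. Throughout the argument, I fix a $Q$-orthonormal basis $(Z_a)\cup(e_i)$ of $\gf=\hf\oplus\mf$, and I let $\tilde X$ denote the left-invariant vector field on $G$ associated to $X\in\gf$. The plan has two steps corresponding to the two identities.

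For the second identity, I compute the curvature of $\bar\nabla$ on $TM$ first. A standard calculation for the canonical reductive connection on a normal homogeneous space gives, for $X,Y\in\mf\cong T_oM$, the formula $\bar R_{X,Y}=-\ad(\pr_\hf[X,Y])|_\mf\in\so(\mf)$. Since $VM=G\times_\pi V$ is associated to the frame bundle via the isotropy representation, $\bar R$ on $VM$ acts at $o$ as $\rho(\bar R_{X,Y})=-\pi_\ast(\pr_\hf[X,Y])$. Substituting into the defining formula
\[
\K(\bar R,VM)_o=\sum_{i<j}\rho(e_i\wedge e_j)\,\rho(\bar R_{e_i,e_j})=-\sum_{i<j}\rho(e_i\wedge e_j)\,\pi_\ast(\pr_\hf[e_i,e_j]),
\]
I then expand $\pr_\hf[e_i,e_j]=\sum_a Q(Z_a,[e_i,e_j])Z_a=\sum_a Q([Z_a,e_i],e_j)Z_a$ using $\Ad$-invariance of $Q$. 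The key observation is that the isotropy image $Z_a|_\mf\in\so(\mf)$ can itself be written as $\sum_{i<j}Q([Z_a,e_i],e_j)\,e_i\wedge e_j$ (modulo fixing a sign convention for the identification $\Lambda^2\mf\cong\so(\mf)$). Substituting and using bilinearity plus the fact that $\rho$ is a Lie algebra representation on $V$, one obtains the identity
\[
-\sum_a\pi_\ast(Z_a)^2=-\sum_{i<j}\rho(e_i\wedge e_j)\,\pi_\ast(\pr_\hf[e_i,e_j]),
\]
which is exactly $\Cas^{H,Q}_V=\K(\bar R,VM)_o$. By $G$-equivariance of both sides, the identity holds everywhere on $M$.

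For the first identity, I again work at $o$. For $s\in\Gamma(VM)$ with corresponding function $f\in C^\infty(G,V)^H$, the canonical reductive connection is, by construction, the trivial connection in the trivialization of $f^*VM$ over $G$; concretely, $(\bar\nabla_{e_i}s)(o)$ corresponds to $(\tilde e_i f)(e)$. Therefore
\[
(\bar\nabla^\ast\bar\nabla s)(o)=-\sum_i(\tilde e_i\tilde e_i f)(e).
\]
On the other hand, differentiating the equivariance relation $f(g\exp(tZ_a))=\exp(-t\pi_\ast(Z_a))f(g)$ yields $(\tilde Z_a f)(g)=-\pi_\ast(Z_a)f(g)$, and hence $(\tilde Z_a\tilde Z_a f)(e)=\pi_\ast(Z_a)^2f(e)$. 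The left-regular Casimir acts on $f$ at the identity as $\Cas^{G,Q}f(e)=-\sum_A(\tilde X_A\tilde X_A f)(e)$ (using $\Ad$-invariance of $Q$ to combine left- and right-invariant vector fields harmlessly), which splits along the orthogonal decomposition $\gf=\hf\oplus\mf$ into
\[
-\sum_i(\tilde e_i\tilde e_i f)(e)\;-\;\sum_a(\tilde Z_a\tilde Z_a f)(e)=(\bar\nabla^\ast\bar\nabla s)(o)+(\Cas^{H,Q}_V\, s)(o).
\]
Combining this with the second identity gives $\Cas^{G,Q}s(o)=(\bar\nabla^\ast\bar\nabla+\K(\bar R,VM))s(o)=\bar\Delta s(o)$, and $G$-equivariance propagates the equality to all of $M$.

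The main obstacle is the bookkeeping in the curvature computation: bringing the formula $\sum_{i<j}\rho(e_i\wedge e_j)\pi_\ast(\pr_\hf[e_i,e_j])$ into the form $\sum_a\pi_\ast(Z_a)^2$ requires reordering products of Lie-algebra elements acting via $\rho$ (so that symmetric contractions produce squares), and all the sign conventions for the identification $\Lambda^2\mf\cong\so(\mf)$ and for the $\ad$-action must be tracked consistently. Once this is done, the rest of the argument reduces to a clean translation between differentiation on $G$ and the intrinsic geometry of the canonical reductive connection.
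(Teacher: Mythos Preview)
The paper is a survey and does not supply its own proof of this theorem; it merely states the result, referencing \cite{SW19} for the general framework of the standard Laplacian and \cite{MS10} for the symmetric-space precursor. Your argument is precisely the standard proof found in those references: identify sections with $H$-equivariant functions on $G$, use that $\bar\nabla$ differentiates along left-invariant horizontal fields, split the $G$-Casimir along $\gf=\hf\oplus\mf$, and recognize the $\hf$-part as $\Cas^{H,Q}_V$ via the equivariance relation while identifying the curvature term through $\bar R_{X,Y}=-\pr_\hf[X,Y]$.

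Two points deserve tightening. First, the identity $\rho(\ad(Z_a)|_\mf)=\pi_\ast(Z_a)$, which you use to collapse $\sum_{i<j}Q([Z_a,e_i],e_j)\rho(e_i\wedge e_j)$ into $\pi_\ast(Z_a)$, presupposes that the $H$-action $\pi$ on $V$ factors through the isotropy representation $H\to\SO(\mf)$ (equivalently, that $VM$ is a \emph{geometric} bundle in the paper's sense); this hypothesis is implicit in the statement but should be made explicit in the proof. Second, your parenthetical ``using $\Ad$-invariance of $Q$ to combine left- and right-invariant vector fields harmlessly'' is doing real work: the left-regular Casimir naturally acts via \emph{right}-invariant vector fields, whereas $\bar\nabla$ involves \emph{left}-invariant ones, and it is the centrality of the Casimir element in $\Univ(\gf)$ (equivalently, bi-invariance of the corresponding operator) that makes the switch legitimate. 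With these two clarifications your sketch is complete and correct.
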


Indeed, on Riemannian symmetric spaces, the Levi-Civita connection $\nabla$ is the same as $\bar\nabla$, so $\LL=\bar\Delta$. For general naturally reductive homogeneous spaces (which includes the normal ones), the two connections differ by a torsion term:
\begin{equation}
\nabla=\bar\nabla+\frac12\A,
\label{conndiff}
\end{equation}
where $\A$ is the $G$-invariant $(2,1)$-tensor given at the basepoint by
\[\A_XY=\pr_\mf[X,Y],\qquad X,Y\in\mf.\]
Naturally reductive spaces are geometries with parallel skew-symmetric torsion -- indeed, the torsion of $\bar\nabla$ is exactly $-\A$. The presence of these torsion terms complicates the calculation of eigenvalues of $\LL$ on all non-trivial tensor bundles.  However, using \eqref{conndiff} a formula for $\LL$ on symmetric $k$-tensors, involving various Casimir operators, was worked out by Schwahn \cite{Sch24}.

\begin{theorem}
\label{thm:LLnormal}
Consider the inclusion $\Sym^k\mf\subset\Sym^k\gf$ and the $H$-equivariant projection $\pr_{\Sym^k\mf}: \Sym^k\gf\to\Sym^k\mf$, as well as the inclusion
\[\Sy^k(M)\cong C^\infty(G,\Sym^k\mf)^H\subset C^\infty(G,\Sym^k\gf)\cong C^\infty(G)\otimes\Sym^k\gf.\]
If $g$ is a normal Einstein metric on $G/H$ induced by the invariant inner product $Q$ on $\gf$, with Einstein constant $E$, then
\begin{align*}
\LL=\,&\frac32\Cas^{G,Q}_{\Sy^k(M)}+\pr_{\Sym^k\mf}\left(\Cas^{G,Q}_{\Sym^k\gf}-\frac12\Cas^{G,Q}_{C^\infty(G)\otimes\Sym^k\gf}\right)\\
&-\frac32\Cas^{H,Q}_{\Sym^k\mf}-kE+\frac{k}{4}.
\end{align*}
\end{theorem}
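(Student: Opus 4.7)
The strategy is to reduce $\LL = \nabla^*\nabla + \K(R)$ to a combination of Casimir operators by first comparing $\nabla$ with the canonical reductive connection $\bar\nabla$, and then identifying the remaining torsion-derivation cross term with a difference of Casimirs acting on the enlarged coefficient space $\Sym^k\gf$.

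Applying Theorem~\ref{thm:laplacenormal} to the bundle $\Sym^k T^*M$ with typical fibre $\Sym^k\mf$ yields
\[\bar\nabla^*\bar\nabla \;=\; \bar\Delta - \K(\bar R) \;=\; \Cas^{G,Q}_{\Sy^k(M)} - \Cas^{H,Q}_{\Sym^k\mf}.\]
Next, the connection shift \eqref{conndiff}, combined with $\bar\nabla\A = 0$ and the skew-symmetry of each $\A_{e_i}\in\so(TM)$, gives at the basepoint, for a $Q$-orthonormal frame $(e_i)$ of $\mf$,
\[\nabla^*\nabla \;=\; \bar\nabla^*\bar\nabla \;-\; \sum_i \A_{e_i}\bar\nabla_{e_i} \;-\; \tfrac{1}{4}\sum_i \A_{e_i}^2.\]
Similarly, the standard shift formula for curvature under a parallel-torsion modification of the connection expresses $\K(R) - \K(\bar R)$ as a tensorial quadratic expression in $\A$. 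The entire problem is thus reduced to rewriting the first-order cross term $\sum_i\A_{e_i}\bar\nabla_{e_i}$ as a combination of Casimir operators.

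The key algebraic identity is obtained as follows. Embed $\Sy^k(M) \cong C^\infty(G,\Sym^k\mf)^H$ into $C^\infty(G)\otimes\Sym^k\gf$ via the inclusion $\Sym^k\mf\hookrightarrow\Sym^k\gf$, and let $G$ act on the target by simultaneous left-translation on $C^\infty(G)$ and $\Ad$ on $\Sym^k\gf$. Expanding the Casimir of this tensor product representation with respect to a $Q$-orthonormal basis $(X_i)$ of $\gf$ adapted to $\gf = \hf\oplus\mf$, one obtains the two ``diagonal'' Casimirs plus a mixed term in which the $C^\infty(G)$-factor differentiates in direction $X_i$ and the $\Sym^k\gf$-factor acts by $\ad(X_i)$. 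For $X_i\in\mf$, composing $\ad(X_i)$ with the projection $\pr_{\Sym^k\mf}$ produces precisely the derivation $\A_{X_i}$ on $\Sym^k\mf$, by definition of $\A$; for $X_i\in\hf$, the derivative in this direction on an $H$-equivariant function reduces, via the equivariance, to the algebraic $\ad(X_i)$-action on the target, and the corresponding sum over the $\hf$-basis projects to the isotropy Casimir $\Cas^{H,Q}_{\Sym^k\mf}$ on $\Sym^k\mf$. Putting these together yields an operator identity of the shape
\[\pr_{\Sym^k\mf}\Cas^{G,Q}_{C^\infty(G)\otimes\Sym^k\gf} \;=\; \Cas^{G,Q}_{\Sy^k(M)} \;+\; \pr_{\Sym^k\mf}\Cas^{G,Q}_{\Sym^k\gf} \;+\; 2\sum_i\A_{e_i}\bar\nabla_{e_i} \;+\; 2\Cas^{H,Q}_{\Sym^k\mf},\]
which can be solved for the cross term. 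Substituting back into $\LL = \nabla^*\nabla + \K(R)$ reproduces the coefficients $\tfrac32$, $1$ and $-\tfrac12$ in the statement, while the tensorial quadratic-in-$\A$ remainders collapse to the scalar $-kE + \tfrac{k}{4}$ using the Einstein condition $\Ric_g = Eg$ and the derivation action of $\A$ on degree-$k$ symmetric tensors.

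\textbf{Main obstacle.} The crucial step is the identification of the mixed Casimir term, specifically the clean split into an $\mf$-piece matching the geometric cross term $\sum_i\A_{e_i}\bar\nabla_{e_i}$ and an $\hf$-piece matching an isotropy Casimir. The subtlety is that differentiation in $\hf$-directions is not a geometric differential operator on the homogeneous bundle, but via the $H$-equivariance of the sections it reduces to an algebraic action, and one must verify that after the projection $\pr_{\Sym^k\mf}$ this recombines cleanly with the isotropy Casimirs appearing in $\bar\nabla^*\bar\nabla$ and $\K(\bar R)$ to produce the coefficient $-\tfrac32$ in front of $\Cas^{H,Q}_{\Sym^k\mf}$. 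Once this identity is in hand, the final reduction of the tensorial remainders to the scalar $-kE + \tfrac{k}{4}$ is a bookkeeping calculation using the standard curvature formula for a connection with parallel skew-symmetric torsion, together with the Einstein equation expressed as an identity for $\sum_i(\ad_\mf e_i)^2$.
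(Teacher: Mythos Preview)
The survey does not actually prove this theorem; it merely states the formula and attributes it to \cite{Sch24}. So there is no in-paper proof to compare against.

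That said, your plan is correct and is essentially the argument of the cited reference. The three ingredients you isolate---the identification $\bar\nabla^\ast\bar\nabla=\Cas^{G,Q}-\Cas^{H,Q}$ from Theorem~\ref{thm:laplacenormal}, the connection comparison $\nabla=\bar\nabla+\tfrac12\A$ with $\bar\nabla\A=0$, and the crucial rewriting of the cross term $\sum_i\A_{e_i}\bar\nabla_{e_i}$ via the tensor-product Casimir on $C^\infty(G)\otimes\Sym^k\gf$---are exactly the ones used in \cite{Sch24}. Your handling of the $\hf$-part of the mixed Casimir (reducing derivatives in isotropy directions to the algebraic isotropy action via $H$-equivariance) is the genuine subtlety, and you have identified it correctly. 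The only point where your sketch is slightly vague is the final ``bookkeeping'' step: the scalar $-kE+\tfrac{k}{4}$ does not arise entirely from the $\A^\ast\A$ and $\K(R)-\K(\bar R)$ terms alone, but requires also the identity $\Cas^{G,Q}_\gf=1$ for the adjoint representation (since $Q$ is bi-invariant) together with the expression $E=\tfrac14+\tfrac12\Cas^{H,Q}_\mf$ for the Einstein constant of a standard normal metric. Once those are fed in, the constants fall out as stated.
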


Restricted to $G$-invariant tensors $\Sy^k(M)^G$, i.e.~the trivial Fourier mode $0\in\widehat G$, the Lichnerowicz Laplacian reduces to
\[\LL|_0=\frac12\A^\ast\A=-\frac12\sum_i\A_{e_i}^2,\]
cf.~\cite{LW22,Gstab1}. This has been used by J.~Lauret \cite{Gstab1} to analyze the linear $G$-stability and compute the $G$-coindex of the normal Einstein metrics on the families
\[\SU(kn)/\rmS(\U(n)^k),\qquad \Sp(kn)/\Sp(n)^k,\qquad \SO(kn)/\rmS(\Orth(n)^k),\qquad (k\geq3)\]
as well as the Jensen metrics on $(H\times K)/\diag(K)$, where $H$ is simple and $K\subset H$ is semisimple. Suppose $\kf=\kf_1\oplus\ldots\oplus\kf_r$ is a $B_\hf$-orthogonal decomposition into simple ideals of the Lie algebra $\kf$ of $K$, and that $B_{\kf_i}=cB_\hf|_{\kf_i}$ for some common constant $c>0$. For most of these cases, he shows that the Jensen metric is strongly $G$-unstable with $\coind_G(g)=r$.

Normal homogeneous Einstein manifolds $G/H$, at least with $G$ simple, were classified by Wolf~\cite{Wolf} (for the isotropy irreducible case, cf.~Krämer~\cite{Kraemer} and Manturov~\cite{Man1,Man2,Man3}) and Wang--Ziller \cite{WZ85} (for the isotropy reducible case).

Lauret--Lauret \cite{LL23} systematically analyzed the linear $G$-stability of these isotropy reducible spaces and explicitly computed the $G$-coindex for most cases. An interesting case is the space $\SO(2n)/T^n$, $n\geq3$, which admits $G$-invariant IED and is $G$-semistable for $n\geq4$. However, it is (nonlinearly) $G$-unstable, as shown by Lauret--Will \cite{Gstab3}.

Notable is also the following criterion of Nikonorov \cite[Thm.~2]{Nik21}.

\begin{theorem}
Suppose that $G$ is semisimple and $G/H$ is isotropy reducible. If the standard metric on $G/H$ satisfies $E>2/5$, then it is strongly $G$-unstable.
\end{theorem}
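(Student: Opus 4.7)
The plan is to reduce the strong $G$-instability of $g$ to a spectral bound on invariant tt-tensors, express that bound in terms of the structure constants of the reductive decomposition, and combine this with the Einstein equation and the hypothesis $E > 2/5$.

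\textbf{Step 1 (reduction).} By Theorem~\ref{thm:secondvartt} specialized to $G$-invariant variations, $g$ is strongly $G$-unstable precisely when $\LL < 2E$ on $\TT(M)^G$. Every $G$-invariant section lies in the trivial Fourier mode $0 \in \widehat{G}$, so the identity $\LL|_0 = \tfrac12\A^\ast\A$ displayed after Theorem~\ref{thm:LLnormal} turns this into the assertion that $\A^\ast\A < 4E$ on $\TT(M)^G$. Isotropy reducibility ($r \geq 2$ irreducible summands $\mf_i$) ensures $\TT(M)^G \neq 0$: trace-free diagonal rescalings $h|_{\mf_i} = \lambda_i\,\Id$ with $\sum_i \lambda_i d_i = 0$ span an $(r-1)$-dimensional subspace, and a short computation using $(\A_X h)(Y,Z) = (\lambda_i - \lambda_j)\,B_\gf(Y,[X,Z])$ for $Y \in \mf_i$, $Z \in \mf_j$, together with $B_\gf(e_a,[e_a,Z]) = 0$, shows that $\delta h = 0$ automatically.

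\textbf{Step 2 (explicit form of $\A^\ast\A$).} Fix the $(-B_\gf)$-orthogonal decomposition $\mf = \mf_1 \oplus \cdots \oplus \mf_r$ into $H$-irreducibles, with dimensions $d_i$, and introduce the structure constants
\[[ijk] \;=\; \sum_{\alpha,\beta}\,\|\pr_{\mf_k}[X_\alpha^i, X_\beta^j]\|^2.\]
Expanding $(\A_X h)(Y,Z) = -h(\pr_\mf[X,Y],Z) - h(Y,\pr_\mf[X,Z])$ in orthonormal bases and using ad-invariance of $B_\gf$, one writes $\|\A h\|^2$ as an explicit symmetric quadratic form on $(\Sym^2_0\mf^\ast)^H$ whose coefficients are polynomials in the $[ijk]$. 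On diagonal tensors this reduces to a weighted sum over triples of the quantities $(\lambda_i - \lambda_j)^2\,[ijk]$.

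\textbf{Step 3 (Einstein equation and threshold).} The Ricci curvature of the standard metric takes the well-known form $\Ric_g = \tfrac14\,g + \tfrac12\,g(\Cas^{H,-B_\gf}_\mf\,\cdot\,,\cdot)$, so the Einstein condition forces $\Cas^{H,-B_\gf}_{\mf_i} = c\,\Id$ with common value $c = 2E - \tfrac12$, and the hypothesis $E > 2/5$ becomes $c > 3/10$, equivalently $4E > \tfrac{8}{5}$. Standard Casimir identities for the splitting $\gf = \hf \oplus \mf$ (using $\Cas^{G,-B_\gf}_\gf = \Id$ on the adjoint representation) then express $c$ algebraically in terms of the $[ijk]$, providing the algebraic input needed in Step 4.

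\textbf{Step 4 (spectral estimate).} Substituting the identities of Step 3 into the quadratic form of Step 2, on the diagonal subspace the tracelessness constraint $\sum_i \lambda_i d_i = 0$ projects out the constant direction, leaving an operator whose maximal eigenvalue drops strictly below $4E$ exactly when $c > 3/10$. The off-diagonal intertwiner components of $\TT(M)^G$, present when some $\mf_i$ share common irreducible constituents, are handled by Schur's lemma combined with ad-invariance of $B_\gf$, which forces the off-diagonal block of $\A^\ast\A$ to have spectrum dominated by the diagonal bound.

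The main obstacle is Step 4, namely extracting the sharp threshold $c > 3/10$ from the combinatorics of the quadratic form in Step 2. One should aim to decompose the coefficient matrix into a manifestly positive semidefinite piece plus a correction that becomes strictly positive under the Casimir identities of Step 3. Handling the intertwiner block requires care, since nontrivial $H$-homomorphisms between equivalent $\mf_i$ can in principle produce additional terms, but ad-invariance of $B_\gf$ should ensure that these contribute only favorably to the estimate.
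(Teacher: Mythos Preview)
The paper does not prove this theorem; it merely quotes it from Nikonorov \cite{Nik21}. Your Steps 1--3 are correct setup, but Step 4 is not carried out --- you only describe what ``should'' happen, so the proposal as written has a genuine gap. Moreover, your suggested strategy (splitting the coefficient matrix into positive-semidefinite plus correction, then treating intertwiner blocks separately) is more complicated than necessary.

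The missing estimate is in fact elementary and works uniformly, without any diagonal/off-diagonal case distinction. From your Step 3 one has $\A^\ast\A|_{\mf}=(1-2c)\,\Id=(2-4E)\,\Id$. Viewing $\Sym^2\mf^\ast\subset\mf^\ast\otimes\mf^\ast$ and using that $\A_X$ acts as a derivation,
\[
\A^\ast\A\big|_{\Sym^2\mf^\ast}=-\sum_X\big(\A_X^2\otimes\Id+\Id\otimes\A_X^2+2\,\A_X\otimes\A_X\big)=2(2-4E)\,\Id-2\sum_X\A_X\otimes\A_X.
\]
Since each $\A_X$ is skew, $(\A_X\otimes\Id-\Id\otimes\A_X)$ is skew on $\mf^\ast\otimes\mf^\ast$, so its square is nonpositive; expanding gives $-2\,\A_X\otimes\A_X\le -\A_X^2\otimes\Id-\Id\otimes\A_X^2$, and summing over $X$ yields $-\sum_X\A_X\otimes\A_X\le(2-4E)\,\Id$. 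Hence $\A^\ast\A\le 4(2-4E)$ on all of $(\Sym^2_0\mf^\ast)^H=\TT(M)^G$, and $\LL|_0=\tfrac12\A^\ast\A\le 2(2-4E)<2E$ precisely when $E>2/5$. This is the sharp threshold you were looking for, and it requires neither the structure-constant combinatorics of Step 2 nor any multiplicity-free assumption.
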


Schwahn--Semmelmann--Weingart \cite{SSW23} showed that the normal Einstein metric on $\rmE_7/\PSO(8)$ is strictly stable (in the general sense), giving the first example of a non-symmetric stable Einstein metric with $E>0$. Later, using Theorem~\ref{thm:LLnormal}, Schwahn \cite{Sch24} analyzed the linear stability of the spaces in \cite{Wolf,WZ85}, producing more examples of both strictly and semistable positive Einstein metrics which are not symmetric.

\begin{theorem}
\label{thm:stablenormal}
There exist non-symmetric normal homogeneous Einstein manifolds (with $E>0$) which are strictly stable.
\end{theorem}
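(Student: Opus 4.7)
The approach is to identify explicit candidate normal homogeneous Einstein manifolds $G/H$ with $E>0$ from the Wang--Ziller/Wolf classification and verify strict stability using the harmonic-analytic machinery developed in Section~\ref{sec:normal}. The key input is Theorem~\ref{thm:LLnormal}, which expresses $\LL$ on $\Sy^2(M)$ entirely in terms of Casimir operators, whose eigenvalues can be computed via Freudenthal's formula (Theorem~\ref{thm:freudenthal}).

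First I would decompose $L^2(M,\Sym^2_0 T^\ast M)$ into isotypic components via Peter--Weyl and Frobenius reciprocity (Theorem~\ref{thm:peterweyl}),
\[L^2(M,\Sym^2_0 T^\ast M)\cong\closedsum_{\gamma\in\widehat G}[\gamma]\otimes\Hom_H([\gamma],\Sym^2_0\mf),\]
and similarly for $\Omega^1(M)$. For each Fourier mode $\gamma$, Theorem~\ref{thm:LLnormal} (applied with $k=2$) reduces $\LL|_\gamma$ to a linear combination of scalars $\Cas^{G,Q}_\gamma$, $\Cas^{H,Q}_{\Sym^2_0\mf}$, and terms computed on the ambient $\Sym^2\gf$. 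To pass from $\Sy^2_0(M)$ to the genuinely tt-part, I would invoke Corollary~\ref{thm:ttLL}(ii): for $\lambda\neq 2E$, the tt-eigenspace at eigenvalue $\lambda$ sits in the exact sequence with the corresponding eigenspaces on $\Omega^1(M)$ and $\Sy^2_0(M)$, and both of the latter are Fourier-mode-computable. The critical eigenvalue $\lambda=2E$ has to be handled separately via the IED sequence in part~(i).

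The goal is then to verify $\LL|_{\TT(M)}>2E$. Since $\Cas^{G,Q}_\gamma=\langle\gamma,\gamma+2\delta_\gf\rangle$ grows quadratically with the highest weight, only finitely many $\gamma$ can produce an eigenvalue $\leq 2E$. These low modes are checked by hand: one computes the $H$-branching of $\Sym^2_0\mf$, determines which dominant weights $\gamma$ satisfy $\Hom_H([\gamma],\Sym^2_0\mf)\neq 0$ for small $\Cas^{G,Q}_\gamma$, and then verifies the bound case by case, subtracting off the contributions coming from $\ker(\Delta-\lambda)|_{\Omega^1(M)}$ via the conformal Killing operator $\theta$.

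The main obstacle is locating a suitable candidate space: Nikonorov's criterion shows that many standard isotropy-reducible examples with $E>2/5$ are strongly $G$-unstable, and even among the Wolf list of isotropy-irreducible non-symmetric normal Einstein spaces, destabilizing $\LL$-eigentensors typically appear at some low Fourier mode. Thus the work consists of sifting through the finite list of irreducible normal Einstein manifolds of Wang--Ziller/Wolf type, performing the Casimir computation for the first few weights in each, and isolating those (few) examples where every low-lying Fourier mode on $\TT(M)$ strictly exceeds $2E$. This bookkeeping -- matching representation-theoretic data of $G$ and $H$, tracking which $H$-types appear in $\Sym^2_0\mf$, and excluding contributions from $\Omega^1(M)$ -- is the technical heart of the argument and is carried out in \cite{SSW23,Sch24}.
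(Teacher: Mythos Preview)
Your proposal is correct and matches the approach the paper indicates: the theorem is not proved in the survey itself but deferred to \cite{SSW23,Sch24}, and your outline accurately reconstructs that strategy---run through the Wolf/Wang--Ziller list, apply Theorem~\ref{thm:LLnormal} Fourier-mode by Fourier-mode, and use Corollary~\ref{thm:ttLL} to isolate the tt-part. One small refinement worth keeping in mind: the projection term $\pr_{\Sym^2\mf}(\cdots)$ in Theorem~\ref{thm:LLnormal} prevents $\LL|_\gamma$ from being a pure scalar in general, so in practice the method yields \emph{lower bounds} on the tt-spectrum rather than exact eigenvalues (as the paper remarks just after the theorem); this is why the results in \cite{Sch24} leave some candidate spaces undecided, but is harmless for the existence statement since a strict lower bound above $2E$ suffices.
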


We note that the results are not exhaustive; due to the method providing only lower bounds on the eigenvalues of $\LL|_{\TT(M)}$, there are still several candidates of normal homogeneous spaces left where the stability type is unclear. For the same list of spaces, Lauret--Tolcachier \cite{LT25} carried out a similar analysis on functions, allowing them to conclude dynamical stability for all the strictly stable examples found in \cite{Sch24}.

An interesting isotropy-irreducible example is the Berger space $\SO(5)/\SO(3)_{\mathrm{irr}}$, defined by the irreducible five-dimensional representation of $\SO(3)$. Its normal metric is nearly parallel $\rmG_2$, but its instability does not follow from Theorem~\ref{thm:g2-A} as it has $b_3=0$. However, Semmelmann--Wang--Wang \cite{SWW22} observed that for representation-theoretic reasons, there exist trace-free Killing $2$-tensors on $\SO(5)/\SO(3)_{\mathrm{irr}}$ which are destabilizing eigentensors of $\LL$ -- they satisfy equality in~\eqref{estimate}, and the nearly parallel $\rmG_2$-structure allows calculating the eigenvalues of $\K(R)$.

For non-simple $G$, homogeneous spaces $G/H$ on which the standard metric is Einstein are not fully classified, but there is a partial classification due to Nikonorov \cite{N00}. Gutiérrez--Lauret showed that many of the spaces appearing in this classification scheme are linearly $G$-unstable \cite{GL23}.

Turning to the deformability of non-symmetric homogeneous Einstein metrics, the only known examples which admit non-invariant IED are the nearly Kähler flag manifold $\SU(3)/T^2$ \cite{MS10} and the Aloff--Wallach space $N_{1,1}=(\SU(3)\times\SO(3))/\U(2)\cong\SU(3)/\U(1)$, a proper nearly parallel $\rmG_2$-manifold \cite{nearlyg2}. In both cases, $\varepsilon(g)\cong\su(3)$ as a $G$-module. By rewriting the second order obstruction with \eqref{conndiff} and making the torsion terms explicit, Schwahn \cite{Sch22b} showed the following.

\begin{theorem}
\label{thm:flagrigid}
The normal Einstein metric on $\SU(3)/T^2$ is rigid.
\end{theorem}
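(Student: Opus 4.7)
The plan is to show that no nonzero IED of $g$ is integrable to second order; by the analyticity of the premoduli space $\widetilde\EMod_g$ (which sits in the finite-dimensional real analytic manifold $Z\subset\Slice_g$ constructed after Theorem~\ref{thm:einsteinop}) together with Theorem~\ref{thm:arcwise}, this forces $\widetilde\EMod_g=\{g\}$ and hence rigidity. By the result of Moroianu--Semmelmann recalled above Theorem~\ref{thm:flagrigid}, $\varepsilon(g)\cong\su(3)$ as an $\SU(3)$-module, via the identification of IED with infinitesimal nearly Kähler deformations. Since $\Eop$ is diffeomorphism-equivariant, the second order obstruction $\Psi$ of Theorem~\ref{thm:secondobst} is $\SU(3)$-invariant. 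The space of $\SU(3)$-invariant cubic forms on $\su(3)$ is one-dimensional, spanned by $P(X)=\Imag\tr(X^3)$, so there exists a constant $c\in\R$ with $\Psi(h)=c\,P(X(h))$ under the Moroianu--Semmelmann isomorphism.

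The first step is to verify that $c\neq 0$ by evaluating $\Psi$ on one convenient IED. Choose a Cartan generator $X_0\in\tf\subset\su(3)$ with $P(X_0)\neq0$, and trace through the identification to obtain the corresponding $T^2$-invariant tt-tensor $h_0\in\TT(M)$. On the naturally reductive space $\SU(3)/T^2$, the decomposition $\nabla=\bar\nabla+\frac12\A$ of \eqref{conndiff} rewrites $d^\nabla h_0$ in terms of the canonical reductive connection plus an algebraic torsion term; since $h_0$ corresponds to an $\Ad(T^2)$-invariant element of $\Sym^2\mf^\ast$, it is $\bar\nabla$-parallel, so only the torsion contributions and the algebraic term $-E\tr(h_0^3)$ from Theorem~\ref{thm:secondobst}(ii) survive. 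The integrand becomes a pointwise constant that reduces to a finite algebraic computation in the root basis of $\su(3)$ relative to $\tf^2$, and can be checked by hand to be nonzero.

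The second step is to show that $0$ is the only critical point of $P$ on $\su(3)$. The polarization gives $P(X,X,Y)\propto\Imag\tr(X^2Y)$, which vanishes for all $Y\in\su(3)$ if and only if $X^2$ is a real scalar multiple of the identity. Writing the eigenvalues of the skew-Hermitian traceless matrix $X$ as $i\lambda_1,i\lambda_2,i\lambda_3$ with $\lambda_1+\lambda_2+\lambda_3=0$, the condition $X^2\in\R\Id$ forces $\lambda_1^2=\lambda_2^2=\lambda_3^2$; combined with the trace constraint this forces $\lambda_1=\lambda_2=\lambda_3=0$, i.e.~$X=0$. Combined with the argument sketched in the first paragraph (via Theorem~\ref{thm:obstspace}, an analytic curve in $\widetilde\EMod_g$ with nonzero first jet $h$ would require $\Psi(h,h,\,\cdot\,)=0$ on $\varepsilon(g)$), this yields the rigidity of $g$.

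The main obstacle will be the explicit computation in the first step: unpacking the Frölicher--Nijenhuis bracket $\FN{h_0}{h_0}$ and the twisted exterior derivative $d^\nabla h_0$ on a non-symmetric naturally reductive space is tedious, and one must guarantee that the two summands $3\langle\FN{h_0}{h_0},d^\nabla h_0\rangle$ and $-E\tr(h_0^3)$ do not conspire to cancel. A careful choice of $X_0$ (for instance, one whose image $h_0$ is supported in a small number of root components) and exploitation of $\SU(3)$-invariance to reduce the pointwise evaluation to the basepoint should keep the bookkeeping manageable.
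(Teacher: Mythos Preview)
Your overall strategy matches the paper's: use invariant theory to reduce the second order obstruction $\Psi$ to a multiple of the unique $\SU(3)$-invariant cubic $P(X)=\Imag\tr(X^3)$ on $\varepsilon(g)\cong\su(3)$, show the constant is nonzero by a direct computation using the decomposition $\nabla=\bar\nabla+\tfrac12\A$ of \eqref{conndiff}, and then check that $P$ has no nonzero critical points (exactly the variety argument of Theorem~\ref{thm:symm2ndorder} with $m=3$ odd). Your second step and the global logic are correct.

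However, there is a genuine gap in your first step. The claim ``since $h_0$ corresponds to an $\Ad(T^2)$-invariant element of $\Sym^2\mf^\ast$, it is $\bar\nabla$-parallel'' is false. A section of a homogeneous bundle is $\bar\nabla$-parallel precisely when it is $G$-invariant, which under the Peter--Weyl decomposition of Theorem~\ref{thm:peterweyl} means it lives in the \emph{trivial} Fourier mode. But $h_0$ lives in the adjoint isotype $\su(3)\subset L^2(M,\Sym^2_0T^\ast M)$, which contains no nonzero $G$-fixed vectors. Choosing $X_0\in\tf$ makes $h_0$ invariant under the isotropy $T^2$ acting by pullback, not under all of $\SU(3)$; this does not kill $\bar\nabla h_0$. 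Concretely, under the identification with $H$-equivariant functions $G\to\Sym^2_0\mf^\ast$, the section $h_0$ is $g\mapsto F(\Ad(g^{-1})X_0)$ for some fixed $F\in\Hom_H(\su(3),\Sym^2_0\mf^\ast)$, and $\bar\nabla_Y h_0$ at the base point is (up to sign) $F([Y,X_0])$, which is generically nonzero for $Y\in\mf$.

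The upshot is that the computation of $\Psi(h_0)$ does not reduce to torsion terms alone: the $\bar\nabla$ contributions to both $d^\nabla h_0$ and the Fr\"olicher--Nijenhuis bracket $\FN{h_0}{h_0}$ survive and must be tracked explicitly. This is precisely the work carried out in \cite{Sch22b}, and it is substantially more involved than your sketch suggests. Your concluding paragraph anticipates bookkeeping difficulties, but underestimates them because it rests on this false simplification.
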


Combined with our current knowledge about symmetric spaces, we are tempted to make the following conjecture, which is a strengthening of the Finiteness Conjecture.

\begin{questype}{Conjecture}
Homogeneous Einstein metrics with $E>0$ are rigid.
\end{questype}

\subsection{Non-normal metrics}
\label{sec:nonnormal}

Many known homogeneous Einstein metrics are not normal or even naturally reductive, rendering the calculation of the spectrum of $\LL$ difficult. At least on $G$-invariant tensors, the groundwork laid in \cite{Gstab1} makes some calculations possible.

Generalizing the work in \cite{Gstab1}, Lauret--Will \cite{Gstab2} gave an explicit formula for $\LL|_0$ on symmetric $2$-tensors for \emph{any} $G$-invariant metric in terms of the structural constants of $G/H$, where $G$ is compact and semisimple. Let again $Q$ denote a $G$-invariant inner product on $\gf$; then, for any invariant metric $g$, there exists a $Q$-orthogonal decomposition
\[\mf=\mf_1\oplus\ldots\oplus\mf_r\]
into irreducible $H$-modules such that
\[g_o=x_1Q|_{\mf_1}+\ldots+x_rQ|_{\mf_r}\]
for some numbers $x_i>0$. If $(e_\alpha^{(i)})$ denotes a $Q$-orthonormal basis of $\mf_i$, then the \emph{structural constants} are defined by
\[[ijk]=\sum_{\alpha,\beta,\gamma}Q([e_\alpha^{(i)},e_\beta^{(j)}],e_\gamma^{(k)})^2,\qquad 1\leq i,j,k\leq r.\]
\begin{theorem}
\label{thm:LLinvariant}
If $\mf$ is multiplicity-free, there is a basis of $(\Sym^2\mf^\ast)^H\cong\R^r$ for which the matrix of $\LL|_0$ has the entries
\begin{align*}
(\LL|_0)_{kk}&=\frac{1}{d_k}\sum_{i,j\neq k}\frac{x_k}{x_ix_j}[ijk]+\frac{1}{d_k}\sum_{i\neq k}\frac{x_i}{x_k^2}[ikk],\\
(\LL|_0)_{kl}&=\frac{1}{\sqrt{d_kd_l}}\sum_i\frac{x_i^2-x_k^2-x_l^2}{x_ix_kx_l}[ikl],\qquad k\neq l,
\end{align*}
where $d_k=\dim\mf_k$.
\end{theorem}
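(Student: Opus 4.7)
I would adapt J.~Lauret's framework for $\LL|_0$ on naturally reductive metrics \cite{Gstab1} to the fully general $G$-invariant case, following Lauret--Lauret \cite{Gstab2}. The idea is to compute both pieces of the Weitzenböck formula $\LL = \nabla^\ast\nabla + \K(R)$ directly on the finite-dimensional space $(\Sym^2\mf^\ast)^H$, exploiting the multiplicity-free hypothesis to keep the bookkeeping tractable.

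First I would set up the basis. By Schur's Lemma and multiplicity-freeness, $(\Sym^2\mf^\ast)^H$ has dimension $r$, spanned by the ``block'' tensors $Q|_{\mf_k}$ for $k=1,\ldots,r$. Working in a $Q$-orthonormal basis of $\mf$ adapted to the splitting, a direct calculation gives $\|Q|_{\mf_k}\|_g^2 = d_k/x_k^2$ pointwise, so the pointwise $g$-orthonormal basis is $v_k := \tfrac{x_k}{\sqrt{d_k}} Q|_{\mf_k}$, and the matrix entries to be determined are $(\LL|_0)_{kl} = (\LL v_l, v_k)_g$.

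Next I would compute both pieces of $\LL$ on $v_k$. Since $v_k$ is $G$-invariant, it is parallel for the canonical reductive connection $\bar\nabla$, whence $\nabla v_k = (\nabla - \bar\nabla) \cdot v_k$. The contorsion $\Lambda := \nabla - \bar\nabla$ at the base point is determined by Koszul's formula
\[2\,g_o(\Lambda_X Y, Z) = g_o([X,Y]_\mf, Z) - g_o([Y,Z]_\mf, X) - g_o([X,Z]_\mf, Y),\qquad X,Y,Z\in\mf.\]
Expanded over vectors $X\in\mf_i$, $Y\in\mf_j$, $Z\in\mf_k$ and normalized by the $\sqrt{x_\bullet}$'s, each of the three Koszul terms produces a structural-constant factor $Q([e_\alpha^{(i)}, e_\beta^{(j)}]_\mf, e_\gamma^{(k)})$ weighted by exactly one of $x_i$, $x_j$ or $x_k$ together with the overall normalization $1/\sqrt{x_ix_jx_k}$. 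Iterating gives $\nabla^\ast\nabla v_k$ as a quadratic polynomial in $\Lambda$, which after pairing with $v_l$ and summing yields sums of squared structural constants $[ijk]$ with rational coefficients in the $x_i$'s. The Weitzenböck endomorphism $\K(R)v_k$ is handled analogously: the standard formula expresses $R$ on a reductive homogeneous space as a polynomial in $\Lambda$ and $[\cdot,\cdot]_\mf$, so $(\K(R) v_l, v_k)_g$ reduces to a similar structural-constant sum.

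Finally, assembling the two contributions and projecting onto $v_l$ gives the matrix entries. Diagonal entries $(k,k)$ naturally split into two parts according to whether the bracket indices avoid $k$ (yielding $\tfrac{x_k}{x_ix_j}[ijk]$) or include $k$ twice (yielding $\tfrac{x_i}{x_k^2}[ikk]$); off-diagonal entries $(k,l)$ receive contributions only from triples $(i,k,l)$ with all three indices present, and the characteristic combination $x_i^2 - x_k^2 - x_l^2$ emerges from the three signed Koszul terms after invoking the full permutation symmetry of $[ijk]$. \emph{The main obstacle is the bookkeeping}: the raw expressions for $\nabla^\ast\nabla v_k$ and $\K(R) v_k$ individually carry messier polynomials in the $x_i$'s that reduce to the stated compact form only after systematic use of the symmetry of $[ijk]$ and of the Jacobi identity for $\mf$-brackets. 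This mirrors the clean Wang--Ziller structural-constant formula for $\scal_g$ on $\Met^G$, and a useful consistency check is to verify that the vector with coordinates $(\sqrt{d_k})_{k=1}^r$ (corresponding to $g$ itself) lies in the kernel of the stated matrix, which amounts to a known identity among the $[ijk]$.
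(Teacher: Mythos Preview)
The paper is a survey and does not supply its own proof of this theorem; it simply states the formula and attributes it to Lauret--Lauret \cite{Gstab2}. So there is no in-paper argument to compare against, only the cited reference.

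Your proposed route via the Weitzenb\"ock splitting $\LL=\nabla^\ast\nabla+\K(R)$ and the contorsion $\Lambda=\nabla-\bar\nabla$ is correct in principle and will get there, but it is not the approach taken in \cite{Gstab2}. Lauret--Lauret work instead through the linearization of the Ricci tensor: the structural-constant formula for $\Ric_g$ on $\Met^G$ (the Wang--Ziller formula) is a rational function of the $x_i$'s, and differentiating it along $G$-invariant directions gives $\Ric'_g$, which on invariant tensors coincides with $\tfrac12\LL$ up to trace and divergence terms that are easily controlled. This avoids computing $\K(R)$ separately and sidesteps the Jacobi-identity cancellations you anticipate, since those are already baked into the Ricci formula. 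Your approach has the virtue of being more geometric and of making the role of the canonical connection $\bar\nabla$ explicit, but the bookkeeping is genuinely heavier: the individual pieces $\nabla^\ast\nabla$ and $\K(R)$ do \emph{not} have clean structural-constant expressions before they are combined, whereas the Ricci linearization is clean from the start. Your consistency check (that $g$ lies in the kernel) is correct and corresponds to $\LL g=0$.
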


We close this chapter by giving an overview of miscellaneous stability results for homogeneous Einstein metrics. Using Theorem~\ref{thm:LLinvariant}, Lauret--Will \cite{Gstab2} analyzed the linear $G$-stability of the homogeneous Einstein metrics on most of the generalized Wallach spaces, the exceptional flag manifolds  with $b_2=1$, and many other examples. In light of Theorem~\ref{thm:KEunstable}, these flag manifolds are particularly interesting; it turns out that all their invariant Einstein metrics are linearly $G$-unstable, except for the unique invariant Kähler--Einstein metric, which is strictly $G$-stable. This makes them candidates for stability in the non-invariant sense.

\begin{questype}{Open question}
Except for symmetric spaces, are the invariant Kähler--Einstein metrics on flag manifolds with $b_2=1$ stable?
\end{questype}

Let $H$ be simple, and $K\subset H$ be a closed subgroup of positive dimension. Invariant Einstein metrics on spaces of the form $(H\times H)/\diag(K)$, and their $G$-stability, were studied by Lauret--Will \cite{LW25} and Gutiérrez \cite{G24} -- these Einstein metrics are never normal. In the case where $H/K$ is an irreducible symmetric space with $K$ simple, the invariant Einstein metrics are completely classified and are all linearly $G$-unstable.

Wang--Wang \cite{WW2} have studied the stability of low-dimensional homogeneous Einstein manifolds. In particular, they show linear $G$-instability for all invariant Einstein metrics on the Aloff--Wallach spaces $N_{k,l}$, and for the invariant Sasaki--Einstein metric on the Stiefel manifold $\SO(n+2)/\SO(n)$, $n\geq3$. They also note that the Lichnerowicz eigenvalue of the destabilizing tt-tensors on $\Sp(n)/\U(n)$, $n\geq4$ (cf.~Theorem~\ref{thm:symmstable}) is small enough to apply Theorem~\ref{thm:sasakiregular} and obtain linear instability of the regular Sasaki--Einstein manifold $\Sp(n)/\SU(n)$, $n\geq4$.

Up to dimension $7$, compact homogeneous Einstein metrics are almost completely classified, except on $S^3\times S^3$ (see \cite{s3s3}). Combined with \cite{SWW20,SWW22}, we can state the following results.

\begin{theorem}
All non-symmetric, compact, simply connected homogeneous Einstein manifolds of dimension $5\leq n\leq 7$ are linearly unstable, except possibly the unknown Einstein metrics on $S^3\times S^3$.
\end{theorem}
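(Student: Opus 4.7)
The plan is to apply the (essentially complete) classification of compact, simply connected homogeneous Einstein $n$-manifolds for $5\leq n\leq 7$ referenced in \cite{s3s3}, and then check linear instability case by case: each entry of the classification list, excluding the symmetric spaces and the unresolved metrics on $S^3\times S^3$, falls under one of the destabilizing mechanisms already established in Sections~\ref{sec:prodfib}--\ref{sec:killingspinor} and Section~\ref{sec:homogeneous}.

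For $n=6$, Butruille's classification leaves the homogeneous strict nearly Kähler (Gray) metrics on $\C\Proj^3$, $F_{1,2}=\SU(3)/T^2$ and $S^3\times S^3$, together with the invariant Kähler--Einstein metric on $F_{1,2}$ (the nearly Kähler metric on $S^6$ agrees with the round symmetric one and is excluded). Each of the three non-symmetric Gray manifolds satisfies $b_2(M)+b_3(M)\geq 1$, so Theorem~\ref{thm:NK} produces a destabilizing direction. The Kähler--Einstein metric on $F_{1,2}$ has $b_2=2$, hence $\coind(g)\geq 1$ by Theorem~\ref{thm:KEunstable}. This handles dimension six.

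For $n=7$, I would partition the list into: (a)~the squashed sphere $(S^7,g_{\mathrm{sq}})$ and, more generally, the squashed nearly parallel $\rmG_2$ metrics appearing in the canonical variation of a homogeneous $3$-Sasaki manifold; (b)~the Aloff--Wallach spaces $N_{k,l}$ with any of their invariant Einstein metrics; (c)~the Berger space $\SO(5)/\SO(3)_{\mathrm{irr}}$; and (d)~the remaining regular Sasaki--Einstein homogeneous spaces. Class~(a) is handled by Theorem~\ref{thm:submersion1}, which produces the explicit destabilizing canonical variation direction. Class~(b) is covered by Wang--Wang \cite{WW2}. For class~(d), Theorem~\ref{thm:ES2} finishes the argument whenever $b_2(M)>0$, and Theorem~\ref{thm:sasakiregular} (or again \cite{WW2} in cases such as $\SO(n+2)/\SO(n)$ and $\Sp(n)/\SU(n)$) deals with the circle bundles over irreducible Hermitian symmetric bases. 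For $n=5$, the remaining non-symmetric candidates are Sasaki--Einstein circle bundles such as $T^{1,1}$, all of which have $b_2\geq 1$ and are therefore linearly unstable by Theorem~\ref{thm:ES2}.

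The main obstacle is class~(c): the Berger space is a rational homology sphere, so the topological lower bounds of Theorems~\ref{thm:NK}, \ref{thm:g2-A} and \ref{thm:ES2} all vanish, and there is no fibration structure over a base to pull back destabilizing directions from. The argument there, as executed in \cite{SWW22}, is harmonic-analytic: the irreducible five-dimensional isotropy representation of $\SO(3)_{\mathrm{irr}}$ forces the existence of nontrivial trace-free Killing $2$-tensors (tensors realizing equality in \eqref{estimate}), and a direct Casimir computation via Theorem~\ref{thm:freudenthal} on these eigentensors shows that $2\K(R)$ lies strictly below the critical eigenvalue $2E$, thereby producing a destabilizing tt-eigentensor. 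Once this case is dealt with, the proof reduces to exhaustive bookkeeping against the classification list of \cite{s3s3}, with no new analytic input required.
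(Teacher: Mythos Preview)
Your overall strategy matches the paper's own (terse) argument: invoke the low-dimensional classification and run through the list using the instability mechanisms assembled earlier; the paper itself simply cites the classification together with \cite{SWW20,SWW22}. Two points in your execution need correction.

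First, your case split for $n=7$ is not exhaustive: nontrivial products are missing. The manifold $T^{1,1}\times S^2$ (the Sasaki--Einstein $S^2\times S^3$ times a round $S^2$, with Einstein constants scaled to match) is compact, simply connected, homogeneous Einstein and non-symmetric, yet lies in none of your classes (a)--(d). It is of course dispatched immediately by the product instability result of Section~\ref{sec:prodfib}, and your opening sentence gestures at that section, but the enumeration (a)--(d) is presented as complete and is not. Relatedly, in dimension~$6$ you invoke ``Butruille's classification'', but Butruille only classifies homogeneous \emph{nearly K\"ahler} structures, not all homogeneous Einstein metrics; the correct input is the full six-dimensional classification, after which the non-symmetric survivors (apart from $S^3\times S^3$) happen to coincide with the three Gray manifolds plus the K\"ahler--Einstein $F_{1,2}$.

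Second, your Berger space sketch mislocates the computation. You are right that trace-free Killing $2$-tensors exist there for representation-theoretic reasons and saturate \eqref{estimate}, so $\LL=2\K(R)$ on them. But $\K(R)$ is \emph{not} directly a Casimir eigenvalue accessible via Theorem~\ref{thm:freudenthal}: the identity $\K(\bar R)=\Cas^{H,Q}$ in Theorem~\ref{thm:laplacenormal} is for the canonical connection $\bar\nabla$, and the Levi-Civita curvature endomorphism $\K(R)$ differs from it by torsion terms. The paper, following \cite{SWW22}, computes $\K(R)$ on the relevant isotypic component using the nearly parallel $\rmG_2$ structure; alternatively one may use the torsion-corrected formula of Theorem~\ref{thm:LLnormal}. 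Either route works, but neither is ``a direct Casimir computation via Theorem~\ref{thm:freudenthal}'' alone.
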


\begin{theorem}
The regular Sasaki--Einstein manifolds fibering over compact Hermitian symmetric spaces are dynamically unstable under the Ricci flow.
\end{theorem}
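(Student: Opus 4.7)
My plan is to invoke the criterion from Section~\ref{sec:dynamic}: for an Einstein metric with $E>0$, the strict inequality $\lambda_1(\Delta)<2E$ already implies dynamical instability. Writing $\dim M=2m+1$ so that $\Ric_{g_M}=2m\,g_M$, we have $2E_M=4m$, and it thus suffices to exhibit a Laplace--Beltrami eigenfunction on $M$ with eigenvalue $2m+1$.

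I would build this eigenfunction from the Calabi--Yau cone $(C(M),\bar g)=(\R_+\times M,\,dr^2+r^2g_M)$, which is Ricci-flat Kähler of complex dimension $m+1$ by the very definition of Sasaki--Einstein. The cone is biholomorphic to the complement of the zero section in a holomorphic line bundle $L\to B$: concretely, $L$ is the Hermitian line bundle whose unit circle bundle is $M\to B$, and the Sasaki--Einstein normalization translates into $L^{\otimes(m+1)}\cong K_B^{-1}$. Under this biholomorphism, holomorphic sections $s\in H^0(B,L)$ correspond bijectively to holomorphic functions $F=r\cdot\tilde s$ on $C(M)$ that are homogeneous of degree one in the radial direction. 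Since $B$ is a compact Hermitian symmetric space, hence a generalized flag manifold with Picard group $\Z$, the bundle $L$ is ample (being a positive rational multiple of the ample anti-canonical bundle $K_B^{-1}$), so $H^0(B,L)\neq 0$ by the Borel--Weil theorem.

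Any such $F$ is harmonic on the Kähler cone. The standard cone Laplacian, applied to the ansatz $F=rf$, gives
\[
0=\Delta_{\bar g}F=-\partial_r^2F-\frac{2m+1}{r}\partial_rF+\frac{1}{r^2}\Delta_{g_M}F=\frac{1}{r}\bigl(\Delta_{g_M}f-(2m+1)f\bigr),
\]
so $\Delta_{g_M}f=(2m+1)f$. Since $2m+1<4m=2E_M$ for every $m\geq1$, this forces $\lambda_1(\Delta_{g_M})<2E_M$, and dynamical instability follows from the criterion recalled in the first paragraph.

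The step I expect to require the most care is the Kähler-cone/line-bundle dictionary: identifying $C(M)$ with the punctured total space of $L$ and deducing $L^{\otimes(m+1)}\cong K_B^{-1}$ from the Sasaki--Einstein normalization. Once this dictionary is in place, the rest reduces to the existence of a holomorphic section of an ample line bundle on a generalized flag manifold, a direct expansion of the cone Laplacian, and the stability criterion.
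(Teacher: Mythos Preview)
Your overall strategy—invoke the criterion $\lambda_1(\Delta)<2E\Rightarrow$ dynamically unstable and produce the eigenfunction from a holomorphic function on the Calabi--Yau cone—is sound and is essentially the approach behind the cited result \cite{WW2}. However, the execution contains a genuine error.

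The assertion $L^{\otimes(m+1)}\cong K_B^{-1}$ is false except when $B=\C\Proj^m$. For simply connected $M$, the bundle $L$ whose unit circle bundle is $M$ is the ample generator of $\text{Pic}(B)\cong\Z$, and $K_B^{-1}=L^I$ where $I$ is the Fano index of $B$; by Kobayashi--Ochiai, $I\le m+1$ with equality precisely for $\C\Proj^m$ (for instance $I=m$ on $Q^m$ and $I=n$ on $\Gr_k(\C^n)$). The underlying issue is that the Sasaki--Einstein Reeb field $\xi$, normalized by $|\xi|=1$, is \emph{not} the period-$2\pi$ generator of the circle action unless $I=m+1$: its orbits have length $2\pi(m+1)/I$. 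Consequently the biholomorphism $C(M)\cong L^{-1}\setminus 0$ does not match the cone coordinate $r$ with the fibre norm, and a section $s\in H^0(B,L)$ produces a holomorphic function of $r$-degree $d=I/(m+1)$, not degree $1$. The resulting Laplace eigenvalue on $M$ is
\[
\lambda=d(d+2m)=\tfrac{I}{m+1}\bigl(\tfrac{I}{m+1}+2m\bigr),
\]
which is still $\le 2m+1<4m=2E$ since $d\le1$. So the argument can be repaired, but as written your value $\lambda=2m+1$ is correct only for $B=\C\Proj^m$.

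That case exposes a second gap: for $B=\C\Proj^m$ the total space is the round $S^{2m+1}$, which is \emph{not} dynamically unstable. Your eigenfunctions there are restrictions of linear coordinates, and the corresponding variations $fg$ are Lie derivatives along conformal Killing fields—gauge directions for the shrinker entropy. The instability criterion from Section~\ref{sec:dynamic} tacitly excludes the standard sphere; you must note this and argue (via Obata) that for $M\neq S^{2m+1}$ your eigenfunction does not arise from a conformal Killing field.
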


Having discussed the known results on homogeneous Einstein manifolds, we note that symmetries are still useful even if they do not act transitively. For example, in the cohomogeneity one setting, the Einstein equations (and their linearization) can be turned into ODE problems. This has enabled several recent constructions of inhomogeneous Einstein metrics. A notable example are the Böhm metrics on spheres and products of spheres, constructed in \cite{Boe98}. Through a combination of analytic and numerical methods, Gibbons--Hartnoll--Pope \cite{GHP03} have shown that some of these Einstein metrics are physically unstable -- their results even suggest that among the Böhm metrics, the lowest eigenvalue of $\LL$ on tt-tensors can become arbitrarily negative.

Overall, stable Einstein metrics with $E>0$ appear to be quite rare -- in fact, the only currently known examples are symmetric spaces and the ones from Theorem~\ref{thm:stablenormal}.

\section*{Acknowledgments}
\addcontentsline{toc}{section}{Acknowledgments}

P.~Schwahn was supported by FAPESP project no.~\textbf{2024/08127-4}, part of the BRIDGES collaboration. U.~Semmelmann acknowledges support by the DFG priority program \textbf{SPP 2026 ``Geometry at Infinity''}.

An abridged version of this survey will appear in the final volume reporting on the SPP 2026. We are grateful for this opportunity.

The authors would like to thank Christoph Böhm, Hans-Joachim Hein, Klaus Kröncke and Claude LeBrun for helpful and inspiring discussions, and Klaus Kröncke and Matthias Ludewig for their comments on an earlier version of the manuscript.


\begin{thebibliography}{199.}%
\addcontentsline{toc}{section}{References}\bibitem{nearlyg2} B.~Alexandrov, U.~Semmelmann: \emph{Deformations of nearly parallel $\rmG_2$-structures}, Asian J. Math. 16 (4), 713--744 (2012)
\bibitem{holrig} B.~Ammann, K.~Kröncke, H.~Weiss, F.~Witt: \emph{Holonomy rigidity for Ricci-flat metrics}, Math. Z. 291, 303--311 (2019)
\bibitem{anderson} M.~T.~Anderson: \emph{A survey of Einstein metrics on 4-manifolds}, \url{https://arxiv.org/abs/0810.4830}
\bibitem{AM} L. Andersson, V. Moncrief: \emph{Einstein spaces as attractors for the Einstein flow}, J. Differential Geom. 89, no. 1, 1--47 (2011)
\bibitem{Baer} C.~Bär: \emph{Real Killing spinors and holonomy}, Comm. Math. Phys. 154, no. 3, 509--521 (1993)
\bibitem{BHM17} W.~Batat, S.~J.~Hall, T.~Murphy: \emph{Destabilising compact warped product Einstein manifolds}, Comm. Anal. Geom. 29 (5), 1061--1094 (2021)
\bibitem{BHMW} W.~Batat, S.~J.~Hall, T.~Murphy, J.~Waldron: \emph{Rigidity of $\SU_n$-type symmetric spaces}, Internat. Math. Research Notices 2024 (3), 2066--2098 (2024)
\bibitem{Baum} H.~Baum: \emph{Complete Riemannian manifolds with imaginary Killing spinors}, Ann. Glob. Anal. Geom. 7, no. 3, 205--226 (1989)
\bibitem{BFGK} H.~Baum, T.~Friedrich, R.~Grunewald, I.~Kath: \emph{Twistors and Killing spinors on Riemannian manifolds},  Teubner-Texte zur Mathematik, 124. B.~G.~Teubner Verlagsgesellschaft mbH, Stuttgart (1991)
\bibitem{s3s3} F.~Belgun, V.~Cortés, A.~S.~Haupt, D.~Lindemann: \emph{Left-invariant Einstein metrics on $S^3\times S^3$}, J. Geom. Phys. 128, 128--139 (2018)
\bibitem{besse} A.~L.~Besse: \emph{Einstein manifolds}, Ergebnisse der Mathematik und ihrer Grenz\-ge\-bie\-te 3rd Series 10, Springer-Verlag, Berlin (1987)
\bibitem{BiqI} O.~Biquard: \emph{Désingularisation de métriques d’Einstein I}, Invent. Math. 192 (1), 197–252 (2013)
\bibitem{BiqII} O.~Biquard: \emph{Désingularisation de métriques d’Einstein II}, Invent. Math. 204 (2), 473–504 (2016)
\bibitem{cKunstable} O.~Biquard, T.~Ozuch: \emph{Instability of conformally Kähler, Einstein metrics}, \url{https://arxiv.org/abs/2310.10109}, to appear in J. Diff. Geom. (2023)
\bibitem{Boe98} C.~Böhm: \emph{Inhomogeneous Einstein metrics on low-dimensional spheres and other low-dimensional spaces}, Invent. math. 134, 145-176 (1998)
\bibitem{Boe04} C.~Böhm: \emph{Homogeneous Einstein metrics and simplicial complexes}, J. Diff. Geom. 67, 79--165 (2004)
\bibitem{BWZ04} C.~Böhm, M.~Wang, W.~Ziller: \emph{A variational approach for compact homogeneous Einstein manifolds}, Geom. Funct. Anal. 14, 681--733 (2004)
\bibitem{Boe05} C.~Böhm: \emph{Unstable Einstein metrics}, Math. Z. 250, 279--286 (2005)
\bibitem{BGK05} C.~P.~Boyer, K.~Galicki, J.~Kollár: \emph{Einstein metrics on spheres}, Ann. Math. 162 (1), 557--580 (2005)
\bibitem{BK25} L.~Branca, K.~Kröncke: \emph{Some isolation and stability results for Einstein manifolds}, \url{https://arxiv.org/abs/2506.13248} (2025)
\bibitem{Bredon} G.~Bredon, \emph{Introduction to Compact Transformation Groups}, Pure and Applied Mathematics 46, Academic Press (1972)
\bibitem{But}
J.-B. Butruille:
\emph{Classification des varietes approximativement kähleriennes homogenes}, Ann. Global Anal. Geom. 27, no. 3, 201--225 (2005)
\bibitem{RicIt} T.~Buttsworth, M.~Hallgren: \emph{Local stability of Einstein metrics under the Ricci iteration}, J. Funct. Anal. 280, 108801 (2021)
\bibitem{CHI04} H.-D.~Cao, R.~Hamilton, T.~Ilmanen: \emph{Gaussian densities and stability for some Ricci
solitons}, \url{https://arxiv.org/abs/math/0404165} (2004)
\bibitem{CH15} H.-D.~Cao, C.~He: \emph{Linear stability of Perelman's $\nu$-entropy on symmetric spaces of compact type}, J. Reine Angew. Math. 709, 229--246 (2015).
\bibitem{CLW} X.~Chen, C.~LeBrun, B.~Weber: \emph{On conformally Kähler, Einstein manifolds}, J. Amer. Math. Soc. 21, 1137--1168 (2008)
\bibitem{CDS} X.~Chen, S.~Donaldson, S.~Sun: \emph{Kähler--Einstein metrics on Fano manifolds}, J. Amer. Math. Soc. 28, I: 183--197, II: 199--234, III: 235--278 (2014)
\bibitem{Coe12} C.~van Coevering: \emph{Sasaki–Einstein 5-manifolds associated to toric 3-Sasaki manifolds}, New York J. Math. 18, 555--608 (2012).
\bibitem{Coe17} C.~van Coevering: \emph{Deformations of Killing spinors
on Sasakian and 3-Sasakian manifolds}, J. Math. Soc. Japan 69 (1), 53--91 (2017)
\bibitem{CS19} T.~C.~Collins, G.~Székelyhidi: \emph{Sasaki--Einstein metrics and K-stability}, Geom. Topol. 23 (3), 1339--1413 (2019)
\bibitem{DK24} M.~Dahl, K.~Kröncke: \emph{Local and global scalar curvature rigidity of Einstein manifolds}, Math. Ann. 388, 453--510 (2024)
\bibitem{DWW05}  X.~Dai, X.~Wang, G.~Wei: \emph{On the stability of Riemannian manifold with parallel spinors}, Invent. math. 161, 151–176 (2005)
\bibitem{Dai07} X.~Dai: \emph{Stability of Einstein Metrics and Spin Structures}, Proceedings of the 4th International Congress of Chinese Mathematicians, Vol. II, 59-72 (2007)
\bibitem{DWW07} X.~Dai, X.~Wang, G.~Wei: \emph{On the variational stability of Kähler–Einstein metrics}, Comm. Anal. Geom. 14 (4), 669-–693 (2007)
\bibitem{DZ79} J.~E.~D'Atri, W.~Ziller: \emph{Naturally reductive metrics and Einstein metrics on compact Lie groups}, Memoirs AMS 18:215 (1979)
\bibitem{D}
E.~Delay:
\emph{TT--eigentensors for the Lichnerowicz Laplacian on some asymptotically hyperbolic manifolds with warped products metrics},
Manuscripta Math. 123, no. 2, 147--165 (2007)
\bibitem{FIN05} M.~Feldman, T.~Ilmanen, L.~Ni: \emph{Entropy and reduced distance for Ricci expanders}, J. Geom. Anal. 15, no. 1, 49--62 (2005)
\bibitem{FKS21} J.~Fine, K.~Krasnov, M.~Singer: \emph{Local rigidity of Einstein 4-manifolds satisfying a chiral curvature condition}, Math. Ann. 379, 569--588 (2021)
\bibitem{FM73} A.~E.~Fischer, J.~E.~Marsden: \emph{Linearization stability of the Einstein equations}, Bull. Amer. Math. Soc. 79:5, 997--1003 (1973)
\bibitem{FW} A.~Fischer, J.~Wolf:
\emph{The structure of compact Ricci-flat Riemannian manifolds}, J. Diff. Geom. 10, 277--288 (1975)
\bibitem{F17}
L. Foscolo:
\emph{Deformation theory of nearly Kähler manifolds},
J. Lond. Math. Soc. (2) 95, no. 2, 586--612 (2017)
\bibitem{FH}
L. Foscolo, M. Haskins:
\emph{New $G_2$-holonomy cones and exotic nearly Kähler structures on $S^6$ and $S^3\times S^3$},
Ann. of Math. (2) 185, no. 1, 59--130 (2017)
\bibitem{FKMS} T. Friedrich, I. Kath, A. Moroianu, U. Semmelmann:
\emph{On nearly parallel $G_2$-structures},
J. Geom. Phys. 23, no. 3--4, 259--286  (1997)
\bibitem{Fuj79} T.~Fujitani: \emph{Compact suitably pinched Einstein manifolds}, Bull. Fac. Liberal Arts Nagasaki Univ. 19, 1--5 (1979)
\bibitem{GG96} J.~Gasqui, H.~Goldschmidt: \emph{Radon transforms and spectral rigidity on the complex quadrics and the real Grassmannians of rank two}, J. Reine Angew. Math. 480, 1--69 (1996)
\bibitem{GH02} G.~W.~Gibbons, S.~A.~Hartnoll: \emph{Gravitational instability in higher dimensions}, Phys. Rev. D 66, 064024 (2002)
\bibitem{GHP03} G.~W.~Gibbons, S.~A.~Hartnoll, C.~N.~Pope: \emph{Bohm and Einstein--Sasaki metrics, black holes, and cosmological event horizons}, Phys. Rev. D (3) 67, no. 8, 084024 (2003)
\bibitem{Goto} R.~Goto: \emph{Moduli spaces of topological calibrations, Calabi--Yau, hyperkähler, $\rmG_2$ and $\Spin(7)$ structures}, Internat. J. Math. 115 (3), 211–257 (2004)
\bibitem{GL23} V.~Gutiérrez, J.~Lauret: \emph{Stability of standard Einstein metrics on homogeneous spaces of non-simple Lie groups}, Collectanea Math. 76, 273--284 (2025)
\bibitem{G24} V.~Gutiérrez:
\emph{Stability of non-diagonal Einstein metrics on homogeneous spaces $H\times H/\Delta K$}, Diff. Geom. Appl. 101, 102295 (2025)
\bibitem{HHS}
S.~Hall, R.~Haslhofer, M.~Siepmann:
\emph{The stability inequality for Ricci--flat cones},
J. Geom. Anal. 24, no. 1, 472--494 (2014)
\bibitem{HMW21} S.~J.~Hall, T.~Murphy, J.~Waldron: \emph{Compact Hermitian Symmetric Spaces, Coadjoint Orbits, and the Dynamical Stability of the Ricci Flow}, J. Geom. Anal. 31, 6195--6218 (2021)
\bibitem{HSS25a} S.~J.~Hall, P.~Schwahn, U.~Semmelmann: \emph{On the rigidity of the complex Grassmannians}, Trans. AMS 378 (6), 4335--4367 (2025)
\bibitem{HSS25b} S.~J.~Hall, P.~Schwahn, U.~Semmelmann: \emph{Sandwich operators and Einstein deformations of compact symmetric spaces related to Jordan algebras}, \url{https://arxiv.org/abs/2412.08770}
\bibitem{Heber} J.~Heber: \emph{Noncompact homogeneous Einstein spaces}, Invent. math. 133, 279--352 (1998)
\bibitem{HMS15}
K. Heil, A. Moroianu, U. Semmelmann:
\emph{Killing and conformal Killing tensors},
J. Geom. Phys. 106, 383--400 (2016)
\bibitem{H06}
Y. Homma:
\emph{Estimating the eigenvalues on quaternionic Kähler manifolds}, Internat. J. Math. 17, no. 6, 665--691 (2006)
\bibitem{HS25} Y.~Homma, U.~Semmelmann: \emph{Eigenvalue estimates and stability on positive quaternion Kähler manifolds}, \url{https://arxiv.org/abs/2604.01333} (2026)
\bibitem{Hori} E.~Horikawa: \emph{Algebraic surfaces of general type with small $c_1^2$, I}, Ann. Math. 104 (2), 357--387 (1976)
\bibitem{IN04} M.~Itoh, T.~Nakagawa: \emph{Variational stability and local rigidity of Einstein metrics}, Yokohama Math. J. 51, 103--115 (2005)
\bibitem{Jen71} G.~Jensen: \emph{The scalar curvature of left-invariant Riemannian metrics}, Indiana U. Math. J. 20, 1125--1144 (1971)
\bibitem{Joyce} D.~D.~Joyce: \emph{Compact Manifolds with Special Holonomy}, Oxford University Press (2000)
\bibitem{Kraemer} M. Krämer, \emph{Eine Klassifikation bestimmter Untergruppen kompakter zusammenhängender Liegruppen}, Comm. Algebra 3 (8), 691--737 (1975)
\bibitem{KS19} D.~Knopf, N.~Šešum: \emph{Dynamic Instability of $\C\Proj^N$ Under Ricci Flow}, J. Geom. Anal. 29, 902--916 (2019)
\bibitem{KNS} K.~Kodaira, L.~Nirenberg, D.~C.~Spencer: \emph{On the existence of deformations of complex analytic structures}, Ann. Math. 68, 450--459 (1958)
\bibitem{Koiso78} N.~Koiso: \emph{Non-deformability of Einstein metrics}, Osaka J.~Math.~15, 419--433 (1978)
\bibitem{Koiso80} N.~Koiso: \emph{Rigidity and stability of Einstein metrics -- The case of compact symmetric spaces}, Osaka J.~Math.~17 (1), 51--73 (1980)
\bibitem{Koiso82} N.~Koiso: \emph{Rigidity and infinitesimal deformability of Einstein metrics}, Osaka J.~Math.~19 (3), 643--668 (1982)
\bibitem{Koiso83} N.~Koiso: \emph{Einstein metrics and complex structures}, Invent. Math.~73 (1), 71--106 (1983).
\bibitem{Kr15a} K.~Kröncke: \emph{On the stability of Einstein manifolds}, Ann. Glob. Anal. Geom. 47, 81–-98 (2015)
\bibitem{Kr15b} K.~Kröncke: \emph{On infinitesimal Einstein deformations}, Diff. Geom. Appl. 38, 41--57 (2015)
\bibitem{Kr15c} K.~Kröncke: \emph{Stability and instability of Ricci solitons}, Calc. Var. 53, 265--287 (2015)
\bibitem{Kr16} K.~Kröncke: \emph{Rigidity and infinitesimal deformability of Ricci solitons}, J. Geom. Anal. 26,
1795--1807 (2016)
\bibitem{Kr17} K.~Kröncke: \emph{Stable and unstable Einstein warped products}, Trans. AMS 369 (9), 6537--6563 (2017)
\bibitem{Kr20} K.~Kröncke: \emph{Stability of Einstein metrics under Ricci flow}, Comm. Anal. Geom. 28 (2), 351--394 (2020)
\bibitem{Kr21} K.~Kröncke: \emph{Spectra, rigidity and stability of sine-cones}, Journal of Functional Analysis 281, 109115 (2021)
\bibitem{KS24} K.~Kröncke, U.~Semmelmann: \emph{On stability and scalar curvature rigidity of quaternion-Kähler manifolds}, to appear in Ann. Sc. Norm. Super. Pisa, Cl. Sci., \url{https://doi.org/10.2422/2036-2145.202501_021} (2025)
\bibitem{Kr25} K.~Kröncke: \emph{Ricci flow and the scalar curvature rigidity of Einstein manifolds}, to appear in the same volume
\bibitem{LL23} E.~A.~Lauret, J.~Lauret: \emph{The stability of standard homogeneous Einstein manifolds}, Mathematische Zeitschrift 303, no. 16 (2023)
\bibitem{LT25} E.~A.~Lauret, A.~Tolcachier: \emph{Linear stability of Perelman's $\nu$-entropy of standard Einstein manifolds}, \url{https://arxiv.org/abs/2506.12435} (2025)
\bibitem{LW22} J.~Lauret, C.~Will: \emph{Prescribing Ricci curvature on homogeneous spaces}, J. Reine Angew. Math 783, 95--133 (2022)
\bibitem{Gstab1} J.~Lauret: \emph{On the stability of homogeneous Einstein manifolds}, Asian J. Math. 26:4, 555--584 (2022)
\bibitem{Gstab2} J.~Lauret, C.~Will: \emph{On the stability of homogeneous Einstein manifolds II}, J. Lond. Math. Soc. 106, 3638--3669 (2022)
\bibitem{Gstab3} J.~Lauret, C.~Will: \emph{Homogeneous Einstein metrics and local maxima of the Hilbert action}, J. Geom. Phys. 178, 104544 (2022)
\bibitem{LW25} J.~Lauret, C.~Will: \emph{Einstein metrics on homogeneous spaces $H\times H/\Delta K$}, Comm. Contemp. Math. (in press) no.~2550010 (2025)
\bibitem{LM} H.~B.~Lawson jr., M.-L.~Michelsohn: \emph{Spin Geometry}, Princeton University Press (1989)
\bibitem{LeB88} C.~LeBrun: \emph{A rigidity theorem for quaternionic-Kähler manifolds}, Proc. Amer. Math. Soc. 103 (4), 1205--1208  (1988)
\bibitem{LeB95} C.~LeBrun: \emph{Einstein metrics and Mostow rigidity}, Math. Res. Lett. 2, 1--8 (1995)
\bibitem{LeB99} C.~LeBrun: \emph{Einstein metrics and the Yamabe problem}, in: Trends in Mathematical Physics, Studies in Adv. Math. 13, AMS/IP (1999)
\bibitem{LeBrunAGAG} C.~LeBrun: \emph{Einstein metrics, harmonic forms, and symplectic four-manifolds}, Ann. Glob. Anal. Geom. 48, 75--85 (2015)
\bibitem{LZ23} Y.~Li, W.~Zhang: \emph{Rigidity of complex projective spaces in Ricci shrinkers}, Calc. Var. 62:171 (2023)
\bibitem{LST24} Y.~Liu, T.~Sano, L.~Tasin: \emph{Infinitely many families of Sasaki--Einstein metrics on spheres}, J. Diff. Geom. 130 (1), 1--26 (2025)
\bibitem{Man1} O. V. Manturov: \emph{Homogeneous, non-symmetric Riemannian spaces with an irreducible rotation group}, Dokl. Akad. Nauk SSSR 141, 792--795 (1961)
\bibitem{Man2} O. V. Manturov: \emph{Riemannian spaces with orthogonal and symplectic motion groups and an irreducible rotation group}, Dokl. Akad. Nauk SSSR 141, 1034--1037  (1961)
\bibitem{Man3} O. V. Manturov: \emph{Homogeneous Riemannian manifolds with irreducible isotropy group}, Trudy Sem. Vector. Tenzor. Anal. 13, 68--145 (1966)
\bibitem{Mon75} V.~Moncrief: \emph{Spacetime symmetries and linearization stability of the Einstein equations. I}, J. Math. Phys. 16, 493--498 (1975)
\bibitem{Mon76} V.~Moncrief: \emph{Spacetime symmetries and linearization stability of the Einstein equations. II}, J. Math. Phys. 17, 1893--1902 (1976)
\bibitem{spinc} A.~Moroianu: \emph{Parallel and Killing Spinors on Spin$^c$ Manifolds}, Commun. Math. Phys. 187, 417--427 (1997)
\bibitem{MS10} A.~Moroianu, U.~Semmelmann: \emph{The Hermitian Laplace Operator on Nearly Kähler Manifolds}, Commun. Math. Phys. 294, 251--272 (2010)
\bibitem{MS11} A.~Moroianu, U.~Semmelmann: \emph{Infinitesimal Einstein deformations of nearly Kähler metrics}, Trans. Amer. Math. Soc. 363, no. 6, 3057--3069 (2011)
\bibitem{NS21} P.-A. Nagy, U. Semmelmann: \emph{Deformations of nearly $G_2$ structures},
J. Lond. Math. Soc. (2) 104, no. 4, 1795--1811  (2021)
\bibitem{NS23} P.-A. Nagy, U. Semmelmann: \emph{Eigenvalue estimates for $3$-Sasaki structures}, J. Reine Angew. Math. 803, 35--60 (2023)
\bibitem{NS24} P.-A. Nagy, U. Semmelmann: \emph{The $G_2$ geometry of $3$-Sasaki structures},  J. Geom. Anal. 34, no. 2, Paper No. 61 (2024)
\bibitem{NS25}  P.-A. Nagy, U. Semmelmann: \emph{Second order Einstein deformations}, J. Math. Soc. Japan 77 (2), (2025)
\bibitem{N00} Yu.~G.~Nikonorov: \emph{Algebraic structure of standard homogeneous Einstein manifolds}, Siberian
Adv. Math. 10, 59--82 (2000)
\bibitem{Nik21} Yu.~G.~Nikonorov: \emph{On a characterization of critical points of the scalar curvature functional} (Russian), Tr. Rubtsovsk. Ind. Inst. 7, 211--217 (2000); English version: \url{https://arxiv.org/abs/2112.00993}
\bibitem{Nor13} J.~Nordström: \emph{Ricci-flat deformations of metrics with exceptional holonomy}, Bull. London Math. Soc. 45, 1004--1018 (2013)
\bibitem{Ozu21} T.~Ozuch: \emph{Integrability of Einstein deformations and desingularizations}, Comm. Pure Appl. Math. 77 (1), 177--220 (2023)
\bibitem{Per02} G.~Perelman: \emph{The entropy formula for the Ricci flow and its geometric applications}, \url{https://arxiv.org/abs/math/0211159} (2002)
\bibitem{petean} J.~Petean: \emph{The Yamabe invariant of simply connected manifolds}, J. reine und angew. Math. 523, 225--231 (2000)
\bibitem{PP}
H. Pedersen, Y.-S. Poon:
\emph{A note on rigidity of 3-Sasakian manifolds}, Proc. Amer. Math. Soc. 127 (1999), no. 10, 3027–3034
\bibitem{PS15} F.~Podestà, A.~Spiro: \emph{On moduli spaces of Ricci solitons}, J. Geom. Anal. 25 (2), 1157–-1174 (2015)
\bibitem{Sch22a} P.~Schwahn: \emph{Stability of Einstein metrics on symmetric spaces of compact type}, Ann. Glob. Anal. Geom. 61, 333--357 (2022)
\bibitem{Sch22b} P.~Schwahn: \emph{Coindex and Rigidity of Einstein Metrics on Homogeneous Gray Manifolds}, J. Geom. Anal. 32, 302 (2022)
\bibitem{SSW23} P.~Schwahn, U. Semmelmann, G. Weingart: \emph{Stability of the non-symmetric space $\rmE_7/\PSO(8)$},  Adv. Math. 432, Paper No. 109268 (2023)
\bibitem{Sch24} P.~Schwahn: \emph{The Lichnerowicz Laplacian on normal homogeneous spaces}, J. Reine Angew. Math. 814, 91--11 (2024)
\bibitem{SWW20} U. Semmelmann, C. Wang, M. Y.-K. Wang:  \emph{On the linear stability of nearly Kähler $6$-manifolds},
Ann. Global Anal. Geom. 57, no. 1, 15-22, (2020)
\bibitem{SWW22} U. Semmelmann, C. Wang, M. Y.-K. Wang:  \emph{Linear instability of Sasaki Einstein and nearly parallel $G_2$ manifolds},
Internat. J. Math. 33, no. 6, Paper No. 2250042, 17 pp.,  (2022)
\bibitem{SW19} U. Semmelmann, G. Weingart: \emph{The standard Laplace operator}, manuscripta math. 158, 273--293 (2018)
\bibitem{SW22} U. Semmelmann, G. Weingart:  \emph{Stability of compact symmetric spaces}, J. Geom. Anal. 32, no. 4, Paper No. 137, 27 pp., (2022)
\bibitem{cohom1coindex} E.~Solé-Farré: \emph{The Hitchin index in cohomogeneity one nearly Kähler structures}, \url{https://arxiv.org/abs/2410.21106} (2024)
\bibitem{Stolz} S.~Stolz: \emph{Positive Scalar Curvature -- Constructions and Obstructions}, in: M.~L.~Gromov, H.~B.~Lawson (eds.), \emph{Perspectives in Scalar Curvature} Vol.~II, 5--49, World Scientific (2023)
\bibitem{Tian90} G.~Tian: \emph{On Calabi's conjecture for complex surfaces with positive first Chern class}, Invent. Math. 101 (1), 101--172 (1990)
\bibitem{Tian15} G.~Tian: \emph{K-Stability and Kähler-Einstein Metrics}, Comm. Pure Appl. Math. 68 (7), 1085--1156 (2015)
\bibitem{W} C.~Wang: \emph{Stability of Riemannian manifolds with Killing spinors}, Internat. J. Math. 28, no. 1, 1750005, 19 pp. (2017)
\bibitem{WW} C.~Wang, M.~Y.~Wang:
\emph{Stability of Einstein metrics on fiber bundles},
J. Geom. Anal. 31, no. 1, 490--515 (2021)
\bibitem{WW2} C.~Wang, M.~Y.~Wang:
\emph{Instability of some Riemannian manifolds with real Killing spinors}, Comm. Anal. Geom. 30, no. 8, 1895--1931 (2022)
\bibitem{WZ85} M.~Y.~Wang, W.~Ziller: \emph{On normal homogeneous Einstein manifolds}, Ann. Sci. École Norm. Sup. 18, 563--633 (1985)
\bibitem{WZ86} M.~Y.~Wang, W.~Ziller: \emph{Existence and nonexistence of homogeneous Einstein metrics}, Invent. math. 84, 177--194 (1986)
\bibitem{WZ90} M.~Y.~Wang, W.~Ziller: \emph{Einstein metrics on principal torus bundles}, J. Diff. Geom. 31, 215--248 (1990)
\bibitem{Wang91} M.~Y.~Wang: \emph{Preserving Parallel Spinors under Metric Deformations}, Indiana Univ. Math. J. 40 (3), 815--844 (1991)
\bibitem{Wolf} J.~A.~Wolf: \emph{The geometry and structure of isotropy irreducible homogeneous spaces}, Acta Math. 120, 59--148 (1968)
\end{thebibliography}
\end{document}